\documentclass[acmlarge, manuscript, screen, review, natbib=false, oneside]{acmart}

\usepackage{hyperref}
\usepackage{subfig}  
\usepackage{tikz}
\usetikzlibrary{positioning, decorations.pathreplacing}
\usepackage{float}   
\usepackage{multirow}
\usepackage{placeins} 

\gdef\@copyrightpermission{}     
\setcopyright{none}              
\acmDOI{}                        
\authorsaddresses{}              
\makeatletter
\renewcommand\@formatdoi[1]{}               
\def\@ACM@checkaffil{}                      
\long\def\footnotetextcopyrightpermission#1{}
\makeatother
\makeatletter
\@twosidefalse
\@mparswitchfalse
\makeatother
\acmVolume{}
\acmArticle{}
\acmMonth{}
\acmYear{}
\AtBeginDocument{%
  \fancyfoot{}%
  \fancyhead{}%
  \fancyhead[RO,LE]{\thepage}%
  \fancyhead[LO,RE]{\shortauthors}%
  \fancypagestyle{firstpagestyle}{\fancyhf{}\fancyfoot{}}%
}

\RequirePackage[
  style=authoryear-comp,
  backend=bibtex,
  giveninits=false,
  uniquename=false
  ]{biblatex}

\newtheorem{theorem}{Theorem}[section]

\newtheorem{corollary}[theorem]{Corollary}
\bibliography{references.bib}
\DeclareMathOperator{\CVaR}{CVaR}
\DeclareMathOperator{\VaR}{VaR}

\usepackage{algorithm}
\usepackage{algorithmicx}
\usepackage{algpseudocode}

\begin{document}
\raggedbottom

\title{Accelerated Online Risk-Averse Policy Evaluation in POMDPs with Theoretical Guarantees and Novel CVaR Bounds}

\author{Yaacov Pariente}
\affiliation{%
  \institution{Technion - Israel Institute of Technology}
  \city{Haifa}
  \country{Israel}
}

\author{Vadim Indelman}
\affiliation{%
  \institution{Technion - Israel Institute of Technology}
  \city{Haifa}
  \country{Israel}}

\renewcommand{\shortauthors}{Pariente \& Indelman}

\begin{abstract}
    Risk-averse decision-making under uncertainty in partially observable domains represents a central challenge in artificial intelligence and is essential for developing reliable autonomous agents. The formal framework for such problems is typically the partially observable Markov decision process (POMDP), where risk sensitivity is introduced through a risk measure applied to the value function. Among these, the Conditional Value-at-Risk (CVaR) has emerged as a particularly significant criterion. However, solving POMDPs is computationally intractable in general, and approximate solution methods rely on computationally expensive simulations of possible future agent trajectories. 
    
    This work introduces a theoretical framework for accelerating the evaluation of CVaR value functions in POMDPs while providing formal performance guarantees. As the mathematical foundation, we derive new bounds on the CVaR of a random variable $X$ using an auxiliary random variable $Y$, under assumptions relating their respective cumulative distribution and density functions; these bounds yield interpretable concentration inequalities and converge as the distributional discrepancy vanishes. Building on this foundation, we establish upper and lower bounds on the CVaR value function computable from a simplified belief-MDP, accommodating general simplifications of the transition dynamics—including reduced-complexity observation and transition models. We develop estimators for computing these bounds during online policy evaluation within a particle-belief MDP framework and provide probabilistic performance guarantees. These bounds are employed for computational acceleration via action elimination: actions whose bounds indicate suboptimality under the simplified model are safely discarded while ensuring consistency with the original POMDP. 
    
    Empirical evaluation across multiple POMDP domains confirms that the bounds reliably separate safe from dangerous policies while achieving substantial computational speedups under the simplified model.
\end{abstract}

\maketitle

\section{Introduction}
Autonomous agents have emerged as a cornerstone of modern intelligent systems, with applications spanning healthcare, disaster response, education, industrial automation, and decentralized systems. From autonomous aerial vehicles coordinating firefighting efforts to multi-agent systems executing complex financial transactions, these agents are designed to perceive, reason, and act without continuous human oversight. However, ensuring that such systems behave safely and reliably in uncertain and partially observable environments remains a central challenge. In these settings, risk-aware decision-making frameworks, particularly those grounded in partially observable Markov decision processes (POMDPs), provide a principled foundation for modeling uncertainty and enforcing safety through explicit risk-sensitive optimization \cite{marecki2010risk, hou2016solving, ahmadi2023risk}.

Exact solution methods for POMDPs become computationally intractable when the state, observation, and action spaces are large \cite{papa_mdp_complexity}. To address this fundamental limitation, approximate planning algorithms utilize forward search trees constructed via simulation of future agent trajectories \cite{silver2010monte, sunberg2018online}. The sampling procedure is guided by the underlying reward function, which provides evaluative feedback for action selection. This approach enables practical online planning in partially observable domains by trading off solution optimality for computational tractability.

Risk-averse decision-making frameworks in the literature encompass three primary approaches: distributionally robust optimization, which hedges against distributional uncertainty \cite{xu2010distributionally}; chance-constrained programming, which enforces probabilistic feasibility constraints \cite{santana2016aaai, moss2024constrainedzero}; and risk metric integration, wherein a risk measure—such as Conditional Value-at-Risk (CVaR)—is incorporated directly into the value function \cite{chow2015risk}. 
Risk metric integration is particularly appealing when the risk measure is coherent, as coherent risk measures satisfy fundamental axioms—monotonicity, translation invariance, positive homogeneity, and subadditivity—that ensure rational and consistent behavior under aggregation and scaling of uncertain returns. These properties are especially important in sequential decision-making, where returns accumulate over time and diversification across stochastic outcomes should not be penalized. Moreover, coherence guarantees convexity of the resulting objective, enabling tractable optimization and facilitating theoretical analysis. Among coherent risk measures, CVaR has emerged as a widely adopted choice due to its explicit focus on tail risk and its compatibility with robust and distributional formulations.
In the latter framework, CVaR is computed over the cumulative return, defined as the sum of stage-wise costs across the planning horizon. 
Chance-constrained and CVaR-based formulations reflect different perspectives on risk in sequential decision problems. 

Chance constraints enforce probabilistic restrictions on state visitation, thereby preventing the agent from entering undesirable regions of the state space. Conversely, CVaR optimization targets the tail risk of the cumulative return, prioritizing mitigation of poor reward realizations. These objectives are complementary rather than interchangeable. Consider, for instance, an online algorithmic trading problem wherein the environment reward is defined as the percentage portfolio gain relative to the preceding portfolio value, and the state is characterized by current asset prices. In this setting, CVaR optimization of returns directly addresses the trader's primary objective—managing downside risk in cumulative gains—whereas formulating the problem as a chance constraint on state space would be less natural and potentially misaligned with the risk management goal. Another example arises in sequential decision-making problems with multiple objectives, such as a navigating robot that must account for fuel availability, obstacles, and related operational constraints. In such settings, optimizing the cumulative reward provides a principled means of jointly accommodating all constraints.

Current state-of-the-art POMDP planning algorithms remain computationally demanding, as they rely on extensive sampling from the original POMDP dynamics. To mitigate this issue, the simplification approach replaces the true POMDP dynamics with a computationally tractable surrogate model, which is then employed during planning while still admitting performance guarantees relative to the original dynamics. Simplification techniques for expectation-based value functions are already available \cite{LevYehudi24aaai, barenboim2023NIPS, andrey_ori} and have been shown to accelerate POMDP planning. However, risk-averse simplification has remained largely unexamined; the only existing contribution in this direction employs a non-coherent risk measure \cite{Zhitnikov22ai}, which offers weaker justification for risk-averse decision making \cite{majumdar2020should}.

We begin by establishing the mathematical foundations of this work, deriving several CVaR bounds for a random variable $X$ using an auxiliary random variable $Y$, under assumptions relating their respective CDFs and PDFs. Although not all of these theoretical results are employed in our practical applications, they form a foundational framework that we expect will support future research on simplification.
Building on these foundations, we analyze the relationship between the original value function—defined as the CVaR of the return—and its simplified belief transition-model counterpart. In particular, we establish lower and upper bounds on the original value function in terms of the simplified value function and provide corresponding performance guarantees. We then develop estimators for computing these bounds during online planning and derive performance guarantees for these estimators as well. The primary contributions of this work are as follows:

\begin{itemize}
	\item Derivation of bounds on the CVaR of a random variable $X$ given another random variable $Y$. These bounds enable the approximation of $\CVaR_\alpha(X)$ when direct access to $X$ is limited. They also yield interpretable concentration inequalities that characterize the CVaR of $X$ through the CVaR of $Y$ under an adjusted confidence level, and we establish convergence of these bounds as the distributional discrepancy vanishes.
	
	\item Derivation of lower and upper bounds on the theoretical value function in risk-averse POMDPs expressed through the action-value function from a simplified belief-MDP transition model, with corresponding bounds for simplified observation models.
	
	\item Estimators for evaluating the simplified bounds during online planning, with probabilistic performance guarantees on the deviation between the theoretical action-value function and its estimated bounds computed using simplified belief-MDP and observation models within a particle-belief MDP framework.

	\item Application of the derived bounds to computational acceleration via action elimination, in which actions whose bounds indicate suboptimality are safely discarded, and empirical demonstration of substantial speedups with negligible degradation in policy performance across multiple POMDP domains.
\end{itemize}

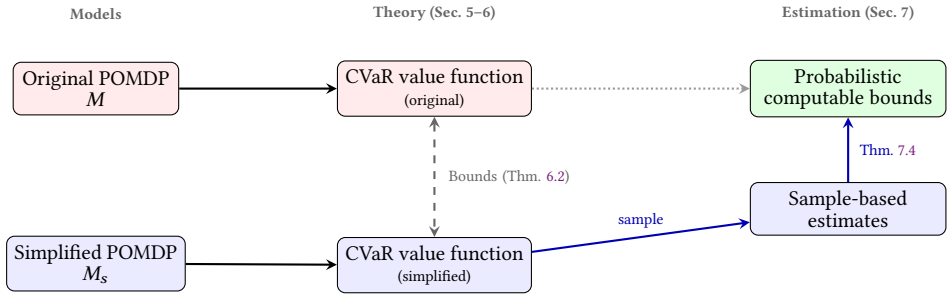
\begin{figure}[t]
	\centering
	\begin{tikzpicture}[
		node distance=1.0cm and 1.6cm,
		box/.style={rectangle, draw, rounded corners=3pt, minimum width=2.2cm, minimum height=0.7cm, align=center, font=\small},
		arr/.style={->, thick, >=stealth},
		darr/.style={<->, thick, >=stealth, dashed},
		lbl/.style={font=\scriptsize, midway},
		phase/.style={font=\scriptsize\bfseries, text=black!60},
	]
		\node[phase] at (0, 1.0) {Models};
		\node[phase] at (4.5, 1.0) {Theory (Sec.~5--6)};
		\node[phase] at (10.0, 1.0) {Estimation (Sec.~7)};

		\node[box, fill=red!8] (origM) at (0, 0) {Original POMDP\\[-2pt] $M$};
		\node[box, fill=red!8] (origQ) at (4.5, 0) {CVaR value function\\[-2pt] \scriptsize(original)};

		\node[box, fill=blue!8, below=1.6cm of origM] (simpM) {Simplified POMDP\\[-2pt] $M_s$};
		\node[box, fill=blue!8, below=1.6cm of origQ] (simpQ) {CVaR value function\\[-2pt] \scriptsize(simplified)};

		\node[box, fill=blue!8, minimum width=2.6cm] (est) at (10.0, -1.6) {Sample-based\\[-2pt] estimates};
		\node[box, fill=green!12, minimum width=2.6cm] (bounds) at (10.0, 0) {Probabilistic\\[-2pt] computable bounds};

		\draw[arr] (origM) -- (origQ);
		\draw[arr] (simpM) -- (simpQ);

		\draw[darr, black!60] (origQ) -- (simpQ) node[lbl, right, xshift=2pt, text width=1.8cm, align=left] {\scriptsize Bounds (Thm.~\ref{thm:uniform_lower_and_upper_bounds_for_v_and_q})};

		\draw[arr, blue!70!black] (simpQ) -- (est) node[lbl, above] {\scriptsize sample};

		\draw[arr, blue!70!black] (est) -- (bounds) node[lbl, right, xshift=1pt] {\scriptsize Thm.~\ref{thm:guarantees}};

		\draw[arr, black!40, densely dotted] (origQ) -- (bounds);
	\end{tikzpicture}
	\caption{Overview of the proposed framework. The original POMDP yields the original CVaR value function. The simplified POMDP $M_s$ admits a simplified value function, related to the original via Theorem~\ref{thm:uniform_lower_and_upper_bounds_for_v_and_q}. In practice, the simplified value function and distributional discrepancy are estimated from sample trajectories, yielding probabilistic computable bounds on the original value function (Theorem~\ref{thm:guarantees}).}
	\label{fig:framework_overview}
\end{figure}
An overview of the proposed framework is depicted in Figure~\ref{fig:framework_overview}. The remainder of this paper is organized as follows. Section~2 reviews related work. Section~3 introduces the necessary preliminaries on POMDPs, particle belief MDPs, and CVaR estimation. Section~4 formulates the problem of bounding the risk-averse value function under a simplified belief-transition model. Section~5 derives the foundational CVaR bounds using auxiliary random variables. Section~6 establishes value function bounds for the static CVaR value function under general and observation-model simplifications. Section~7 develops online estimators for computing these bounds during planning and provides corresponding performance guarantees. Section~8 discusses the limitations of the framework. Section~9 presents the experimental evaluation across multiple POMDP domains. Section~10 concludes the paper. Proofs and additional experimental details are provided in the appendix.

\section{Related Work}
Conditional Value at Risk (CVaR) \cite{rockafellar2000optimization} is a principled and extensively studied risk measure with broad applicability across domains involving uncertainty. A central property of CVaR is its dual representation \cite{artzner1999coherent}, which allows it to be interpreted as the expected loss in the worst-case tail of a cost distribution \cite{chow2015risk}. Unlike Value at Risk (VaR), which identifies only the quantile threshold of extreme losses at a chosen confidence level, CVaR quantifies both the probability and the magnitude of such losses, yielding a more comprehensive assessment of risk. This property makes CVaR especially relevant in safety-critical decision-making contexts, where both the occurrence and severity of adverse outcomes are significant. Moreover, CVaR satisfies the axioms of coherent risk measures—such as subadditivity and positive homogeneity—ensuring consistent and rational aggregation of risk across time and decision stages \cite{artzner1999coherent}. Practical estimators of CVaR are supported by concentration inequalities and deviation guarantees from the true risk value \cite{brown2007large,pmlr-v97-thomas19a}, further reinforcing its reliability in applied decision-making problems.

Risk can be incorporated into planning under uncertainty through multiple paradigms, including chance constraints \cite{ono2015chance}, exponential utility functions \cite{koenig1994risk}, distributionally robust optimization \cite{xu2010distributionally, osogami2015robust, cubuktepe2021robust}, and quantile regression methods \cite{dabney2018distributional}. Among these, distributionally robust formulations are particularly aligned with CVaR optimization, as both emphasize resilience to rare but high-impact events \cite{chow2015risk}. Risk measures map random cost variables to real-valued evaluations and are expected to satisfy fundamental axioms to ensure interpretability and consistency \cite{majumdar2020should}. Coherent risk measures, by satisfying these axioms, provide a principled foundation for incorporating CVaR into sequential decision-making. General coherent risk formulations have been employed as optimization objectives in POMDPs, constrained MDPs, and shortest-path problems \cite{ahmadi2021constrained, ahmadi2020risk, ahmadi2021risk, dixit2023risk}, where CVaR arises as a key special case. Specifically, \cite{chow2015risk} introduced the CVaR-MDP framework, defining the value function as the CVaR of the return and deriving a value-iteration-based solution with formal error guarantees.

Simplification techniques for POMDPs aim to reduce computational complexity while retaining performance guarantees, thereby facilitating real-time deployment of POMDP-based policies in practical systems \cite{Papadimitriou1987TheCO}. Simplification refers to replacing one or more POMDP components with computationally tractable approximations while providing formal performance guarantees that ensure decision quality with respect to the original model. For instance, \cite{LevYehudi24aaai} examined simplification of the observation model by introducing a less expensive surrogate while establishing finite-sample convergence bounds. Similarly, \cite{BARENBOIM2026104442} studied simplification of the state and observation spaces, providing deterministic performance guarantees and integrating them into state-of-the-art solvers. Furthermore, \cite{Zhitnikov22ai} examined the simplification of belief-dependent rewards and provided deterministic guarantees under the Value at Risk (VaR) criterion. However, VaR is not a coherent risk measure and therefore lacks several essential properties for principled risk-averse decision making. More broadly, the simplification of risk-averse planning remains largely unexplored, with \cite{Zhitnikov22ai} being the only prior work in this direction. To the best of our knowledge, no prior work has addressed the problem of bounding the CVaR of a random variable using an auxiliary random variable with a known distributional discrepancy, which constitutes a key contribution of this work.

\section{Preliminaries}
\subsection{Partially Observable Markov Decision Process}\label{sec:POMDP_preliminaries}
A finite-horizon Partially Observable Markov Decision Process (POMDP) is formally defined as the tuple $(X, A, Z, T, O, c, b_0)$, where $X$, $A$, and $Z$ denote the state, action, and observation spaces, respectively. The transition model $T(x_{t+1} \mid x_t, a_t) \triangleq P(x_{t+1} \mid x_t, a_t)$ specifies the conditional probability of transitioning from state $x_t$ to $x_{t+1}$ given action $a_t$, while the observation model $O(z_t \mid x_t) \triangleq P(z_t \mid x_t)$ defines the likelihood of observing $z_t$ when the system is in state $x_t$. The belief space $B$ is the set of all probability distributions over $X$, and the stage-wise cost function is given by $c: B \times A \to \mathbb{R}$.

At each time step $t$, the agent maintains a belief $b_t \in B$, representing the posterior distribution over the latent state conditioned on the history of observations and actions. The history is denoted by $H_t = \{z_{1:t}, a_{0:t-1}, b_0\}$, and the belief update is defined as $b_t(x_t) \triangleq P(x_t \mid H_t)$ for each $x_t \in X$. A policy $\pi_t: B \to A$ is a measurable mapping that prescribes the action $a_t = \pi_t(b_t)$ based on the current belief state. The immediate expected cost under belief $b_t$ and action $a_t$ is
\begin{equation}
	c(b_t, a_t) \triangleq \mathbb{E}_{x \sim b_t}[c_x(x, a_t)],
\end{equation}
where $c_x(x, a_t)$ denotes the state-dependent cost, bounded as $R_{\min} \le c_x(x, a_t) \le R_{\max}$.

The cumulative cost, or \emph{return}, over a finite horizon $T \in \mathbb{N}$ is given by
\begin{equation}
	R_{t:T} \triangleq \sum_{\tau=t}^T c(b_\tau, a_\tau),
\end{equation}
which serves as the performance criterion starting at time $t$. The value function associated with policy $\pi$ and initial belief $b_k$ is
\begin{equation}\label{Eq:regular_V_function}
	V^\pi(b_k) \triangleq \mathbb{E}[R_{k:T} \mid b_k, \pi] = \sum_{t=k}^T \mathbb{E}[c(b_t, a_t) \mid b_k, \pi],
\end{equation}
and the corresponding action-value function is
\begin{equation}\label{Eq:regular_Q_function}
	Q^\pi(b_k, a_k) \triangleq \mathbb{E}_{z_{k+1}}\!\left[c(b_k, a_k) + V^\pi(b_{k+1}) \mid b_k, a_k \right].
\end{equation}
A POMDP can equivalently be formulated as a fully observable \emph{belief-MDP} (BMDP), in which the belief $b_t$ serves as the state variable and is a sufficient statistic for the interaction history. In this formulation, the dynamics are governed by a belief transition kernel induced by the latent state-transition and observation models. Formally, the belief-MDP associated with a POMDP $M$ is defined as
\begin{equation}
	M_B \triangleq (\mathcal{B}, A, \tau, \rho, \gamma),
\end{equation}
where $\mathcal{B} \triangleq \Delta(X)$ denotes the space of probability distributions over the latent state space $X$, $A$ is the action space, $\tau$ denotes the belief-transition kernel, $\rho$ is the expected cost, and $\gamma$ is the discount factor. Given the state-transition and observation models, the belief-transition model of the belief-MDP is given by
\begin{equation}
	P(b_{t+1}|b_{t},a_t)=\int_{x_t \in X}\int_{x_{t+1}\in X}\int_{z_{t+1}\in Z}P(x_{t+1}|x_t,a_t)P(z_{t+1}|x_{t+1})b_t(x_t)dx_{t}dx_{t+1}dz_{t+1}.
\end{equation}
This BMDP formulation is exact but generally intractable, since the belief space $\mathcal{B}$ is infinite-dimensional, motivating approximate representations such as particle belief MDPs.

\subsection{Particle Belief MDP }
In order to estimate the theoretical action-value function, we consider a particle belief MDP (PB-MDP) setting. Formally, denote by $M_P \triangleq (\Sigma, A, \tau, \rho, \gamma)$ the PB-MDP that is defined with respect to the POMDP $M$ \cite{lim2023optimality} and $N_p \in \mathbb{N}$, where 
\begin{itemize}
	\item $\Sigma \triangleq \{\bar{b}:\bar{b}=\{(x_i,w_i)\}_{i=1}^{N_p},x_i\in X, \forall i,w_i\geq 0,\exists i\text{ such that } w_i>0\}$ is the state space over the particle beliefs.
	\item $A$ is the action space as defined in the POMDP $M$.
	\item $\tau(\bar{b}_{t+1}|\bar{b}_{t}, a)$ is the belief transition probability, for $a\in A,\bar{b}\in \Sigma$.
	\item $\rho(\bar{b},a) \triangleq \frac{\sum_{j=1}^{N_p} w_j c(x_j,a)}{\sum_{j=1}^{N_p} w_j}$ is the belief-dependent cost.
	\item $\gamma$ as defined in the POMDP $M$.
\end{itemize}
This belief estimator maintains a finite set of state particles that serve as an approximation to the theoretical belief, which, in principle, may involve an infinite number of states.

\subsection{Conditional Value-at-Risk}\label{sec:cvar_preliminaries}
Let $X$ be a random variable defined on a probability space $(\Omega, \mathcal{F}, P)$, where $\mathcal{F}$ is the $\sigma$-algebra, and $P : \mathcal{F} \to [0,1]$ is a probability measure. Assume further that $\mathbb{E}[|X|] < \infty$. We denote the cumulative distribution function (CDF) of the random variable $X$ by $F_X(x)=P(X\leq x)$. The value at risk at confidence level $\alpha \in (0,1)$ is the $1-\alpha$ quantile of $X$, i.e., \begin{equation}
	\VaR_\alpha (X) \triangleq \inf\{x\in \mathbb{R}:F_X(x) > 1-\alpha\}
\end{equation}
For simplicity we denote $q_\alpha^X=\VaR_\alpha(X)$ or $q_\alpha^F=\VaR_\alpha(X)$. The conditional value at risk (CVaR) at confidence level $\alpha$ is defined as \cite{rockafellar2000optimization}
\begin{equation}
	\CVaR_\alpha(X) \triangleq \inf_{w\in \mathbb{R}}\{w+\frac{1}{\alpha}\mathbb{E}[(X-w)^+]|w\in \mathbb{R}\},
\end{equation}
where $(x)^+=\max{(x,0)}$. For a smooth $F$, it holds that \cite{pflug2000some}
\begin{equation}\label{def:cvar_as_conditional_expectation}
	\CVaR_\alpha(X)=\mathbb{E}[X|X>\VaR_\alpha(X)]=\frac{1}{\alpha}\int_{1-\alpha}^1 F^{-1}(v)dv.
\end{equation}
Let $X_i \overset{\text{iid}}{\sim} F$ for $i\in\{1,\dots,n\}$. Denote by
\begin{equation}\label{eq:brown_cvar_estimator}
	\hat{C}_\alpha(X) \triangleq \hat{C}_\alpha(\{X_i\}^{n}_{i=1}) \triangleq \inf_{x\in \mathbb{R}} \Bigl{\{}x+\frac{1}{n\alpha} \sum_{i=1}^n(X_i-x)^+\Bigr{\}}
\end{equation}
the estimate of $\CVaR_\alpha(X)$ \cite{brown2007large}. Theorem \ref{thm:brown_bounds}, 
that bounds the deviation of the estimated CVaR and the true CVaR, was proved in \cite{brown2007large}.
\begin{theorem}\label{thm:brown_bounds}
	If $\text{supp}(X) \subseteq [a, b]$ and $X$ has a continuous distribution function, then for any $\delta \in (0, 1]$, 
	\begin{equation}
		P\Bigl{(}\CVaR_\alpha(X)-\hat{C}_\alpha(X)>(b-a)\sqrt{\frac{5\ln(3/\delta)}{\alpha n}}\Bigr{)}\leq \delta,
	\end{equation}
	\begin{equation}
		P\Bigl{(}\CVaR_\alpha(X)-\hat{C}_\alpha(X)<-\frac{(b-a)}{\alpha}\sqrt{\frac{\ln(1/\delta)}{2n}}\Bigr{)}\leq \delta.
	\end{equation}
\end{theorem}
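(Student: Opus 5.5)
We prove the two tails separately, working throughout with the Rockafellar--Uryasev representation $\CVaR_\alpha(X)=\inf_w g(w)$, where $g(w)=w+\frac{1}{\alpha}\mathbb{E}[(X-w)^+]$. The empirical counterpart is $\hat g(w)=w+\frac{1}{n\alpha}\sum_i (X_i-w)^+$, so that $\hat C_\alpha=\inf_w \hat g(w)$. Both infima may be restricted to $w\in[a,b]$: for $w<a$ each function is affine with slope $1-1/\alpha\le 0$, and for $w>b$ it is increasing. Continuity of $F$ guarantees that the minimiser of $g$ is $w^*=\VaR_\alpha(X)\in[a,b]$, with $P(X\ge w^*)=\alpha$.

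\textbf{Second inequality ($\hat C_\alpha$ too large).} Since $\hat C_\alpha\le \hat g(w^*)$, we have
\[
\hat C_\alpha-\CVaR_\alpha(X)\le \hat g(w^*)-g(w^*)=\frac{1}{\alpha}\Bigl(\frac1n\sum_i (X_i-w^*)^+-\mathbb{E}(X-w^*)^+\Bigr).
\]
The summands $(X_i-w^*)^+$ are i.i.d.\ and lie in $[0,b-a]$. Hoeffding's inequality therefore gives that the right-hand side exceeds $\frac{b-a}{\alpha}\sqrt{\ln(1/\delta)/(2n)}$ with probability at most $\delta$, which is exactly the second statement. This part is routine.

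\textbf{First inequality ($\hat C_\alpha$ too small).} This is the main obstacle, because the infimum in $\hat C_\alpha$ is attained at a random point $\hat w$, the empirical quantile, so no fixed-$w$ argument applies directly. The plan is to exploit the structure of the minimiser. Let $\hat w$ be the empirical $(1-\alpha)$-quantile. Then
\[
\hat C_\alpha=\hat w+\frac{1}{n\alpha}\sum_i (X_i-\hat w)^+ ,
\]
and
\[
\CVaR_\alpha(X)\le g(\hat w)=\hat w+\frac1\alpha\mathbb{E}[(X-\hat w)^+ \mid \hat w].
\]
It thus suffices to bound the deviation of $\frac1n\sum_i (X_i-\hat w)^+$ from its mean, uniformly over the relevant range of $\hat w$. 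I would do this in three steps, splitting $\delta$ into three parts of $\delta/3$ each, which accounts for the $\ln(3/\delta)$.
\begin{enumerate}
\item Localise $\hat w$. A binomial (Chernoff) bound shows that with probability at least $1-\delta/3$ the empirical quantile lies between the population quantiles at levels $\alpha\pm\epsilon$, where $\epsilon\approx\sqrt{\alpha\ln(3/\delta)/n}$; a Bernstein-type bound is used here so that the width scales with $\alpha$.
\item Concentrate at a fixed point. At the deterministic point $w^*$, the variable $(X-w^*)^+$ is nonzero only with probability $\alpha$, so its variance is at most $\alpha(b-a)^2$. Bernstein's inequality then gives a deviation of order $(b-a)\sqrt{\alpha\ln(3/\delta)/n}$, which after the $1/\alpha$ scaling becomes $(b-a)\sqrt{\ln(3/\delta)/(\alpha n)}$.
\item Control the error from the random $\hat w$. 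The map $w\mapsto \frac1n\sum_i (X_i-w)^+-\mathbb{E}(X-w)^+$ is Lipschitz, with slope equal to the difference between the empirical and true tail probabilities. On the localisation event, this slope is $O(\epsilon)$ times $|\hat w-w^*|\le b-a$, which again contributes a term of order $(b-a)\sqrt{\ln(3/\delta)/(\alpha n)}$.
\end{enumerate}
Adding the three contributions and tracking constants should give the factor $\sqrt5$. I expect the bookkeeping in the localisation step, namely choosing the Bernstein form so that the constants combine to exactly $5$, to be the delicate part. If it fails, the fallback is to cite \cite{brown2007large} directly, since the statement is quoted from that work.
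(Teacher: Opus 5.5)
The paper gives no proof of this theorem; it quotes it from \cite{brown2007large}, so your fallback is what the paper actually does. Your proof of the second inequality is correct and complete. The bound $\hat C_\alpha\le\hat g(w^*)$, the identity $g(w^*)=\CVaR_\alpha(X)$, and Hoeffding for the $[0,b-a]$-valued variables $(X_i-w^*)^+$ give exactly the threshold $\frac{b-a}{\alpha}\sqrt{\ln(1/\delta)/(2n)}$.

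The first inequality, however, is not established, and the gap is precisely the constant $\sqrt5$. Write $D(w)=\mathbb{E}(X-w)^+-\frac1n\sum_i(X_i-w)^+$. Your scheme bounds $\CVaR_\alpha(X)-\hat C_\alpha\le\frac1\alpha D(w^*)+\frac1\alpha\bigl(D(\hat w)-D(w^*)\bigr)$, which already discards $g(\hat w)-g(w^*)\ge0$, and then controls the two pieces separately. A Bernstein or Chernoff deviation for a count with mean $n\alpha$ is about $\sqrt{2\alpha\ln(3/\delta)/n}$, not $\sqrt{\alpha\ln(3/\delta)/n}$, so the first piece costs about $\sqrt2\,(b-a)\sqrt{\ln(3/\delta)/(\alpha n)}$. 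The second piece is $|\hat w-w^*|\le b-a$ times a slope that equals $K/n-\alpha$ at $w^*$, where $K=\#\{i:X_i>w^*\}$. That slope is of the same size, and about twice it once $F(\hat w)-(1-\alpha)$ is included. Also, localising $\hat w$ alone does not control $\hat F$ on $[w^*,\hat w]$; you need a separate event for $K$. For small $\alpha$ the total is therefore at least $2\sqrt2$, and with your estimates about $3\sqrt2$, times $(b-a)\sqrt{\ln(3/\delta)/(\alpha n)}$. No bookkeeping of this decomposition reaches $\sqrt5$.

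The missing idea is to avoid the random point $\hat w$ entirely and merge the two fluctuations, since both are driven by the same indicators $\mathbf{1}\{X_i>w^*\}$. The function $\hat g$ is convex, is minimised on $[a,b]$, and has right derivative $1-K/(n\alpha)$ at $w^*$, which is a subgradient. Hence
\[
\hat C_\alpha\ \ge\ \hat g(w^*)-(w^*-a)\Bigl(1-\frac{K}{n\alpha}\Bigr)^+-(b-w^*)\Bigl(\frac{K}{n\alpha}-1\Bigr)^+ .
\]
Using $P(X>w^*)=\alpha$, which is where continuity of $F$ enters, this rearranges to
\[
\CVaR_\alpha(X)-\hat C_\alpha\ \le\ \frac1\alpha\max\bigl\{\mathbb{E}V-\bar V,\ \bar V'-\mathbb{E}V'\bigr\},\qquad V=(X-a)\mathbf{1}\{X>w^*\},\quad V'=(b-X)\mathbf{1}\{X>w^*\},
\]
with $\bar V,\bar V'$ the sample means. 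Both $V$ and $V'$ lie in $[0,b-a]$ and have second moment at most $\alpha(b-a)^2$.

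If $5\ln(3/\delta)\ge\alpha n$, the claim is trivial, because $\CVaR_\alpha(X)-\hat C_\alpha\le b-a$ always holds. Otherwise, apply Bernstein's inequality with deviation $\alpha t$, where $t=(b-a)\sqrt{5\ln(3/\delta)/(\alpha n)}$. Each of the two tails is then at most $\exp\bigl(-\tfrac{15}{8}\ln(3/\delta)\bigr)\le\delta/3$. This proves the first inequality with room to spare and makes your localisation step unnecessary.
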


The estimator in~\eqref{eq:brown_cvar_estimator} can be expressed as
\begin{equation}\label{eq:cvar_estimator_sorted_sample}
	\hat{C}_\alpha(X)=X^{(n)}-\frac{1}{\alpha}\sum_{i=1}^{n-1} (X^{(i)}-X^{(i-1)})\Bigl{(} \frac{i}{n}-(1-\alpha) \Bigr{)}^+,
\end{equation}
where $X^{(i)}$ is the $i$th order statistic of $X_1,\dots,X_n$ in ascending order \cite{pmlr-v97-thomas19a}. The results presented in Theorems~\ref{thm:thomas_upper_bound} and~\ref{thm:thomas_lower_bound}, following the work of \cite{pmlr-v97-thomas19a}, yield tighter bounds on the CVaR compared to those established by \cite{brown2007large}.
\begin{theorem}\label{thm:thomas_upper_bound}
	If $X_1, \ldots, X_n$ are independent and identically distributed random variables and $\Pr(X_1 \le b) = 1$ for some finite $b$, then for any $\delta \in (0, 0.5]$,
	\begin{equation}
		\begin{aligned}
			&\Pr\Bigg(
			\CVaR_\alpha(X_1)
			\le Z_{n+1}
			- \frac{1}{\alpha} \sum_{i=1}^n (Z_{i+1} - Z_i) 
			\Bigg(
			\frac{i}{n} - \sqrt{\frac{\ln(1/\delta)}{2n}}
			- (1 - \alpha)\Bigg)^{+}
			\Bigg) \ge 1 - \delta,
		\end{aligned}
	\end{equation}
	where $Z_1, \ldots, Z_n$ are the order statistics (i.e., $X_1, \ldots, X_n$ sorted in ascending order), $Z_{n+1} = b$, and $x^+ \triangleq  \max\{0, x\}$ for all $x \in \mathbb{R}$.
\end{theorem}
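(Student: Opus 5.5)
Our approach is the standard one behind \cite{pmlr-v97-thomas19a}: a DKW-type one-sided bound on the empirical CDF, combined with a monotonicity property of CVaR. Let $F$ be the CDF of $X_1$ and $\hat F_n$ the empirical CDF. Write $\epsilon=\sqrt{\ln(1/\delta)/(2n)}$.

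\textbf{Step 1 (confidence band).} Apply the one-sided Dvoretzky--Kiefer--Wolfowitz inequality with Massart's constant. It is valid for one-sided deviations whenever $\delta\le 1/2$, which is exactly why the statement requires $\delta\in(0,0.5]$. It gives
\begin{equation*}
\Pr\Bigl(\sup_x \bigl(\hat F_n(x)-F(x)\bigr)>\epsilon\Bigr)\le e^{-2n\epsilon^2}=\delta .
\end{equation*}
So with probability at least $1-\delta$ we have $F(x)\ge \hat F_n(x)-\epsilon$ for all $x$. Since $\Pr(X_1\le b)=1$, we also have $F(x)=1$ for $x\ge b$.

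\textbf{Step 2 (a stochastically dominating CDF).} On this event, define
\begin{equation*}
G(x)=\max\bigl\{0,\hat F_n(x)-\epsilon\bigr\}\ \text{for } x<b, \qquad G(x)=1 \ \text{for } x\ge b .
\end{equation*}
Then $G$ is a valid CDF, and $G\le F$ pointwise, so the distribution $G$ stochastically dominates $F$. By the quantile representation \eqref{def:cvar_as_conditional_expectation}, written in the general form $\frac1\alpha\int_{1-\alpha}^1 F^{-1}(v)\,dv$ (valid without smoothness), $G\le F$ implies $G^{-1}\ge F^{-1}$. Hence $\CVaR_\alpha(F)\le \CVaR_\alpha(G)$.

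\textbf{Step 3 (evaluate $\CVaR_\alpha(G)$).} $G$ is a step function with jumps at $Z_1,\dots,Z_n,Z_{n+1}=b$. On $[Z_i,Z_{i+1})$ it takes the value $(i/n-\epsilon)^+$. For a nonnegative variable $W$ with CDF $H$ supported below $b$, we have $\mathbb{E}[(W-w)^+]=\int_w^b(1-H)$. Using this, or equivalently integrating the quantile function of $G$ over $[1-\alpha,1]$, the CVaR becomes
\begin{equation*}
\CVaR_\alpha(G)=b-\frac1\alpha\int_{-\infty}^{b}\bigl(G(x)-(1-\alpha)\bigr)^+dx .
\end{equation*}
This mirrors the derivation of \eqref{eq:cvar_estimator_sorted_sample}. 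Evaluating the integral piecewise on the intervals $[Z_i,Z_{i+1})$ gives
\begin{equation*}
\CVaR_\alpha(G)=Z_{n+1}-\frac1\alpha\sum_{i=1}^n (Z_{i+1}-Z_i)\Bigl(\frac in-\epsilon-(1-\alpha)\Bigr)^+ .
\end{equation*}
This is the claimed expression. Combining it with Steps 1--2 yields the bound with probability at least $1-\delta$.

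\textbf{Main obstacle.} The delicate step is Step 3: verifying the integral identity for CVaR of a discrete $G$ that has atoms, including a possibly large atom at $b$. To do this, I would start from the Rockafellar--Uryasev definition. The infimum over $w$ of $w+\frac1\alpha\int_w^b(1-G)$ is attained at the $(1-\alpha)$-quantile of $G$. Substituting that $w$ and rearranging gives $b-\frac1\alpha\int(G-(1-\alpha))^+$, which avoids any need for continuity of $F$. The monotonicity in Step 2 also follows directly from this definition: for each fixed $w$, the objective under $G$ is at least the objective under $F$, because $1-G\ge 1-F$.
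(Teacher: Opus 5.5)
Your proof is correct. The paper does not prove this statement; it quotes it from \cite{pmlr-v97-thomas19a}. Its own machinery for Theorem~\ref{thm:ecdf_cvar_bound} does use your skeleton, though: the upper-bound part of Theorem~\ref{thm:cvar_bound_v2} with $Y$ taken to be the empirical distribution, together with the one-sided DKW event. Your $G$ is exactly the paper's $F_Y^U$ specialized to $Y\sim\hat F_n$, and monotonicity of CVaR under $G\le F$ does the rest.

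You differ only in how $\CVaR_\alpha(G)$ is evaluated. The paper splits the quantile integral at $1$. It obtains $\epsilon b$ from the atom at $b$ and $(\alpha-\epsilon)\CVaR_{\alpha-\epsilon}(\hat F_n)$ from the shifted part, which gives the interpretable form $\frac{\epsilon}{\alpha}b+(1-\frac{\epsilon}{\alpha})C^{\hat F_X}_{\alpha-\epsilon}$. It then only asserts, and checks numerically, that this coincides with the order-statistic expression. In the empirical case its inequality for $A_2$ is in fact an equality, so both routes compute the same number.

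You instead use $\CVaR_\alpha(G)=b-\frac1\alpha\int_{-\infty}^b (G(x)-(1-\alpha))^+\,dx$ and integrate over $[Z_i,Z_{i+1})$. This has three advantages:
\begin{enumerate}
\item It lands directly on the stated formula.
\item It handles ties and the atom at $b$ without extra work, and it needs no case split on $\alpha$ versus $\epsilon$: when $\alpha\le\epsilon$ every positive part vanishes and the bound is $b$.
\item It effectively supplies the missing algebraic link behind the paper's claim that its bound and that of \cite{pmlr-v97-thomas19a} are identical.
\end{enumerate}

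One small wording slip: the tail formula $\mathbb{E}[(W-w)^+]=\int_w^b(1-H)$ requires $W\le b$, not that $W$ be nonnegative.
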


\begin{theorem}\label{thm:thomas_lower_bound}
	If $X_1, \ldots, X_n$ are independent and identically distributed random variables and $\Pr(X_1 \ge a) = 1$ for some finite $a$, then for any $\delta \in (0, 0.5]$,
	\begin{equation}
		\begin{aligned}
			&\Pr\Bigg(
			\CVaR_\alpha(X_1)
			\ge Z_n
			- \frac{1}{\alpha} \sum_{i=0}^{n-1} (Z_{i+1} - Z_i)
			\Big(
			\min \Big\{
			1,\, \frac{i}{n} + \sqrt{\frac{\ln(1/\delta)}{2n}}
			\Big\}
			- (1 - \alpha) \Big)^{+}
			\Bigg) \ge 1 - \delta,
		\end{aligned}
	\end{equation}
	where $Z_1, \ldots, Z_n$ are the order statistics (i.e., $X_1, \ldots, X_n$ sorted in ascending order), $Z_0 = a$, and $x^+ \triangleq \max\{0, x\}$ for all $x \in \mathbb{R}$.
\end{theorem}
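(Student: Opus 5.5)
The plan is to combine a one-sided Dvoretzky--Kiefer--Wolfowitz (DKW) inequality with the monotonicity of CVaR under first-order stochastic dominance. The bound in the statement will turn out to be exactly the CVaR of an explicit "pessimistic-from-below" distribution function built from the empirical CDF.

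\textbf{Step 1 (confidence band).} Let $F$ be the CDF of $X_1$ and $F_n$ the empirical CDF, and set $\epsilon=\sqrt{\ln(1/\delta)/(2n)}$. By the one-sided DKW inequality with Massart's constant,
\begin{equation*}
\Pr\bigl(\sup_x (F(x)-F_n(x))>\epsilon\bigr)\le e^{-2n\epsilon^2}=\delta .
\end{equation*}
The restriction $\delta\le 0.5$ is what Massart's tight constant requires. So with probability at least $1-\delta$ we have $F(x)\le F_n(x)+\epsilon$ for all $x$.

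On this event, define $G(x)=0$ for $x<a$ and $G(x)=\min\{1,F_n(x)+\epsilon\}$ for $x\ge a$. This $G$ is a valid CDF: it is nondecreasing and right-continuous, it places an atom of mass $\epsilon$ at $Z_0=a$, and it reaches $1$ by $Z_n$. Since $\Pr(X_1\ge a)=1$, we have $F(x)=0\le G(x)$ for $x<a$. Hence $G\ge F$ pointwise.

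\textbf{Step 2 (monotonicity of CVaR).} I will use the quantile representation $\CVaR_\alpha=\frac{1}{\alpha}\int_{1-\alpha}^1 F^{-1}(v)\,dv$. Here $F^{-1}$ is the generalized inverse, and the representation holds for any integrable law, not only smooth ones. Pointwise $G\ge F$ gives $G^{-1}(v)\le F^{-1}(v)$ for every $v$. Integrating yields $\CVaR_\alpha(G)\le \CVaR_\alpha(X_1)$.

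The main care point in the whole argument is using this general quantile form rather than the smooth-case formula \eqref{def:cvar_as_conditional_expectation}. The reason is that both $G$ and possibly $F$ have atoms.

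\textbf{Step 3 (closed form for $\CVaR_\alpha(G)$).} For any distribution $H$ supported in $[a,c]$, I will use the identity
\begin{equation*}
\CVaR_\alpha(H)=c-\frac{1}{\alpha}\int_a^c \bigl(H(x)-(1-\alpha)\bigr)^+dx .
\end{equation*}
It follows from writing $c-H^{-1}(v)=\int_a^c \mathbf{1}\{H^{-1}(v)\le x\}\,dx$ and applying Fubini. Concretely, $\{v : H^{-1}(v)\le x\}$ is the interval of $v$ with $v\le H(x)$, so its intersection with $[1-\alpha,1]$ has length $(H(x)-(1-\alpha))^+$.

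Apply this with $c=Z_n$ and $H=G$. The function $G$ is constant, equal to $\min\{1,i/n+\epsilon\}$, on each interval $[Z_i,Z_{i+1})$ for $i=0,\dots,n-1$. The integral therefore splits into the sum
\begin{equation*}
\sum_{i=0}^{n-1}(Z_{i+1}-Z_i)\Bigl(\min\{1,\tfrac{i}{n}+\epsilon\}-(1-\alpha)\Bigr)^+ .
\end{equation*}
Combining Steps 1--3, on the DKW event of probability at least $1-\delta$ we get $\CVaR_\alpha(X_1)\ge \CVaR_\alpha(G)$, which equals the stated expression.

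I expect the only delicate points to be the following. First, checking that the generalized-inverse identity is valid with atoms, including the boundary case of ties among the $Z_i$. Ties are harmless, since the corresponding terms have zero width. Second, confirming that the one-sided DKW direction is the correct one, namely an upper bound on $F-F_n$.
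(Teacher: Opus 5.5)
Your argument is correct, and it is essentially Thomas and Learned-Miller's own proof; the only real departure from the paper's route is Step~3.

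The paper never proves this statement itself. It imports it from \cite{pmlr-v97-thomas19a} and later presents it as a special case of Theorem~\ref{thm:ecdf_cvar_bound}, i.e.\ of Theorem~\ref{thm:cvar_bound_v2} with $Y$ taken to be the empirical distribution. Your Steps 1 and 2 coincide with that route:
\begin{itemize}
\item the same one-sided DKW event;
\item the same dominating CDF (the paper's $F_Y^L$ with $a_{\min}=a$ is exactly your $G$);
\item the same monotonicity of $\CVaR$ via the generalized-inverse quantile integral.
\end{itemize}

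The difference is how $\CVaR_\alpha(G)$ is evaluated. The paper shifts the level in the quantile integral and obtains $(1+\frac{\epsilon}{\alpha})C_{\alpha+\epsilon}^{\hat{F}_X}-\frac{\epsilon}{\alpha}C_{\epsilon}^{\hat{F}_X}$, or $\frac{1}{\alpha}[(\alpha+\epsilon-1)a+E_{\hat{F}_X}[X]-\epsilon C_\epsilon^{\hat{F}_X}]$ when $\alpha+\epsilon\ge 1$. This form is interpretable and extends to an arbitrary auxiliary $Y$ with a uniform CDF gap. However, the paper only asserts, and checks numerically, that it equals the order-statistic sum in the statement.

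Your identity $\CVaR_\alpha(H)=c-\frac{1}{\alpha}\int_a^c (H(x)-(1-\alpha))^+\,dx$ follows from Fubini and the equivalence $H^{-1}(v)\le x \iff v\le H(x)$. It evaluates $\CVaR_\alpha(G)$ exactly and lands directly on the stated sum.

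Your computation also supplies the algebraic link between the two forms that the paper leaves implicit. The paper's level-shift inequalities are in fact equalities for this particular $G$: for $\tau\in(0,1-\epsilon]$ one has $G^{-1}(\tau+\epsilon)=F_n^{-1}(\tau)$, and $G^{-1}\equiv a$ on $(0,\epsilon]$. So the paper's expression and the order-statistic sum are the same quantity computed two ways.
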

Notably, Theorems~\ref{thm:thomas_upper_bound} and~\ref{thm:thomas_lower_bound} require only one-sided boundedness of the support and do not assume continuity of the distribution function, in contrast to Theorem~\ref{thm:brown_bounds}, which requires both two-sided boundedness and a continuous distribution. The restriction to $\delta \in (0, 0.5]$ is mild in practice, as it corresponds to confidence levels of at least $50\%$.

\section{Problem Formulation}

In this work, we adopt a risk-averse formulation in which, instead of optimizing the expected return as in \eqref{Eq:regular_V_function} and \eqref{Eq:regular_Q_function}, the value function is defined as the CVaR of the return. Let $\alpha \in (0,1)$, and denote the original POMDP by $M$. The value and action-value functions at time $k$ are defined by
\begin{equation}\label{Eq:cvar_value_function_def}
	\begin{aligned}
		&V^{\pi}_{M}(b_k,\alpha) \! \triangleq \! \CVaR_\alpha^P[\sum_{t=k}^T \! c(b_t, \pi(b_t))|b_k, \pi],
	\end{aligned}
\end{equation}
\begin{equation}\label{Eq:cvar_Q_function_def}
	\begin{aligned}
		Q^{\pi}_{M}(b_k, a_k, \alpha)
		\triangleq \CVaR_\alpha^P[c(b_k, a_k)
		+ \sum_{t=k+1}^T c(b_t, \pi(b_t))|b_k, \pi].
	\end{aligned}
\end{equation}
Figure~\ref{fig:v_value_vs_cvar_v_value} illustrates the difference between the standard expected value function and the CVaR value function.

We consider cases in which the belief-MDP transition model is simplified for computational tractability. Let $\Omega_B$ be the sample space of the belief random variable and let $\Omega \triangleq \Omega_B^{T-k+1}$ denote the corresponding product space, endowed with a $\sigma$-algebra $\mathcal{F}$. We define two probability measures $P, P_s : \mathcal{F} \to [0,1]$ on the measurable space $(\Omega,\mathcal{F})$, corresponding to the original and simplified belief models, respectively. The CDF of $R_{k:T}$ under the original model is
\begin{equation}\label{eq:OriginalCDFReturn}
		P(R_{k:T}\leq l|b_k,\pi) = \int_{b_{k+1:T} \in B^{T-k}} \!\! P(R_{k:T}\leq l|b_{k:T},\pi)
		\prod_{i=k+1}^T P(b_i|b_{i-1},\pi)db_{k+1:T},
\end{equation}
and the simplified CDF of $R_{k:T}$ is defined using only the simplified belief transition model:
\begin{equation}\label{eq:SimplifiedCDFReturn}
		P_s(R_{k:T}\leq l|b_k,\pi) = \int_{b_{k+1:T} \in B^{T-k}} \!\! P(R_{k:T}\leq l|b_{k:T},\pi)
		\prod_{i=k+1}^T P_s(b_i|b_{i-1},\pi)db_{k+1:T}.
\end{equation}
The belief-MDP transition model is defined using the state transition and observation models. Hence, simplification of the belief-MDP transition model is general enough to include simplifications of state and observation models simultaneously.

Denote the simplified POMDP by $M_s$. The value and action-value functions corresponding to $M_s$ are denoted by $V^{\pi}_{M_s}(b_k, \alpha)$ and $Q^{\pi}_{M_s}(b_k, a_k, \alpha)$, respectively. These definitions are identical to those in \eqref{Eq:cvar_value_function_def} and \eqref{Eq:cvar_Q_function_def}, except that the return distribution is defined with respect to the simplified belief transition model $P_s$.

While the simplified model $P_s$ is computationally tractable, directly using $Q^\pi_{M_s}(b_k, a_k, \alpha)$ as a surrogate for $Q^\pi_M(b_k, a_k, \alpha)$ may lead to incorrect risk assessments, since the return distributions under $P$ and $P_s$ differ. Our objective is to establish lower and upper bounds, denoted by $L_s$ and $U_s$, such that $L_s \leq Q_M^\pi(b_k, a_k, \alpha) \leq U_s$, depending solely on quantities computable from the simplified model~\eqref{eq:SimplifiedCDFReturn}; this conceptual approach is summarized in Figure~\ref{fig:conceptual_overview}. These bounds provide formal guarantees on the true value function, enabling reliable action selection during online planning. As illustrated in Figure~\ref{fig:problem_formulation}, the key quantity governing these bounds is $\epsilon$, an upper bound on the discrepancy between the CDFs of the return under $P$ and $P_s$. Bounding this CDF discrepancy allows us to bound the difference in CVaR, as established in subsequent sections.

\begin{figure}[t]
	\centering
	\begin{tikzpicture}[
		node distance=1.2cm and 2.0cm,
		box/.style={rectangle, draw, rounded corners=3pt, minimum width=2.6cm, minimum height=0.7cm, align=center, font=\small},
		arr/.style={->, thick, >=stealth},
		darr/.style={<->, thick, >=stealth, dashed},
		lbl/.style={font=\scriptsize, midway},
	]
		\node[box, fill=red!8] (origM) at (0,0) {Original POMDP $M$};
		\node[box, fill=red!8] (origQ) at (6,0) {$Q^\pi_M(b_k,a_k,\alpha)$};

		\node[box, fill=blue!8] (simpM) at (0,-1.8) {Simplified POMDP $M_s$};
		\node[box, fill=green!10] (bounds) at (6,-1.8) {Computable bounds\\[-2pt] $L_s \leq Q^\pi_M \leq U_s$};

		\draw[arr, black!40, densely dotted] (origM) -- (origQ);
		\draw[arr, blue!70!black] (simpM) -- (bounds);

		\draw[darr, red!70!black] (origM) -- (simpM) node[lbl, right, xshift=2pt, align=left] {discrepancy\\[-2pt] $\epsilon$};
		\draw[arr, black!50, densely dotted] (origQ) -- (bounds);
	\end{tikzpicture}
	\caption{Conceptual overview. Direct evaluation of $Q^\pi_M$ under the original model $M$ is computationally expensive. Instead, the simplified model $M_s$ is used to compute bounds on $Q^\pi_M$, where the bound quality is governed by the distributional discrepancy $\epsilon$ between their belief transition models.}
	\label{fig:conceptual_overview}
\end{figure}
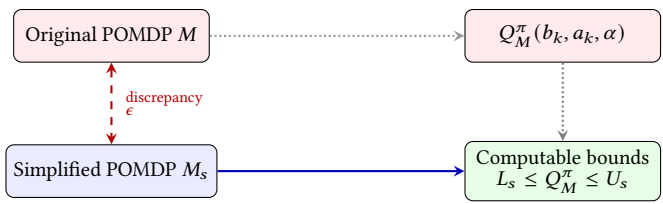

\begin{figure}[t]
	\centering
	\subfloat[Comparison of expected value $V^\pi_M(b_k)$ and CVaR value $V^\pi_M(b_k,\alpha)$. The shaded region represents the worst $\alpha$ fraction of outcomes.]{%
		\begin{tikzpicture}[scale=0.975]
			\def\skewpdf#1{2.5*pow(#1,1.8)*exp(-1.2*(#1))}

			\draw[->] (-0.3,0) -- (6.5,0) node[below] {\textbf{Return}};
			\draw[->] (0,-0.3) -- (0,4.2) node[midway, left, rotate=90, anchor=south] {\textbf{Probability Density}};

			\def\xmean{1.67}
			\def\xvar{3.2}
			\def\xcvar{4.1}

			\fill[red!25] plot[domain=\xvar:5.8, samples=100] (\x, {\skewpdf{\x}}) -- (5.8,0) -- (\xvar,0) -- cycle;

			\draw[blue, very thick, domain=0.05:5.8, samples=150] plot (\x, {\skewpdf{\x}});

			\draw[green!60!black, dashed, very thick] (\xmean,0) -- (\xmean,3.5);

			\draw[red!70!black, dashed, very thick] (\xcvar,0) -- (\xcvar,3.5);

			\node[anchor=north east, fill=white, draw=black!30, rounded corners, inner sep=4pt, font=\small] at (6.3,4.0) {
				\begin{tabular}{cl}
					\tikz\fill[red!25] (0,0) rectangle (0.3,0.2); & Upper $\alpha$ tail \\[1pt]
					\tikz\draw[green!60!black, dashed, thick] (0,0.1) -- (0.3,0.1); & $V^\pi_M(b_k)$ \\[1pt]
					\tikz\draw[red!70!black, dashed, thick] (0,0.1) -- (0.3,0.1); & $V^\pi_M(b_k, \alpha)$
				\end{tabular}
			};
		\end{tikzpicture}%
		\label{fig:v_value_vs_cvar_v_value}
	}
	\quad
	\subfloat[CDF discrepancy between original ($F_X$) and simplified ($F_Y$) models. The $\epsilon$-band bounds the discrepancy, enabling CVaR bounds.]{%
		\begin{tikzpicture}[scale=0.975,
			declare function={
				cdfY(\x) = 1/(1+exp(-2.5*(\x-2)));
				cdfX(\x) = 1/(1+exp(-2.4*(\x-2.15)));
				epsval = 0.12;
			}]

			\draw[->] (-0.3,0) -- (5.5,0) node[midway, below] {\textbf{Return}};
			\draw[->] (0,-0.3) -- (0,4.5) node[midway, left, rotate=90, anchor=south] {\textbf{Cumulative Probability}};

			\node[left] at (0,0) {\small $0$};
			\node[left] at (0,4.26) {\small $1$};

			\fill[green!15] plot[domain=0:5, samples=80] (\x, {4.26*max(0,min(1,cdfY(\x)-epsval))})
				-- plot[domain=5:0, samples=80] (\x, {4.26*min(1,cdfY(\x)+epsval)}) -- cycle;

			\draw[blue, very thick, domain=0:5, samples=80] plot (\x, {4.26*cdfY(\x)});

			\draw[green!50!black, thick, dashed, domain=0:5, samples=80] plot (\x, {4.26*min(1,cdfY(\x)+epsval)});
			\draw[green!50!black, thick, dashed, domain=0:5, samples=80] plot (\x, {4.26*max(0,cdfY(\x)-epsval)});

			\draw[red, very thick, domain=0:5, samples=80] plot (\x, {4.26*cdfX(\x)});

			\draw[black!50, thin, dashed] (0,4.26) -- (5,4.26);

			\draw[<->, thick, black!70] (4.2,{4.26*cdfY(4.2)}) -- (4.2,{4.26*min(1,cdfY(4.2)+epsval)});
			\node[right, font=\small] at (4.2,{4.26*(cdfY(4.2)+epsval/2)}) {$\epsilon$};

			\node[anchor=south east, fill=white, fill opacity=0.5, text opacity=1, draw=black!30, rounded corners, inner sep=4pt, font=\small] at (5.3,0.2) {
				\begin{tabular}{cl}
					\tikz\draw[blue, thick] (0,0.1) -- (0.3,0.1); & $F_Y$ (simplified) \\[1pt]
					\tikz\draw[red, thick] (0,0.1) -- (0.3,0.1); & $F_X$ (original) \\[1pt]
					\tikz\fill[green!15] (0,0) rectangle (0.3,0.2); & $\epsilon$-band
				\end{tabular}
			};
		\end{tikzpicture}%
		\label{fig:cdf_discrepancy}
	}
	\caption{Problem formulation illustrations. (a) The CVaR value function focuses on the tail of the return distribution, unlike the expected value. (b) The simplified model $P_s$ yields a computable CDF $F_Y$, while the original CDF $F_X$ is intractable. When $|F_X(r) - F_Y(r)| \leq \epsilon$, bounds on $\CVaR_\alpha(X)$ can be derived from $Y$.}
	\label{fig:problem_formulation}
\end{figure}

\section{CVaR Bounds}
In the previous section, we introduced a simplified belief-transition model with distribution $P_s$ as a computationally tractable approximation of the original belief-transition model $P$. This approximation induces two corresponding random variables: $X$, representing the return under the original model $P$, and $Y$, representing the return under the simplified model $P_s$. While direct evaluation of risk-sensitive criteria for $X$ is often computationally infeasible, sampling and analysis of $Y$ can be carried out efficiently.

The goal of this section is to characterize how $\CVaR_\alpha(X)$ can be bounded and estimated using information derived from $Y$. To this end, we first establish distributional bounds that relate $X$ and $Y$, and then derive finite-sample guarantees that allow these bounds to be estimated from data. Together, these results provide a principled framework for bounding the CVaR of the true return using samples generated from the simplified model.

\subsection{Theoretical CVaR Bounds}\label{sec:theoretical_cvar_bounds}
In this section, we establish bounds for the CVaR of the random variable $X$ by leveraging an auxiliary random variable $Y$. Two forms of distributional relationships between their respective CDFs are considered: a uniform bound and a non-uniform bound, as illustrated in Figure~\ref{fig:eps_bound} and Figure~\ref{fig:g_bound}. These results provide the theoretical basis for subsequent sections, in which the derived bounds are applied to estimate the CVaR of random variables that are either computationally intractable or prohibitively expensive to sample directly. 

Theorem~\ref{thm:cvar_bound_v2} bounds $\CVaR_\alpha(X)$ in terms of $\CVaR_\alpha(Y)$ under the sole condition that the cumulative distribution functions of $X$ and $Y$ differ by at most a known uniform bound. This representation enhances the interpretability of the bound and constitutes a novel aspect of the result, made possible by framing the bounding problem in terms of distributional discrepancies. In the next section, we demonstrate that the bounds established in \cite{pmlr-v97-thomas19a} (theorems \ref{thm:thomas_upper_bound} and \ref{thm:thomas_lower_bound}) arise as a special case of Theorem \ref{thm:cvar_bound_v2}, thereby providing an interpretation for existing CVaR bounds that are otherwise difficult to interpret.
\begin{theorem}\label{thm:cvar_bound_v2}
	Let $X$ and $Y$ be random variables and $\epsilon \in [0, 1]$. 
	\begin{enumerate}
		\item \textbf{Upper Bound:} assume that $P(X\leq b_X)=1, P(Y\leq b_Y)=1$ and $\forall z\in \mathbb{R},F_Y(z)-F_X(z)\leq \epsilon$, then 
		\begin{enumerate}
			\item If $\alpha > \epsilon$, \begin{equation}\label{eq:general_cvar_upper_bound_1}
				\CVaR_\alpha(X) \leq \frac{\epsilon}{\alpha}\max(b_X,b_Y) + (1 - \frac{\epsilon}{\alpha})\CVaR_{\alpha-\epsilon}(Y).
			\end{equation}
			
			\item If $\alpha \leq \epsilon$, then $\CVaR_\alpha(X)\leq \max(b_X,b_Y)$.
		\end{enumerate}
		
		\item \textbf{Lower Bound:} If $P(X\geq a_X)=1, P(Y\geq a_Y)=1$ and $\forall z\in \mathbb{R},F_X(z)-F_Y(z)\leq \epsilon$, then
		\begin{enumerate}
			\item If $\alpha+\epsilon \leq 1$,  \begin{equation}\label{eq:general_cvar_lower_bound_1}
				\CVaR_{\alpha}(X)\geq (1+\frac{\epsilon}{\alpha})\CVaR_{\alpha+\epsilon}(Y)-\frac{\epsilon}{\alpha} \CVaR_{\epsilon}(Y)
			\end{equation}
			
			\item If $\alpha+\epsilon > 1$ and $a_{min} = \min(a_X, a_Y)$, \begin{equation}
				\CVaR_\alpha(X) \geq \frac{1}{\alpha}\big(\mathbb{E}[Y] - \epsilon \CVaR_\epsilon(Y) + (\alpha+\epsilon-1)a_{min}\big)
			\end{equation}
		\end{enumerate}
	\end{enumerate}
\end{theorem}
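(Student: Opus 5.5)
The plan is to pass from CDFs to quantile functions and work with the integral representation of CVaR. Let $F^{-1}(v)\triangleq\inf\{z:F(z)\ge v\}$ be the generalized (left-continuous) inverse. The paper's $\VaR$ uses a strict inequality, but the resulting quantile differs from $F^{-1}$ only at countably many levels $v$. This does not affect integrals in $v$.

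I will use
\begin{equation*}
\CVaR_\alpha(X)=\frac{1}{\alpha}\int_{1-\alpha}^1 F_X^{-1}(v)\,dv
\end{equation*}
for every integrable $X$, not only for smooth $F$ as in \eqref{def:cvar_as_conditional_expectation}. To justify it, substitute $X\overset{d}{=}F_X^{-1}(U)$ with $U$ uniform on $(0,1)$ into the Rockafellar--Uryasev formula and take $w=F_X^{-1}(1-\alpha)$ (the Acerbi--Tasche identity). I expect this to be the main technical point to state carefully. Everything afterwards is a pointwise quantile comparison followed by a change of variables.

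\textbf{Upper bound.} The first step is a quantile comparison. Suppose $v+\epsilon\le 1$ and $F_Y(z)\ge v+\epsilon$. The hypothesis $F_Y-F_X\le\epsilon$ gives $F_X(z)\ge F_Y(z)-\epsilon\ge v$. Hence
\begin{equation*}
F_X^{-1}(v)\le F_Y^{-1}(v+\epsilon)\quad\text{for } v\le 1-\epsilon.
\end{equation*}
For $v\in(1-\epsilon,1]$ I simply use $F_X^{-1}(v)\le b_X\le\max(b_X,b_Y)$.

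Now take $\alpha>\epsilon$ and split $\int_{1-\alpha}^1$ at $1-\epsilon$. Substituting $u=v+\epsilon$ on the first piece gives
\begin{equation*}
\int_{1-\alpha+\epsilon}^{1}F_Y^{-1}(u)\,du=(\alpha-\epsilon)\CVaR_{\alpha-\epsilon}(Y).
\end{equation*}
The second piece is at most $\epsilon\max(b_X,b_Y)$. Dividing by $\alpha$ yields \eqref{eq:general_cvar_upper_bound_1}. If $\alpha\le\epsilon$, the bound $F_X^{-1}\le b_X$ on the whole range $[1-\alpha,1]$ gives $\CVaR_\alpha(X)\le b_X\le\max(b_X,b_Y)$ directly.

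\textbf{Lower bound.} Symmetrically, suppose $F_X(z)\ge v$ with $v>\epsilon$. The hypothesis $F_X-F_Y\le\epsilon$ gives $F_Y(z)\ge v-\epsilon$. Hence
\begin{equation*}
F_X^{-1}(v)\ge F_Y^{-1}(v-\epsilon)\quad\text{for } v>\epsilon.
\end{equation*}
In case (a), where $\alpha+\epsilon\le1$, the whole range $[1-\alpha,1]$ lies above $\epsilon$. Substituting $u=v-\epsilon$ gives
\begin{equation*}
\int_{1-\alpha-\epsilon}^{1-\epsilon}F_Y^{-1}(u)\,du=(\alpha+\epsilon)\CVaR_{\alpha+\epsilon}(Y)-\epsilon\CVaR_{\epsilon}(Y),
\end{equation*}
because it is the difference of the tail integrals over $[1-\alpha-\epsilon,1]$ and $[1-\epsilon,1]$. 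Dividing by $\alpha$ gives \eqref{eq:general_cvar_lower_bound_1}.

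In case (b), where $\alpha+\epsilon>1$, I split $[1-\alpha,1]$ at $\epsilon$.
\begin{itemize}
\item On $[1-\alpha,\epsilon]$, of length $\alpha+\epsilon-1$, use $F_X^{-1}\ge a_X\ge a_{\min}$.
\item On $(\epsilon,1]$, the quantile comparison gives
\begin{equation*}
\int_0^{1-\epsilon}F_Y^{-1}(u)\,du=\mathbb{E}[Y]-\epsilon\CVaR_\epsilon(Y).
\end{equation*}
\end{itemize}
Summing the two pieces and dividing by $\alpha$ gives the stated bound.

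Integrability holds throughout because the one-sided support bounds make every quantity finite or $-\infty$-safe in the direction that is needed. If $\epsilon=0$, the degenerate $\CVaR_0$ term is multiplied by zero, so it should be read as $0$.
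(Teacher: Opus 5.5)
Your proof is correct, and it reaches the result by a more direct route than the paper. The paper first builds two auxiliary piecewise CDFs $F_Y^U$ and $F_Y^L$ from $F_Y$, $\epsilon$, the support endpoints and a quantile of $Y$. It then verifies that these are CDFs sandwiching $F_X$, invokes monotonicity of CVaR under first-order stochastic dominance, and only then evaluates the CVaR of these auxiliary laws via the quantile integral. That last step uses the same level shift $\tau\mapsto\tau\pm\epsilon$ and the same splitting of the integration range that you use.

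You skip the auxiliary laws altogether. The inclusions $\{F_Y\ge v+\epsilon\}\subseteq\{F_X\ge v\}$ and $\{F_X\ge v\}\subseteq\{F_Y\ge v-\epsilon\}$ give $F_X^{-1}(v)\le F_Y^{-1}(v+\epsilon)$ and $F_X^{-1}(v)\ge F_Y^{-1}(v-\epsilon)$ directly. This is exactly the paper's sandwich-plus-monotonicity step, written at the level of quantile functions. It removes the interval-by-interval verification that $F_Y^U,F_Y^L$ are valid CDFs with the right ordering. As a by-product it yields slightly sharper constants: $b_X$ instead of $\max(b_X,b_Y)$, and $a_X$ instead of $a_{\min}$. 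This is because you bound $F_X^{-1}$ itself rather than an auxiliary law that places its leftover mass $\epsilon$ at $\max(b_X,b_Y)$ or at $a_{\min}$.

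The paper's route, in turn, exhibits explicit extremal distributions sandwiching $F_X$. It reuses this viewpoint in Theorem~\ref{thm:tight_cvar_lower_bound}, where $F_{Y^L}=\min(1,F_Y+g)$ with non-constant $g$ is no longer a level shift. It also relies on it when reinterpreting the bounds of \cite{pmlr-v97-thomas19a} as CVaRs of a dominating CDF.

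One small correction to your final paragraph. In the lower-bound part the only support bounds are from below, so they do not make $\mathbb{E}[Y]$ or $\CVaR_\epsilon(Y)$ finite. Writing $\int_{1-\alpha-\epsilon}^{1-\epsilon}F_Y^{-1}$ and $\int_0^{1-\epsilon}F_Y^{-1}$ as differences of tail integrals needs $\int_{1-\epsilon}^{1}F_Y^{-1}<\infty$. That comes from the standing assumption $\mathbb{E}[|Y|]<\infty$ in Section~\ref{sec:cvar_preliminaries}, not from $a_Y$.
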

\begin{proof}
	The proof is available in Appendix \ref{prf:cvar_bound_v2}.
\end{proof}
The parameter $\epsilon$ quantifies the discrepancy between the random variables $X$ and $Y$, and is defined as a uniform bound on the difference between their cumulative distribution functions. By construction, $\epsilon$ takes values in the interval $[0,1]$. In the limiting case where $\epsilon = 1$, $Y$ provides no information about $X$. In this setting, the bounds provided by Theorem~\ref{thm:cvar_bound_v2} reduce to trivial bounds on $\CVaR_\alpha(X)$ involving the essential supremum of both $X$ and $Y$, rendering them ineffective for practical use. When $\epsilon = 0$, the cumulative distribution functions of $X$ and $Y$ are identical, and thus the bound becomes exact, yielding $\CVaR_\alpha(X) = \CVaR_\alpha(Y)$. 
\begin{theorem}\label{thm:cvar_bound_convergence}
	Under the definitions of $X$, $Y$, $\epsilon$, and $\alpha$ specified in Theorem~\ref{thm:cvar_bound_v2}, the lower and upper bounds established therein converge to $\CVaR_\alpha(X)$ as $\epsilon \to 0$.
\end{theorem}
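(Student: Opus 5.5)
The plan is a squeeze argument. Since Theorem~\ref{thm:cvar_bound_v2} already gives $L(\epsilon) \le \CVaR_\alpha(X) \le U(\epsilon)$, it suffices to show $U(\epsilon) - L(\epsilon) \to 0$ as $\epsilon \to 0$. Here $U$ and $L$ denote the upper and lower bounds of that theorem.

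This route does not require $Y$ to converge to $X$ in any separate sense, so $Y=Y_\epsilon$ may vary with $\epsilon$. We only need its support to stay in a fixed interval $[a,b]$ uniformly in $\epsilon$. We take $a \le \min(a_X,a_Y)$ and $b \ge \max(b_X,b_Y)$. In the POMDP setting this holds automatically, since returns lie in $[(T-k+1)R_{\min},(T-k+1)R_{\max}]$.

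Fix $\alpha\in(0,1)$. For $\epsilon<\min(\alpha,1-\alpha)$ both case (1a) and case (2a) of Theorem~\ref{thm:cvar_bound_v2} apply, so only \eqref{eq:general_cvar_upper_bound_1} and \eqref{eq:general_cvar_lower_bound_1} matter.

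The key tool is the quantile representation
\[
\beta\,\CVaR_\beta(Y)=\int_{1-\beta}^1 F_Y^{-1}(v)\,dv .
\]
This is \eqref{def:cvar_as_conditional_expectation} with the factor $1/\beta$ cleared. It holds for general (not necessarily continuous) distributions when $F_Y^{-1}$ is the left-continuous quantile function, which I would cite from the Acerbi--Tasche/Rockafellar--Uryasev literature, or else restrict to the smooth case as in the preliminaries.

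Substituting the representation and multiplying by $\alpha$ gives
\[
\alpha\,(U-L)= \epsilon\, b' + (\alpha-\epsilon)\CVaR_{\alpha-\epsilon}(Y) - (\alpha+\epsilon)\CVaR_{\alpha+\epsilon}(Y) + \epsilon\,\CVaR_\epsilon(Y),
\]
where $b'=\max(b_X,b_Y)$. The middle two terms combine to
\[
\int_{1-\alpha+\epsilon}^1 F_Y^{-1} - \int_{1-\alpha-\epsilon}^1 F_Y^{-1} = -\int_{1-\alpha-\epsilon}^{1-\alpha+\epsilon}F_Y^{-1}(v)\,dv,
\]
which is an integral over an interval of length $2\epsilon$ of a function taking values in $[a,b]$. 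Its absolute value is therefore at most $2\epsilon\max(|a|,|b|)$. The remaining terms satisfy $|\epsilon b'| \le \epsilon|b|$ and $|\epsilon\CVaR_\epsilon(Y)|\le \epsilon\max(|a|,|b|)$, because the CVaR of a variable supported in $[a,b]$ lies in $[a,b]$.

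Hence $0\le U-L\le 4\epsilon\max(|a|,|b|)/\alpha \to 0$. Combined with the sandwich, both $U$ and $L$ converge to $\CVaR_\alpha(X)$, at rate $O(\epsilon/\alpha)$, which also gives an explicit convergence rate worth stating.

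The main obstacle is justifying the quantile-integral representation without the smoothness assumption used in \eqref{def:cvar_as_conditional_expectation}. If citing it is undesirable, I would instead derive the needed estimate directly from the Rockafellar--Uryasev variational form. From $\beta\,\CVaR_\beta(Y)=\inf_w\{\beta w+\mathbb{E}[(Y-w)^+]\}$, the map $\beta\mapsto\beta\CVaR_\beta(Y)$ is an infimum of affine functions with slopes $w\in[a,b]$, since the optimizer can be restricted to the support. It is therefore Lipschitz in $\beta$ with constant $\max(|a|,|b|)$. That Lipschitz property is exactly what the middle-term estimate uses, and it also handles the $\epsilon\CVaR_\epsilon(Y)$ term.
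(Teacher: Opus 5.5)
Your proof is correct, but it takes a different route from the paper.

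\textbf{The paper's route.} The paper never uses the sandwich. It takes the limit of each bound separately, using $\frac{\epsilon}{\alpha}\max(b_X,b_Y)\to 0$, $\frac{\epsilon}{\alpha}\CVaR_\epsilon(Y)\to 0$, and continuity of $\beta\mapsto\CVaR_\beta(Y)$ for a fixed $Y$. This shows that both $U$ and $L$ tend to $\CVaR_\alpha(Y)$. The identification of this limit with $\CVaR_\alpha(X)$ is left implicit: for a fixed $Y$ obeying both discrepancy hypotheses for every $\epsilon>0$, one simply has $F_X=F_Y$.

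\textbf{Your route and what it buys.} You take validity of the bounds from Theorem~\ref{thm:cvar_bound_v2} and only control the width, via the quantile-integral (or Lipschitz) estimate for $\beta\mapsto\beta\CVaR_\beta(Y)$. This gives three things:
\begin{enumerate}
\item an explicit $O(\epsilon/\alpha)$ rate;
\item coverage of the meaningful case of a surrogate $Y=Y_\epsilon$ that changes with $\epsilon$, where continuity in the level for a fixed $Y$ does not by itself yield a limit;
\item convergence to $\CVaR_\alpha(X)$ directly, rather than to $\CVaR_\alpha(Y)$.
\end{enumerate}

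\textbf{The cost.} You need both one-sided CDF hypotheses at once, and two-sided bounded support. The paper's computation instead treats each bound under its own hypothesis. However, the two-sided hypothesis is genuinely needed for the statement to hold. For example, $Y=X+1$ satisfies $F_Y-F_X\le 0\le\epsilon$ for every $\epsilon$, yet the upper bound tends to $\CVaR_\alpha(X)+1$.

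\textbf{A small slip.} The step $|\epsilon b'|\le\epsilon|b|$ can fail when $b'<0$. Since $b'\in[a,b]$, you get $\epsilon\max(|a|,|b|)$ instead, which is the constant you actually use. Alternatively, dropping the absolute values altogether gives the translation-invariant bound $U-L\le 2\epsilon(b-a)/\alpha$.
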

\begin{proof}
	The proof is available in Appendix \ref{prf:cvar_bound_convergence}.
\end{proof}
For both the upper and lower bounds to be informative, Theorem~\ref{thm:cvar_bound_v2} shows that $\alpha > \epsilon$ and $\alpha + \epsilon \leq 1$ must hold, thereby specifying the required relation between the distributional discrepancy $\epsilon$ and the CVaR confidence level $\alpha$. Under these conditions, the bounding problem reduces to \eqref{eq:general_cvar_upper_bound_1} and \eqref{eq:general_cvar_lower_bound_1}. For $\alpha > \epsilon$, the upper bound on $\CVaR_\alpha(X)$ is a weighted average of the $\CVaR$ of $Y$ at a confidence level shifted by the distributional discrepancy $\epsilon$ and the maximum of the supports of $X$ and $Y$, where the weights are proportional to the amount of distributional discrepancy. For $\alpha+\epsilon \leq 1$, the bound in \eqref{eq:general_cvar_lower_bound_1} satisfies
\begin{equation}
	\begin{aligned}
		\CVaR_{\alpha}(X) &\geq \CVaR_{\alpha+\epsilon}(Y) 
		+ \frac{\epsilon}{\alpha}(\CVaR_{\alpha+\epsilon}(Y)- \CVaR_{\epsilon}(Y)).
	\end{aligned}
\end{equation}
The lower bound corresponds to the $\CVaR$ of $Y$ evaluated at a confidence level adjusted by the distributional discrepancy $\epsilon$, augmented by a correction term that is proportional to the distributional discrepancy.

Theorem~\ref{thm:cvar_bound_v2} assumes that the parameter $\epsilon$ provides an upper bound on the pointwise difference between the cumulative distribution functions $F_X$ and $F_Y$. As illustrated in Figure~\ref{fig:eps_bound}, this bound is particularly conservative in the vicinity of $x = -1$, where the actual discrepancy between $F_X$ and $F_Y$ is significantly smaller than the global bound $\epsilon$. Ideally, a tighter bound on $F_X(x)$ would allow for variation with respect to $x$, rather than relying on a uniform constant. Specifically, one seeks a pointwise bound of the form $F_X(x) \leq F_Y(x) + g(x)$ for some non-negative function $g$, as illustrated in Figure~\ref{fig:g_bound}. In this paper, we defer the study of specific choices of the function $g$ to future work, and instead provide general assumptions under which a feasible tighter bound can be established using such a function.

\begin{figure*}[htbp] 
	\centering
	\subfloat[]{%
		\includegraphics[width=0.45\textwidth]{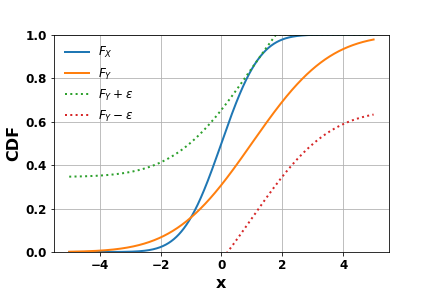}%
		\label{fig:eps_bound}
	}%
	\hfill
	\subfloat[]{%
		\includegraphics[width=0.45\textwidth]{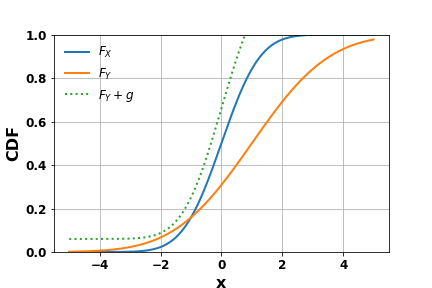}%
		\label{fig:g_bound}
	}
	\caption{Illustrations of bounds on $F_X(x)$.
		(a) Bounds from Theorem~\ref{thm:cvar_bound_v2}.
		(b) Bounds from Theorem~\ref{thm:tight_cvar_lower_bound}, where $g$ depends on $x$ and yields a tighter result than $F_Y(x)+\epsilon$.}
	\label{fig:all_bounds}
\end{figure*}

\begin{theorem}\label{thm:tight_cvar_lower_bound}
	(Tighter CVaR Lower Bound) Let $\alpha \in (0,1)$, $X$ and $Y$ be random variables. Define the random variable $Y^L$ such that $F_{Y^L}(y) \triangleq \min(1, F_Y(y) + g(y))$ for $g:\mathbb{R}\rightarrow [0, \infty)$. Assume 
	$\lim_{x \rightarrow -\infty}g(x)=0$, $g$ is continuous from the right and monotonic increasing.
	If $\forall x\in \mathbb{R}, F_X(x)\leq F_Y(x)+g(x)$, then $F_{Y^L}$ is a CDF and $\CVaR_\alpha(Y^L)\leq \CVaR_\alpha(X).$
\end{theorem}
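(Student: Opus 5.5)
The proof has two parts: first show that $F_{Y^L}$ is a genuine CDF, then compare CVaRs through first-order stochastic dominance.

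\emph{$F_{Y^L}$ is a CDF.} I will verify the four standard properties of $F_{Y^L}(y)=\min(1,F_Y(y)+g(y))$ in turn.
\begin{enumerate}
\item \emph{Monotonicity.} $F_Y$ and $g$ are both nondecreasing, so their sum is nondecreasing, and taking the minimum with the constant $1$ preserves this.
\item \emph{Right-continuity.} $F_Y$ and $g$ are right-continuous by assumption, so their sum is right-continuous, and $u\mapsto\min(1,u)$ is continuous.
\item \emph{Limit at $-\infty$.} We have $F_Y(y)\to 0$ and $g(y)\to 0$, hence $F_{Y^L}(y)\to 0$.
\item \emph{Limit at $+\infty$.} Since $g\ge 0$, we have $1\ge F_{Y^L}(y)\ge \min(1,F_Y(y))=F_Y(y)\to 1$.
\end{enumerate}
Hence a random variable $Y^L$ with this distribution function exists.

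\emph{Stochastic dominance.} Combining the hypothesis $F_X\le F_Y+g$ with $F_X\le 1$ gives
\begin{equation*}
F_X(x)\le \min\bigl(1,F_Y(x)+g(x)\bigr)=F_{Y^L}(x)\quad\text{for all } x.
\end{equation*}
So $X$ first-order stochastically dominates $Y^L$. Passing to generalized inverses, $F_X\le F_{Y^L}$ pointwise implies $F_X^{-1}(v)\ge F_{Y^L}^{-1}(v)$ for all $v\in(0,1)$, where $F^{-1}(v)=\inf\{x: F(x)\ge v\}$: any $x$ with $F_X(x)\ge v$ also satisfies $F_{Y^L}(x)\ge v$, so the infimum defining $F_{Y^L}^{-1}(v)$ is taken over a larger set.

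\emph{Comparing CVaRs.} I will not rely on the smoothness assumption used for \eqref{def:cvar_as_conditional_expectation}. Instead I use the Rockafellar--Uryasev definition directly. For each fixed $w$, the map $t\mapsto (t-w)^+$ is nondecreasing, and first-order dominance gives $\mathbb{E}[h(Y^L)]\le \mathbb{E}[h(X)]$ for nondecreasing $h$. To justify the latter, realize both variables on one space by quantile coupling: take $U\sim\mathrm{Unif}(0,1)$ and set $X'=F_X^{-1}(U)$, $Y'=F_{Y^L}^{-1}(U)$. These have the correct laws, and $Y'\le X'$ almost surely by the quantile comparison. Therefore
\begin{equation*}
w+\tfrac{1}{\alpha}\mathbb{E}[(Y^L-w)^+]\le w+\tfrac{1}{\alpha}\mathbb{E}[(X-w)^+]\quad\text{for every } w.
\end{equation*}
Taking the infimum over $w$ on both sides yields $\CVaR_\alpha(Y^L)\le \CVaR_\alpha(X)$. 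An equivalent route is the quantile-integral representation $\frac1\alpha\int_{1-\alpha}^1 F^{-1}(v)\,dv$, which holds for general distributions with the left-continuous inverse.

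\emph{Integrability.} The step that needs care is ensuring the expectations are finite, or at least well defined. The paper assumes $\mathbb{E}|X|<\infty$. For $Y^L$, its left tail is dominated by that of $Y$ plus $g$, and its right tail is dominated by that of $X$. If $\mathbb{E}[(Y^L-w)^+]=\infty$ the inequality would fail, but $(Y^L-w)^+\le (X-w)^+$ under the coupling, so this term is finite. If $\CVaR_\alpha(Y^L)=-\infty$ the claimed inequality holds trivially. In either case the conclusion follows.
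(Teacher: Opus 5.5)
Your proposal is correct and follows the paper's proof. Both verify that $\min(1,F_Y+g)$ is a CDF, note that $F_X\le F_{Y^L}$ pointwise, and conclude by monotonicity of CVaR. The one difference is in that last step: the paper simply invokes the monotonicity axiom of coherent risk measures, which is formally stated for almost-sure ordering on a common space, whereas your quantile coupling and Rockafellar--Uryasev comparison show explicitly why first-order stochastic dominance is enough (your integrability remark is a harmless extra check).
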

\begin{proof}
	The proof is available in Appendix \ref{prf:tight_cvar_lower_bound}.
\end{proof}
Theorem \ref{thm:tight_cvar_lower_bound} defines a random variable $Y^L$, constructed from $Y$ and the function $g$, such that the distributional discrepancy between $Y^L$ and $X$ is determined explicitly by $g$ rather than being uniformly bounded as in Theorem \ref{thm:cvar_bound_v2}. In addition, it offers a criterion for determining whether a given function $g$ can be used to derive a lower bound on the CVaR. This bound extends the lower bound established in Theorem \ref{thm:cvar_bound_v2}, which is obtained when $g(x)$ is constant and equal to $\epsilon$ for all $x \in \mathbb{R}$. Intuitively, $F_{Y^L}$ is the largest valid CDF that remains within the pointwise discrepancy band $[F_Y(y),\, F_Y(y) + g(y)]$; since $F_X \leq F_{Y^L}$ by construction, monotonicity of CVaR yields the lower bound. Figure~\ref{fig:all_bounds}(c) illustrates this tighter construction.

Note that in Theorem \ref{thm:tight_cvar_lower_bound}, the function $g$ is assumed to be non-decreasing and right-continuous. If one assumes only that $|F_X(x) - F_Y(x)| \leq g(x)$ for some function $g : \mathbb{R} \to [0, \infty)$, which is not necessarily monotonic or continuous, then the most general form of the CVaR bounds is given by
\begin{equation}\label{eq:general_g_cvar_lower_bound}
	\begin{aligned}
		&\CVaR_\alpha(X)=\frac{1}{\alpha}\int_{1-\alpha}^1 \inf\{z\in \mathbb{R}:F_X(z)\geq \tau\}d\tau 
		\geq \frac{1}{\alpha}\int_{1-\alpha}^1 \inf\{z\in \mathbb{R}:F_Y(z)+g(z)\geq \tau\}d\tau,
	\end{aligned}
\end{equation}
\begin{equation}\label{eq:general_g_cvar_upper_bound}
	\begin{aligned}
		&\CVaR_\alpha(X)=\frac{1}{\alpha}\int_{1-\alpha}^1 \inf\{z\in \mathbb{R}:F_X(z)\geq \tau\}d\tau 
		\leq \frac{1}{\alpha}\int_{1-\alpha}^1 \inf\{z\in \mathbb{R}:F_Y(z)-g(z)\geq \tau\}d\tau.
	\end{aligned}
\end{equation}
Another option is to specify the distributional discrepancy through the density functions underlying the cumulative distribution functions. Let $f_x$ and $f_y$ be the probability density functions of $X$ and $Y$, respectively, and let $h : \mathbb{R} \to [0, \infty)$ describe the pointwise discrepancy between them. Theorem \ref{thm:lower_cvar_bound_using_density_bound} specifies conditions on the function $h$ under which a lower bound for the CVaR of $X$ can be obtained. A key advantage is that this bound takes the form of the CVaR of a random variable, enabling its estimation with performance guarantees via CVaR concentration bounds given in Theorems \ref{thm:brown_bounds}, \ref{thm:thomas_lower_bound}, \ref{thm:thomas_upper_bound} and Theorem \ref{thm:ecdf_cvar_bound}.

\begin{theorem}\label{thm:lower_cvar_bound_using_density_bound}
	Let $\alpha \in (0,1)$, $X$ and $Y$ be random variables. Define $h:\mathbb{R}\rightarrow [0,\infty)$ to be a continuous function, $g(z)\triangleq\int_{-\infty}^z h(x)dx$ and $Y^L$ to be a random variable such that $F_{Y^L}(y)\triangleq\min(1, F_{Y}(y) + g(y))$. If $\lim_{z\rightarrow -\infty} g(z)=0$ and $\forall x\in \mathbb{R},f_x(x)\leq f_y(x) + h(x)$, then $F_{Y^L}$ is a CDF and $\CVaR_\alpha(Y^L)\leq \CVaR_\alpha(X)$.
\end{theorem}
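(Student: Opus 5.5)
The plan is to reduce the statement to Theorem~\ref{thm:tight_cvar_lower_bound} by showing that the pointwise density hypothesis implies the pointwise CDF hypothesis with the function $g(z)=\int_{-\infty}^z h(x)\,dx$, and that this $g$ meets the regularity assumptions required there.

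\textbf{Step 1: regularity of $g$.} Since $h\ge 0$ is continuous, $g$ is finite wherever the improper integral converges. The assumption $\lim_{z\to-\infty} g(z)=0$ implicitly requires convergence at $-\infty$, so $g$ is well defined with values in $[0,\infty]$. Because $h\ge 0$, $g$ is non-decreasing. By the fundamental theorem of calculus $g$ is differentiable with $g'=h$ wherever it is finite, hence continuous and in particular right-continuous. If $g$ attains the value $+\infty$ beyond some point, then $F_{Y^L}=1$ there, and the clipping $\min(1,\cdot)$ handles this case harmlessly. Together with $g\ge 0$ and $\lim_{z\to-\infty}g(z)=0$, this gives exactly the hypotheses on $g$ in Theorem~\ref{thm:tight_cvar_lower_bound}.

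\textbf{Step 2: CDF domination.} For every $z\in\mathbb{R}$, integrate the pointwise inequality $f_x\le f_y+h$ over $(-\infty,z]$. Monotonicity of the Lebesgue integral for nonnegative integrands gives
\begin{equation*}
F_X(z)=\int_{-\infty}^z f_x(x)\,dx\le \int_{-\infty}^z f_y(x)\,dx+\int_{-\infty}^z h(x)\,dx = F_Y(z)+g(z).
\end{equation*}
This uses that $X$ and $Y$ are absolutely continuous with the stated densities, which is implicit in the hypothesis.

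\textbf{Step 3: conclusion.} With Steps 1 and 2 in hand, Theorem~\ref{thm:tight_cvar_lower_bound} applies directly. It yields both that $F_{Y^L}$ is a CDF and that $\CVaR_\alpha(Y^L)\le\CVaR_\alpha(X)$. For a self-contained check of the CDF claim, observe the following. $\min(1,F_Y+g)$ is non-decreasing and right-continuous as a minimum of such functions. It tends to $0$ at $-\infty$ because both $F_Y$ and $g$ do. It tends to $1$ at $+\infty$ because $F_Y\to1$, $g\ge0$, and the value is capped at $1$.

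\textbf{Main obstacle.} The only delicate point is Step 1, namely the interpretation of $g$ when $h$ is not integrable on the whole line. In that case $g$ may diverge as $z\to+\infty$, or become infinite past a finite point if $h$ blows up there. I would handle this by noting that only the values of $g$ where $F_Y+g<1$ matter, and there $g$ is finite and continuous. Everything else is routine integration.
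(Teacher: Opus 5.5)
Your proposal is correct and follows essentially the same route as the paper. Both arguments check that $g$ is nonnegative, non-decreasing, continuous via the fundamental theorem of calculus ($g'=h$), and vanishes at $-\infty$, then obtain $F_X\le F_Y+g$ by integrating $f_x\le f_y+h$, and finally invoke Theorem~\ref{thm:tight_cvar_lower_bound}. Your ``main obstacle'' does not arise: $h$ is continuous, so $\int_a^z h$ is finite on every compact interval, and the hypothesis $\lim_{z\to-\infty}g(z)=0$ makes $g$ finite somewhere, hence $g$ is finite everywhere, and no special treatment of $g=+\infty$ is needed.
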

\begin{proof}
	The proof is available in Appendix \ref{prf:lower_cvar_bound_using_density_bound}.
\end{proof}
Theorem \ref{thm:lower_cvar_bound_using_density_bound} is obtained as a corollary of Theorem \ref{thm:tight_cvar_lower_bound} by defining a function $g$, as illustrated in Figure \ref{fig:g_bound}, in terms of the function $h$. This construction demonstrates that the function $g$ in Theorem \ref{thm:tight_cvar_lower_bound} can be generated from a broad class of density discrepancy functions.

\subsection{Concentration Inequalities}\label{sec:con_ineq}
In this section, we derive concentration inequalities for $\CVaR_\alpha(X)$ based on samples drawn from an auxiliary random variable $Y$. A notable special case of these inequalities arises when $Y$ is taken to follow the empirical cumulative distribution function (ECDF) of $X$. 

Let $X_1, \ldots, X_n \overset{i.i.d.}{\sim} F_X$, where $F_X$ is the CDF of a random variable $X$. The ECDF based on these samples is defined by $\hat{F}_X(x)=\frac{1}{n} \sum_{i=1}^n 1_{X_i\leq x}$,
for $x\in \mathbb{R}$. Let $E_{\hat{F}_X}[X]$ denote the expectation with respect to $\hat{F}_X$, and let $C_\alpha^{\hat{F}_X}$ denote the CVaR computed under the empirical distribution $\hat{F}_X$.

\begin{theorem}
	\label{thm:ecdf_cvar_bound}
	Let $X$ be a random variable, $\alpha\in (0, 1],\delta \in (0, 0.5), \epsilon=\sqrt{\ln(1/\delta)/(2n)}$. Let $X_1, \dots, X_n \overset{iid}{\sim} F_X$ be random variables that define the ECDF $\hat{F}_X$.
	\begin{enumerate}
		\item \textbf{Upper Bound:} If $P(X\leq b)=1$, then
		\begin{enumerate}
			\item If $\alpha > \epsilon$ then $P(\CVaR_\alpha(X) \leq (1-\frac{\epsilon}{\alpha})C_{\alpha-\epsilon}^{\hat{F}_X} + \frac{\epsilon}{\alpha}b)> 1-\delta$.
			
			\item If $\alpha\leq \epsilon$ then $\CVaR_\alpha(X)\leq b$
		\end{enumerate}
		
		\item \textbf{Lower Bound:} If $P(X\geq a)=1$, then 
		\begin{enumerate}
			\item If $\alpha+\epsilon < 1$, then $P(\CVaR_\alpha(X)\geq (1+\frac{\epsilon}{\alpha})C_{\alpha+\epsilon}^{\hat{F}_X}-\frac{\epsilon}{\alpha}C_{\epsilon}^{\hat{F}_X})> 1-\delta$.
			
			\item If $\alpha+\epsilon \geq 1$, then $P(\CVaR_\alpha(X) \geq \frac{1}{\alpha}[(\alpha+\epsilon - 1)a + E_{\hat{F}_X}[X] - \epsilon C_\epsilon^{\hat{F}_X}]) > 1-\delta$.
		\end{enumerate}
	\end{enumerate}
\end{theorem}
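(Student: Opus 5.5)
The plan is to reduce Theorem~\ref{thm:ecdf_cvar_bound} to the deterministic Theorem~\ref{thm:cvar_bound_v2}. We take $Y$ to be a random variable distributed according to the empirical CDF $\hat{F}_X$, conditionally on the sample $X_1,\dots,X_n$. The only probabilistic input is the one-sided Dvoretzky--Kiefer--Wolfowitz inequality with Massart's constant: for $\epsilon=\sqrt{\ln(1/\delta)/(2n)}$ and $\delta \in (0,0.5)$ (the range where Massart's one-sided bound holds without extra conditions),
\begin{equation*}
P\Bigl(\sup_{z\in\mathbb{R}}\bigl(\hat{F}_X(z)-F_X(z)\bigr)>\epsilon\Bigr)\leq \delta,
\qquad
P\Bigl(\sup_{z\in\mathbb{R}}\bigl(F_X(z)-\hat{F}_X(z)\bigr)>\epsilon\Bigr)\leq \delta .
\end{equation*}
Each one-sided event is used for one of the two bounds, so no union bound is needed and the confidence stays at $1-\delta$ rather than $1-2\delta$. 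The strict inequality $>1-\delta$ in the statement would come from applying DKW with the non-strict event or from a slight margin. This detail I would check against the precise form of Massart's result, and I may only get $\geq$.

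For the upper bound, we work on the event $\sup_z(\hat{F}_X(z)-F_X(z))\leq\epsilon$, which has probability at least $1-\delta$. On it, the hypothesis $F_Y(z)-F_X(z)\leq\epsilon$ of Theorem~\ref{thm:cvar_bound_v2}(1) holds with $F_Y=\hat{F}_X$. The support condition is automatic: $P(X\le b)=1$ implies each $X_i\le b$ almost surely, so $b_Y\le b$ and $\max(b_X,b_Y)=b$. Case (a) then gives $\CVaR_\alpha(X)\le \frac{\epsilon}{\alpha}b+(1-\frac{\epsilon}{\alpha})\CVaR_{\alpha-\epsilon}(Y)$, and $\CVaR_{\alpha-\epsilon}(Y)=C^{\hat{F}_X}_{\alpha-\epsilon}$ by definition. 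Case (b) is deterministic: CVaR of a variable bounded above by $b$ is at most $b$, which needs no sample at all.

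For the lower bound, we work on the event $\sup_z(F_X(z)-\hat{F}_X(z))\le\epsilon$ and apply Theorem~\ref{thm:cvar_bound_v2}(2), with $a_Y\ge a$ almost surely so that $a_{\min}=a$. When $\alpha+\epsilon<1$, case (a) gives the stated bound directly. When $\alpha+\epsilon\ge1$, there is a boundary mismatch: Theorem~\ref{thm:cvar_bound_v2} splits at $\alpha+\epsilon\le 1$ versus $>1$, while the present statement splits at $<1$ versus $\ge 1$. At $\alpha+\epsilon=1$ we would therefore check that the two expressions coincide. Since $\CVaR_1(Y)=\mathbb{E}[Y]$ and $(\alpha+\epsilon-1)a=0$, formula (a) becomes $\frac{1}{\alpha}(\mathbb{E}[Y]-\epsilon\CVaR_\epsilon(Y))$, which is exactly (b). So either case of Theorem~\ref{thm:cvar_bound_v2} applies at the boundary, and the statement follows with $\mathbb{E}[Y]=E_{\hat{F}_X}[X]$.

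The main obstacle is measure-theoretic rather than computational. Theorem~\ref{thm:cvar_bound_v2} is a deterministic statement about two fixed distributions, whereas here $\hat{F}_X$ is random. I would handle this by conditioning on the sample: for every realization $\omega$ in the DKW good event, $\hat{F}_X(\cdot,\omega)$ is a fixed valid CDF with the required support. Theorem~\ref{thm:cvar_bound_v2} then applies pointwise in $\omega$, so the bound holds on that event, which has probability at least $1-\delta$.
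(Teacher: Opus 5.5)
Your proposal is correct and follows essentially the same route as the paper: take $Y\sim\hat{F}_X$, use the one-sided DKW (Massart) events $\sup_z(\hat{F}_X-F_X)\le\epsilon$ and $\sup_z(F_X-\hat{F}_X)\le\epsilon$ separately for the upper and lower bounds, apply Theorem~\ref{thm:cvar_bound_v2} deterministically on the good event, and conclude by total probability. Your extra checks are sound and slightly more careful than the paper's proof, which neither treats the boundary case $\alpha+\epsilon=1$ (where, as you show, both formulas coincide because $\CVaR_1(Y)=\mathbb{E}[Y]$) nor justifies the strict inequality $>1-\delta$, since the DKW bound it states only yields $\geq 1-\delta$.
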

\begin{proof}
	The proof is available in Appendix \ref{prf:ecdf_cvar_bound}.
\end{proof}
Theorem \ref{thm:ecdf_cvar_bound} provides concentration inequalities for $\CVaR_\alpha(X)$ in a form that enables the user to specify a desired bound consistency level $\delta$, which determines the probability that the bound holds. This result follows as a corollary of Theorem \ref{thm:cvar_bound_v2}, in which the auxiliary random variable $Y$ is instantiated as the ECDF of $X$, whereas $X$ denotes the underlying true random variable, which is inaccessible in practice. The distributional discrepancy required by Theorem \ref{thm:cvar_bound_v2}, denoted by $\epsilon$, is controlled in Theorem \ref{thm:ecdf_cvar_bound} via the Dvoretzky–Kiefer–Wolfowitz (DKW) inequality \cite{DKW_inequality}. The DKW inequality ensures that the supremum distance between the true CDF and the ECDF converges to zero at a rate of order $1/\sqrt{n}$ as the number of samples $n$ increases. Corollary \ref{thm:x_concentration_bound_asymptotic_convergence} establishes the asymptotic convergence of the bounds given in Theorem \ref{thm:ecdf_cvar_bound} as the sample size tends to infinity.
\begin{corollary}\label{thm:x_concentration_bound_asymptotic_convergence}
	Let $X$ be a random variable, $\alpha\in (0, 1],\delta \in (0, 0.5), \epsilon=\sqrt{\ln(1/\delta)/(2n)}, a\in \mathbb{R}, b\in \mathbb{R}$. Let $X_1, \dots, X_n \overset{iid}{\sim} F_X$ be random variables that define the ECDF $\hat{F}_X$. Denote by $U(n)$ and $L(n)$ the upper and lower bounds respectively from Theorem \ref{thm:ecdf_cvar_bound}, where $n$ is the number of samples, then
	\begin{enumerate}
		\item If $P(X\leq b)=1$, then $\lim_{n\rightarrow \infty} U(n)\overset{a.s.}{=}\CVaR_\alpha(X)$.
		\item If $P(X\geq a)=1$, then $\lim_{n\rightarrow \infty} L(n)\overset{a.s.}{=}\CVaR_\alpha(X)$.
	\end{enumerate}
	where a.s.~denotes almost sure convergence.
\end{corollary}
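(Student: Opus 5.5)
Note that $\epsilon=\epsilon(n)=\sqrt{\ln(1/\delta)/(2n)}\to 0$ as $n\to\infty$. Both $U(n)$ and $L(n)$ are the bounds of Theorem~\ref{thm:cvar_bound_v2} with $Y$ replaced by the empirical law $\hat F_X=\hat F_{X,n}$ and the discrepancy parameter set to $\epsilon(n)$. Since $\alpha\in(0,1]$ is fixed, for all sufficiently large $n$ we are in case (a) of each bound (assume $\alpha<1$ for the lower bound, so $\alpha+\epsilon<1$ eventually; for $\alpha=1$ case (b) applies and is treated the same way below). So it suffices to show
\begin{equation*}
U(n)=\Big(1-\tfrac{\epsilon}{\alpha}\Big)C^{\hat F_X}_{\alpha-\epsilon}+\tfrac{\epsilon}{\alpha}b,\qquad
L(n)=\Big(1+\tfrac{\epsilon}{\alpha}\Big)C^{\hat F_X}_{\alpha+\epsilon}-\tfrac{\epsilon}{\alpha}C^{\hat F_X}_{\epsilon}
\end{equation*}
both converge almost surely to $\CVaR_\alpha(X)$.

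\textbf{Step 1: a.s. convergence of the empirical CVaR at moving levels.} By Glivenko--Cantelli, $\sup_z|\hat F_{X,n}(z)-F_X(z)|\to0$ a.s. Fix an $\omega$ in this full-measure event and set $\eta_n=\sup_z|\hat F_{X,n}-F_X|$. The pair $(X,\hat X_n)$, with $\hat X_n\sim\hat F_{X,n}$, satisfies the hypotheses of Theorem~\ref{thm:cvar_bound_v2} with discrepancy $\eta_n$. The supports are handled as follows:
\begin{itemize}
\item for the upper bound, $\hat X_n\le b$ because every sample is $\le b$ a.s.;
\item for the lower bound, $\hat X_n\ge a$.
\end{itemize}
Theorem~\ref{thm:cvar_bound_convergence} then gives convergence of those bounds as $\eta_n\to0$. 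A cleaner route is to use the quantile representation~\eqref{def:cvar_as_conditional_expectation}: $C^{\hat F}_{\beta}=\frac1\beta\int_{1-\beta}^1\hat F_n^{-1}(v)\,dv$. Uniform convergence of CDFs gives $\hat F_n^{-1}(v)\to F^{-1}(v)$ at every continuity point of $F^{-1}$, and $F^{-1}$ has at most countably many discontinuities. I therefore aim to prove
\begin{equation*}
\int_{1-\beta_n}^1\hat F_n^{-1}\,dv\;\to\;\int_{1-\beta}^1 F^{-1}\,dv \quad\text{whenever } \beta_n\to\beta\in(0,1],
\end{equation*}
via dominated convergence.

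\textbf{Step 2: domination, the main obstacle.} For the upper bound, $\hat F_n^{-1}\le b$. For a lower dominating bound, note that on $[1-\beta_n,1]$ with $\beta_n\le\beta'<1$, the quantiles satisfy $\hat F_n^{-1}(v)\ge\hat F_n^{-1}(1-\beta')$, which converges to a finite limit. When the upper interval endpoint touches $v$ near $0$ (level $\beta_n\to1$), I instead need a lower bound $a$, or integrability. In the two-sided-bounded situations the domination is immediate. In the one-sided case I handle the lower tail with the strong law of large numbers: $\int_0^1\hat F_n^{-1}=\bar X_n\to\mathbb E X$ a.s., which gives uniform integrability of the quantile functions. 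The term $C^{\hat F}_\epsilon$ with $\epsilon\to0$ needs separate care; see Step 3.

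\textbf{Step 3: assemble.}
\begin{itemize}
\item \emph{Upper bound.} $(1-\epsilon/\alpha)\to1$, $C^{\hat F}_{\alpha-\epsilon}\to\CVaR_\alpha(X)$ by Step 1, and $\frac{\epsilon}{\alpha}b\to0$. Hence $U(n)\to\CVaR_\alpha(X)$.
\item \emph{Lower bound.} The term $\frac{\epsilon}{\alpha}C^{\hat F}_{\epsilon}=\frac1\alpha\int_{1-\epsilon}^1\hat F_n^{-1}\,dv$ lies between $\frac{\epsilon}{\alpha}\hat F_n^{-1}(1-\epsilon)$ and $\frac{\epsilon}{\alpha}\max_i X_i$. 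This tends to $0$ when $X$ is bounded above. For $X$ only bounded below I would use $0\le\frac1\alpha\int_{1-\epsilon}^1(\hat F_n^{-1})^+\,dv$ together with uniform integrability from the SLLN applied to $X^+$: since $\bar{X^+}_n$ converges, mass escaping to the top shrinking interval of length $\epsilon$ vanishes. The remaining term $(1+\epsilon/\alpha)C^{\hat F}_{\alpha+\epsilon}\to\CVaR_\alpha(X)$ by Step 1, giving $L(n)\to\CVaR_\alpha(X)$ a.s.
\item \emph{Case $\alpha=1$.} The expression in case (b) tends to $\mathbb E_{\hat F}[X]=\bar X_n\to\mathbb E X=\CVaR_1(X)$ by the same arguments.
\end{itemize}
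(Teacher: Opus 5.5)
Your argument is correct, and it takes a different, more careful route than the paper's. The paper's proof is a four-link chain:
\begin{enumerate}
\item the weights $\epsilon/\alpha$ vanish, so $U(n)$ and $L(n)$ have the same limits as $C^{\hat F_X}_{\alpha\mp\epsilon}$, with the term $\frac{\epsilon}{\alpha}C^{\hat F_X}_{\epsilon}$ simply dropped;
\item ``continuity of CVaR in $\alpha$'' replaces the level $\alpha\mp\epsilon$ by $\alpha$;
\item the known almost-sure consistency of the empirical CVaR at a \emph{fixed} level finishes.
\end{enumerate}
That is short, but it leaves three gaps. The second link applies continuity in the level to $\hat F_{X,n}$, a law that changes with $n$, so it really needs continuity uniform in $n$. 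Dropping $\frac{\epsilon}{\alpha}C^{\hat F_X}_{\epsilon}=\frac1\alpha\int_{1-\epsilon}^1\hat F_{X,n}^{-1}\,dv$ is not automatic when $X$ is unbounded above. And for $\alpha=1$ the lower bound is always in case (b), which the chain never treats. Your quantile-function argument uses Glivenko--Cantelli for pointwise convergence of quantiles and the SLLN for domination. It proves the moving-level limit directly and handles the vanishing tail term and $\alpha=1$ explicitly, so it closes exactly these gaps, at the price of a little more measure theory.

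Two tightenings are worth making.

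First, the opening route in your Step~1 does not work as stated. You apply Theorem~\ref{thm:cvar_bound_v2} to $(X,\hat X_n)$ and then invoke Theorem~\ref{thm:cvar_bound_convergence}. But that convergence result concerns a fixed $Y$, whereas here $Y=\hat X_n$ moves with $n$. Drop it and rely on the quantile argument.

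Second, your domination step is cleanest stated once. You have $\hat F_n^{-1}\to F^{-1}$ a.e.\ on $(0,1)$, and $\int_0^1|\hat F_n^{-1}|\,dv=\frac1n\sum_i|X_i|\to\mathbb E|X|=\int_0^1|F^{-1}|\,dv$ a.s., using the paper's standing assumption $\mathbb E|X|<\infty$. Scheff\'e's lemma then gives $\hat F_n^{-1}\to F^{-1}$ in $L^1(0,1)$ a.s. This single fact yields every moving-level limit, the case $\alpha=1$, and $\int_{1-\epsilon}^1\hat F_n^{-1}\,dv\to0$ in one stroke. In particular, it is what controls the \emph{upper} tail of $C^{\hat F_X}_{\alpha+\epsilon}$ in the lower-bound case. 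There no $b$ is available, your Step~2 speaks only of the lower tail, and the $\hat F_n^{-1}(1-\beta')$ device bounds the integrand only from below.
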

\begin{proof}
	The proof is available in Appendix \ref{prf:x_concentration_bound_asymptotic_convergence}.
\end{proof}
The concentration bounds established by \cite{pmlr-v97-thomas19a} coincide with those given in Theorem \ref{thm:ecdf_cvar_bound}, rendering the results of \cite{pmlr-v97-thomas19a} a special case of Theorem \ref{thm:ecdf_cvar_bound}. This equivalence arises because both Theorem \ref{thm:ecdf_cvar_bound} and \cite{pmlr-v97-thomas19a} derive concentration bounds for $\CVaR_\alpha(X)$ by constructing an alternative CDF that stochastically dominates the true distribution $F_X$, employing the DKW inequality to control the discrepancy. The principal distinction between the two results lies in the formulation of the $\CVaR$ bound: \cite{pmlr-v97-thomas19a} express the bound through a sum of reweighted order statistics (theorems \ref{thm:thomas_upper_bound} and \ref{thm:thomas_lower_bound}), resulting in a more intricate form, whereas Theorem \ref{thm:ecdf_cvar_bound} presents a more interpretable bound in terms of $\CVaR$. The interpretability of these bounds constitutes a contribution of this paper.

Theorem \ref{thm:ecdf_y_bound_cvar_x} provides concentration inequalities for the theoretical $\CVaR_\alpha(X)$ based on samples drawn from an auxiliary random variable $Y$, assuming only a bound on the distributional discrepancy between $X$ and $Y$.
\begin{theorem}
	\label{thm:ecdf_y_bound_cvar_x}
	Let $X$ and $Y$ be random variables, $\epsilon \in [0, 1], \delta\in (0,0.5)$, and $\eta = \sqrt{\ln(1/\delta)/(2n)}, \epsilon'=\min(\epsilon+\eta, 1)$. Let $Y_1, \dots, Y_n$ be independent and identically distributed samples from $F_Y$, and denote by $\hat{F}_Y$ the associated empirical cumulative distribution function.
	\begin{enumerate}
		\item \textbf{Upper Bound:} If $\forall z\in \mathbb{R},F_Y(z)-F_X(z)\leq \epsilon$ and $P(X\leq b_X)=1, P(Y\leq b_Y)=1$, then
		
		\begin{enumerate}
			\item If $\alpha > \epsilon'$ then $P\Big(\CVaR_\alpha(X) \leq \frac{\epsilon'}{\alpha}\max(b_X,b_Y) 
			+ (1 - \frac{\epsilon'}{\alpha})\CVaR_{\alpha-\epsilon'}^{\hat{F}_Y}\Big)>1-\delta$. 
			
			\item If $\alpha \leq \epsilon'$, then $\CVaR_\alpha(X)\leq \max(b_X,b_Y)$
		\end{enumerate}
		
		\item \textbf{Lower Bound:} If $\forall z\in \mathbb{R},F_X(z)-F_Y(z)\leq \epsilon$ and $P(X\geq a_X)=1,P(Y\geq a_Y)=1$, then \begin{enumerate}
			\item If $\alpha+\epsilon' \leq 1$, then \begin{equation}
				\begin{aligned}
					P\Big(\CVaR_{\alpha}(X)&\geq (1+\frac{\epsilon'}{\alpha})\CVaR_{\alpha+\epsilon'}^{\hat{F}_Y} 
					-\frac{\epsilon'}{\alpha} \CVaR_{\epsilon'}^{\hat{F}_Y}\Big)>1-\delta.
				\end{aligned}
			\end{equation}
			
			\item If $\alpha+\epsilon' > 1$, then \begin{equation}
				\begin{aligned}
					P\Big(\CVaR_\alpha(X) &\geq \frac{1}{\alpha}\big(\mathbb{E}_{\hat{F}_Y}[Y] - \epsilon' \CVaR_{\epsilon'}^{\hat{F}_Y}
					+ (\alpha+\epsilon'-1)a_{min}\big) \Big) > 1-\delta.
				\end{aligned}
			\end{equation}
		\end{enumerate}
	\end{enumerate}
\end{theorem}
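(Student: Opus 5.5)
The plan is to combine the deterministic bounds of Theorem~\ref{thm:cvar_bound_v2} with the Dvoretzky--Kiefer--Wolfowitz (DKW) inequality, exactly as in Theorem~\ref{thm:ecdf_cvar_bound}. The idea is to replace the auxiliary variable $Y$ by a random variable $\hat Y$ whose CDF is the empirical CDF $\hat F_Y$, and to show that with high probability $\hat F_Y$ is within $\epsilon'$ of $F_X$ in the required one-sided sense.

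First I will invoke the one-sided DKW inequality (Massart's form): for $\eta=\sqrt{\ln(1/\delta)/(2n)}$,
\begin{equation*}
P\Big(\sup_z (\hat F_Y(z)-F_Y(z))>\eta\Big)\le\delta \quad\text{and}\quad P\Big(\sup_z (F_Y(z)-\hat F_Y(z))>\eta\Big)\le\delta .
\end{equation*}
Massart's one-sided version requires $\delta\le 1/2$ (more precisely $\exp(-2n\eta^2)\le 1/2$), which is why $\delta\in(0,0.5)$ is assumed.

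For the upper bound, on the event $E_U=\{\sup_z(\hat F_Y-F_Y)\le\eta\}$ the triangle inequality gives $\hat F_Y(z)-F_X(z)\le \eta+\epsilon$ for all $z$. Since a CDF difference never exceeds $1$, we also have $\hat F_Y(z)-F_X(z)\le\epsilon'=\min(\epsilon+\eta,1)$. Moreover $\hat Y$ is supported on the sample points, so $P(\hat Y\le b_Y)=1$ almost surely. I will then apply Theorem~\ref{thm:cvar_bound_v2}(1) with the pair $(X,\hat Y)$ and discrepancy $\epsilon'$, which yields case (a) when $\alpha>\epsilon'$. Case (b) holds deterministically, since $\CVaR_\alpha(X)\le b_X\le\max(b_X,b_Y)$.

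The lower bound is symmetric. On $E_L=\{\sup_z(F_Y-\hat F_Y)\le\eta\}$ we get $F_X(z)-\hat F_Y(z)\le\epsilon'$, and $\hat Y\ge a_Y$ almost surely. Theorem~\ref{thm:cvar_bound_v2}(2) with $(X,\hat Y)$ then gives (a) and (b), noting that $\min(a_X,a_Y)$ is exactly $a_{min}$. Each conclusion therefore holds with probability at least $1-\delta$.

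The main obstacle is the strict inequality ``$>1-\delta$'' and the exact constant. Massart's bound gives $P(\cdot>\eta)\le e^{-2n\eta^2}=\delta$, which yields only $\ge 1-\delta$. I would handle this as Theorem~\ref{thm:ecdf_cvar_bound} presumably did. One option is to use the non-strict event $\{\sup\ge\eta\}$ and argue by continuity in $\eta$. Alternatively, I would cite DKW with the strict form used there and simply mirror that proof's convention rather than re-derive it.

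A secondary point to check is that Theorem~\ref{thm:cvar_bound_v2} applies to a discrete $\hat Y$. Its hypotheses only involve CDF inequalities and support bounds, with no continuity assumption, so this should be fine. The CVaR values $\CVaR^{\hat F_Y}$ are then computed under the empirical law.
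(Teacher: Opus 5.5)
Your proposal is correct and takes the same route as the paper. You restrict to the one-sided DKW event and use the triangle inequality to get $\sup_z(\hat F_Y(z)-F_X(z))\le\epsilon'$ (respectively $\sup_z(F_X(z)-\hat F_Y(z))\le\epsilon'$); then, by the law of total probability, you apply Theorem~\ref{thm:cvar_bound_v2} deterministically with $\hat F_Y$ in place of $F_Y$, exactly as the paper does. Your treatment is a bit more careful than the paper's (one-sided inequalities, a separate DKW event for the lower bound, the cap at $1$, the support of $\hat Y$), and the paper does not resolve the strict ``$>1-\delta$'' issue you flag either: DKW gives only $\ge 1-\delta$, and the paper simply writes the strict inequality.
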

\begin{proof}
	The proof is available in Appendix \ref{prf:ecdf_y_bound_cvar_x}.
\end{proof}
The parameter $\epsilon$ in Theorem \ref{thm:ecdf_y_bound_cvar_x} captures the distributional discrepancy between $X$ and $Y$, consistent with its role in Theorem \ref{thm:cvar_bound_v2}. Additionally, the theorem introduces $\epsilon'$ to represent the discrepancy between $F_X$ and $\hat{F}_Y$, where $\hat{F}_Y$ denotes the empirical CDF of $Y$ constructed from the sample $\{Y_i\}_{i=1}^n$. The parameter $\eta$ accounts for the additional distributional discrepancy beyond $\epsilon$, and captures the estimation error in approximating $F_Y$ using the empirical sample $\{Y_i\}_{i=1}^n$. By combining Theorem \ref{thm:cvar_bound_v2} with the DKW inequality \cite{DKW_inequality}, we obtain probabilistic guarantees for the estimated bounds.

\section{Value Function Bounds for Static CVaR Value Function}\label{sec:cvar_bounds}
In this section, we leverage the CVaR bounds from the previous section to establish bounds between the simplified and original value functions as defined in \eqref{Eq:cvar_value_function_def}, where the simplification is considered in a general form with respect to the belief transition model. We then derive bounds for the specific case in which the simplification applies to the observation model. The implications of these bounds for policy evaluation and open-loop POMDP planning will be examined in the Experiments section.

\subsection{Bounds for a General Belief Transition Model Simplification}\label{sec:bounds_for_simplified_belief_model}
In this section, we establish bounds on the difference between the CDFs of the returns, computed with respect to the simplified and original belief transition models. These bounds are then used to bound the original value function in terms of the simplified value function.

Theorem \ref{thm:simplification_bound_over_belief_distribution} demonstrates that the difference between the CDFs of the returns, computed with respect to the simplified and original belief transition models, can be characterized in terms of the difference between the simplified and original belief transition models. 
\begin{theorem}\label{thm:simplification_bound_over_belief_distribution} 	Let $P$ and $P_s$ denote the probability measures induced by the original and simplified belief-transition models, respectively. Denote by $\mathbb{E}^s$ the expectation taken with respect to the simplified belief-transition model. Then, the following holds:
	\begin{equation}\label{eq:simplification_bound_over_belief_distribution}
		\begin{aligned}
			\sup_{l\in \mathbb{R}}|P(R_{k:T} \leq l|b_k, a_k, \pi) \! -\! P_s(R_{k:T} \leq l|b_k, a_k, \pi)| \!
			\leq \!\! \! \sum_{t=k}^{T-1} \mathbb{E}^s[\Delta^s(b_t, a_t)|b_k, a_k],
		\end{aligned}
	\end{equation}
	where $\Delta^s$ is the TV distance that is defined by
	\begin{equation}\label{eq:TVdistBel}
		\begin{aligned}
			\Delta^s(b_{t-1},a_{t-1})
			\triangleq \int_{b_t\in B} |P(b_t|b_{t-1},a_{t-1})-P_s(b_t|b_{t-1},a_{t-1})|db_t.
		\end{aligned}
	\end{equation}
\end{theorem}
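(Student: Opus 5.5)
Fix $l\in\mathbb{R}$ and write $\phi_l(b_{k:T}) \triangleq P(R_{k:T}\leq l\mid b_{k:T},\pi)$; this conditional probability takes values in $[0,1]$. By \eqref{eq:OriginalCDFReturn} and \eqref{eq:SimplifiedCDFReturn}, both CDFs are integrals of the same function $\phi_l$ against the two path measures. These path measures are products of the one-step kernels $P(b_i\mid b_{i-1},a_{i-1})$ and $P_s(b_i\mid b_{i-1},a_{i-1})$, with $a_k$ given and $a_i=\pi(b_i)$ for $i>k$. The plan is a telescoping (hybrid) argument over the product of kernels, followed by the elementary bound $|\int \phi\,(\mu-\nu)|\leq \int|\mu-\nu|$ for $0\le\phi\le 1$. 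Since the resulting bound is uniform in $l$, taking the supremum over $l$ at the end gives \eqref{eq:simplification_bound_over_belief_distribution}.

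\textbf{Step 1 (telescoping).} For $j=k,\dots,T$ define the hybrid path density
\begin{equation*}
\mu_j(b_{k+1:T}) \triangleq \prod_{i=k+1}^{j} P_s(b_i\mid b_{i-1},a_{i-1}) \prod_{i=j+1}^{T} P(b_i\mid b_{i-1},a_{i-1}).
\end{equation*}
Then $\mu_k$ is the original path density and $\mu_T$ is the simplified one. Writing $I_j \triangleq \int \phi_l\,\mu_j$, we have $P(R\le l)-P_s(R\le l) = \sum_{j=k}^{T-1} (I_j - I_{j+1})$, and hence
\begin{equation*}
|P(R\le l)-P_s(R\le l)| \le \sum_{j=k}^{T-1} |I_j-I_{j+1}|.
\end{equation*}

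\textbf{Step 2 (one-step bound).} The densities $\mu_j$ and $\mu_{j+1}$ differ only in the factor for $b_{j+1}$: $\mu_j$ uses $P(b_{j+1}\mid b_j,a_j)$ and $\mu_{j+1}$ uses $P_s(b_{j+1}\mid b_j,a_j)$. Both share the simplified prefix over $b_{k+1:j}$ and the original suffix over $b_{j+2:T}$. Integrate out the suffix first (Fubini), and set
\begin{equation*}
\psi_l(b_{k:j+1}) \triangleq \int \phi_l \prod_{i=j+2}^T P(b_i\mid b_{i-1},a_{i-1})\,db_{j+2:T}.
\end{equation*}
Because the suffix factors are probability kernels, $\psi_l\in[0,1]$. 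Therefore
\begin{equation*}
|I_j-I_{j+1}| \le \int \prod_{i=k+1}^{j}P_s(b_i\mid b_{i-1},a_{i-1}) \int |P(b_{j+1}\mid b_j,a_j)-P_s(b_{j+1}\mid b_j,a_j)|\,db_{j+1}\,db_{k+1:j}.
\end{equation*}
The inner integral is exactly $\Delta^s(b_j,a_j)$ from \eqref{eq:TVdistBel}. The outer integral is then the expectation $\mathbb{E}^s[\Delta^s(b_j,a_j)\mid b_k,a_k]$ under the simplified model. For $j=k$ there is no prefix, and the term reduces to $\Delta^s(b_k,a_k)$.

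\textbf{Step 3 (sum and supremum).} Summing over $j=k,\dots,T-1$ gives the right-hand side of \eqref{eq:simplification_bound_over_belief_distribution}. The bound does not depend on $l$, so we may take the supremum over $l$ on the left.

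The main obstacle is the ordering in the hybrid construction. We must use the \emph{simplified} kernels on the prefix, so that the expectation lands under $\mathbb{E}^s$ as the statement requires, and the \emph{original} kernels on the suffix, so that the suffix can be integrated out into a $[0,1]$-valued function. If the roles were reversed, we would obtain an expectation under $P$ instead, which is not what the theorem states. Beyond that, the remaining care is measure-theoretic. We need Fubini to interchange integrals; this is justified since all integrands are nonnegative and bounded. We also need the dependence of $a_i=\pi(b_i)$ on the path to be measurable, which is assumed in the definition of a policy.
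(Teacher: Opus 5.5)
Your proof is correct and is essentially the paper's argument. The paper first drops the indicator $1_{R_{k:T}\le l}$ to bound the CDF gap by the path-space $L^1$ distance $g(T)=\int|\prod_i P(b_i\mid b_{i-1},a_{i-1})-\prod_i P_s(b_i\mid b_{i-1},a_{i-1})|\,db_{k+1:T}$. It then proves $g(t+1)\le g(t)+\mathbb{E}^s[\Delta^s(b_t,a_t)\mid b_k,a_k]$ by induction using $\prod x_i-\prod y_i=x_{t+1}(\prod_{i\le t}x_i-\prod_{i\le t}y_i)+(x_{t+1}-y_{t+1})\prod_{i\le t}y_i$. Unrolled, this recursion is exactly your hybrid telescoping with a simplified prefix and an original suffix. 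The only difference is that you keep the test function and integrate out the suffix to get $\psi_l\in[0,1]$, whereas the paper integrates the original kernel to $1$ inside the absolute value, so its intermediate step bounds the full total variation between the two path measures.
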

\begin{proof}
	The proof is available in Appendix \ref{prf:simplification_bound_over_belief_distribution}.
\end{proof}
The expression $\mathbb{E}^s[\Delta^s(b_t, a_t) \mid b_k, a_k]$ quantifies the expected one-step distributional discrepancy at time $t$ between the transition dynamics of the simplified and original belief-MDPs, conditioned on the planning process being initiated at time $k$. Summing this quantity over the horizon accumulates the total distributional discrepancy, leading to the bound in \eqref{eq:simplification_bound_over_belief_distribution}, which in turn bounds the cumulative distribution function of the return across the horizon. This accumulation mechanism is illustrated in Figure~\ref{fig:epsilon_accumulation}.

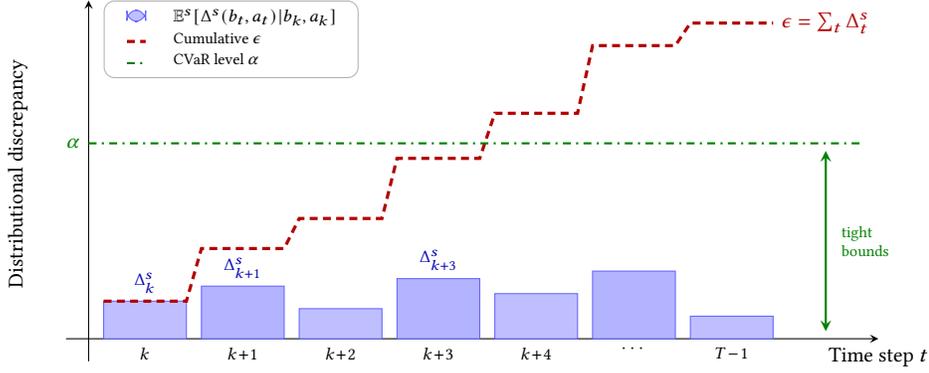
\begin{figure}[t]
	\centering
	\begin{tikzpicture}[>=stealth, font=\small]
		\draw[->] (-0.3,0) -- (10.5,0) node[below] {Time step $t$};
		\draw[->] (0,-0.3) -- (0,4.5);

		\node[rotate=90, anchor=south] at (-0.7,2.2) {Distributional discrepancy};

		\def\hk{0.5}    
		\def\hkk{0.7}   
		\def\hkkk{0.4}  
		\def\hkfour{0.8} 
		\def\hkfive{0.6} 
		\def\hksix{0.9}  
		\def\hkseven{0.3} 

		\def\bw{1.1}

		\fill[blue!25, draw=blue!60] (0.2,0) rectangle (0.2+\bw, \hk);
		\fill[blue!30, draw=blue!60] (0.2+\bw+0.2, 0) rectangle (0.2+2*\bw+0.2, \hkk);
		\fill[blue!25, draw=blue!60] (0.2+2*\bw+0.4, 0) rectangle (0.2+3*\bw+0.4, \hkkk);
		\fill[blue!30, draw=blue!60] (0.2+3*\bw+0.6, 0) rectangle (0.2+4*\bw+0.6, \hkfour);
		\fill[blue!25, draw=blue!60] (0.2+4*\bw+0.8, 0) rectangle (0.2+5*\bw+0.8, \hkfive);
		\fill[blue!30, draw=blue!60] (0.2+5*\bw+1.0, 0) rectangle (0.2+6*\bw+1.0, \hksix);
		\fill[blue!25, draw=blue!60] (0.2+6*\bw+1.2, 0) rectangle (0.2+7*\bw+1.2, \hkseven);

		\node[below, font=\scriptsize] at (0.2+0.5*\bw, 0) {$k$};
		\node[below, font=\scriptsize] at (0.2+1.5*\bw+0.2, 0) {$k\!+\!1$};
		\node[below, font=\scriptsize] at (0.2+2.5*\bw+0.4, 0) {$k\!+\!2$};
		\node[below, font=\scriptsize] at (0.2+3.5*\bw+0.6, 0) {$k\!+\!3$};
		\node[below, font=\scriptsize] at (0.2+4.5*\bw+0.8, 0) {$k\!+\!4$};
		\node[below, font=\scriptsize] at (0.2+5.5*\bw+1.0, 0) {$\cdots$};
		\node[below, font=\scriptsize] at (0.2+6.5*\bw+1.2, 0) {$T\!-\!1$};

		\node[above, font=\scriptsize, blue!70!black] at (0.2+0.5*\bw, \hk) {$\Delta^s_k$};
		\node[above, font=\scriptsize, blue!70!black] at (0.2+1.5*\bw+0.2, \hkk) {$\Delta^s_{k+1}$};
		\node[above, font=\scriptsize, blue!70!black] at (0.2+3.5*\bw+0.6, \hkfour) {$\Delta^s_{k+3}$};

		\draw[red!70!black, very thick, densely dashed]
			(0.2, 0.5) -- (0.2+\bw, 0.5)
			-- (0.2+\bw+0.2, 1.2) -- (0.2+2*\bw+0.2, 1.2)
			-- (0.2+2*\bw+0.4, 1.6) -- (0.2+3*\bw+0.4, 1.6)
			-- (0.2+3*\bw+0.6, 2.4) -- (0.2+4*\bw+0.6, 2.4)
			-- (0.2+4*\bw+0.8, 3.0) -- (0.2+5*\bw+0.8, 3.0)
			-- (0.2+5*\bw+1.0, 3.9) -- (0.2+6*\bw+1.0, 3.9)
			-- (0.2+6*\bw+1.2, 4.2) -- (0.2+7*\bw+1.2, 4.2);

		\node[right, red!70!black, font=\small] at (0.2+7*\bw+1.2, 4.2) {$\epsilon = \sum_t \Delta^s_t$};

		\draw[green!50!black, thick, dashdotted] (0, 2.6) -- (10.3, 2.6);
		\node[left, green!50!black, font=\small] at (0, 2.6) {$\alpha$};

		\draw[<->, green!50!black, thick] (9.8, 0.1) -- (9.8, 2.5);
		\node[right, green!50!black, font=\scriptsize, text width=1.5cm, align=left] at (9.9, 1.3) {tight\\bounds};

		\node[anchor=north west, fill=white, draw=black!30, rounded corners, inner sep=3pt, font=\scriptsize] at (0.2,4.5) {
			\begin{tabular}{cl}
				\tikz\fill[blue!25, draw=blue!60] (0,0) rectangle (0.25,0.15); & $\mathbb{E}^s[\Delta^s(b_t,a_t)|b_k,a_k]$ \\[1pt]
				\tikz\draw[red!70!black, very thick, densely dashed] (0,0.08) -- (0.25,0.08); & Cumulative $\epsilon$ \\[1pt]
				\tikz\draw[green!50!black, thick, dashdotted] (0,0.08) -- (0.25,0.08); & CVaR level $\alpha$
			\end{tabular}
		};
	\end{tikzpicture}
	\caption{Accumulation of distributional discrepancy over the planning horizon. Each bar represents the expected one-step TV distance $\mathbb{E}^s[\Delta^s(b_t, a_t) \mid b_k, a_k]$ at time $t$. The cumulative sum $\epsilon = \sum_{t=k}^{T-1} \Delta^s_t$ (dashed) governs the bound quality: when $\epsilon < \alpha$ (below the dash-dotted line), the CVaR bounds from Theorem~\ref{thm:uniform_lower_and_upper_bounds_for_v_and_q} remain informative; when $\epsilon \geq \alpha$, the bounds reduce to trivial extrema.}
	\label{fig:epsilon_accumulation}
\end{figure}

By leveraging Theorem \ref{thm:cvar_bound_v2} and combining it with the supremum norm bounds for the simplified and original return CDFs in \eqref{eq:simplification_bound_over_belief_distribution}, we derive the following bounds on the value functions.

\begin{theorem}\label{thm:uniform_lower_and_upper_bounds_for_v_and_q}
	Let $\delta_{d_{min}}\in [0, 1], \delta_{d_{max}}\in [0, 1]$ and denote 
	\begin{equation}\label{eq:epsilon}
		\epsilon \triangleq \epsilon(b_k, a_k) \triangleq \min(\sum_{i=k}^{T-1} \mathbb{E}^s[\Delta^s(b_i, a_i)|b_k, a_k,\pi], 1),
	\end{equation}
	\begin{enumerate}
		\item \textbf{Upper Bound:} assume that $P(R_{k:T}\leq d_{max})>1-\delta_{d_{max}}$ and $P_s(R_{k:T}\leq d_{max})>1-\delta_{d_{max}}$, then \begin{enumerate}
			\item If $\epsilon<\alpha$, then
			$U_s\triangleq (1-\frac{\epsilon}{\alpha}) Q^\pi_{M_s}(b_k,a_k,\alpha-\epsilon) + \frac{\epsilon}{\alpha}d_{max}$
			\item If $\epsilon \geq \alpha$ then $U_s\triangleq d_{max}$
		\end{enumerate}
		\item \textbf{Lower Bound:} assume that $P(R_{k:T}\geq d_{min})>1-\delta_{d_{min}}$ and $P_s(R_{k:T}\geq d_{min})>1-\delta_{d_{min}}$, then \begin{enumerate}
			\item If $\epsilon + \alpha < 1$ then 
			$
			L_s\triangleq (1+\frac{\epsilon}{\alpha}) Q^\pi_{M_s}(b_k,a_k,\alpha+\epsilon) - \frac{\epsilon}{\alpha} Q^\pi_{M_s}(b_k,a_k, \epsilon)
			$
			\item If $\epsilon + \alpha \geq 1$ then 
			$
			L_s\triangleq\frac{1}{\alpha}[-(\alpha + \epsilon - 1)
			d_{min}+Q^\pi_{M_s}(b_k,a_k)-\epsilon Q^\pi_{M_s}(b_k,a_k,\epsilon)]
			$
		\end{enumerate}
	\end{enumerate}
	Then $P(L_s \leq Q_M^\pi(b_k, a_k,\alpha)) > 1-\delta_{d_{min}}$ and $P(Q_M^\pi(b_k, a_k,\alpha)\leq U_s)> 1-\delta_{d_{max}}$.
\end{theorem}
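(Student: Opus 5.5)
The plan is to reduce the statement to Theorem~\ref{thm:cvar_bound_v2}. Let $X$ be the return $R_{k:T}$ under the original model $P$ conditioned on $(b_k,a_k,\pi)$, and let $Y$ be the same return under $P_s$. By construction $Q^\pi_M(b_k,a_k,\alpha)=\CVaR_\alpha(X)$ and $Q^\pi_{M_s}(b_k,a_k,\beta)=\CVaR_\beta(Y)$ for every level $\beta$; we also read $Q^\pi_{M_s}(b_k,a_k)$ as $\mathbb{E}[Y]$. Theorem~\ref{thm:simplification_bound_over_belief_distribution} gives $\sup_l|F_X(l)-F_Y(l)|\le \sum_{t=k}^{T-1}\mathbb{E}^s[\Delta^s(b_t,a_t)\mid b_k,a_k]$. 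Since the CDF difference is trivially at most $1$, both one-sided conditions $F_Y-F_X\le\epsilon$ and $F_X-F_Y\le\epsilon$ of Theorem~\ref{thm:cvar_bound_v2} hold, with $\epsilon$ as in \eqref{eq:epsilon}.

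The remaining mismatch is that Theorem~\ref{thm:cvar_bound_v2} needs almost-sure support bounds, whereas here we only have $P(R_{k:T}\le d_{max})>1-\delta_{d_{max}}$ under both $P$ and $P_s$. I would treat the event $\{R_{k:T}\le d_{max}\}$ (respectively $\{R_{k:T}\ge d_{min}\}$) as the ``good'' event and read the probabilistic statement $P(Q^\pi_M\le U_s)>1-\delta_{d_{max}}$ as saying that the bound holds once we condition on, or restrict to, the regime where the support assumption $b_X=b_Y=d_{max}$ is valid. On that event we apply part~1 of Theorem~\ref{thm:cvar_bound_v2} with $\max(b_X,b_Y)=d_{max}$. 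For $\epsilon<\alpha$ this gives exactly
\begin{equation*}
\CVaR_\alpha(X)\le\Bigl(1-\frac{\epsilon}{\alpha}\Bigr)\CVaR_{\alpha-\epsilon}(Y)+\frac{\epsilon}{\alpha}d_{max}=U_s .
\end{equation*}
For $\epsilon\ge\alpha$ it gives $\CVaR_\alpha(X)\le d_{max}$.

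The lower bound is symmetric. With $a_X=a_Y=d_{min}$, part~2(a) of Theorem~\ref{thm:cvar_bound_v2} yields $L_s$ directly when $\epsilon+\alpha<1$. When $\epsilon+\alpha\ge1$, part~2(b) applies with $a_{min}=d_{min}$; we then check the sign convention of the $d_{min}$ term against the stated $L_s$. Replacing $\mathbb{E}[Y]$ and the CVaRs of $Y$ by the corresponding $Q^\pi_{M_s}$ quantities completes the matching.

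The main obstacle is making the probabilistic support step rigorous. $Q^\pi_M$ is a deterministic number, so the ``probability'' in the conclusion has to be interpreted through the failure event of the boundedness assumption. The first approach I would try is to replace $X$ and $Y$ by their truncations $X\wedge d_{max}$ and $Y\wedge d_{max}$. These are almost surely bounded, and truncation preserves the uniform CDF bound, since the CDFs coincide with the originals below $d_{max}$ and both equal $1$ above it. The two versions agree outside an event of probability less than $\delta_{d_{max}}$, and the theorem's probability statement is attributed to that event. If this turns out to be too loose, I would fall back on stating the result conditionally on the support event, which is what the $\delta$ parameters encode.
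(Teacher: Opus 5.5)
Your reduction is the one the paper uses. Theorem~\ref{thm:simplification_bound_over_belief_distribution} gives $\sup_l|F_X(l)-F_Y(l)|\le\epsilon$. Theorem~\ref{thm:cvar_bound_v2} is then invoked with $b_X=b_Y=d_{max}$ and $a_X=a_Y=d_{min}$. The $\delta$'s are handled through the event $\{R_{k:T}\le d_{max}\}$: the paper writes $P(Q^\pi_M\le U_s)\ge P(Q^\pi_M\le U_s\mid R_{k:T}\le d_{max})\,P(R_{k:T}\le d_{max})$ and treats the conditional probability as $1$. The step you single out as the main obstacle, however, is a genuine gap. Neither of your fixes closes it, and neither does the paper's conditioning.

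\textbf{Why the support step fails.} Both $Q^\pi_M(b_k,a_k,\alpha)=\CVaR_\alpha(X)$ and $U_s$ are numbers, so $P(Q^\pi_M\le U_s)\in\{0,1\}$. For $\delta_{d_{max}}<1$ the conclusion is therefore the deterministic inequality $\CVaR_\alpha(X)\le U_s$. Conditioning on the realization event $\{R_{k:T}\le d_{max}\}$ does not change $\CVaR_\alpha(X)$, which is a functional of the unconditional law. The CVaR of the conditional law is a different quantity, and the conditional CDFs need not be $\epsilon$-close anyway, since the discrepancy gets rescaled by $1/P(R_{k:T}\le d_{max})$.

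Truncation fails for a similar reason. Theorem~\ref{thm:cvar_bound_v2} does give $\CVaR_\alpha(X\wedge d_{max})\le U_s$. But $\CVaR_\alpha(X)\ge\CVaR_\alpha(X\wedge d_{max})$, which is the wrong direction. Moreover, the fact that $X$ and $X\wedge d_{max}$ agree outside a set of probability below $\delta_{d_{max}}$ says nothing about a tail functional at level $\alpha>\delta_{d_{max}}$, because that set can carry the whole $\alpha$-tail.

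\textbf{Counterexample.} Take a single transition from $b_k$ to one of two successor beliefs with stage costs $0$ and $M>0$, all other costs zero. These are reached with probabilities $0.9/0.1$ under $P$ and $0.95/0.05$ under $P_s$. Then $\epsilon=0.1$. Set $\alpha=0.5$, $d_{max}=0$ and $\delta_{d_{max}}=0.2$; both hypotheses hold, since $0.9>0.8$ and $0.95>0.8$. Yet $U_s=0.8\,\CVaR_{0.4}(Y)=0.1M$, while $Q^\pi_M=\CVaR_{0.5}(X)=0.2M$, so $P(Q^\pi_M\le U_s)=0$.

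\textbf{What your route does prove.} It proves the statement when $d_{max}$ and $d_{min}$ are almost-sure bounds under both models, for example $(T-k+1)R_{\max}$ and $(T-k+1)R_{\min}$. In that case the conclusion is just the deterministic inequality. A merely high-probability $d_{max}$ can only be used by charging the mass above it inside the bound, via an almost-sure bound. It cannot be absorbed by lowering the confidence level.

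\textbf{The deferred sign check also fails.} Part 2(b) of Theorem~\ref{thm:cvar_bound_v2} yields $+(\alpha+\epsilon-1)a_{min}$, whereas the stated $L_s$ has $-(\alpha+\epsilon-1)d_{min}$. The stated bound therefore follows only when $d_{min}\ge 0$. For example, take $R_{k:T}\equiv-1$ under both models with $\epsilon=0.2$, $\alpha=0.9$ and $d_{min}=-1$. The stated $L_s$ is then $-7/9>-1=Q^\pi_M$.
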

\begin{proof}
	The proof is available in Appendix \ref{prf:uniform_lower_and_upper_bounds_for_v_and_q}.
\end{proof}
The bounds established in  Theorem~\ref{thm:uniform_lower_and_upper_bounds_for_v_and_q} provide upper and lower bounds on the original value function in terms of the simplified value function, evaluated at an adjusted confidence level that accounts for the distributional discrepancy $\epsilon$ between the simplified and original belief transition models.

\subsection{Bounds for Observation Model Simplification}\label{sec:simp_obs_bound}
Simplification of the observation model is a special case of simplification of the belief transition model. Despite that, computation of the belief transition model is still an open question in the field, and therefore the bounds from Theorem \ref{thm:uniform_lower_and_upper_bounds_for_v_and_q} should be adjusted for this specific simplification setting. 

Let \( q_z \) denote the simplified observation model associated with the POMDP \( M_s \), and let \( p_z \) denote the original observation model associated with the POMDP \( M \). The bounds for observation model simplification are exhibited in Theorem \ref{thm:observation_model_simplification_bounds}.
\begin{theorem}\label{thm:observation_model_simplification_bounds}
	In the case where the simplified and original observation models are denoted by $q_z$ and $p_z$ respectively, it holds that
	\begin{equation}\label{eq:cdf_diff_obs_simp_bound}
		\begin{aligned}
			\sup_{l\in \mathbb{R}}|P(R_{k:T} \leq l|b_k, a_k,\pi) \! -\! P_s(R_{k:T} \leq l|b_k, a_k,\pi)| \!
			\leq \!\! \! \sum_{t=k}^{T-1} \mathbb{E}^s[\Delta^s(x_{t+1})|b_k, a_k],
		\end{aligned}
	\end{equation}
	where $\Delta^s$ is the TV distance that is defined by 
	\begin{equation}\label{eq:TVdistState}
		\Delta^s(x_{t}) \triangleq \int_{z_{t}\in Z} |p_z(z_{t}|x_t)-q_z(z_t|x_t)|dz_t.
	\end{equation}
\end{theorem}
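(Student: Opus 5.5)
We prove this by the same telescoping (hybrid) argument that underlies Theorem~\ref{thm:simplification_bound_over_belief_distribution}, applied at the level of state and observation trajectories instead of belief trajectories. Under either model the policy is fixed and the belief update is a deterministic function of the history. Hence $R_{k:T}$ is a deterministic function of the sequence $(x_{k+1:T}, z_{k+1:T})$ given $b_k,a_k,\pi$. The two joint laws of this sequence factor as
\[
P=\prod_{t=k}^{T-1} T(x_{t+1}\mid x_t,a_t)\,p_z(z_{t+1}\mid x_{t+1}),\qquad
P_s=\prod_{t=k}^{T-1} T(x_{t+1}\mid x_t,a_t)\,q_z(z_{t+1}\mid x_{t+1}).
\]
They differ only in the observation factors. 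The initial state is $x_k\sim b_k$, and $a_t=\pi(b_t)$ is determined by the past. I would first argue carefully that the simplified model $M_s$ uses the same belief-update map in both cases, or else treat the update map as part of the history-dependent policy; otherwise the return is not the same measurable function under $P$ and $P_s$.

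With this in place, for every $l$ the difference $|P(R_{k:T}\le l)-P_s(R_{k:T}\le l)|$ is bounded by the total-variation quantity
\[
\int \Bigl|\prod_t T\,p_z-\prod_t T\,q_z\Bigr| \, d(x,z),
\]
since indicators are bounded by $1$. Taking the supremum over $l$ preserves this bound. This reduces the claim to a TV bound between two product-form path measures.

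Next we telescope through the hybrid measures $H_j$, which use $q_z$ for steps $k,\dots,j-1$ and $p_z$ afterwards. Then $H_k=P$ and $H_T=P_s$, and consecutive hybrids differ only in the factor at time $j$. Integrating out the future factors (which are probability kernels and integrate to $1$) gives
\[
\int|H_j-H_{j+1}| = \mathbb{E}_{H}\Bigl[\int |p_z(z_{j+1}\mid x_{j+1})-q_z(z_{j+1}\mid x_{j+1})|\,dz_{j+1}\Bigr]
= \mathbb{E}[\Delta^s(x_{j+1})].
\]
Here the past is distributed under the simplified observation model and $x_{j+1}$ is drawn by the shared transition kernel. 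The expectation is therefore $\mathbb{E}^s[\Delta^s(x_{j+1})\mid b_k,a_k]$. Summing over $j=k,\dots,T-1$ by the triangle inequality yields \eqref{eq:cdf_diff_obs_simp_bound}.

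The main obstacle is ordering the hybrids so that the prefix is simplified, which is what makes the expectation be $\mathbb{E}^s$, and checking that this works. The suffix must integrate to one under the original model $p_z$. It does, because each suffix factor, conditioned on its past, is a normalized kernel regardless of which observation model generated the past. A second technical point is the paper's convention that $\Delta^s$ has no factor $1/2$. This only loosens the bound, since the CDF difference is at most half the $L^1$ distance, so the stated inequality still holds. Finally, $x_{j+1}$ is random under the hybrid through the actions chosen from simplified beliefs. This is consistent with the conditioning on $b_k,a_k,\pi$ in $\mathbb{E}^s$.
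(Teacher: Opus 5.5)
Your proof is correct, but it takes a different route from the paper's. The paper does not telescope over state--observation paths. Instead it reuses the belief-level bound of Theorem~\ref{thm:simplification_bound_over_belief_distribution}, whose induction also keeps the simplified kernels in the prefix. It then shows, term by term, that $\Delta^s(b_t,a_t)\le \mathbb{E}[\Delta^s(x_{t+1})\mid b_t,a_t]$. To do this it writes both belief kernels as mixtures of a common update kernel $P(b_{t+1}\mid b_t,a_t,z_{t+1},x_t,x_{t+1})$ against $p_z$ or $q_z$, moves the absolute value inside the integral, integrates out $b_{t+1}$, and finishes with the tower property.

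Your single hybrid argument on the joint law of $(x_{k+1:T},z_{k+1:T})$ fuses these two steps into one. It gives each increment as an exact equality and avoids the formal densities $db_{t+1}$ on belief space. The paper's route buys modularity instead:
\begin{itemize}
\item the result becomes a corollary of the general belief-transition theorem;
\item it shows as a by-product that the state-level discrepancy dominates the belief-level one;
\item its kernel formulation covers stochastic (particle-filter) belief updates directly. You would handle these by adding the update's internal randomness as further shared factors, which integrate out.
\end{itemize}

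On the point you flagged: the shared update map is exactly what the paper assumes when it asserts that the two update kernels coincide, and it is essential. Folding the map into a history-dependent policy does not rescue the case where $M$ updates with $p_z$ and $M_s$ with $q_z$, because the cost $c(b_t,a_t)$ is evaluated on the beliefs themselves. For a concrete failure, take one step, two equally likely states, state cost $\mathbf{1}\{x=1\}$, an uninformative $p_z$, and $q_z(z=1\mid x=1)=\tfrac12+\eta$, $q_z(z=1\mid x=2)=\tfrac12-\eta$. The return CDFs then differ by $\tfrac12$, while the right-hand side equals $2\eta$.
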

\begin{proof}
	The proof is available in Appendix \ref{prf:observation_model_simplification_bounds}.
\end{proof}
Theorem~\ref{thm:observation_model_simplification_bounds} establishes a state-dependent upper bound on the sup-norm difference between the CDFs of the return. This bound is amenable to practical computation. The state $x_{t+1}$ appearing in~\eqref{eq:cdf_diff_obs_simp_bound} can be sampled from the distribution
$P(\cdot \mid x_t, \pi_t) \prod_{i=k+1}^{t-1} P(x_{i+1} \mid x_i, \pi_i) P(x_k \mid b_k)$,
where $\pi_i$ denotes the action selected by policy $\pi$ at time $i$. 

By leveraging Theorem~\ref{thm:uniform_lower_and_upper_bounds_for_v_and_q} with Theorem~\ref{thm:observation_model_simplification_bounds}, we obtain Corollary~\ref{thm:cvar_tv_dist_bound_simp_obs}, which provides computable CVaR bounds based on state-dependent total variation distance.
\begin{corollary}\label{thm:cvar_tv_dist_bound_simp_obs}
	The bound from Theorem \ref{thm:uniform_lower_and_upper_bounds_for_v_and_q} holds for $\epsilon(b_k, a_k)=\sum_{t=k}^{T-1} \mathbb{E}^s[\Delta^s(x_{t+1})|b_k, a_k]$.
\end{corollary}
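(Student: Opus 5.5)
The corollary follows by inspecting how Theorem~\ref{thm:uniform_lower_and_upper_bounds_for_v_and_q} uses its discrepancy parameter. That theorem enters only through one property of $\epsilon$: it must bound the sup-norm difference between the return CDFs under $P$ and $P_s$. With this property, Theorem~\ref{thm:cvar_bound_v2} applies to $X=R_{k:T}$ under $P$ and $Y=R_{k:T}$ under $P_s$. So the plan is to (i) isolate this dependence, and (ii) show that $\epsilon(b_k,a_k)=\sum_{t=k}^{T-1}\mathbb{E}^s[\Delta^s(x_{t+1})\mid b_k,a_k]$, truncated at $1$, has the same property. Property (ii) is exactly what Theorem~\ref{thm:observation_model_simplification_bounds} supplies in \eqref{eq:cdf_diff_obs_simp_bound}.

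\textbf{Step 1: CDF discrepancy.} Let $\tilde\epsilon=\min\bigl(\sum_{t=k}^{T-1}\mathbb{E}^s[\Delta^s(x_{t+1})\mid b_k,a_k],\,1\bigr)$. Since the difference of two CDFs never exceeds $1$ in absolute value, \eqref{eq:cdf_diff_obs_simp_bound} gives
\begin{equation*}
\sup_{l\in\mathbb{R}}\bigl|P(R_{k:T}\le l\mid b_k,a_k,\pi)-P_s(R_{k:T}\le l\mid b_k,a_k,\pi)\bigr|\le\tilde\epsilon .
\end{equation*}
This yields both one-sided conditions required by Theorem~\ref{thm:cvar_bound_v2}: $F_Y-F_X\le\tilde\epsilon$ and $F_X-F_Y\le\tilde\epsilon$.

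\textbf{Step 2: replay the proof of Theorem~\ref{thm:uniform_lower_and_upper_bounds_for_v_and_q}.} I would repeat that argument (Appendix~\ref{prf:uniform_lower_and_upper_bounds_for_v_and_q}) with $\tilde\epsilon$ in place of the belief-TV quantity from \eqref{eq:epsilon}. Concretely, use the high-probability support assumptions $d_{\min}$, $d_{\max}$ (with failure probabilities $\delta_{d_{\min}}$, $\delta_{d_{\max}}$) as the essential bounds $a_X,a_Y,b_X,b_Y$. Then invoke the appropriate case of Theorem~\ref{thm:cvar_bound_v2}, splitting on $\tilde\epsilon<\alpha$ versus $\tilde\epsilon\ge\alpha$ for the upper bound and on $\tilde\epsilon+\alpha<1$ versus $\tilde\epsilon+\alpha\ge 1$ for the lower bound. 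Finally, identify each $\CVaR_{\beta}(Y)$ with $Q^\pi_{M_s}(b_k,a_k,\beta)$. This produces $L_s$ and $U_s$ in the stated form with the stated probabilities.

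\textbf{Main obstacle.} The delicate point is confirming that the proof of Theorem~\ref{thm:uniform_lower_and_upper_bounds_for_v_and_q} really uses $\epsilon$ only through the sup-CDF bound of Theorem~\ref{thm:simplification_bound_over_belief_distribution}, and never through the belief-level TV structure itself. If it does, the substitution is immediate. The other check is that the conditioning matches. Theorem~\ref{thm:observation_model_simplification_bounds} conditions on $(b_k,a_k,\pi)$ with the same first action $a_k$ as in the $Q$-function, and its expectation $\mathbb{E}^s$ is taken over states sampled under the simplified process. Both agree with the setting of Theorem~\ref{thm:uniform_lower_and_upper_bounds_for_v_and_q}, so no additional conditions arise.
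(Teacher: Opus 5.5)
Your proposal is correct and follows the paper's own argument. Theorem~\ref{thm:uniform_lower_and_upper_bounds_for_v_and_q} uses $\epsilon$ only through the sup-norm bound on the difference of the return CDFs, and Theorem~\ref{thm:observation_model_simplification_bounds} supplies that bound for the state-based discrepancy, so the theorem's proof carries over unchanged. Two details go beyond the paper's sketch. Your explicit truncation $\min(\cdot,1)$ is needed and the paper leaves it implicit: the untruncated sum can exceed $1$, which would leave $Q^\pi_{M_s}(b_k,a_k,\epsilon)$ undefined. Also, replaying Theorem~\ref{thm:cvar_bound_v2} in the case $\epsilon+\alpha\ge 1$ gives the term $+(\alpha+\epsilon-1)d_{\min}$, so the minus sign printed in Theorem~\ref{thm:uniform_lower_and_upper_bounds_for_v_and_q} is a typo that your argument does not literally reproduce.
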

\begin{proof}
	The proof is available in Appendix \ref{prf:cvar_tv_dist_bound_simp_obs}.
\end{proof}

\section{Online Bound Estimation}\label{sec:online_bound_estimation}
In this section, we derive practical estimators corresponding to the theoretical bounds presented in Section~\ref{sec:cvar_bounds}, and describe how these bounds can be efficiently computed within the context of online policy evaluation and open-loop planning. All estimators are constructed from samples generated by particle-based belief estimators drawn under the simplified belief-transition distribution. Specifically, we develop estimators for the action–value function, the distributional discrepancy $\epsilon$ appearing in Corollary~\ref{thm:cvar_tv_dist_bound_simp_obs}, and the minimum and maximum values of the return, thereby enabling computation of the complete bound stated in Corollary~\ref{thm:cvar_tv_dist_bound_simp_obs}. Moreover, we provide performance guarantees for these estimators, establishing their reliability when computed from a finite number of samples.

\subsection{Action-Value Function Estimator}\label{sec:q_func_estimator}
The action-value function is defined as the CVaR of the return $R_{k:T} \triangleq \sum_{t=k}^T c(b_t, a_t)$, and can be estimated in a straightforward manner by generating sample trajectories of the return and computing the empirical CVaR from these samples. 

To generate return samples, it is necessary to simulate belief trajectories from time $k$ to time $T$. This is achieved using a particle-based belief estimator, where the belief is represented by a weighted set of $N_p \in \mathbb{N}$ particles. Given a particle belief $\bar{b}_k$, provided by the user to represent the agent's belief at time $k$, the agent generates $N_b$ simulated particle-belief trajectories $\{\bar{b}_t^i\}$, where $i \in \{1, \dots, N_b\}$ indexes the trajectory and $t \in \{k, \dots, T\}$ denotes the time step. For each simulated trajectory, a return sample is computed as $\bar{R}^i_{k:T} = \sum_{t=k}^T c(\bar{b}_t^i, \pi(\bar{b}_t^i))$, resulting in a collection of return samples $\{\bar{R}_{k:T}^i\}_{i=1}^{N_b}$. Let $\bar{b}_t^i \triangleq \{(x_t^{i,j}, w_t^{i,j})\}_{j=1}^{N_p}$ denote the $i$-th simulated particle belief at time $t$, where $j$ indexes the particles. Define the normalized weights by $\tilde{w}_t^{i,j} \triangleq \frac{w_t^{i,j}}{\sum_{j=1}^{N_p} w_t^{i,j}}$. Assuming the cost function is state-dependent, the cost associated with belief $\bar{b}_t^i$ and action $a$ is given by
$
c(\bar{b}_t^i, a) = \sum_{j=1}^{N_p} \tilde{w}_t^{i,j} \, c(x_t^{i,j}, a).
$ 
By estimating CVaR according to \eqref{eq:cvar_estimator_sorted_sample}, we get the action-value function estimator
\begin{equation}
	\begin{aligned}
		&\hat{Q}_{M_P}^\pi (\bar{b}_k, a, \alpha) \triangleq \hat{C}(\{R^i\}_{i=1}^{N_b}) 
		&=R^{(N_b)}-\frac{1}{\alpha}\sum_{i=1}^{N_b} (R^{(i)}-R^{(i-1)})\Bigl(\frac{i}{N_b}-(1-\alpha) \Bigr)^+.
	\end{aligned}
\end{equation}
Detailed pseudo-code for the estimation of the action–value function is presented in Algorithm \ref{alg:value_func_eval}.

\begin{algorithm*}[tb]
	\caption{CVaR Value Function Estimation for Policy $\pi$}
	\label{alg:value_func_eval}
	\textbf{Global Parameters}: $N_p, N_b, T, \pi$\\
	\textbf{Input}: Particle belief $\bar{b}$, planning horizon $T$, policy $\pi$, action $a$, risk level $\alpha$, number of belief trajectories $N_b$ and number of belief particles $N_p$.\\
	\textbf{Output}: Estimated value $\hat{V}^\pi(\bar{b})$ or $\hat{Q}^\pi(\bar{b},a)$

	\vspace{4pt}
	\noindent
	\begin{minipage}[t]{0.48\textwidth}
	\begin{algorithmic}[1]
		\Procedure{GENPF}{$\bar{b}, a$}
		\State Sample $x_0$ from $\bar{b}$ with prob.\ $w_i / \sum_i w_i$
		\State $z \gets G(x_0,a)$
		\For{$i = 1$ \textbf{to} $N_p$}
		\State $(x'_i, r_i) \gets G(x_i,a)$
		\State $w'_i \gets w_i \cdot Z(z \mid a, x'_i)$
		\EndFor
		\State $\bar{b'} \gets \{(x'_i,w'_i)\}_{i=1}^{N_p}$
		\State $\rho \gets \frac{\sum_i w_i r_i}{\sum_i w_i}$
		\State \textbf{return} $\bar{b'}, \rho$
		\EndProcedure

		\Procedure{SampleNextBeliefs}{beliefs, $\pi$}
		\State $\text{NextBeliefs} \gets list()$
		\For{$j=1$ \textbf{to} $N_b$}
		\State $\text{NextBelief}\gets \text{GENPF(beliefs[j]},$ $\pi\text{(beliefs[j]))}$
		\State append NextBelief to NextBeliefs
		\EndFor
		\State \textbf{return} NextBeliefs
		\EndProcedure

		\Procedure{GenBeliefTrajectories}{$\bar{b}, \pi, T$}
		\State $\text{trajectories} \gets list()$
		\For{$i=1$ \textbf{to} $N_b$}
		\State \text{append $\bar{b}$ to trajectories}
		\EndFor
		\For{$t = 0$ \textbf{to} $T-1$}
		\State $\text{NextBeliefs} \gets$ SampleNextBeliefs(trajectories[$t$], $\pi$)
		\State \text{append NextBeliefs to trajectories}
		\EndFor
		\State \textbf{return} trajectories
		\EndProcedure
	\end{algorithmic}
	\end{minipage}%
	\hfill
	\begin{minipage}[t]{0.48\textwidth}
	\begin{algorithmic}[1]
		\Procedure{GenerateReturnSample}{$\bar{b}, a, \pi$}
		\State trajectories $\gets$ GenBeliefTrajectories($\bar{b}, \pi, T$)
		\State ReturnSample $\gets$ list()
		\For{$i=1$ \textbf{to} $N_b$}
		\State Return $\gets$ c(trajectories[0][$i$], $a$)
		\For{$t=1$ \textbf{to} $T$}
		\State Return += c(trajectories[$t$][$i$], $\pi$(trajectories[$t$][$i$]))
		\EndFor
		\State append Return to ReturnSample
		\EndFor
		\State \textbf{return} ReturnSample
		\EndProcedure

		\Procedure{CVaREstimate}{samp, $\alpha$}
		\State Sort samp in ascending order as $z_1,\dots,z_n$ with $z_0=0$
		\State \textbf{return} $z_n - \tfrac{1}{\alpha}\sum_{i=0}^{n-1}(z_{i+1}-z_i)\bigl(\tfrac{i}{n}-(1-\alpha)\bigr)^+$
		\EndProcedure

		\Procedure{Estimate$V^\pi$}{$\bar{b},\alpha$}
		\State \textbf{return} \Call{EstimateQ}{$\bar{b},\pi(\bar{b}),\alpha$}
		\EndProcedure

		\Procedure{EstimateQ}{$\bar{b},a,\alpha$}
		\State samp $\gets$ GenerateReturnSample($\bar{b}, a, \pi$)
		\State \textbf{return} \Call{CVaREstimate}{samp,$\alpha$}
		\EndProcedure
	\end{algorithmic}
	\end{minipage}
\end{algorithm*}

\subsection{Online CDF Bound Estimation}\label{sec:cdf_bound_estimation}
The bound on the difference between return simplified and original CDFs, established in Theorem~\ref{thm:observation_model_simplification_bounds} and denoted by $\epsilon$ in Corollary~\ref{thm:cvar_tv_dist_bound_simp_obs}, plays a central role in bounding the discrepancy between the simplified and original action-value functions. Notably, it is the only term in the bound that explicitly captures the impact of the difference between the simplified and original observation models. A challenge arises from the fact that the quantity $\epsilon$ cannot be directly estimated during planning, as its computation requires access to the full observation model $p_z$, which is typically too expensive to evaluate online. For the purpose of online computation of the bound, we adopt the methodology proposed in~\cite{LevYehudi24aaai}, which enables evaluation without real-time access to the original observation model. This is achieved by decoupling the offline sampling phase—conducted using the original observation model—from the online sampling phase, which relies solely on the simplified observation model. Specifically, the offline–online decoupling strategy and the state-level importance-sampling estimator in~\eqref{eq:m_state_act} were introduced by~\cite{LevYehudi24aaai}; the belief-level aggregation, horizon-level accumulation, and the probabilistic guarantees developed in the remainder of this section are novel contributions of the present work. This offline--online decoupling procedure is illustrated in Figure~\ref{fig:offline_online_decoupling}.

Let $Q_0$ be a distribution over the state space, and denote the TV-distance expectation given the previous state and action by $m$. \eqref{eq:m_state_act} shows how the one-step distributional discrepancy bound $m$ can be computed using importance sampling.
\begin{equation}
	\begin{aligned}
		m(x_t, a) &\triangleq \mathbb{E}_{x_{t+1} \sim P(\cdot | x_t, a)}[\Delta^s(x_{t+1})|x_t, a]
		&=\mathbb{E}_{x_{t+1} \sim Q_0}[\frac{P(x_{t+1}|x_t, a)}{Q_0(x_{t+1})}\Delta^s(x_{t+1})|x_t, a].
	\end{aligned}
	\label{eq:m_state_act}
\end{equation} 
Define the belief-dependent one-step distributional discrepancy $m$, and the belief-dependent total distributional discrepancy bound of the CDFs difference at time $t$ by $\epsilon_t$.
\begin{equation}\label{eq:m_belief}
	m(b_t, a) \triangleq \mathbb{E}[m(x_t, a)|b_t, a],
\end{equation}
\begin{equation}
	\epsilon_t(b_t, a) \triangleq \sum_{\tau=t}^{T-1} \mathbb{E}[\Delta^s(x_{\tau+1})|b_t, a] = \sum_{\tau=t}^{T-1} \mathbb{E}[m(b_\tau, a)|b_t, a].
\end{equation}
In practice, a set of states $\{x_n^\Delta\}_{n=1}^{N^\Delta} \overset{\text{i.i.d.}}{\sim} Q_0$ is drawn from a user-specified distribution $Q_0$, and the TV-distance corresponding to each sampled state is computed offline. During online planning, the precomputed TV distance estimates are reweighted via importance sampling, thereby adapting the offline-computed distributional discrepancy to reflect the actual distributional discrepancy encountered by the agent during execution, as described in~\eqref{eq:m_est_state}. By utilizing $\{x_n^\Delta\}$, \eqref{eq:m_state_act} and \eqref{eq:m_belief} can be estimated as follows \begin{equation}\label{eq:m_est_state}
	\hat{m}(x_t, a)=\frac{1}{N^\Delta}\sum_{n=1}^{N^\Delta} \frac{P(x_n^\Delta|x_t, a)}{Q_0(x_n^\Delta)}\Delta^s(x_n^\Delta),
\end{equation}
\begin{equation}
	\hat{m}(\bar{b}_t, a)=\sum_{i=1}^{N_p} \hat{m}(x_t^i, a)\tilde{w}_{t}^i,
\end{equation}
for $\bar{b}_t\triangleq \{(x_t^i, w_t^i)\}$ where $\tilde{w}_t^i$ are the normalized weights. Using the particle-belief trajectories $\{\bar{b}_\tau^i\}_{i=1}^{N_b}$ introduced in Section~\ref{sec:q_func_estimator}, the TV-distance at time $\tau+1$ is estimated by using $\{\bar{b}_\tau^i\}_{i=1}^{N_b}$ as samples from the conditional distribution of $\bar{b}_\tau$ given $\bar{b}_t$, as described below.
\begin{equation}
	\hat{\mathbb{E}}[m(b_\tau, a)|\bar{b}_t, a] = \frac{1}{N_b} \sum_{i=1}^{N_b} \hat{m}(\bar{b}_\tau^{i}, \pi(\bar{b}_\tau^{i})).
\end{equation}
The estimate of $\epsilon$ is then computed directly by
\begin{equation}\label{eq:eps_estimator}
	\hat{\epsilon}(\bar{b}_t, a)=\sum_{\tau=t}^{T-1} \hat{\mathbb{E}}[m(\bar{b}_\tau, \pi_\tau)|\bar{b}_t, a].
\end{equation}

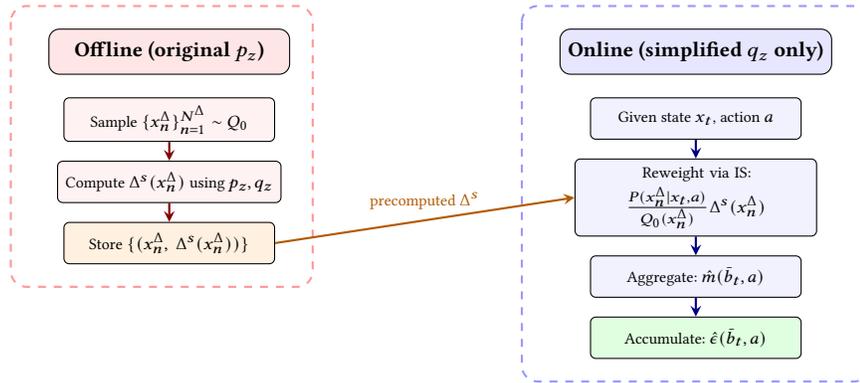
\begin{figure}[t]
	\centering
	\begin{tikzpicture}[
		>=stealth, font=\small,
		phase/.style={rectangle, draw, rounded corners=4pt, minimum width=3.2cm, minimum height=0.6cm, align=center, font=\small\bfseries},
		sbox/.style={rectangle, draw, rounded corners=2pt, minimum width=2.8cm, minimum height=0.55cm, align=center, font=\scriptsize},
		arr/.style={->, thick},
		lbl/.style={font=\scriptsize, midway},
	]
		\node[phase, fill=red!10] (offhdr) at (0, 0) {Offline (original $p_z$)};
		\node[sbox, fill=red!5, below=0.3cm of offhdr] (off1) {Sample $\{x_n^\Delta\}_{n=1}^{N^\Delta} \sim Q_0$};
		\node[sbox, fill=red!5, below=0.25cm of off1] (off2) {Compute $\Delta^s(x_n^\Delta)$ using $p_z, q_z$};
		\node[sbox, fill=orange!12, below=0.25cm of off2, minimum width=2.8cm] (store) {Store $\{(x_n^\Delta,\; \Delta^s(x_n^\Delta))\}$};


		\node[phase, fill=blue!10] (onhdr) at (7, 0) {Online (simplified $q_z$ only)};
		\node[sbox, fill=blue!5, below=0.3cm of onhdr] (on1) {Given state $x_t$, action $a$};
		\node[sbox, fill=blue!5, below=0.25cm of on1, text width=3.0cm] (on2) {Reweight via IS:\\[1pt] $\frac{P(x_n^\Delta|x_t,a)}{Q_0(x_n^\Delta)}\Delta^s(x_n^\Delta)$};
		\node[sbox, fill=blue!5, below=0.25cm of on2] (on3) {Aggregate: $\hat{m}(\bar{b}_t, a)$};
		\node[sbox, fill=green!12, below=0.25cm of on3] (on4) {Accumulate: $\hat{\epsilon}(\bar{b}_t, a)$};

		\draw[red!40, thick, rounded corners=6pt, dashed]
			([xshift=-0.5cm, yshift=0.3cm]offhdr.north west) rectangle ([xshift=0.5cm, yshift=-0.3cm]store.south east);
		\draw[blue!40, thick, rounded corners=6pt, dashed]
			([xshift=-0.5cm, yshift=0.3cm]onhdr.north west) rectangle ([xshift=0.7cm, yshift=-0.3cm]on2.south east |- on4.south);

		\draw[arr, orange!70!black, thick] (store.east) -- (on2.west) node[lbl, above, yshift=1pt, font=\scriptsize] {precomputed $\Delta^s$};

		\draw[arr, red!50!black] (off1) -- (off2);
		\draw[arr, red!50!black] (off2) -- (store);
		\draw[arr, blue!50!black] (on1) -- (on2);
		\draw[arr, blue!50!black] (on2) -- (on3);
		\draw[arr, blue!50!black] (on3) -- (on4);
	\end{tikzpicture}
	\caption{Offline--online decoupling for distributional discrepancy estimation. In the offline phase, states are sampled from a proposal distribution $Q_0$ and the TV distances $\Delta^s$ are computed using the original observation model $p_z$. During online planning, the precomputed $\Delta^s$ values are reweighted via importance sampling using only the state-transition model, eliminating the need to evaluate $p_z$ in real time.}
	\label{fig:offline_online_decoupling}
\end{figure}

Theorem~\ref{thm:m_i_sum_bound_guarantees} establishes performance guarantees for the distributional discrepancy estimator defined in~\eqref{eq:eps_estimator}, and demonstrates that it converges exponentially fast to the true distributional discrepancy as the number of sampled beliefs $N_b$ increases.
\begin{theorem}\label{thm:m_i_sum_bound_guarantees}
	Let $\eta>0,\delta\in (0,1)$. If $\hat{\Delta}$ is unbiased, then 
	\begin{equation}
		\begin{aligned}
			&P(|\hat{\epsilon}_t(\bar{b}_t, a)-\epsilon_t(\bar{b}_t, a)|\geq \eta) 
			&\leq 2\exp\Big( -\frac{2\eta^2 N_b}{D^2(T-k)^2(R_{\max}-R_{\min})^2} \Big)
		\end{aligned}
	\end{equation}
	for $D=\max_{i\in \{k+1,\dots,T-1\}} D_i$ where $D_i=\sup_{x_{i+1}}P(x_{i+1}|x_{i}, a_i)/Q_0(x_{i+1})$.
\end{theorem}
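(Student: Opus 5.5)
The bound will follow from Hoeffding's inequality applied to the $N_b$ independent simulated particle-belief trajectories. By construction of Algorithm~\ref{alg:value_func_eval}, the trajectories $\{\bar{b}^i_{t:T}\}_{i=1}^{N_b}$ are i.i.d.\ given $\bar{b}_t$ and $a$. So I rewrite the estimator \eqref{eq:eps_estimator} by exchanging the two finite sums:
\begin{equation*}
\hat{\epsilon}_t(\bar{b}_t,a)=\frac{1}{N_b}\sum_{i=1}^{N_b} S_i,\qquad S_i\triangleq\sum_{\tau=t}^{T-1}\hat{m}(\bar{b}^i_\tau,\pi(\bar{b}^i_\tau)).
\end{equation*}
Here each $S_i$ depends only on the $i$-th trajectory and on the fixed offline sample $\{x^\Delta_n\}$. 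Conditioned on the offline sample, the $S_i$ are therefore i.i.d.

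The first step is to show $\mathbb{E}[S_i]=\epsilon_t(\bar{b}_t,a)$. This is where the unbiasedness hypothesis is used. The importance-sampling identity \eqref{eq:m_state_act} gives $\mathbb{E}_{Q_0}[\hat{m}(x,a)]=m(x,a)$. Linearity over the normalized particle weights then gives that $\hat{m}(\bar{b},a)$ is unbiased for $m(\bar{b},a)$. The tower property over $\bar{b}^i_\tau\mid\bar{b}_t$ turns each summand into $\mathbb{E}[m(b_\tau,a)\mid b_t,a]$, and summing over $\tau$ yields $\epsilon_t$. Strictly, unbiasedness holds only after averaging over the offline draw. To apply Hoeffding conditionally I will take the statement's hypothesis that $\hat{\Delta}$ is unbiased to mean that the centred mean is $\epsilon_t$.

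The second step is a deterministic range for $S_i$. Each importance weight satisfies $P(x^\Delta_n\mid x_\tau,a_\tau)/Q_0(x^\Delta_n)\le D_\tau\le D$. Each $\Delta^s$ is a TV-type integral that must be bounded by a constant range; I match the stated constant by bounding the per-step range by $R_{\max}-R_{\min}$. On that reading each $\hat{m}(x,a)$ lies in an interval of length at most $D(R_{\max}-R_{\min})$, and so does its convex combination $\hat{m}(\bar{b},a)$. Summing at most $T-k$ terms puts $S_i$ in an interval of length $D(T-k)(R_{\max}-R_{\min})$.

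The final step applies the two-sided Hoeffding inequality to the mean of the $N_b$ bounded i.i.d.\ variables $S_i$:
\begin{equation*}
P\big(|\hat{\epsilon}_t-\epsilon_t|\ge\eta\big)\le 2\exp\!\Big(-\frac{2N_b^2\eta^2}{N_b\,D^2(T-k)^2(R_{\max}-R_{\min})^2}\Big),
\end{equation*}
which is exactly the claimed bound.

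The main obstacle is step two, namely justifying a uniform almost-sure range for $S_i$ that matches the constant $D(T-k)(R_{\max}-R_{\min})$. I would handle it by stating the range assumption on $\Delta^s$ explicitly, and by noting that $D$ is defined as a supremum over the states reachable at each step, which makes the bound uniform over trajectories. A secondary subtlety is that the same offline states $\{x^\Delta_n\}$ are shared across all trajectories. The remedy is to work conditionally on the offline sample throughout, so that independence across $i$ is preserved.
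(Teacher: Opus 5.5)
Your route is the paper's: unbiasedness of $\hat m$ from the importance-sampling identity and linearity over particle weights, the tower property over $\bar b^i_\tau\mid\bar b_t$, rewriting $\hat\epsilon_t$ as an average of per-trajectory sums, a deterministic range via $D$, and two-sided Hoeffding. Your Hoeffding arithmetic is correct, and your range condition is what the stated constant requires. State it as a hypothesis, though, rather than as a ``reading'' of $\Delta^s$.

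\textbf{The range step.} Since $\Delta^s$ is an $L^1$ distance between densities, $0\le\Delta^s\le 2$, and likewise for the averaged estimator $\hat\Delta$. With weights in $[0,D]$ this gives $0\le S_i\le 2D(T-k)$, and Hoeffding then yields $2\exp(-\eta^2N_b/(2D^2(T-k)^2))$. That implies the stated bound precisely when $R_{\max}-R_{\min}\ge 2$.

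Some such condition cannot be avoided. $\hat\epsilon_t$ and $\epsilon_t$ do not depend on the costs, while rescaling the costs by a factor tending to $0$ drives the right-hand side to $0$.

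Your claim that $\hat m(x,a)$ lies in an interval of length $D(R_{\max}-R_{\min})$ also needs $\Delta^s\in[0,R_{\max}-R_{\min}]$, not just some range of that length, because the weights can vanish.

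The paper is looser at this step. It asserts $X_i\in[2D(T-t)R_{\min}/N_b,\,2D(T-t)R_{\max}/N_b]$ from ``TV $\le 2$'', and then drops the resulting factor $4$ in Hoeffding.

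\textbf{The genuine gap: centering and independence.} Your patch does not close this step.

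Conditioning on the fixed offline sample $\{x^\Delta_n\}$ does make the $S_i$ i.i.d. But then $\mathbb{E}[S_i\mid\{x^\Delta_n\}]$ is a random importance-sampling approximation of $\epsilon_t$, not $\epsilon_t$ itself. The unbiasedness of $\hat\Delta$ is what makes the \emph{unconditional} mean equal $\epsilon_t$. It says nothing about the conditional mean given the offline draw, so it cannot be reinterpreted the way you propose.

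Unconditionally, the $S_i$ have mean $\epsilon_t$ but are dependent through the shared offline states. So Hoeffding applies in neither reading. The paper simply asserts that the $X_i$ are i.i.d.\ and has the same hole.

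There are two ways to repair it:
\begin{itemize}
\item Assume the offline states (and their $\hat\Delta$ values) are drawn afresh for each trajectory. Then the $S_i$ are genuinely i.i.d.\ with mean $\epsilon_t$, and your argument goes through.
\item Split $\hat\epsilon_t-\epsilon_t$ into the conditional Hoeffding term plus the offline importance-sampling error. The latter is an average of $N^\Delta$ i.i.d.\ terms in $[0,2D(T-k)]$ with mean $\epsilon_t$, so it needs its own Hoeffding bound in $N^\Delta$. This gives a modified statement.
\end{itemize}
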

\begin{proof}
	The proof is available in Appendix \ref{prf:m_i_sum_bound_guarantees}.
\end{proof}

\subsection{Online Return Bound Estimation}\label{sec:return_bound_estimation}
The bound in Theorem \ref{thm:uniform_lower_and_upper_bounds_for_v_and_q} requires knowledge of the upper and lower bounds on the return under both the original and simplified distributions. Formally, we need estimators $\hat{d}_{\min}$ and $\hat{d}_{\max}$ for $d_{\min}$ and $d_{\max}$ in Theorem \ref{thm:uniform_lower_and_upper_bounds_for_v_and_q}. A simple choice is
\begin{equation}\label{eq:trivial_d}
	\hat{d}_{\min} = (T-k+1) R_{\min}, \qquad
	\hat{d}_{\max} = (T-k+1) R_{\max},
\end{equation}
which, however, ignores the dependence of the return on the agent's belief $b_k$ and policy $\pi$. For example, if the belief indicates that the agent is far from any obstacle and the planning horizon is too short for a collision to occur, taking the maximal possible return obscures this contextual information and results in a looser bound than necessary.  

To obtain tighter estimates, we define
\begin{equation}
	\hat{d}_{\max}^\pi(b_k) = \max_{i \in \{1,\dots,N_b\}} R^i, 
	\qquad
	\hat{d}_{\min}^\pi(b_k) = \min_{i \in \{1,\dots,N_b\}} R^i,
\end{equation}
where $\{R^i\}_{i=1}^{N_b} \sim P_s(\cdot | b_k, \pi)$ are the return samples used to estimate the action--value function. Theorem \ref{thm:d_guarantees} provides performance guarantees for these estimators.

\begin{theorem}\label{thm:d_guarantees}
	For the estimators $\hat{d}_{\max}^\pi(b_k) = \max_{i} R^i$ and $\hat{d}_{\min}^\pi(b_k) = \min_{i} R^i$, where $i \in \{1,\dots,N_b\}$, $R^1,\dots,R^{N_b} \sim P_s$, and for $\epsilon$ as defined in~\eqref{eq:epsilon}, the following bounds hold.
	\begin{equation}
		P_s(R_{k:T} \geq \hat{d}_{min}^\pi(b_k) | b_k ,\pi)\geq\frac{N_b}{N_b+1}, \qquad P_s(R_{k:T} \leq \hat{d}_{max} | b_k ,\pi)\geq\frac{N_b}{N_b+1}
	\end{equation}
	\begin{equation}
		P\!\left(R_{k:T} \geq \hat{d}_{min}^\pi(b_k) \Big| b_k ,\pi \right)
		\geq \frac{N_b}{N_b+1} - \epsilon, \qquad
		P\!\left(R_{k:T} \leq \hat{d}_{max}^\pi(b_k) \Big| b_k ,\pi \right)
		\geq \frac{N_b}{N_b+1} - \epsilon.
	\end{equation}
\end{theorem}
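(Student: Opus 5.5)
The plan is to treat the first pair of inequalities as a statement about a fresh draw, averaged over the sample. Let $R^{N_b+1}$ be a new return drawn from $P_s(\cdot\mid b_k,\pi)$, independent of $R^1,\dots,R^{N_b}$. Read $P_s(R_{k:T}\geq \hat d_{\min}^\pi(b_k)\mid b_k,\pi)$ as the probability, over both $R^{N_b+1}$ and the sample, that $R^{N_b+1}\geq \min_{i\leq N_b}R^i$.

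Under this reading the bound is a rank/exchangeability argument. The $N_b+1$ variables $R^1,\dots,R^{N_b+1}$ are i.i.d., hence exchangeable. The event $R^{N_b+1}<\min_{i\le N_b}R^i$ says that $R^{N_b+1}$ is the \emph{strict} unique minimum of the $N_b+1$ values. By exchangeability each index is equally likely to be the strict unique minimum. These $N_b+1$ events are disjoint, so each has probability at most $1/(N_b+1)$. Ties only make strict uniqueness less likely, so atoms in the return distribution cause no difficulty. It follows that
\begin{equation*}
P_s\big(R_{k:T}\geq \hat d_{\min}^\pi(b_k)\,\big|\,b_k,\pi\big)\geq 1-\tfrac{1}{N_b+1}=\tfrac{N_b}{N_b+1}.
\end{equation*}
The statement for $\hat d_{\max}$ follows by the symmetric argument, applied to $-R$ or with the maximum in place of the minimum.

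For the second pair, conditioning on the sample is the key step. Fix the realized threshold $d=\hat d_{\min}^\pi(b_k)$; this is legitimate because the sample is independent of the new return. Theorem~\ref{thm:simplification_bound_over_belief_distribution} gives the uniform CDF bound
\begin{equation*}
\sup_l|P(R_{k:T}\le l)-P_s(R_{k:T}\le l)|\le\epsilon.
\end{equation*}
We need this bound for strict inequalities as well, namely $|P(R<d)-P_s(R<d)|\le\epsilon$. That version follows from the sup bound by taking the limit $l\uparrow d$. It can also be obtained directly from the total-variation argument, which bounds $|P(A)-P_s(A)|$ for every measurable event $A$ of the return. Hence $P(R_{k:T}\ge d)\ge P_s(R_{k:T}\ge d)-\epsilon$ holds pointwise in $d$. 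Integrating over the law of the sample then gives
\begin{equation*}
P\big(R_{k:T}\geq \hat d_{\min}^\pi(b_k)\big)\geq \tfrac{N_b}{N_b+1}-\epsilon.
\end{equation*}
The maximum case uses $P(R\le d)\ge P_s(R\le d)-\epsilon$ directly. If $\epsilon$ has been capped at $1$ by the $\min$ in \eqref{eq:epsilon}, the bound still holds, since the right-hand side is then nonpositive.

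The main obstacle is interpretive rather than technical. The inequalities do not hold conditionally on a fixed sample, so we must make explicit that the probability is taken jointly over the estimator and the new return. We must also justify that the fresh return is independent of the $N_b$ simulated returns, which means the trajectories are generated independently under $P_s$. Once that is stated, the exchangeability step and the transfer via $\epsilon$ are routine. The one point that needs care is the passage from the closed-event CDF bound to the open event $\{R<d\}$.
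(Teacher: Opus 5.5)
Your proposal is correct and follows essentially the same route as the paper. For the $P_s$ bounds you use the same strict-extremum/exchangeability argument over the $N_b+1$ i.i.d.\ draws, and for the $P$ bounds you condition on the sample, apply the uniform CDF bound of Theorem~\ref{thm:simplification_bound_over_belief_distribution} pointwise in the threshold, and integrate. Your explicit handling of the open event $\{R<d\}$ in the minimum case, which the paper dismisses with ``similarly'', is a welcome refinement.
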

\begin{proof}
	The proof is available in Appendix \ref{prf:d_guarantees}.
\end{proof}
The bounds for both the simplified and the original returns are computed using return samples generated from the simplified belief-transition model. The uncertainty in the upper bound on the original return, induced by using samples drawn from the simplified return distribution, is quantified by $\epsilon$, which represents the distributional discrepancy between the original and simplified returns.


\subsection{Performance Guarantees}\label{sec:performance_guarantees}
In this section we provide performance guarantees for the bounds exhibited in previous sections, making them reliable in practice.

\subsubsection{Guarantees with Known Return Bounds}
Theorem~\ref{thm:trivial_d_guarantees} establishes performance guarantees for the original action--value function in terms of the simplified action--value function, under the assumption that the simplification is applied to the belief--transition model. These bounds hold for a general $\rho$-POMDP, where the cost is belief-dependent.

\begin{theorem}\label{thm:trivial_d_guarantees}
	Let $\delta,\delta_{d_{\min}}, \delta_{d_{\max}} \in [0,1]$, and let $b_k$ denote the initial belief. Consider samples $R^1,\dots,R^{N_b} \sim P_s(\cdot \mid b_k,\pi)$ generated using the simplified belief-transition model. Define
	\begin{equation}
		\epsilon \triangleq \epsilon(b_k, a_k) \triangleq \min(\sum_{i=k}^{T-1} \mathbb{E}^s[\Delta^s(b_i, a_i)|b_k, a_k,\pi], 1),
	\end{equation}
	\begin{equation}
		\eta = \sqrt{\ln(1/\delta)/(2N_b)}, \qquad \epsilon' = \epsilon + \eta.
	\end{equation}
	\begin{enumerate}
		\item \textbf{Upper Bound:} assume that $P(R_{k:T}\leq d_{max})>1-\delta_{d_{max}}$ and $P_s(R_{k:T}\leq d_{max})>1-\delta_{d_{max}}$, then \begin{enumerate}
			\item If $\epsilon'<\alpha$, then
			$U_s\triangleq (1-\frac{\epsilon'}{\alpha}) \hat{Q}^\pi_{M_s}(b_k,a_k,\alpha-\epsilon') + \frac{\epsilon'}{\alpha}d_{max}$
			\item If $\epsilon' \geq \alpha$ then $U_s\triangleq d_{max}$
		\end{enumerate}
		\item \textbf{Lower Bound:} assume that $P(R_{k:T}\geq d_{min})>1-\delta_{d_{min}}$ and $P_s(R_{k:T}\geq d_{min})>1-\delta_{d_{min}}$, then \begin{enumerate}
			\item If $\epsilon' + \alpha < 1$ then
			$
			L_s\triangleq (1+\frac{\epsilon'}{\alpha}) \hat{Q}^\pi_{M_s}(b_k,a_k,\alpha+\epsilon') - \frac{\epsilon'}{\alpha} \hat{Q}^\pi_{M_s}(b_k,a_k, \epsilon')
			$
			\item If $\epsilon' + \alpha \geq 1$ then 
			$
			L_s\triangleq\frac{1}{\alpha}[-(\alpha + \epsilon' - 1)
			d_{min}+\hat{Q}^\pi_{M_s}(b_k,a_k)-\epsilon' \hat{Q}^\pi_{M_s}(b_k,a_k,\epsilon')]
			$
		\end{enumerate}
	\end{enumerate}
	Then $P(L_s \leq Q_M^\pi(b_k, a_k,\alpha)) > (1-\delta)(1-\delta_{d_{min}})$ and $P(Q_M^\pi(b_k, a_k,\alpha)\leq U_s)> (1-\delta)(1-\delta_{d_{max}})$.
\end{theorem}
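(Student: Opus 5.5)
The plan is to reduce the statement to Theorem~\ref{thm:ecdf_y_bound_cvar_x}, instantiated for the return, in the same way that Theorem~\ref{thm:uniform_lower_and_upper_bounds_for_v_and_q} was obtained from Theorem~\ref{thm:cvar_bound_v2}.

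\textbf{Step 1 (identifying $X$, $Y$ and $\epsilon$).} Let $X$ be the return $R_{k:T}$ under $P(\cdot\mid b_k,a_k,\pi)$ and $Y$ the return under $P_s(\cdot\mid b_k,a_k,\pi)$. Then $\CVaR_\alpha(X)=Q^\pi_M(b_k,a_k,\alpha)$. The samples $R^1,\dots,R^{N_b}$ are i.i.d.\ from $F_Y$. Their ECDF $\hat F_Y$ gives
\[
\CVaR^{\hat F_Y}_\beta=\hat{Q}^\pi_{M_s}(b_k,a_k,\beta)
\]
for every level $\beta$, by the sorted-sample form \eqref{eq:cvar_estimator_sorted_sample}. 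It also gives $\mathbb{E}_{\hat F_Y}[Y]=\hat Q^\pi_{M_s}(b_k,a_k)$, the sample mean. By Theorem~\ref{thm:simplification_bound_over_belief_distribution}, $\sup_l|F_X(l)-F_Y(l)|\le\epsilon$ with $\epsilon$ as defined, so both one-sided hypotheses $F_Y-F_X\le\epsilon$ and $F_X-F_Y\le\epsilon$ hold. I will use $\epsilon'$ as in Theorem~\ref{thm:ecdf_y_bound_cvar_x}, i.e.\ capped at $1$. Inside the case split this is harmless: whenever $\epsilon+\eta\ge 1$, the conditions $\epsilon'\ge\alpha$ and $\epsilon'+\alpha\ge1$ both hold.

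\textbf{Step 2 (support events and probability product).} The support hypotheses of Theorem~\ref{thm:ecdf_y_bound_cvar_x} are deterministic, but here they hold only with probability $>1-\delta_{d_{\max}}$ (respectively $>1-\delta_{d_{\min}}$). I will read the assumptions exactly as in Theorem~\ref{thm:uniform_lower_and_upper_bounds_for_v_and_q}: on the event $G_{\max}$ that the returns under both $P$ and $P_s$ lie below $d_{\max}$, we may take $b_X=b_Y=d_{\max}$; likewise $a_X=a_Y=a_{\min}=d_{\min}$ on $G_{\min}$. The randomness of the samples, used in the DKW step of Theorem~\ref{thm:ecdf_y_bound_cvar_x} with failure probability $\delta$, is generated independently of the event defining $G_{\max}$ and $G_{\min}$. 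On $G_{\max}$, Theorem~\ref{thm:ecdf_y_bound_cvar_x}(1) gives $Q^\pi_M\le U_s$ with conditional probability $>1-\delta$. Multiplying by independence gives $>(1-\delta)(1-\delta_{d_{\max}})$. Case 1(b) holds deterministically on $G_{\max}$, since $\CVaR_\alpha(X)\le d_{\max}$ there. The lower bound in case 2(a) follows identically from Theorem~\ref{thm:ecdf_y_bound_cvar_x}(2a).

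\textbf{Step 3 (case 2(b), the main obstacle).} Theorem~\ref{thm:ecdf_y_bound_cvar_x}(2b) yields
\[
Q^\pi_M\ge\frac{1}{\alpha}\Big(\hat Q^\pi_{M_s}(b_k,a_k)-\epsilon'\hat Q^\pi_{M_s}(b_k,a_k,\epsilon')+(\alpha+\epsilon'-1)d_{\min}\Big).
\]
The statement instead has $-(\alpha+\epsilon'-1)d_{\min}$, the same sign convention used in Theorem~\ref{thm:uniform_lower_and_upper_bounds_for_v_and_q}(2b). Since $\alpha+\epsilon'-1\ge0$ in this case, the stated $L_s$ is at most the derived one exactly when $(\alpha+\epsilon'-1)d_{\min}\ge0$. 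I will therefore close the case by establishing $d_{\min}\ge0$, which holds when the stage costs are nonnegative ($R_{\min}\ge0$). In that situation any valid lower threshold can be replaced by $\max(d_{\min},0)$ without weakening $G_{\min}$. The stated bound then follows with the same probability $>(1-\delta)(1-\delta_{d_{\min}})$.

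A translation-equivariance argument, shifting returns by $d_{\min}$, does not remove this sign requirement, because it reproduces the $+$ form. This step is therefore where I expect the verification to hinge. If $d_{\min}<0$ is allowed, I would check whether the paper's convention for $d_{\min}$ in Theorem~\ref{thm:uniform_lower_and_upper_bounds_for_v_and_q} implicitly assumes nonnegative returns, and invoke that convention.
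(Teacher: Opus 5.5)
Your route is the paper's: instantiate Theorem~\ref{thm:ecdf_y_bound_cvar_x} with $X,Y$ the returns under $P$ and $P_s$, take the CDF discrepancy from Theorem~\ref{thm:simplification_bound_over_belief_distribution}, and absorb the support hypotheses by conditioning. The genuine gap is Step~2, and the paper's law-of-total-probability step has exactly the same defect.

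\textbf{Step 2 does not work.} $Q^\pi_M=\CVaR_\alpha(X)$ is a fixed number determined by the entire law of $X$. $G_{\max}$, by contrast, is an event about a single realization of the return. Conditioning on it neither changes $Q^\pi_M$ nor yields $P(X\le b_X)=1$. Theorem~\ref{thm:ecdf_y_bound_cvar_x} really uses that almost-sure bound: through Theorem~\ref{thm:cvar_bound_v2}, the quantiles of $X$ above level $1-\epsilon'$ are bounded by $\max(b_X,b_Y)$. Since you also take $G_{\max}$ independent of the samples, $P(Q^\pi_M\le U_s\mid G_{\max})=P(Q^\pi_M\le U_s)$. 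So ``conditional probability $>1-\delta$ on $G_{\max}$'' is just the unconditional claim you are trying to prove, and the same holds for your deterministic treatment of case~1(b).

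The claim is in fact false when $\delta_{d_{\max}}>0$. Take the following setup.
\begin{itemize}
\item $P_s=P$, so $\epsilon=0$ and $\epsilon'=\eta$.
\item A return equal to $M>0$ with probability $p=10^{-4}$ and $0$ otherwise. This is realizable by a one-step problem whose observation reveals a rare high-cost state.
\item $d_{\max}=0$ and $\delta_{d_{\max}}=2p$, so both support hypotheses hold.
\item $\alpha=0.5$, $\delta=0.05$, $N_b=100$, so $\eta\approx 0.12<\alpha$.
\end{itemize}
With probability $(1-p)^{N_b}$ all samples are $0$, giving $U_s=0<2pM=Q^\pi_M$. Hence $P(Q^\pi_M\le U_s)\le pN_b=0.01$, far below $(1-\delta)(1-\delta_{d_{\max}})\approx 0.95$. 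A symmetric rare low atom breaks the lower bound for $\alpha$ near $1$.

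What can actually be proved is the almost-sure case, $P(R_{k:T}\le d_{\max})=P_s(R_{k:T}\le d_{\max})=1$ and likewise for $d_{\min}$. This is $\delta_{d_{\max}}=\delta_{d_{\min}}=0$ with the strict hypotheses read as $\ge$. There the statement is an immediate corollary of Theorem~\ref{thm:ecdf_y_bound_cvar_x} with $b_X=b_Y=d_{\max}$ and $a_X=a_Y=d_{\min}$, and no conditioning is needed.

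\textbf{Step 3: right diagnosis, but the repair changes the statement.} Your diagnosis goes beyond the paper, whose proof only says Theorem~\ref{thm:ecdf_y_bound_cvar_x} ``applies directly.'' That theorem, and the estimate $A_4\ge(\alpha+\epsilon-1)a_{\min}$ behind Theorem~\ref{thm:cvar_bound_v2}, give $+(\alpha+\epsilon'-1)d_{\min}$. So the stated $L_s$ is implied only when $d_{\min}\ge 0$.

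Replacing $d_{\min}$ by $\max(d_{\min},0)$ changes $L_s$, so it does not prove the stated bound for a given $d_{\min}<0$. That bound actually fails, even with nonnegative costs. If all costs vanish, $R_{k:T}\equiv 0$ under both models, while the belief transitions can still differ enough that $\epsilon+\alpha>1$. With $d_{\min}=-1000$, the stated $L_s=1000(\alpha+\epsilon'-1)/\alpha>0=Q^\pi_M$ deterministically. The correct conclusion is that case~2(b) needs either the $+$ sign or the added hypothesis $d_{\min}\ge 0$; it cannot be proved as written.

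Relatedly, capping $\epsilon'$ is not harmless in 2(b), because $L_s$ depends on $\epsilon'$. It is a necessary reinterpretation, since the uncapped formula evaluates $\hat Q^\pi_{M_s}$ at a level above $1$.
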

\begin{proof}
	The proof is available in Appendix \ref{prf:trivial_d_guarantees}.
\end{proof}

Estimation of the belief-transition model remains an open problem in the literature; consequently, the distributional discrepancy $\epsilon$ in Theorem~\ref{thm:trivial_d_guarantees}, when defined in terms of the belief-transition model, is currently intractable to estimate. In the special case where only the observation model is simplified, the one-step distributional discrepancy $\Delta^s$ is defined with respect to the observation model rather than the belief-transition model (see Section \ref{sec:simp_obs_bound}). In this case, Theorem~\ref{thm:trivial_d_guarantees} holds when $\Delta^s$ is defined using the original and simplified observation models, as characterized in Theorem~\ref{thm:cvar_tv_dist_bound_simp_obs}, thereby rendering the estimation of $\epsilon$ tractable.

To attain standard bounds, one can set trivial bounds to the return $d_{min}=(T-k+1)R_{\min}$ and $d_{max}=(T-k+1)R_{\max}$. These bounds would produce $\delta_{d_{min}}=\delta_{d_{max}}=0$ and guarantees of $1-\delta$ for the bounds in Theorem \ref{thm:trivial_d_guarantees}.

\subsubsection{Guarantees with Estimated Return Bounds}
In practice, it is preferable to estimate the return bounds $d_{\min}$ and $d_{\max}$ with respect to the current belief and policy in order to obtain tighter bounds (see Section~\ref{sec:return_bound_estimation}). Theorem~\ref{thm:guarantees} establishes guarantees for the resulting simplified bounds, in which $d_{\min}$ and $d_{\max}$ are replaced by their estimators.
\begin{theorem}\label{thm:guarantees}
	Let $b_k$ be an initial belief and let $R^1,\dots,R^{N_b} \sim P_s(\cdot | b_k,\pi)$ be a sample that utilizes the simplified belief-transition model. Denote \begin{equation}
		\epsilon \triangleq \epsilon(b_k, a_k) \triangleq \min(\sum_{i=k}^{T-1} \mathbb{E}^s[\Delta^s(b_i, a_i)|b_k, a_k,\pi], 1)
	\end{equation}
	\begin{equation}
		\eta = \sqrt{\ln(1/\delta)/(2N_b)}, \qquad \epsilon' = \epsilon + \eta
	\end{equation}
	\begin{equation}
		\hat{d}_{\max}^\pi(b_k) = \max_{i \in \{1,\dots,N_b\}} R^i, 
		\qquad
		\hat{d}_{\min}^\pi(b_k) = \min_{i \in \{1,\dots,N_b\}} R^i,
	\end{equation}
	
	\begin{enumerate}
		\item \textbf{Upper Bound:} Under the following conditions, it holds that $P(Q^\pi_{M}(\bar{b}_k, a_k, \alpha)\leq U_s|b_k,\pi)\geq (1-\delta)\Big(\frac{N_b}{N_b+1} -\epsilon \Big)$
		\begin{enumerate}
			\item If $\epsilon'<\alpha$, then
			$U_s\triangleq (1-\frac{\epsilon'}{\alpha}) \hat{Q}^\pi_{M_s}(b_k,a_k,\alpha-\epsilon') + \frac{\epsilon'}{\alpha}\hat{d}_{max}^\pi(b_k)$
			\item If $\epsilon' \geq \alpha$ then $U_s\triangleq \hat{d}_{max}^\pi(b_k)$
		\end{enumerate}
		\item \textbf{Lower Bound:} Under the following conditions, it holds that $P(Q^\pi_{M}(\bar{b}_k, a_k, \alpha)\geq L_s|b_k,\pi)\geq (1-\delta)\Big(\frac{N_b}{N_b+1} -\epsilon \Big)$
		\begin{enumerate}
			\item If $\epsilon' + \alpha < 1$ then 
			$
			L_s\triangleq (1+\frac{\epsilon'}{\alpha}) \hat{Q}^\pi_{M_s}(b_k,a_k,\alpha+\epsilon') - \frac{\epsilon'}{\alpha} \hat{Q}^\pi_{M_s}(b_k,a_k, \epsilon')
			$
			\item If $\epsilon' + \alpha \geq 1$ then 
			$
			L_s\triangleq\frac{1}{\alpha}[-(\alpha + \epsilon' - 1)\hat{d}_{min}^\pi(b_k)+\hat{Q}^\pi_{M_s}(b_k,a_k)-\epsilon' \hat{Q}^\pi_{M_s}(b_k,a_k,\epsilon')]
			$
		\end{enumerate}
	\end{enumerate}
\end{theorem}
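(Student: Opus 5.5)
The plan is to reduce this to Theorem~\ref{thm:trivial_d_guarantees}, or more directly to Theorem~\ref{thm:ecdf_y_bound_cvar_x}. The step that changes is that the deterministic return bounds $d_{\max},d_{\min}$ become the data-dependent estimators $\hat d^\pi_{\max}(b_k),\hat d^\pi_{\min}(b_k)$, whose validity Theorem~\ref{thm:d_guarantees} controls. I treat the upper bound; the lower bound is symmetric, with $\hat d^\pi_{\min}$, $a_{\min}$ and part~2 of Theorem~\ref{thm:ecdf_y_bound_cvar_x} in place of their upper-bound counterparts.

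\emph{Step 1 (the CDF discrepancy).} Let $X$ be the return $R_{k:T}$ under $P$ and $Y$ the return under $P_s$, both given $b_k,a_k,\pi$. By Theorem~\ref{thm:simplification_bound_over_belief_distribution}, $\sup_l|F_X(l)-F_Y(l)|\le\epsilon$, so both one-sided conditions of Theorem~\ref{thm:cvar_bound_v2} hold with this $\epsilon$. The samples $R^1,\dots,R^{N_b}$ are i.i.d.\ from $F_Y$, and their empirical CVaR is exactly $\hat Q^\pi_{M_s}(b_k,a_k,\cdot)$. The DKW inequality then gives an event $E_1$ with $P(E_1)\ge1-\delta$ on which $\sup|F_Y-\hat F_Y|\le\eta$. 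On $E_1$ the triangle inequality gives $\sup|F_X-\hat F_Y|\le\epsilon'$.

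\emph{Step 2 (the random support bound).} I will not apply Theorem~\ref{thm:cvar_bound_v2} with a support bound of $X$. Instead I rerun its proof on the event $E_2=\{P(X\le\hat d^\pi_{\max})=1\}$, or, more realistically, on the event where the tail mass above $\hat d^\pi_{\max}$ is absorbed into the guarantee. The upper-bound proof of Theorem~\ref{thm:cvar_bound_v2} builds the dominating CDF $G(z)=\max(\hat F_Y(z)-\epsilon',0)$, which puts mass $\epsilon'$ at the supremum of the supports. Since $\hat F_Y$ is supported on $[\hat d_{\min},\hat d_{\max}]$ automatically, the only point that needs the support bound of $X$ is $\max(b_X,b_Y)$. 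Theorem~\ref{thm:d_guarantees} shows that $\hat d^\pi_{\max}$ upper-bounds $X$ except on a set of $P$-mass at most $1-\frac{N_b}{N_b+1}+\epsilon$. I would then interpret the event "$\hat d^\pi_{\max}$ is a valid support bound" as having probability at least $\frac{N_b}{N_b+1}-\epsilon$, in the same way that Theorem~\ref{thm:trivial_d_guarantees} uses $1-\delta_{d_{\max}}$. Concretely, I plug $\delta_{d_{\max}}$ with $1-\delta_{d_{\max}}=\frac{N_b}{N_b+1}-\epsilon$ into Theorem~\ref{thm:trivial_d_guarantees}, and the analogous $\frac{N_b}{N_b+1}$ guarantee for $P_s$ from the same theorem.

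\emph{Step 3 (combining).} On $E_1\cap E_2$, case (a) with $\epsilon'<\alpha$ gives the stated $U_s$ by Theorem~\ref{thm:ecdf_y_bound_cvar_x}(1a) with $\max(b_X,b_Y)$ replaced by $\hat d^\pi_{\max}$. Case (b) is the trivial bound. The probability is $P(E_1\cap E_2)\ge(1-\delta)(\frac{N_b}{N_b+1}-\epsilon)$, by the product form already used in Theorem~\ref{thm:trivial_d_guarantees}.

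\emph{Main obstacle.} The hard part is justifying this product, because $E_1$ and $E_2$ are built from the same samples $R^i$, so they are not independent. I would first try to condition on the order statistics, or to argue as Theorem~\ref{thm:trivial_d_guarantees} does, treating $\hat d$ as fixed once drawn: conditional on $\hat d$, the DKW event has probability at least $1-\delta$, and averaging over $\hat d$ gives the product. If that fails, I would fall back to a union bound, $1-\delta-(1-\frac{N_b}{N_b+1}+\epsilon)$, which is weaker but rigorous.
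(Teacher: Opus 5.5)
Your concrete reduction is exactly the paper's proof. The paper sets $d_{\max}=\hat d^\pi_{\max}(b_k)$ and $\delta_{d_{\max}}=\frac{1}{N_b+1}+\epsilon$ in Theorem~\ref{thm:trivial_d_guarantees}, takes both hypotheses from Theorem~\ref{thm:d_guarantees}, and reads off $(1-\delta)(1-\delta_{d_{\max}})$; it never forms your events $E_1,E_2$. However, the step you could not close is not an independence technicality, and neither conditioning on $\hat d$ nor the union bound can close it, because the upper-bound claim is false. Take $P_s=P$, so $\epsilon=0$. Let the return equal some $C>0$ with probability $p=1/N_b$ and $0$ otherwise (e.g.\ a one-step POMDP with a rare, perfectly observed costly state), and take any $\alpha\in(p,1)$. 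Then $Q^\pi_M(b_k,a_k,\alpha)=pC/\alpha>0$. Now consider the event that all $N_b$ samples are $0$, which has probability $(1-1/N_b)^{N_b}\approx e^{-1}$. On it, $\hat d^\pi_{\max}(b_k)=0$ and every empirical CVaR is $0$, so $U_s=0$ in both cases (a) and (b). Hence $P(Q^\pi_M\le U_s)\le 1-(1-1/N_b)^{N_b}\approx 0.63$, whereas the statement promises $(1-\delta)\frac{N_b}{N_b+1}\approx 0.95$ for $\delta=0.05$, $N_b=600$. Since $U_s\le\hat d^\pi_{\max}(b_k)$ in both cases and $C$ can be arbitrarily large, the failure is not a matter of constants.

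The defect is exactly the one your definition of $E_2$ exposes. Theorem~\ref{thm:d_guarantees} is correct, but it only bounds the probability that a fresh return exceeds $\hat d^\pi_{\max}$, i.e.\ it controls $\mathbb{E}[F_X(\hat d^\pi_{\max})]$. The CVaR, by contrast, depends on the magnitude of the mass above $\hat d^\pi_{\max}$. That is why Theorem~\ref{thm:cvar_bound_v2} needs an almost-sure bound to price the displaced mass $\epsilon'$.

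The event you actually need, $E_2$, is not controlled by Theorem~\ref{thm:d_guarantees}. In the example its probability is $1-(1-1/N_b)^{N_b}$, and for a continuous return with $P=P_s$ it is $0$.

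The paper's proof does not get around this; it inherits the problem from Theorem~\ref{thm:trivial_d_guarantees}. That theorem's total-probability step conditions on the fresh return satisfying $R_{k:T}\le d_{\max}$. This event is independent of the samples and leaves the constant $Q^\pi_M$ unchanged, so it cannot license treating $d_{\max}$ as a support bound in Theorem~\ref{thm:ecdf_y_bound_cvar_x}. The same example, with the deterministic choice $d_{\max}=0$ and $\delta_{d_{\max}}$ slightly above $1/N_b$, contradicts that theorem as well.

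What does hold is the version with a genuine almost-sure bound, e.g.\ $(T-k+1)R_{\max}$ in place of $\hat d^\pi_{\max}$. This follows at confidence $1-\delta$ directly from Theorem~\ref{thm:ecdf_y_bound_cvar_x}.

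The lower bound has a parallel problem in case 2(b): substituting $\hat d^\pi_{\min}$ for an almost-sure lower bound is unjustified for the same reason. Moreover, your symmetric argument would yield $+(\alpha+\epsilon'-1)\hat d^\pi_{\min}$, as in Theorem~\ref{thm:ecdf_y_bound_cvar_x}, not the $-(\alpha+\epsilon'-1)\hat d^\pi_{\min}$ in the statement.
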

\begin{proof}
	The proof is available in Appendix \ref{prf:guarantees}.
\end{proof}
Recall that the theoretical bounds derived in Section~\ref{sec:cvar_bounds} impose constraints on the relationship between the distributional discrepancy $\epsilon$ and the CVaR confidence level $\alpha$. When these conditions are violated, the resulting bounds on the action-value function become trivial. Employing estimators for the minimum and maximum return enables the agent to act according to the guarantees in Theorem~\ref{thm:guarantees}, even when these constraints are not satisfied. 

As an example (illustrated in Figure~\ref{fig:collapsed_bounds_example}), consider $\epsilon=\alpha=0.1$, $N_b=1000$, and $\delta=0.01$, with planning horizon $T=2$. The agent must choose between a dangerous path that passes through an obstacle and a safe path that avoids it. The agent incurs a cost of $1$ for remaining in place and a cost of $100$ upon colliding with an obstacle. In this setting, even under partial observability, the probability of encountering an obstacle along the safe path, as well as the probability of not encountering an obstacle along the dangerous path, may be small. Consequently, the estimators satisfy $\hat{d}^\pi_{\max}(b_0)=\hat{d}^\pi_{\min}(b_0)\approx 1\times 2$ for the safe path and $\hat{d}^\pi_{\max}(b_0)=\hat{d}^\pi_{\min}(b_0)\approx 100\times 2$ for the dangerous path. This allows the agent to distinguish between the two paths, in the sense that the CVaR of the dangerous path is strictly larger than that of the safe path.

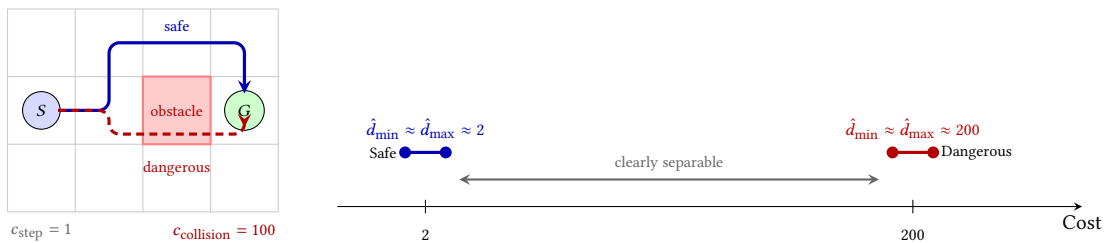
\begin{figure}[t]
	\centering
	\subfloat[An agent at position $S$ must choose between a safe path (blue, cost $1$ per step) and a dangerous path (red, passing through an obstacle with collision cost $100$). The goal's position is $G$.]{%
		\begin{tikzpicture}[>=stealth, font=\small, scale=0.9]
			\draw[black!20, thin] (0,0) grid (4,3);

			\fill[red!20, draw=red!50, thick] (2,1) rectangle (3,2);
			\node[font=\scriptsize, red!70!black] at (2.5,1.5) {obstacle};

			\node[circle, draw, fill=blue!15, minimum size=0.5cm, font=\scriptsize\bfseries] (start) at (0.5,1.5) {$S$};
			\node[circle, draw, fill=green!20, minimum size=0.5cm, font=\scriptsize\bfseries] (goal) at (3.5,1.5) {$G$};

			\draw[blue!70!black, very thick, ->, rounded corners=4pt]
				(0.75,1.5) -- (1.5,1.5) -- (1.5,2.5) -- (3.5,2.5) -- (3.5,1.75);
			\node[above=1pt, font=\scriptsize, blue!70!black] at (2.5,2.5) {safe};

			\draw[red!70!black, very thick, ->, densely dashed, rounded corners=4pt]
				(0.75,1.5) -- (1.5,1.5) -- (1.5,1.15) -- (3.5,1.15) -- (3.5,1.25);
			\node[below=2pt, font=\scriptsize, red!70!black] at (2.5,0.95) {dangerous};

			\node[font=\scriptsize, black!60] at (0.5,-0.3) {$c_{\text{step}}=1$};
			\node[font=\scriptsize, red!60!black] at (3.2,-0.3) {$c_{\text{collision}}=100$};
		\end{tikzpicture}%
		\label{fig:collapsed_bounds_paths}
	}
	\qquad
	\subfloat[When $\epsilon' \geq \alpha$, the CVaR bounds collapse to ${[\hat{d}_{\min}, \hat{d}_{\max}]}$. The return estimators for the safe path (${\approx 2}$) and dangerous path (${\approx 200}$) remain well-separated.]{%
		\begin{tikzpicture}[>=stealth, font=\small, scale=0.9]
			\draw[->] (-0.5,0) -- (10.5,0) node[below] {Cost};

			\foreach \x/\lbl in {0.8/$2$, 8.0/$200$} {
				\draw (\x, -0.1) -- (\x, 0.1);
				\node[below=3pt, font=\scriptsize] at (\x, -0.1) {\lbl};
			}

			\draw[blue!70!black, very thick] (0.5, 0.8) -- (1.1, 0.8);
			\fill[blue!70!black] (0.5, 0.8) circle (2.5pt);
			\fill[blue!70!black] (1.1, 0.8) circle (2.5pt);
			\node[above=2pt, font=\scriptsize, blue!70!black] at (0.8, 0.8) {$\hat{d}_{\min} \approx \hat{d}_{\max} \approx 2$};
			\node[left, font=\scriptsize] at (0.5, 0.8) {Safe};

			\draw[red!70!black, very thick] (7.7, 0.8) -- (8.3, 0.8);
			\fill[red!70!black] (7.7, 0.8) circle (2.5pt);
			\fill[red!70!black] (8.3, 0.8) circle (2.5pt);
			\node[above=2pt, font=\scriptsize, red!70!black] at (8.0, 0.8) {$\hat{d}_{\min} \approx \hat{d}_{\max} \approx 200$};
			\node[right, font=\scriptsize] at (8.3, 0.8) {Dangerous};

			\draw[<->, black!60, thick] (1.3, 0.4) -- (7.5, 0.4);
			\node[above, font=\scriptsize, black!60] at (4.4, 0.4) {clearly separable};
		\end{tikzpicture}%
		\label{fig:collapsed_bounds_intervals}
	}
	\caption{Illustration of bound-based action selection when $\epsilon' \geq \alpha$. (a)~The agent chooses between a safe path (cost $\approx 1 \times 2 = 2$) and a dangerous path through an obstacle (cost $\approx 100 \times 2 = 200$). (b)~Although the CVaR bounds collapse to $[\hat{d}_{\min}, \hat{d}_{\max}]$, the return estimators remain well-separated, enabling the agent to distinguish between the two paths.}
	\label{fig:collapsed_bounds_example}
\end{figure}

When the constraint $\epsilon' < \alpha$ required for the original bounds is violated---as in this example where $\epsilon = \alpha = 0.1$ and hence $\epsilon' \geq \epsilon \geq \alpha$---the bounds in Theorem~\ref{thm:guarantees} reduce to $\hat{d}_{\max}$ and $\hat{d}_{\min}$, causing the upper and lower bounds to coincide. Nevertheless, decision making remains possible: the estimators for the safe and dangerous paths differ substantially ($\approx 2$ versus $\approx 200$), enabling the agent to distinguish between them.

By Corollary~\ref{thm:cvar_tv_dist_bound_simp_obs}, the guarantees stated in Theorem~\ref{thm:guarantees} and Theorem \ref{thm:trivial_d_guarantees} also apply to the specific setting in which the observation model is simplified, with the distributional discrepancy $\epsilon$ between the original and simplified observation models defined as in Corollary~\ref{thm:cvar_tv_dist_bound_simp_obs}. As shown in Section \ref{sec:online_bound_estimation}, this setting can be resolved within the framework of online planning.
\section{Limitations}\label{sec:limitations}
The bounds in Theorem~\ref{thm:uniform_lower_and_upper_bounds_for_v_and_q} depend on the accumulation of one-step distributional discrepancies over the planning horizon, which may limit their effectiveness for long-horizon problems.

Importantly, this accumulation does not necessarily grow linearly with the horizon and is highly problem dependent. For example, in a Light–Dark environment, the agent receives observations only in light regions and relies exclusively on the state-transition model in dark regions. As a result, no distributional discrepancy is accumulated in dark regions, since the observation model is not invoked. Consequently, the effective distributional discrepancy can grow sublinearly with the horizon. This behavior is examined empirically in the experiments section, where we also consider a more challenging variant of the Light–Dark environment in which distributional discrepancy accumulates even in dark regions.

Moreover, even when the distributional discrepancy becomes large relative to $\alpha$, the agent can still act. In this case, the value function bounds collapse to their extrema; specifically, the upper value function bound reduces to $\hat{d}_{\max}$ (Theorem~\ref{thm:guarantees}), which is expected to be low in safe regions and high in dangerous ones. Analogous considerations hold for the lower bound. Consequently, decision making relies on minimum and maximum return samples conditioned on the path and policy, while the simplified observation model continues to provide computational acceleration, as it remains necessary for sampling return realizations.

Finally, the proposed simplification framework yields computational gains only when sampling from the state-transition model is cheaper than sampling from the observation model. The bounds in Theorem~\ref{thm:uniform_lower_and_upper_bounds_for_v_and_q} require samples from both the simplified observation model and the state-transition model to compensate for the absence of the original observation model. Consequently, if the state-transition model is computationally more expensive than the original observation model, the simplification approach would increase rather than decrease planning time. This limitation is shared by other observation model simplification frameworks~\cite{LevYehudi24aaai}.

\section{Experiments}
In this section, we evaluate our derived bounds across several standard benchmark environments. We demonstrate the efficacy of our bounds by examining their performance on selected action sequences, illustrating their capacity to distinguish between safe and unsafe trajectories while achieving computational acceleration relative to the baseline model. We subsequently demonstrate the integration of these bounds into open-loop planning frameworks. All simulations were conducted using the POMDPPlanners package \cite{pariente2026pomdpplannersopensourcepackagepomdp}. Detailed simulation configurations are provided in Appendix~\ref{sec:simulations_appendix}, and hardware specifications in Appendix~\ref{sec:hardware}.

\subsection{Theoretical CVaR Bounds}
\begin{figure*}[htbp]
	\centering
	\subfloat[]{\includegraphics[width=0.36\textwidth]{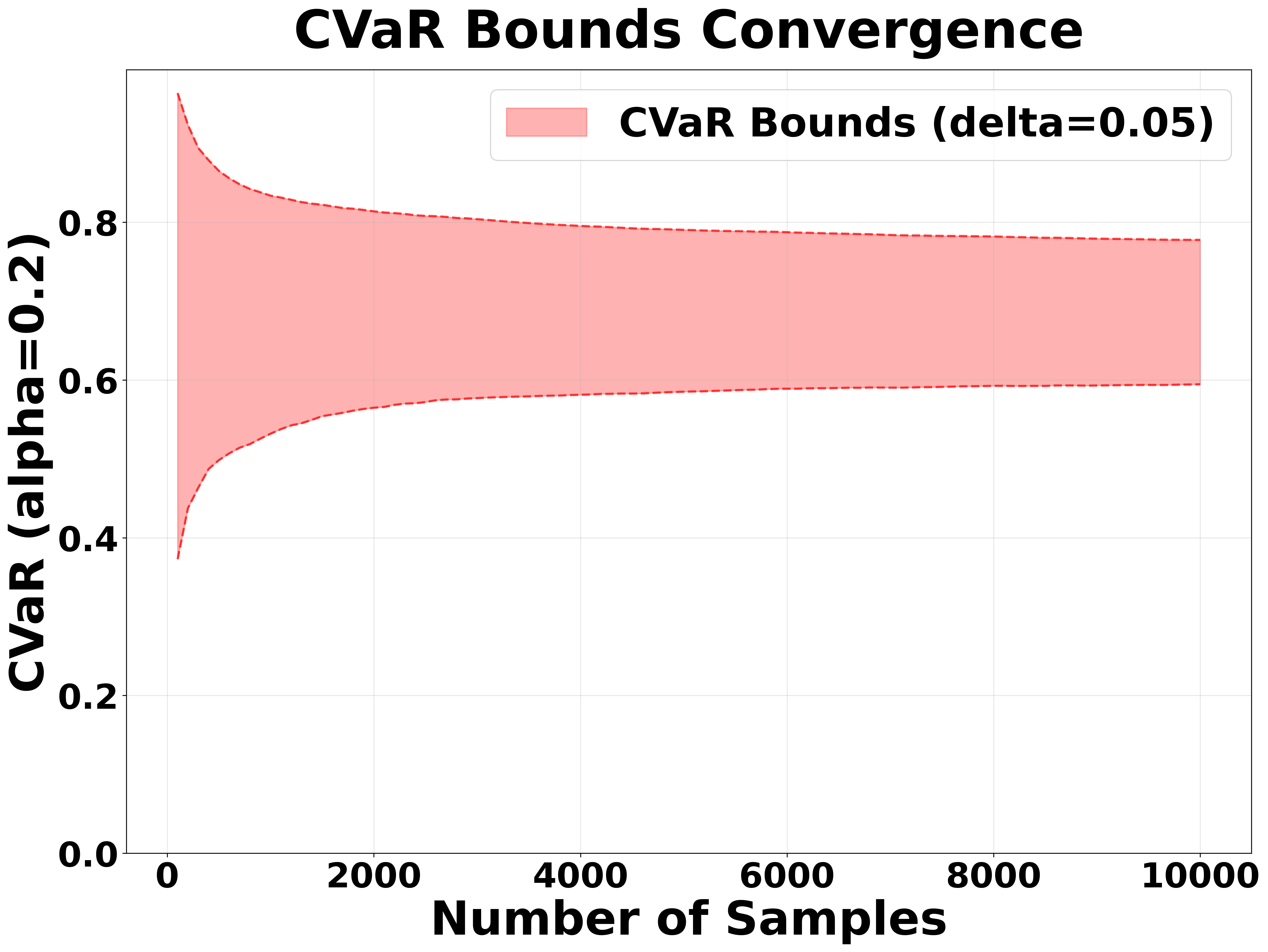}\label{fig:cvar_bounds_convergence}}
	\quad
	\subfloat[]{\includegraphics[width=0.36\textwidth]{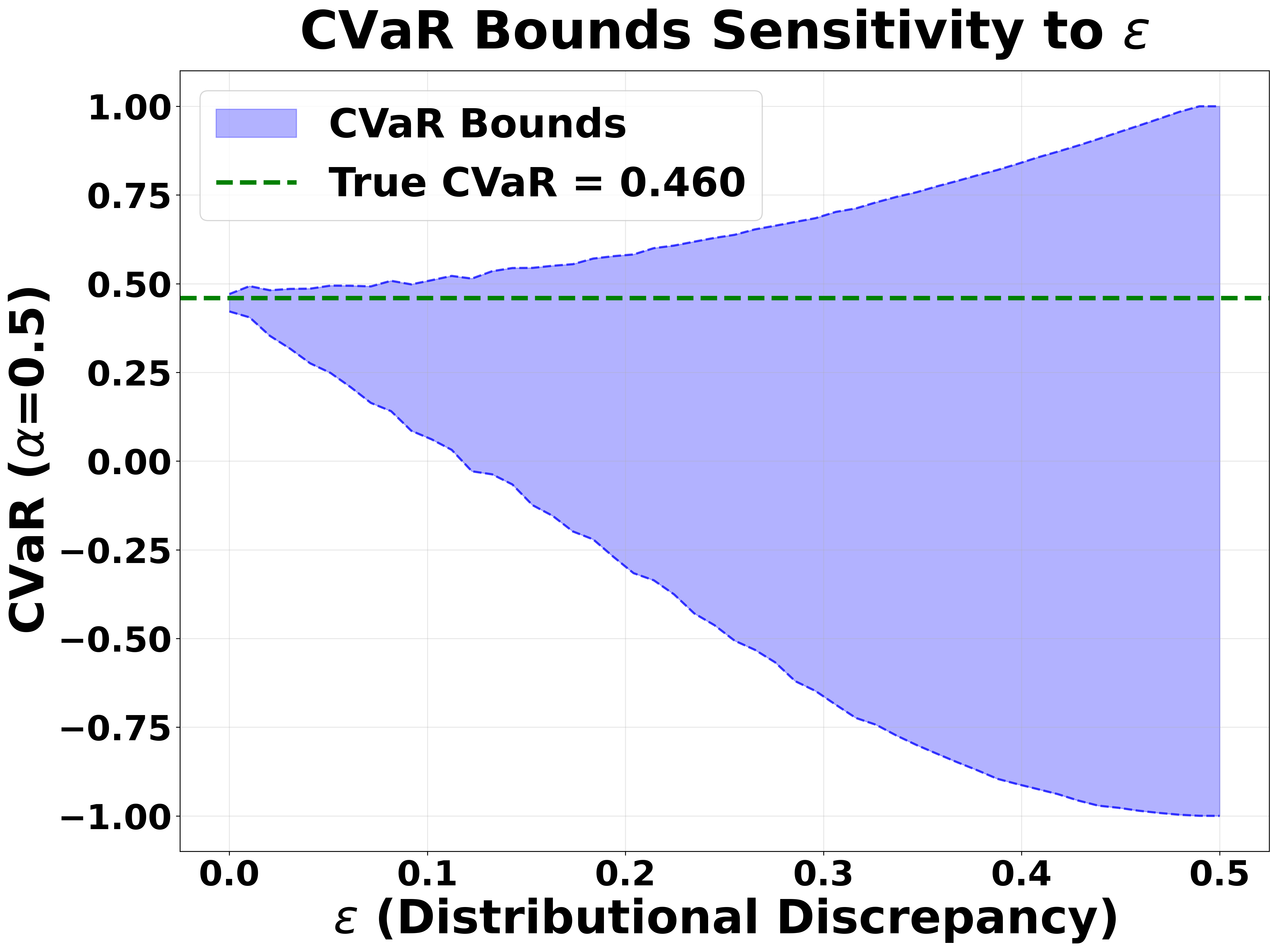}\label{fig:bounds_sensitivity_to_epsilon}}\\[1em]
	\subfloat[]{\includegraphics[width=0.36\textwidth]{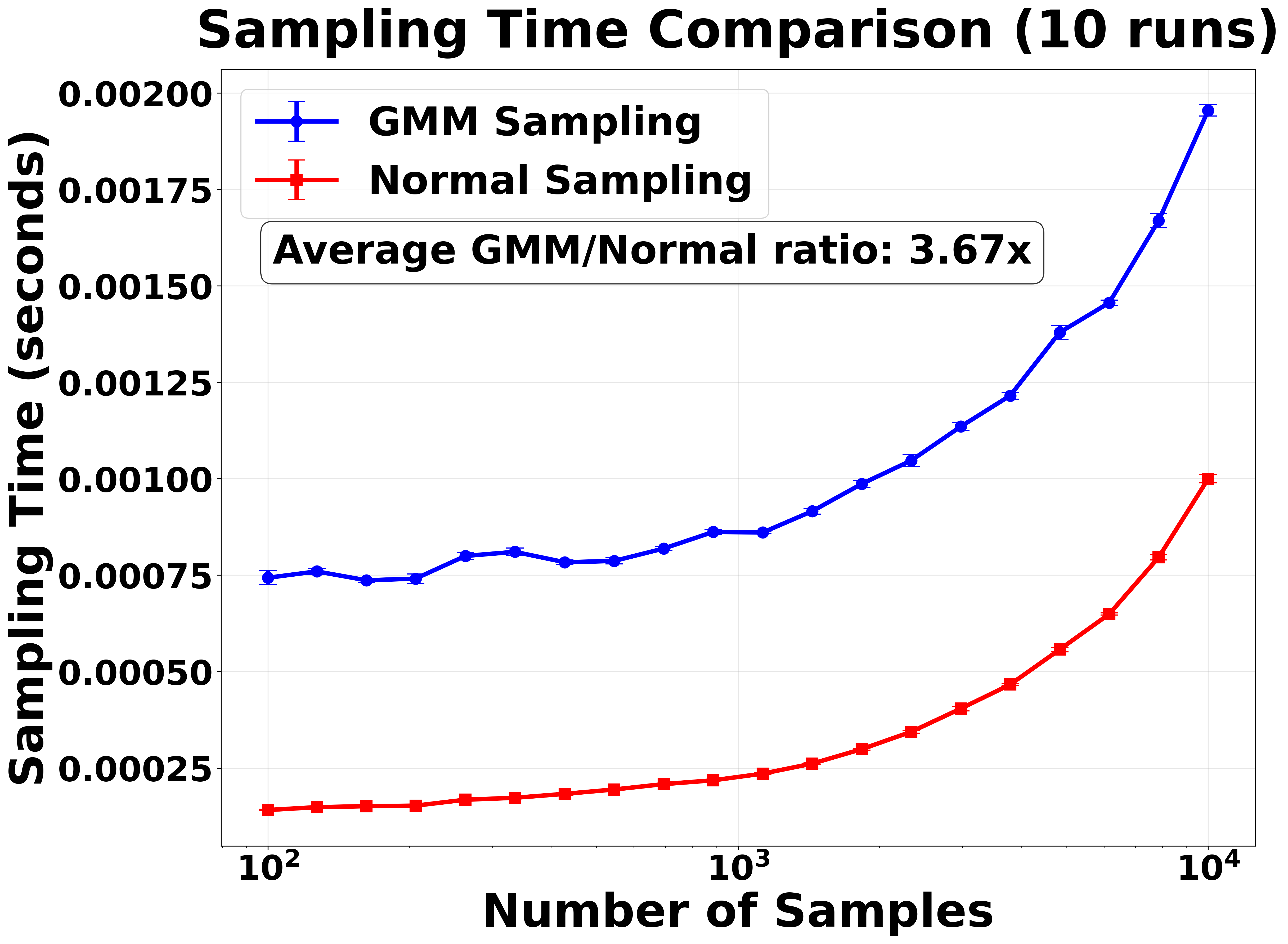}\label{fig:sampling_running_time}}
	\quad
	\subfloat[]{\includegraphics[width=0.36\textwidth]{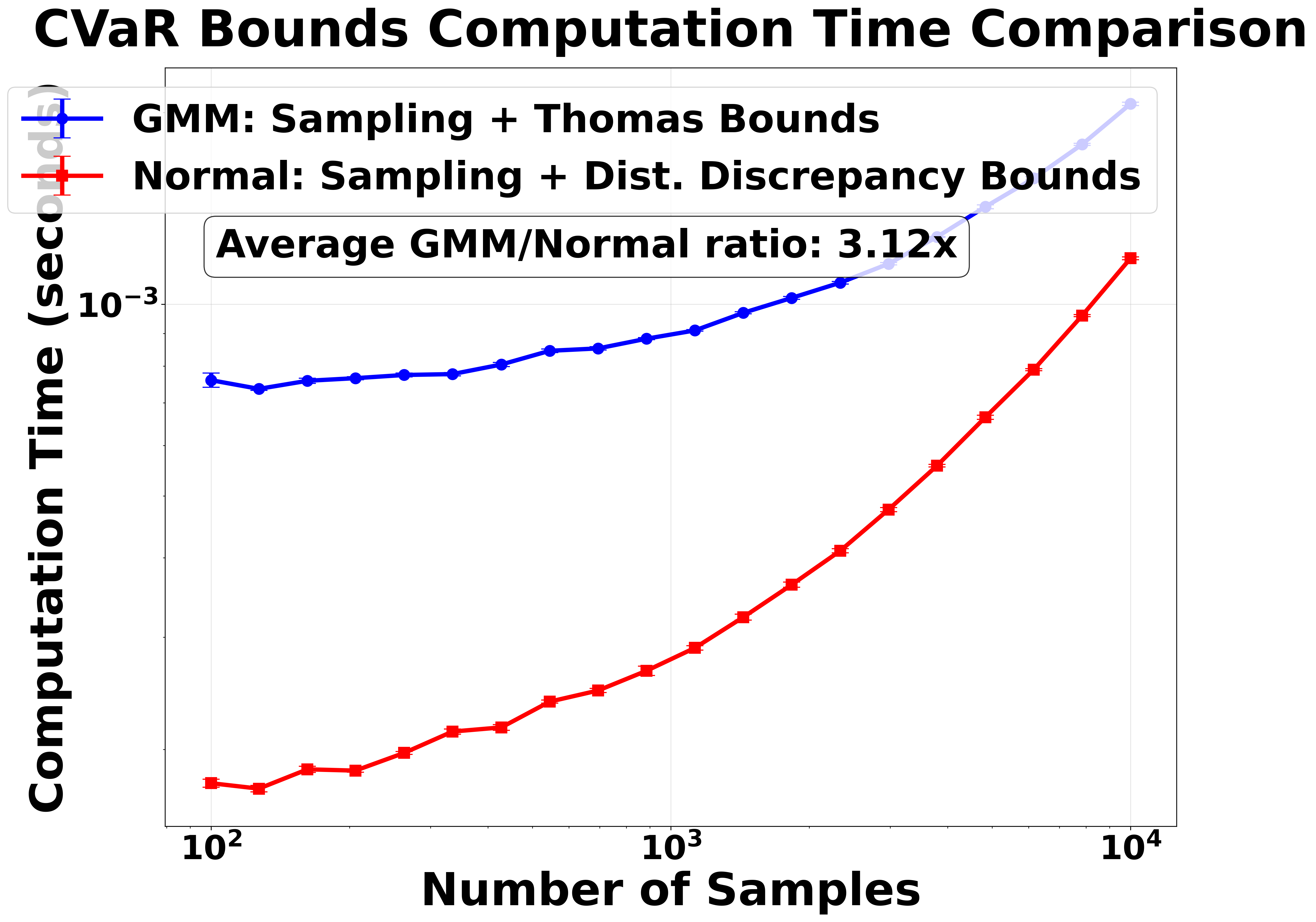}\label{fig:cvar_time_comparison_1}}
	\caption{Figure \ref{fig:cvar_bounds_convergence} shows the bounds established in Theorem \ref{thm:ecdf_y_bound_cvar_x}, demonstrating how a simple truncated Normal distribution can be employed to bound the CVaR of a truncated GMM. Figure \ref{fig:bounds_sensitivity_to_epsilon} illustrates the sensitivity of the CVaR bounds to the distributional discrepancy $\epsilon$, showing how the bound interval widens as $\epsilon$ increases, while the true CVaR remains contained within the bounds. When $\epsilon$ exceeds the risk level $\alpha$, the bounds become trivial, spanning the entire support; this is expected, as a large distributional discrepancy implies that the surrogate random variable $Y$ provides negligible information about the target random variable $X$. Figure \ref{fig:sampling_running_time} presents a comparison of the sampling times for each distribution, while Figure \ref{fig:cvar_time_comparison_1} reports the total computation time required to sample from the truncated GMM and estimate its CVaR, in contrast to sampling from the truncated Normal surrogate and computing the associated concentration bounds. Figures \ref{fig:cvar_bounds_convergence}, \ref{fig:sampling_running_time}, and \ref{fig:cvar_time_comparison_1} were configured with confidence level $\alpha=0.2$ and probability of error $\delta=0.05$. Figure \ref{fig:bounds_sensitivity_to_epsilon} uses $\alpha=0.5$. Figures \ref{fig:sampling_running_time} and \ref{fig:cvar_time_comparison_1} display 95\% confidence intervals around the empirical means.}
\end{figure*}

Specifically, we compare a truncated Gaussian Mixture Model (GMM) and its corresponding truncated Normal approximation for CVaR estimation under bounded support. Such a setting arises, for example, in POMDP planning, where a computationally expensive model used by the agent during online planning is replaced with a more tractable surrogate model, thereby improving the agent’s decision-making speed \cite{LevYehudi24aaai}. In these settings, the agent’s decisions may rely on bounds for the original value function that are derived from the tractable surrogate model \cite{ijcai2022p637}.

The GMM consists of five components with means $\mu_1 = 0.2$, $\mu_2 = -0.2$, $\mu_3 = -0.5$, $\mu_4 = 0.5$, $\mu_5 = 0.0$, variances $\sigma_1^2 = 0.5$, $\sigma_2^2 = 0.2$, $\sigma_3^2 = 0.1$, $\sigma_4^2 = 0.1$, $\sigma_5^2 = 0.3$, and weights $w_1 = 0.3$, $w_2 = 0.2$, $w_3 = 0.05$, $w_4 = 0.05$, $w_5 = 0.4$. The Normal approximation matches the GMM's mean and variance but cannot reproduce its multi-modal structure or outlier effects. All samples are truncated to the interval $[-1, 1]$, which reshapes the tails and slightly distorts boundary components. We compute CVaR at the 20\% quantile for sample sizes ranging from 100 to 10{,}000, with 100 independent repetitions per setting to ensure statistical reliability. The distributional discrepancy ($\epsilon$ in Theorem \ref{thm:ecdf_y_bound_cvar_x}) between the truncated GMM and the truncated Normal distribution is assessed via simulation, by estimating their respective cumulative distribution functions over a common set of bins and computing the maximum difference across all bins. This setup enables a direct assessment of the trade-off between computational efficiency and statistical accuracy when approximating a complex truncated mixture by a single truncated Normal distribution.

Figure \ref{fig:sampling_running_time} presents the sampling time comparison between the truncated GMM and the truncated Normal distribution, with an observed average time ratio of approximately 3.7 in favor of the Normal distribution. Figure \ref{fig:cvar_time_comparison_1} reports a total computational speedup of approximately 3.1 when comparing the process of sampling from the truncated GMM and estimating its CVaR to that of sampling from the truncated Normal and computing both upper and lower bounds as given in Theorem \ref{thm:ecdf_y_bound_cvar_x}. Figure \ref{fig:cvar_bounds_convergence} illustrates the convergence behavior of the CVaR bounds as a function of the number of samples, comparing estimates obtained from the truncated GMM with bounds derived from the truncated Normal surrogate. It is important to note that these bounds are obtained without requiring full knowledge of the underlying GMM distribution. Instead, they rely solely on the discrepancy between the corresponding CDFs. Figure \ref{fig:bounds_sensitivity_to_epsilon} depicts the sensitivity of the CVaR bounds to the distributional discrepancy $\epsilon$. In this experiment, the base distribution $F$ is a truncated Normal on $[-1,1]$ with mean $0$ and standard deviation $1$. For each value of $\epsilon \in [0, 0.5]$, a perturbed distribution $G$ is constructed such that its CDF satisfies $G(x) = \min(F(x) + \epsilon, 1)$, yielding a Kolmogorov--Smirnov distance of exactly $\epsilon$ between $F$ and $G$ by construction. Samples from $G$ are obtained via inverse transform sampling. The resulting bounds grow monotonically with $\epsilon$ while consistently enclosing the true CVaR of $F$. When $\epsilon$ exceeds the risk level $\alpha$, the bounds become trivial, spanning the full support; this is expected, since a large distributional discrepancy implies that $Y$ provides negligible information about $X$.

We evaluated the concentration bounds established by \cite{pmlr-v97-thomas19a} alongside our proposed bounds from Theorem \ref{thm:ecdf_cvar_bound} on a set of probability distributions: $\operatorname{Beta}(2, 2)$, $\operatorname{Beta}(0.5, 0.5)$, $\operatorname{Beta}(2, 5)$, $\operatorname{Beta}(5, 2)$, $\operatorname{Beta}(10, 2)$, $\operatorname{Beta}(2, 10)$, and the Laplace$(0, 1)$ distribution. These distributions were selected to match those used in the original study by \cite{pmlr-v97-thomas19a}, enabling a direct comparison under identical conditions. For each distribution, our bounds precisely coincide with those reported by \cite{pmlr-v97-thomas19a}, resulting in complete overlap between the two sets of bounds. The graphs exhibiting these results are available in Appendix~\ref{sec:thomas_comparison} (Figure~\ref{fig:cvar_bounds_comparison}).

\subsection{POMDP Environments}
We evaluate our approach on three POMDP domains: 2D Light-Dark Navigation, Laser Tag, and Push. Each environment contains dangerous areas where the likelihood of incurring high penalties is elevated, requiring the agent to balance risk-aware decision-making with task completion. In the 2D Light-Dark POMDP, an agent must navigate from a start position to a goal region while avoiding these dangerous areas. The observation noise is position-dependent, with lower uncertainty near designated beacon locations, requiring the agent to balance information gathering with goal-directed movement. In Laser Tag POMDP, the agent must locate and tag an opponent in a partially observable environment using noisy range-bearing measurements from multiple sensors, resulting in an 8-dimensional observation space, while navigating around dangerous regions. The Push POMDP involves manipulating an object to a target location while managing partial observability about the object's position through noisy 2-dimensional observations and avoiding dangerous areas. For each domain, we construct two environment variants: an original environment with a complex observation model represented by Gaussian Mixture Models (GMMs) capturing multi-modal observation distributions, and a simplified environment where this complex distribution is approximated using a single Gaussian distribution. This approximation introduces a controlled model mismatch, allowing us to evaluate the robustness of planning algorithms when the true observation dynamics differ from the assumed model. The GMM-based observation models enable realistic representation of sensor fusion, occlusions, and multi-hypothesis tracking, while the Gaussian approximation provides computational tractability at the cost of distributional fidelity. Full environment specifications are provided in Appendix~\ref{sec:env_configs}.

\subsection{Online Distributional Discrepancy Estimation}
To compute the distributional discrepancy \( \epsilon \) appearing in Corollary~\ref{thm:cvar_tv_dist_bound_simp_obs}, we define \( Q_0 \) as the uniform distribution over the axis-aligned square whose lower-left corner is \( x_{\text{start}} \) and upper-right corner is \( x_{\text{goal}} \). Samples \( x_n^\Delta \) are drawn from the mixture distribution \( (p_Z + q_Z)/2 \); that is, with probability \( 0.5 \), a sample is drawn from the original GMM observation model, and with probability \( 0.5 \), from the simplified observation model. The estimator \( \hat{\Delta}^s(x_n^\Delta) \) is then defined as
\begin{equation}\label{eq:delta_estimator}
	\hat{\Delta}^s(x_n^{\Delta}) = \sum_{j=1}^{N_z} 2 \cdot \frac{\left| p_z(z_j^n \mid x_n^{\Delta}) - q_z(z_j^n \mid x_n^{\Delta}) \right|}{p_z(z_j^n \mid x_n^{\Delta}) + q_z(z_j^n \mid x_n^{\Delta})},
\end{equation}
and is evaluated over $N^\Delta=100$ presampled states. Here, \( N_z = 2000 \) denotes the number of sampled observations used in the estimation procedure. Given equation~\eqref{eq:delta_estimator}, the belief-dependent one-step distributional discrepancy \( \hat{m}(\bar{b}_t, a) \) can be estimated accordingly. Since the state-transition model in \eqref{eq:m_est_state} assigns exponentially small probability to states that are far from the current state, we employ a K-Nearest Neighbors (KNN) approach with $K=10$ to select a subset of the samples \( \{x_n^\Delta\} \) that lie in the vicinity of a given state \( x \). The estimation is then performed over this localized subset, rather than the entire collection \( \{x_n^\Delta\} \).

\subsection{Empirical Static CVaR Bound Evaluation: Open-Loop Policies}
To evaluate the bounds we define for each environment two action-sequences - one that dictates a safe path and the other that dictates a dangerous path. Figure \ref{fig:agent_path} shows that bounds based on the simplified observation models distinguish between the two action sequences and would eliminate the dangerous path under action elimination. The estimated distributional discrepancies are $\epsilon = 0.38$ and $\epsilon = 0.36$ for the dangerous and safe paths in Light-Dark, $\epsilon = 0.52$ and $\epsilon = 0.53$ in Laser Tag, and $\epsilon = 0.24$ and $\epsilon = 0.20$ in Push. Despite high distributional discrepancies across paths within each environment, the bounds successfully discriminate between the two action sequences. Notably, in the Laser Tag environment, the estimated $\epsilon$ exceeds $0.5$, which is the value of the risk level $\alpha$; nevertheless, the bounds remain sufficiently tight to separate the safe and dangerous action sequences.

Comparing the bounds computation time using the original observation models using the bounds of \cite{pmlr-v97-thomas19a} to our bounds that use the simplified observation models, we gain approximately $20.89\times$ acceleration in Light-Dark POMDP, $8.87\times$ in Laser Tag POMDP, and $7.62\times$ in Push POMDP. Figure \ref{fig:acceleration_sensitivity} illustrates how the acceleration remains solid when the number of return samples and horizon changes.

Figure~\ref{fig:horizon_bounds} presents the evolution of the CVaR bounds as a function of the planning horizon for the same safe and dangerous action sequences depicted in Figure~\ref{fig:agent_path}, where each action sequence is decomposed into sub-sequences of increasing horizon length. In the Light-Dark environment, the bounds are separated from the earliest horizon, since the agent's start state is close to the dangerous area and the dangerous path immediately traverses the high-cost region. In Laser Tag and Push, the bounds of both paths are similar at early horizons, before the dangerous path enters the hazardous region, confirming that the bound separation is driven by the actual risk difference rather than by an artifact of the bounding mechanism. In Laser Tag, the separation is most pronounced: both paths have comparable bounds up to horizon~$4$, after which the dangerous path's bounds increase sharply as it enters the dangerous area, while the safe path remains at low values throughout. In Push, the two paths overlap closely up to horizon~$4$, and the bounds diverge around horizon~$6$ when the dangerous path approaches the hazardous region. These patterns are consistent across both $\alpha=0.5$ and $\alpha=0.1$, indicating that the bounds remain discriminative under a more risk-averse setting. The bottom row of Figure~\ref{fig:horizon_bounds} further examines the sensitivity of the bounds to the risk level~$\alpha$ at a fixed horizon. As $\alpha$ decreases, the CVaR focuses on worse-case outcomes, causing both the bound values and the bound intervals to increase; nevertheless, the bounds maintain a clear separation between the safe and dangerous paths across the full range of~$\alpha$.

\begin{figure}[H]
	\centering
	\subfloat[]{\includegraphics[height=0.24\textwidth]{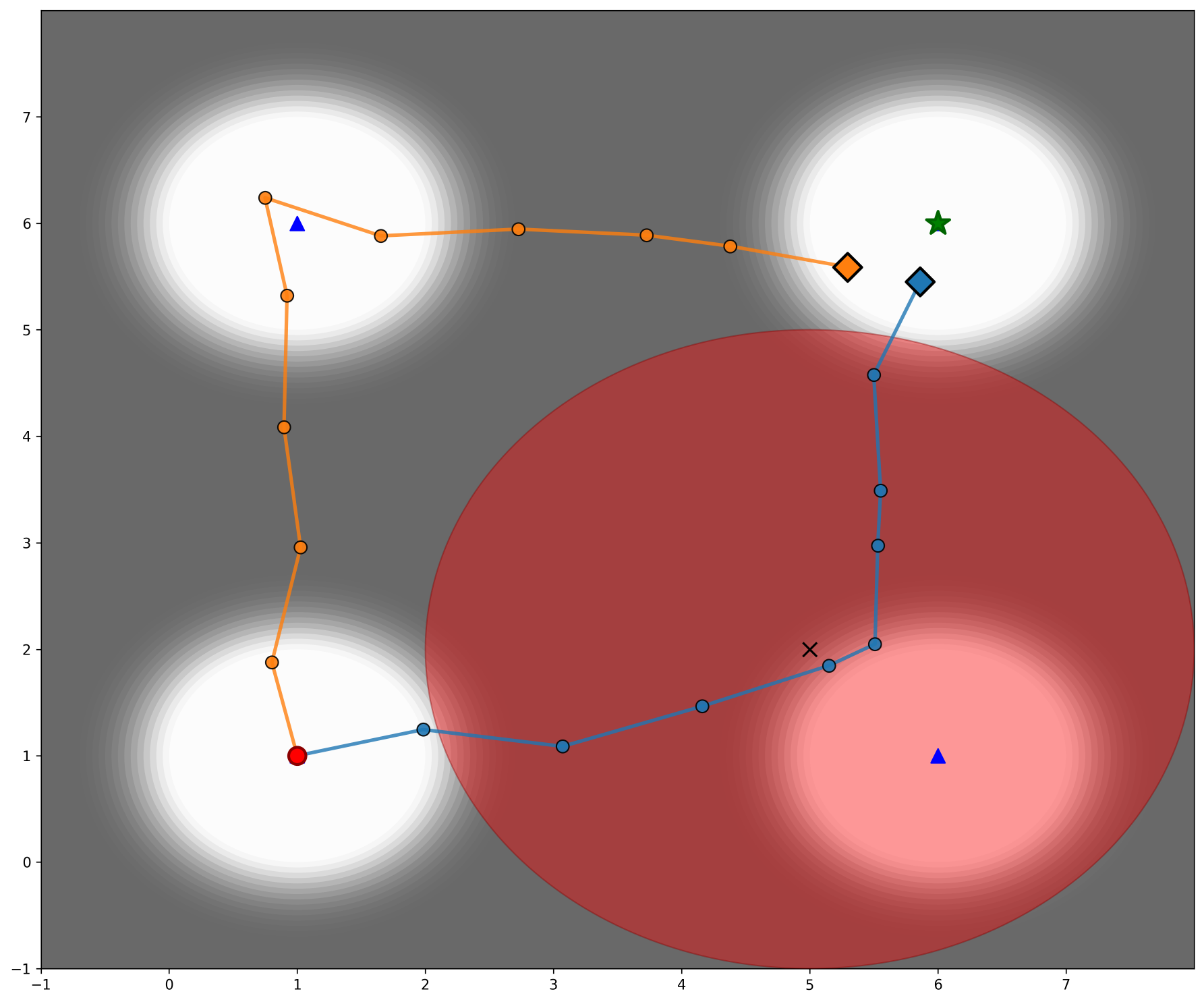}\label{fig:light_dark_env}}
	\subfloat[]{\includegraphics[height=0.24\textwidth]{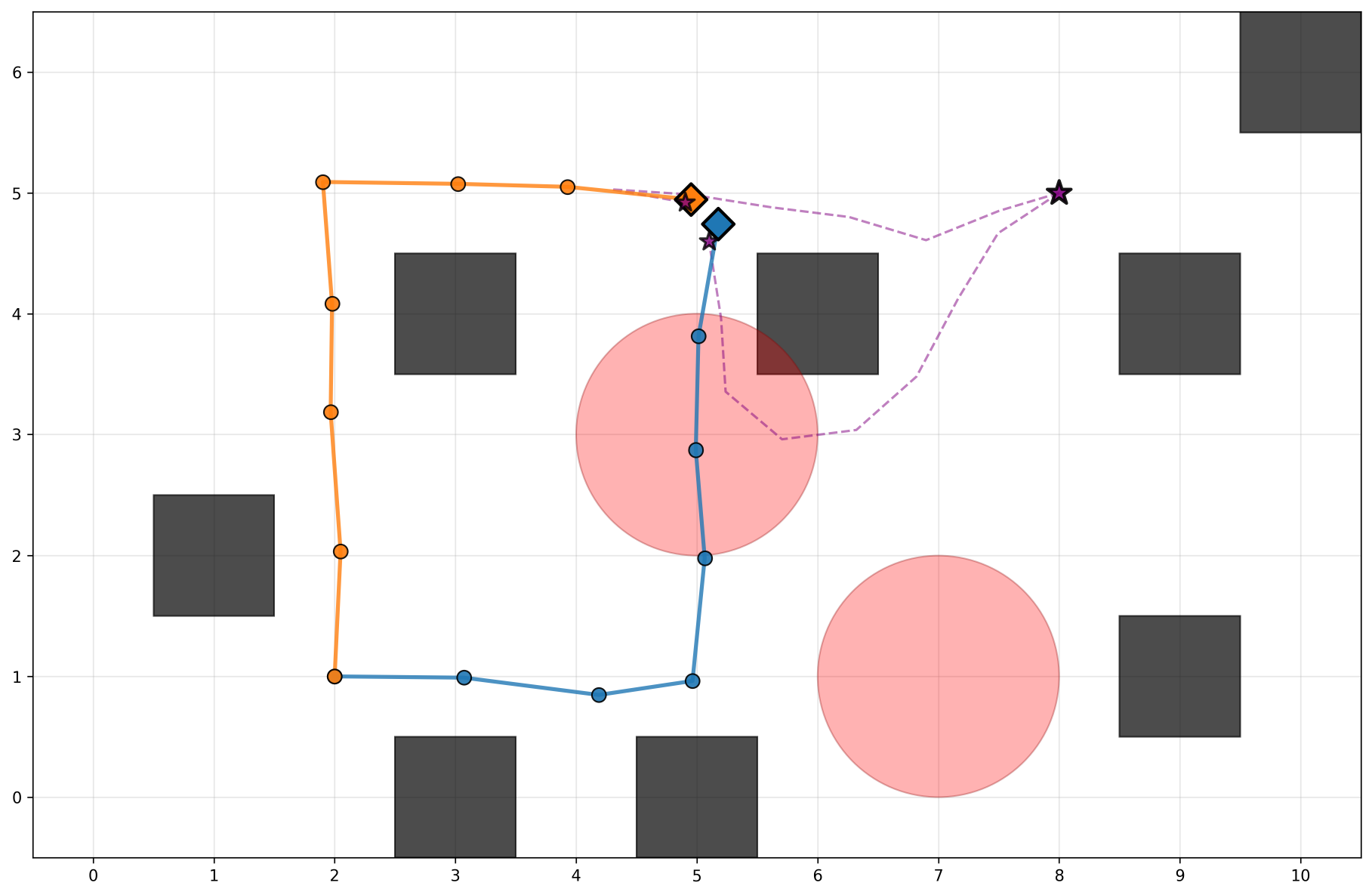}\label{fig:laser_tag_env}}
	\subfloat[]{\includegraphics[height=0.24\textwidth]{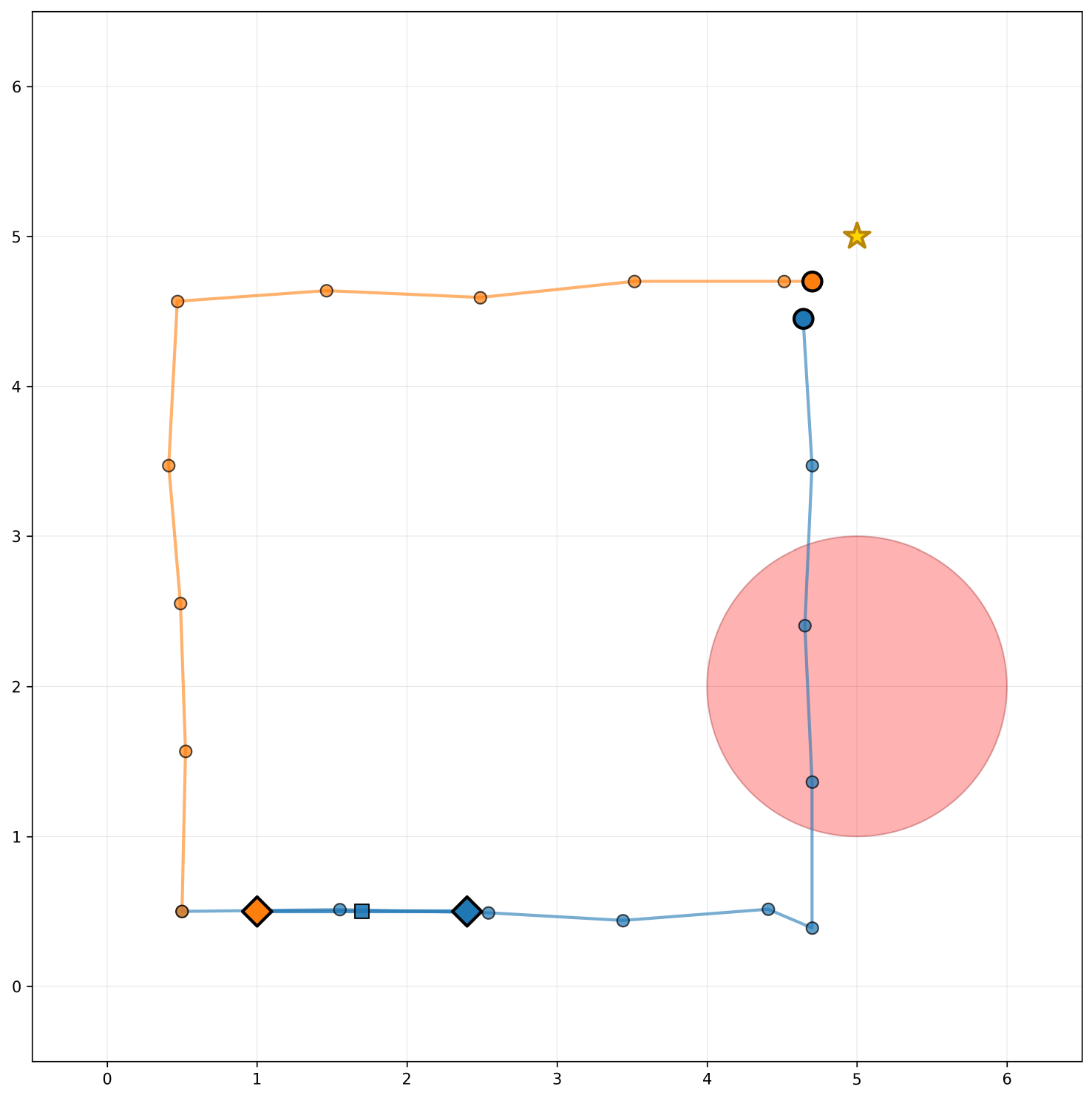}\label{fig:push_env}} \\[1em]
	\subfloat[]{\includegraphics[width=0.30\textwidth]{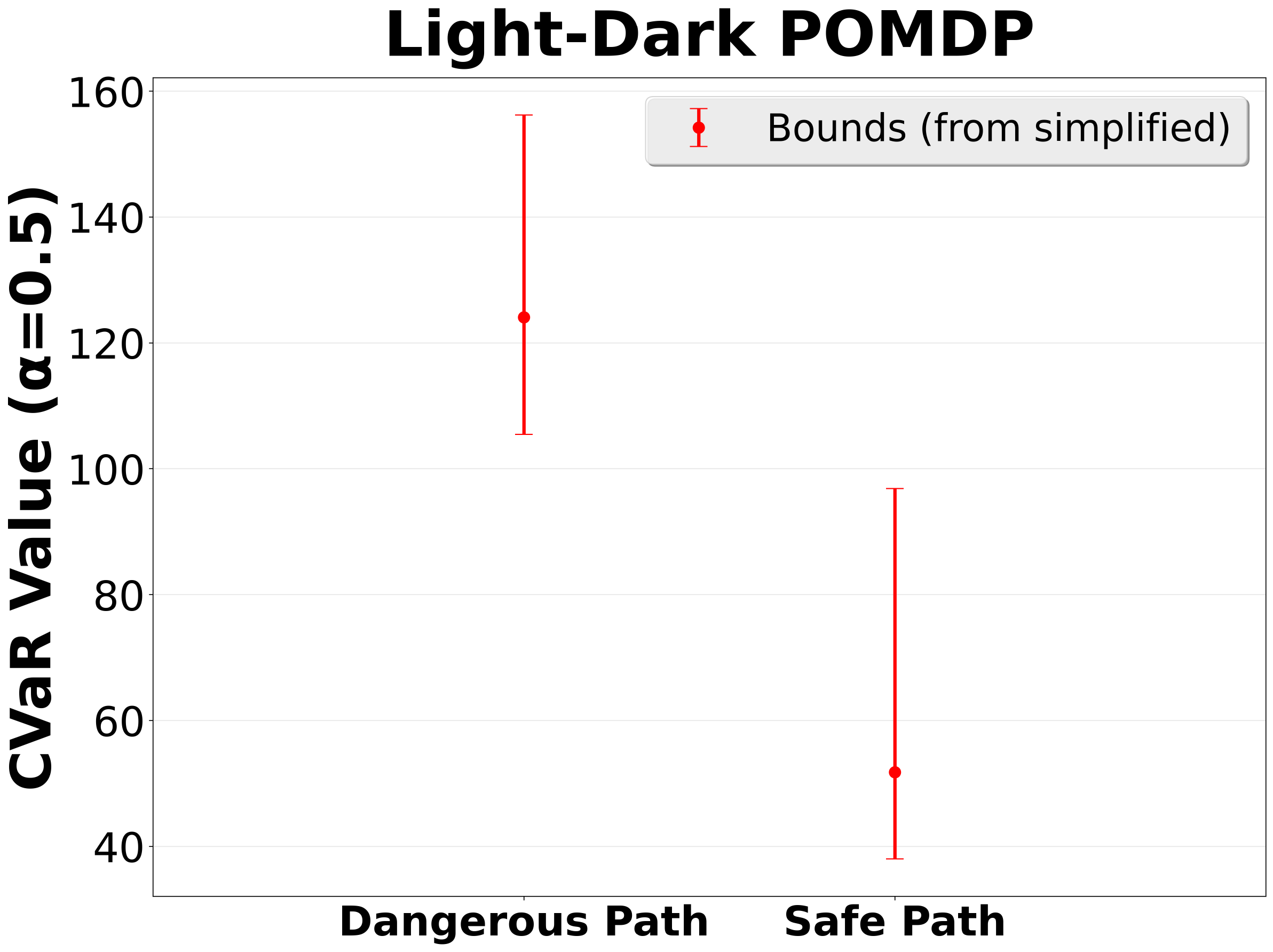}\label{fig:light_dark_action_sequence_bounds}}
	\subfloat[]{\includegraphics[width=0.30\textwidth]{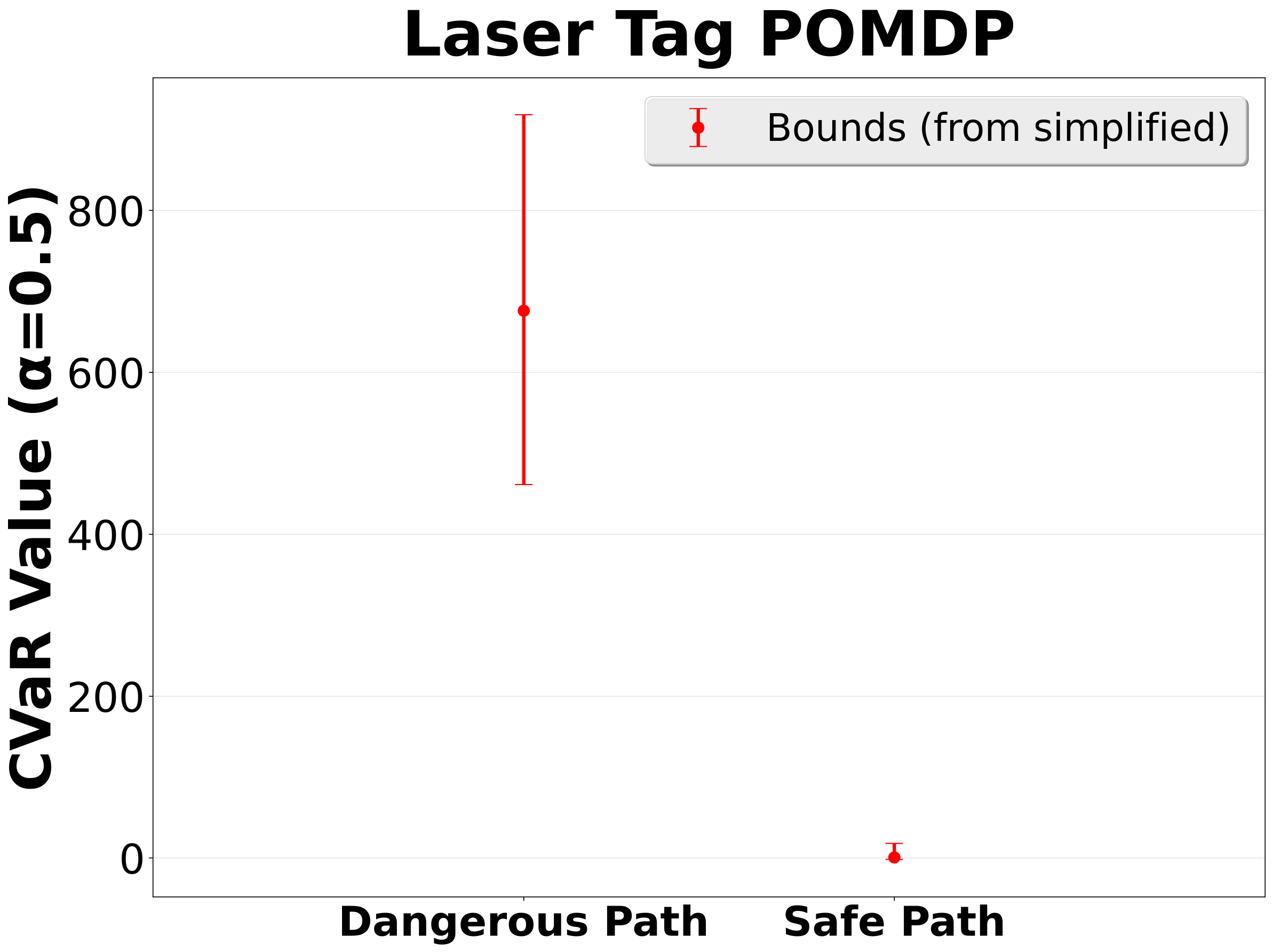}\label{fig:laser_tag_action_sequence_bounds}}
	\subfloat[]{\includegraphics[width=0.30\textwidth]{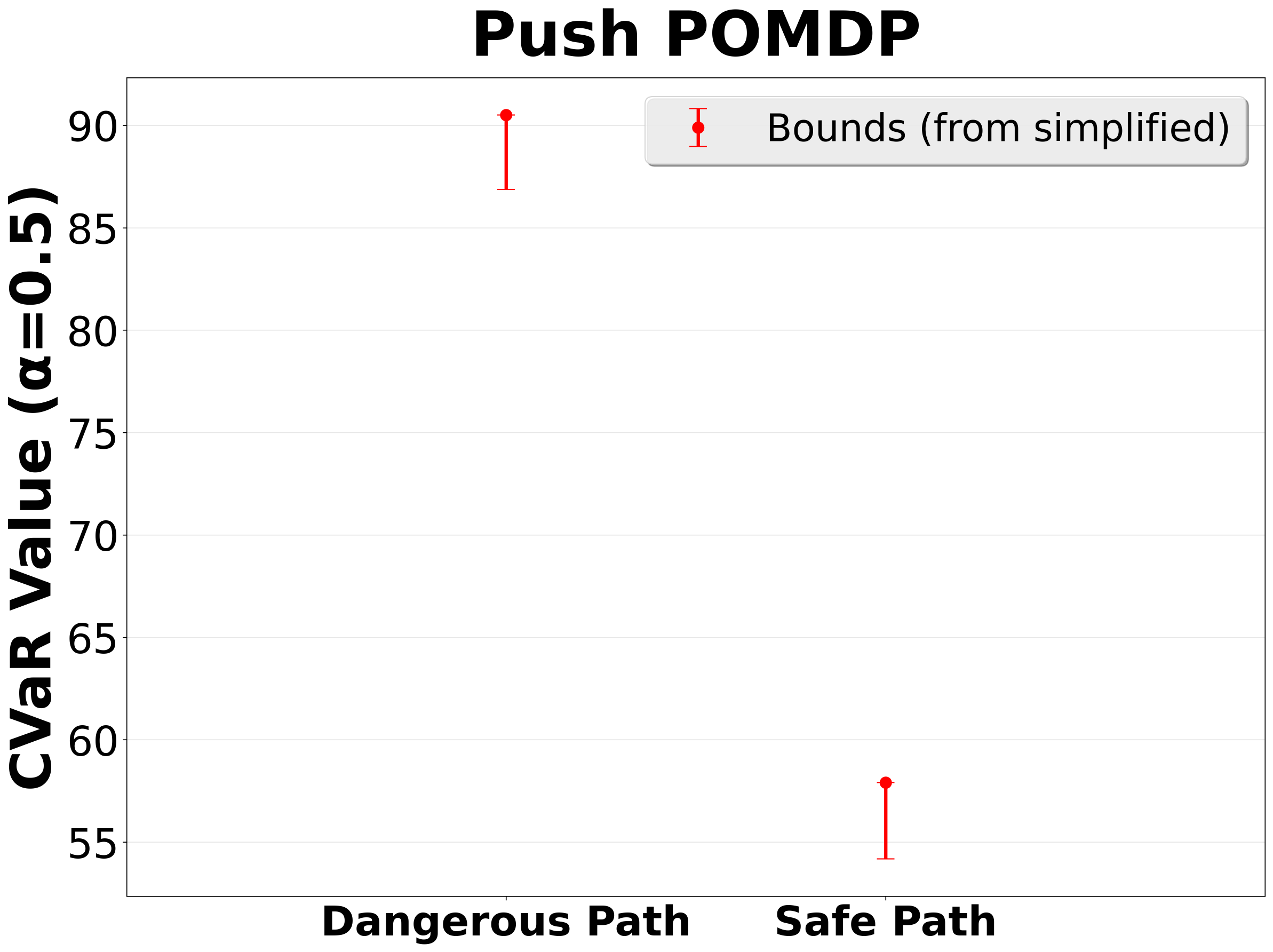}\label{fig:push_action_sequence_bounds}}
	\caption{
		Figures \ref{fig:light_dark_env}, \ref{fig:laser_tag_env}, and \ref{fig:push_env} illustrate pairs of agent trajectories, consisting of a safe (orange) and a dangerous (blue) path.
		In Figure \ref{fig:light_dark_env}, the agent moves from the lower-left to the upper-right goal location; the dangerous trajectory incurs higher cost.
		Figure \ref{fig:laser_tag_env} depicts trajectories in the Laser Tag environment, where the dangerous path traverses a high-cost region indicated in red.
		Figure \ref{fig:push_env} shows a similar pair of trajectories, with the dangerous path entering a hazardous region that leads to a terminal state.
		Across all three environments, the theoretical bounds on the values of the corresponding action sequences successfully discriminate between the safe and dangerous trajectories.
		Bounds are computed with probability of error $\delta = 0.05$.
	}
	\label{fig:agent_path}
\end{figure}

\begin{figure}[H]
	\centering
	\subfloat[]{\includegraphics[width=0.30\textwidth]{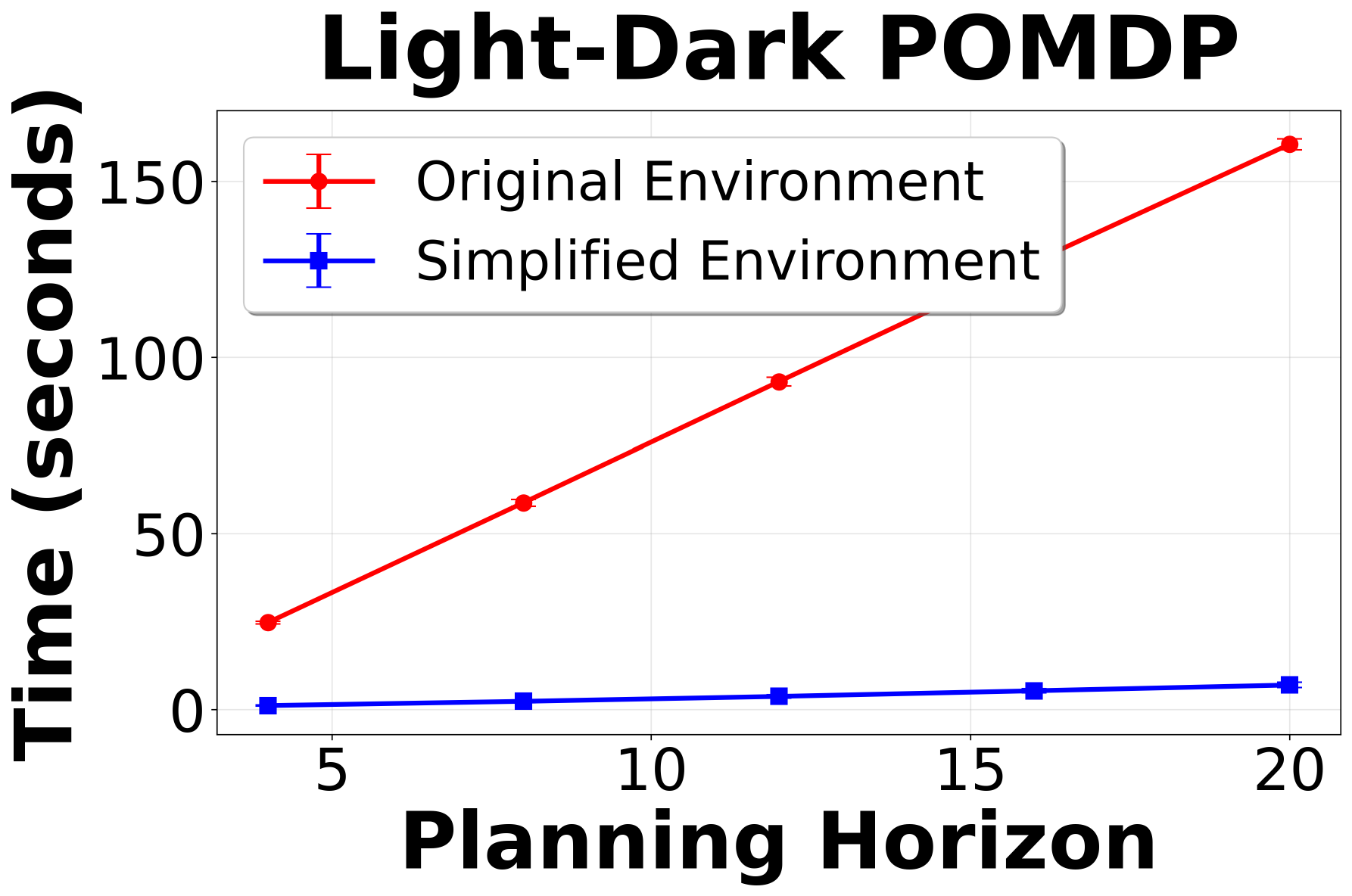}\label{fig:light_dark_horizon_time}}
	\subfloat[]{\includegraphics[width=0.30\textwidth]{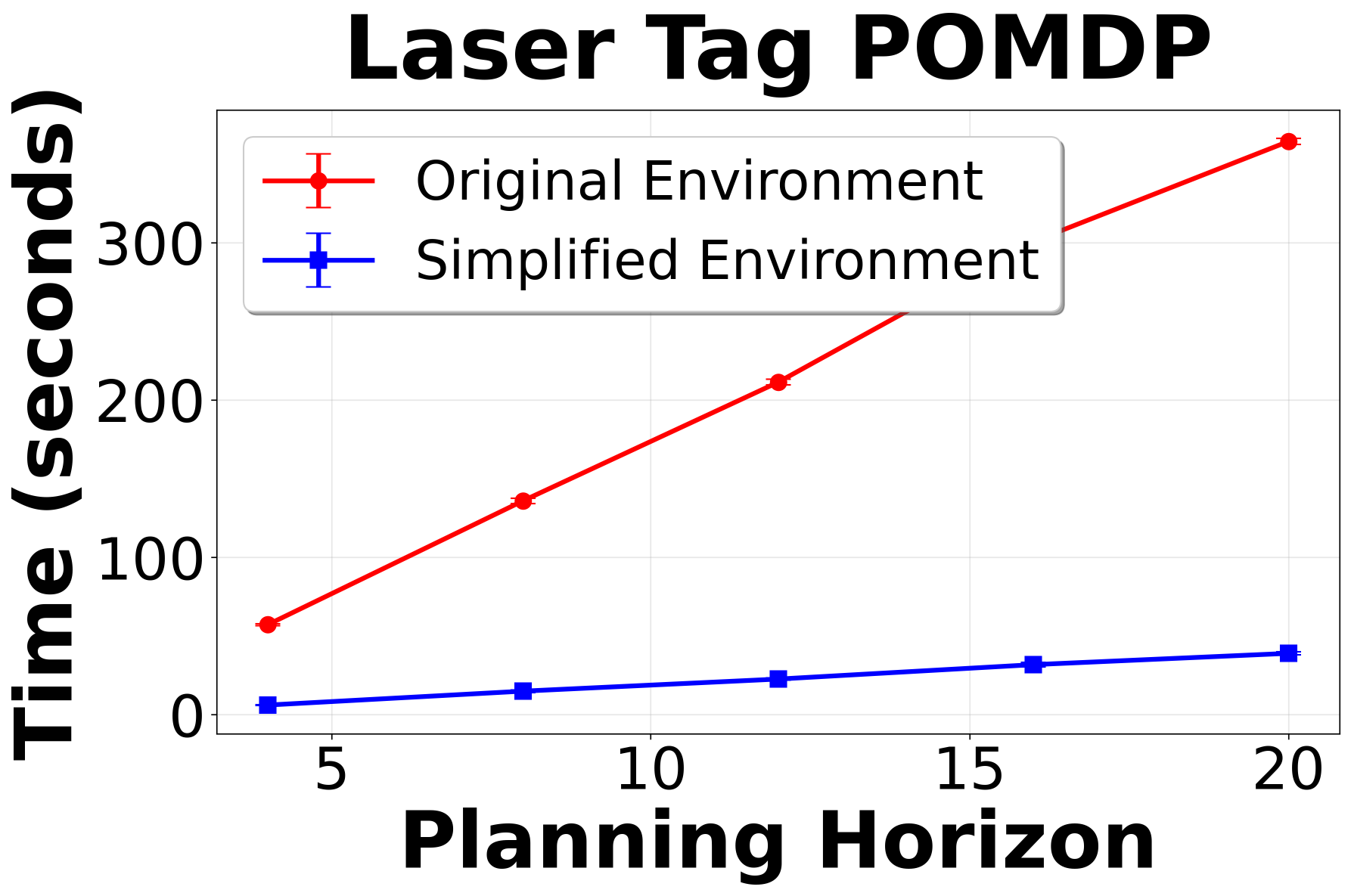}\label{fig:laser_tag_horizon_time}}
	\subfloat[]{\includegraphics[width=0.30\textwidth]{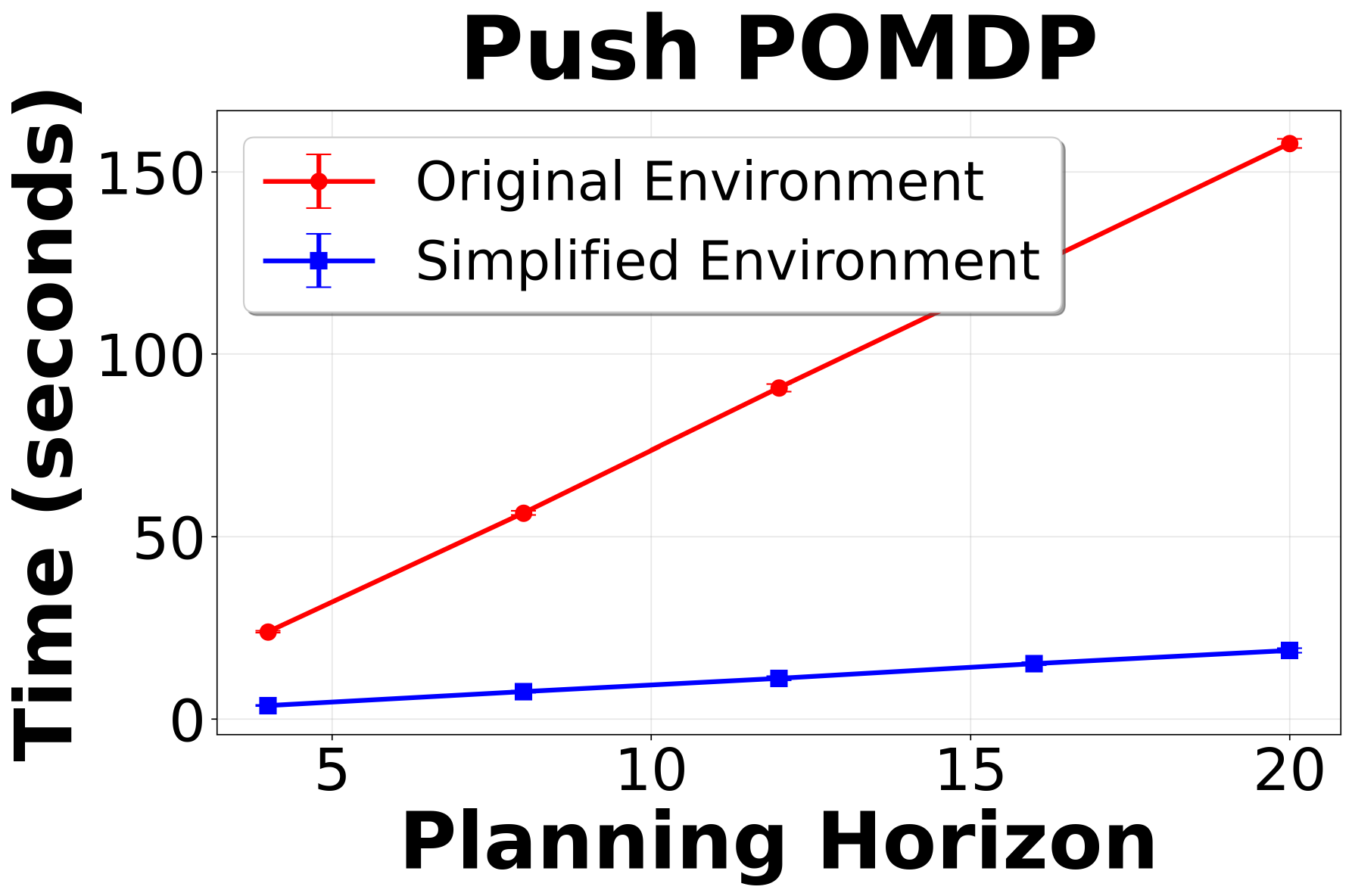}\label{fig:push_horizon_time}} \\[1em]
	\subfloat[]{\includegraphics[width=0.30\textwidth]{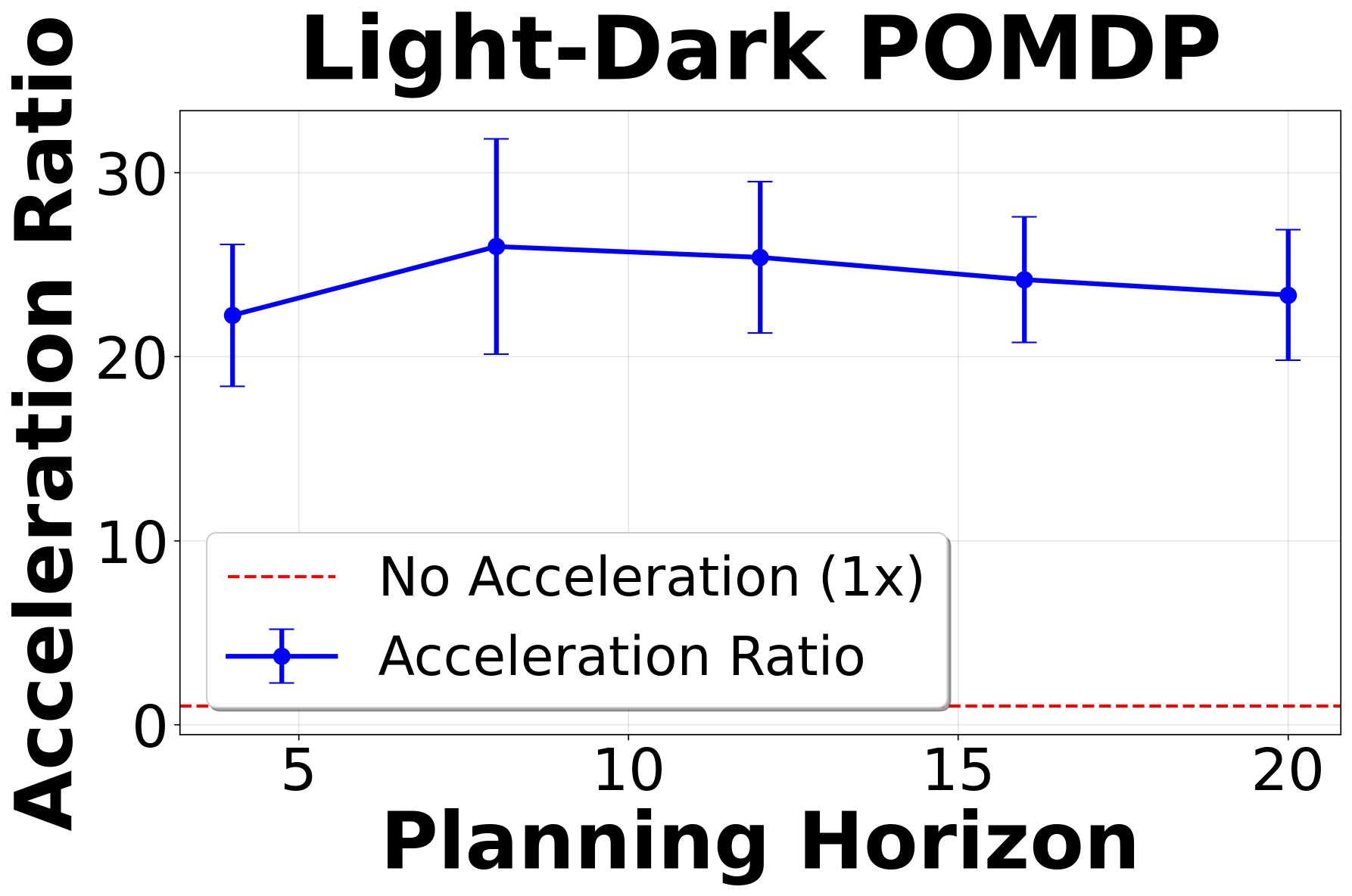}\label{fig:light_dark_horizon_acceleration}}
	\subfloat[]{\includegraphics[width=0.30\textwidth]{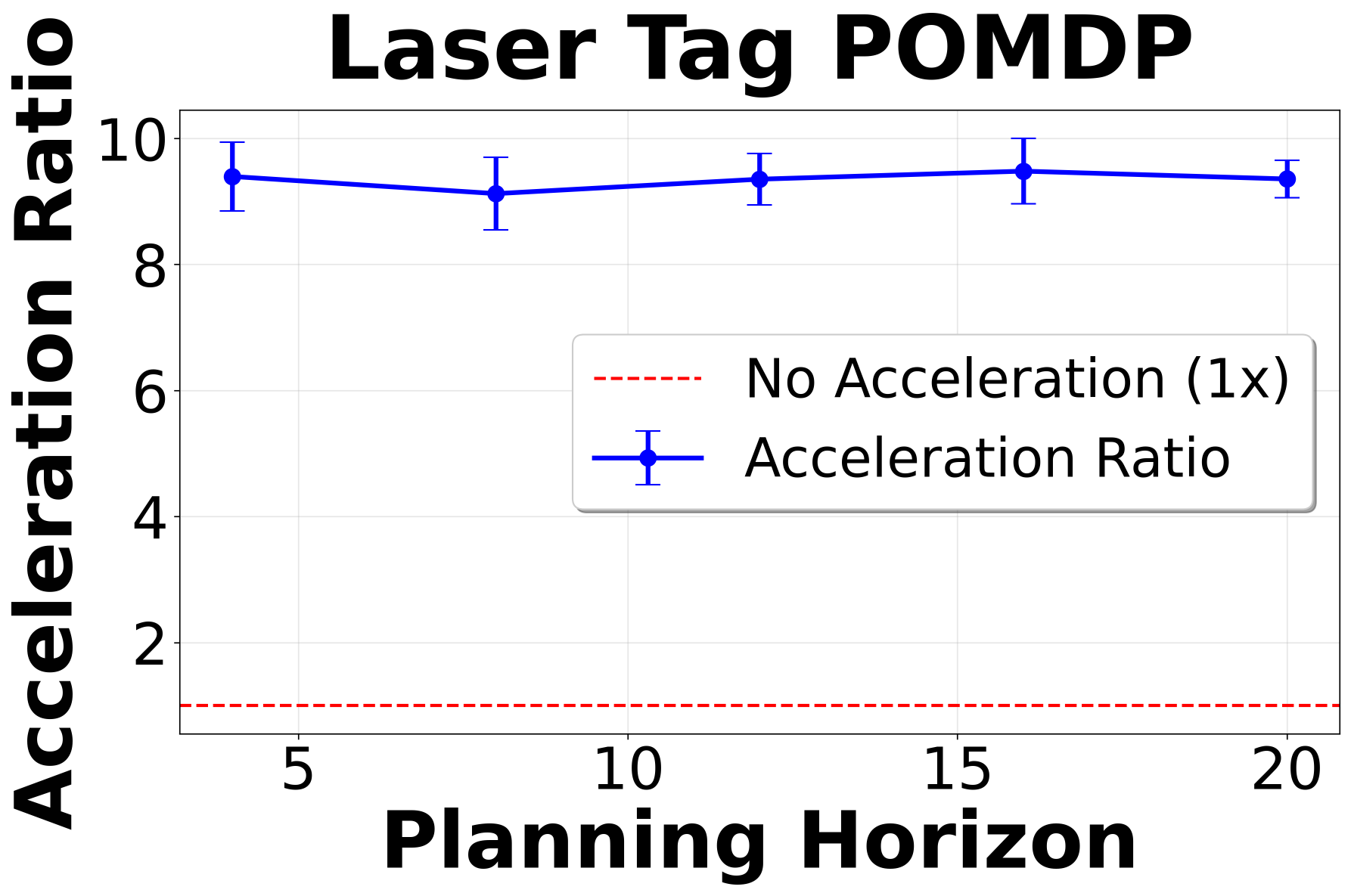}\label{fig:laser_tag_horizon_acceleration}}
	\subfloat[]{\includegraphics[width=0.30\textwidth]{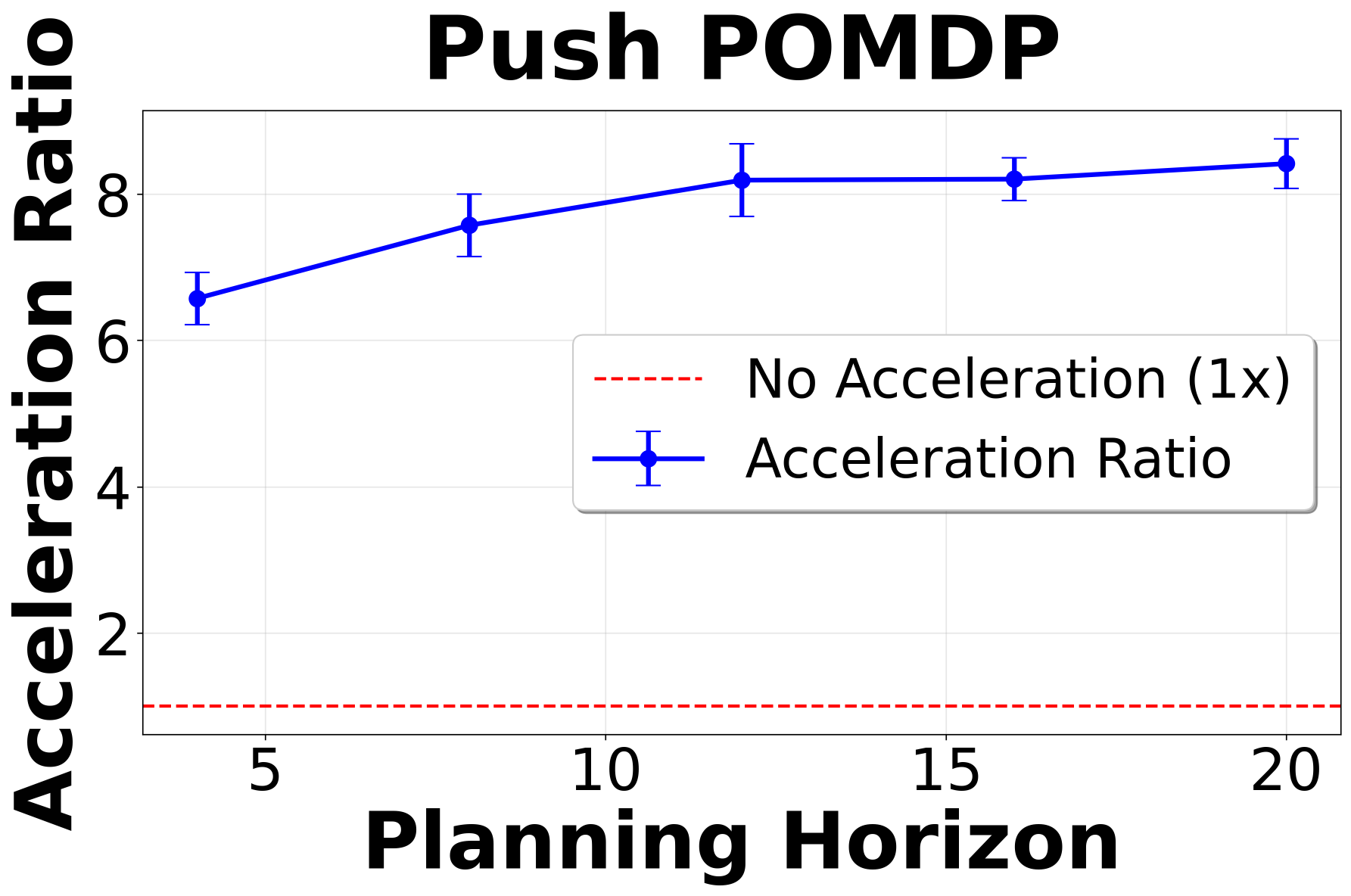}\label{fig:push_horizon_acceleration}} \\[1em]
	\subfloat[]{\includegraphics[width=0.30\textwidth]{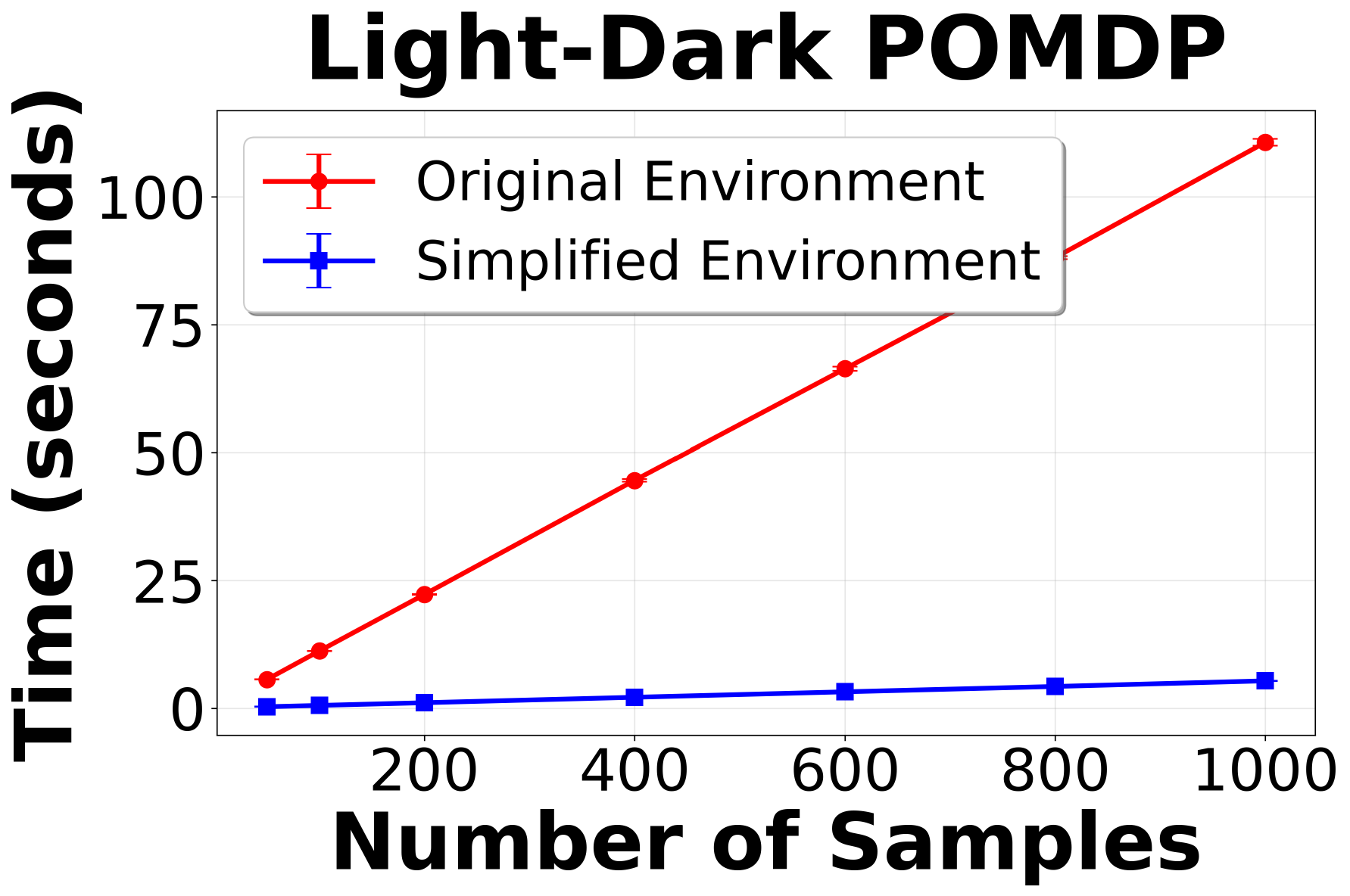}\label{fig:light_dark_n_return_samples_time}}
	\subfloat[]{\includegraphics[width=0.30\textwidth]{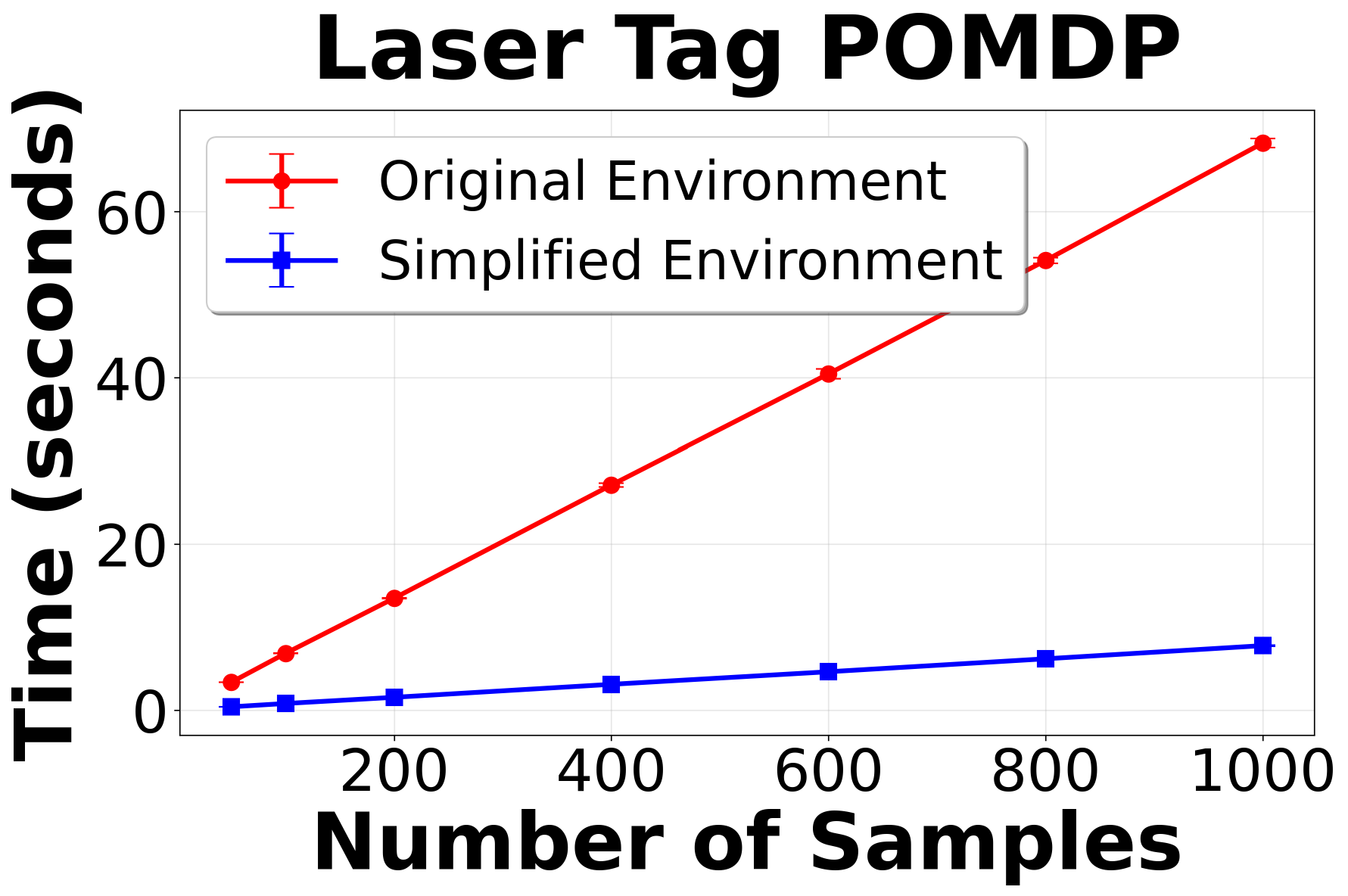}\label{fig:laser_tag_n_return_samples_time}}
	\subfloat[]{\includegraphics[width=0.30\textwidth]{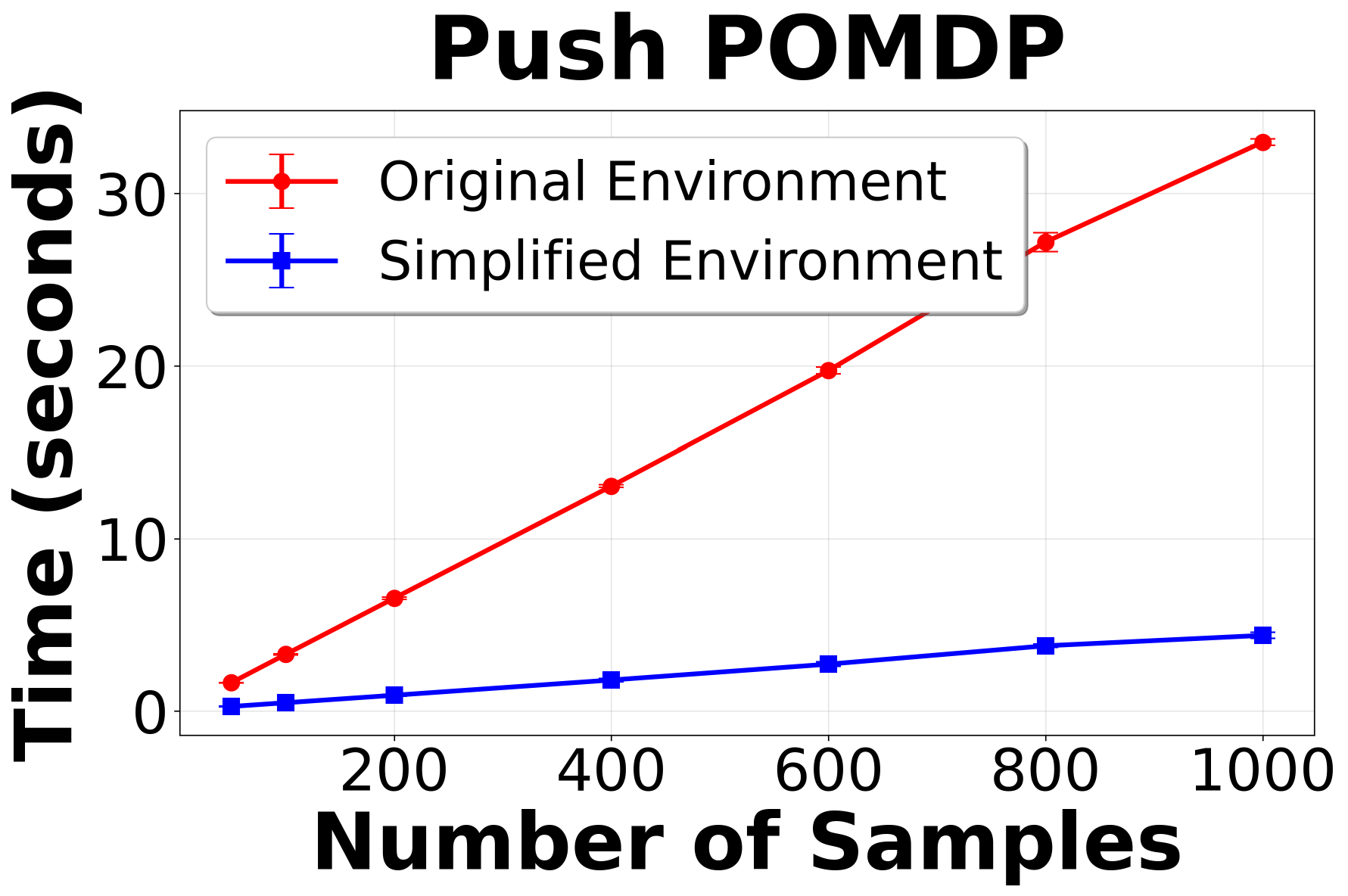}\label{fig:push_n_return_samples_time}} \\[1em]
	\subfloat[]{\includegraphics[width=0.30\textwidth]{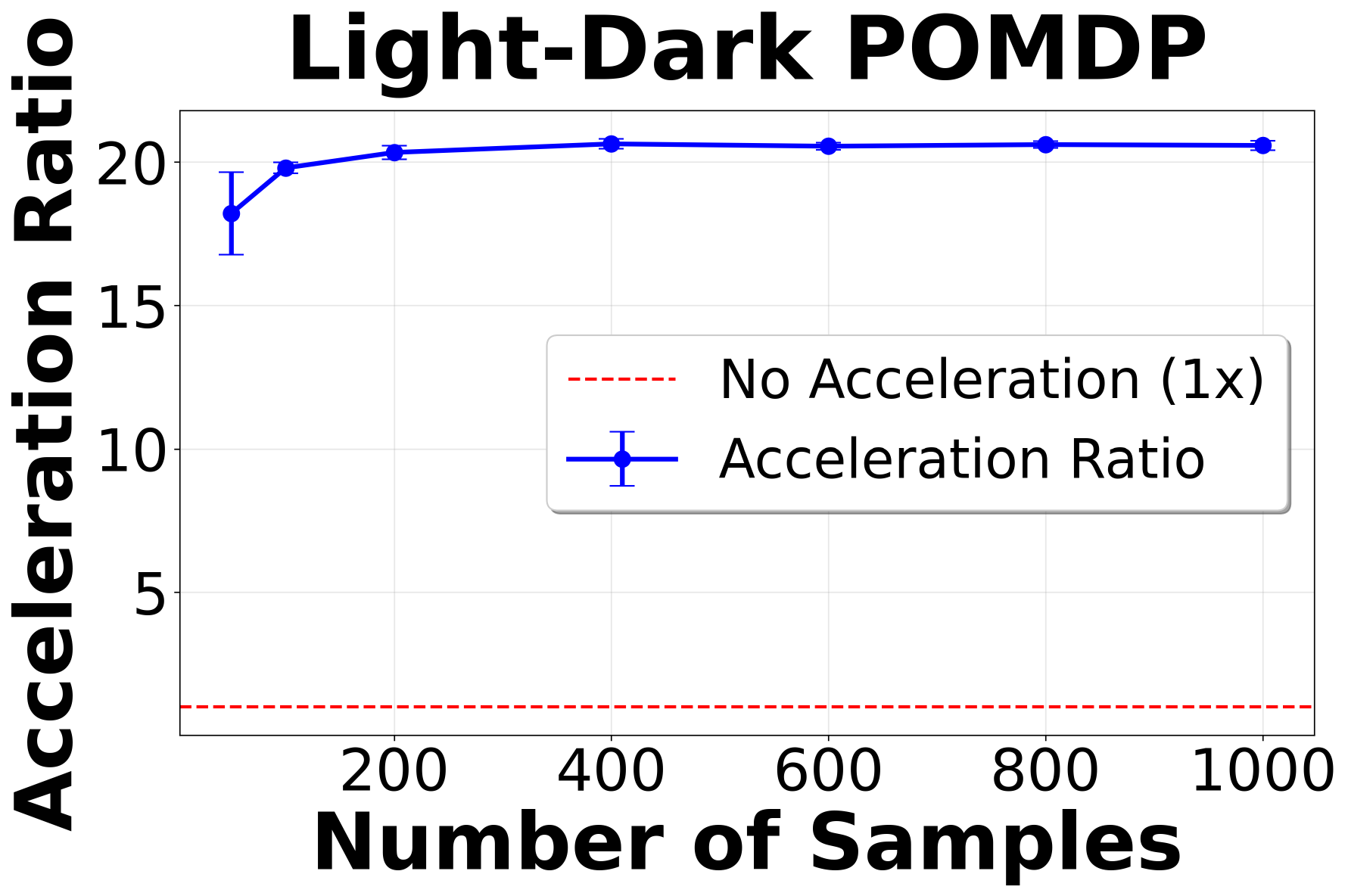}\label{fig:light_dark_n_return_samples_acceleration}}
	\subfloat[]{\includegraphics[width=0.30\textwidth]{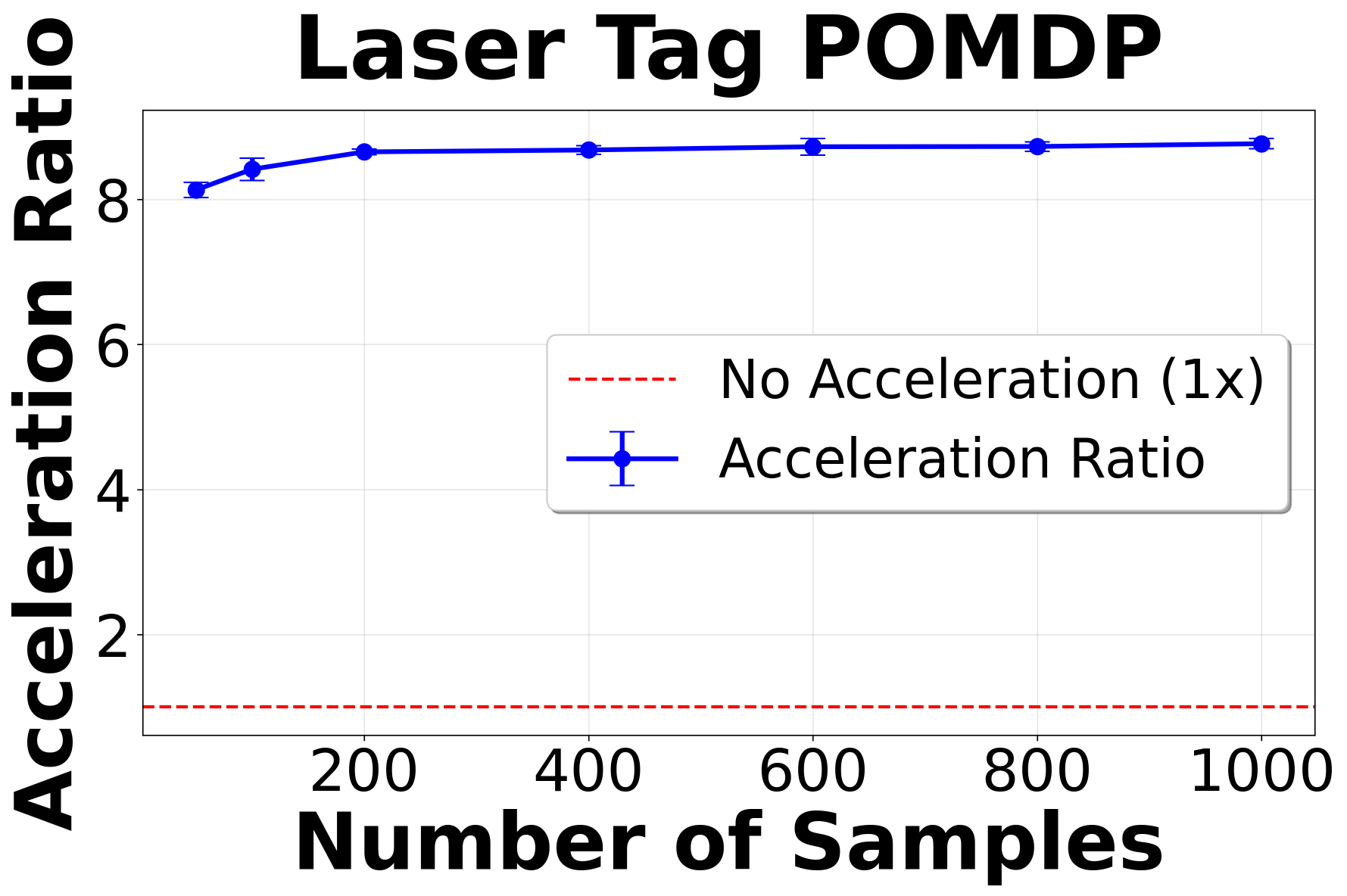}\label{fig:laser_tag_n_return_samples_acceleration}}
	\subfloat[]{\includegraphics[width=0.30\textwidth]{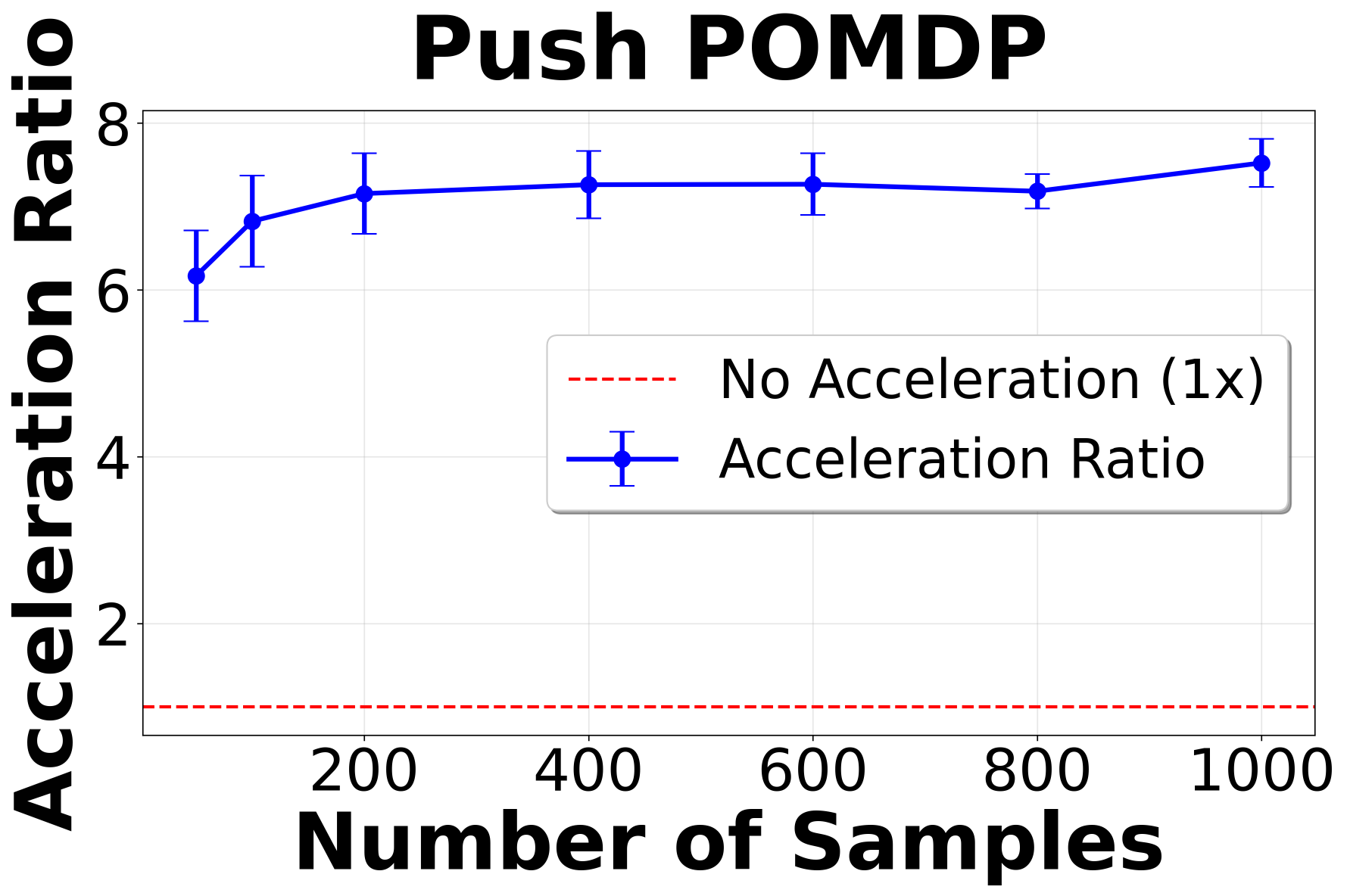}\label{fig:push_n_return_samples_acceleration}}
	\caption{Sensitivity analysis of the computational acceleration with respect to the planning horizon and number of return samples, for Light-Dark (left), Laser Tag (center), and Push (right), with $\alpha=0.5$. Rows~1--2: computation time and acceleration ratio as a function of the planning horizon. Rows~3--4: computation time and acceleration ratio as a function of the number of return samples. Error bars indicate 95\% confidence intervals.}
	\label{fig:acceleration_sensitivity}
\end{figure}

\begin{figure}[H]
	\centering
	\subfloat[]{\includegraphics[width=0.30\textwidth]{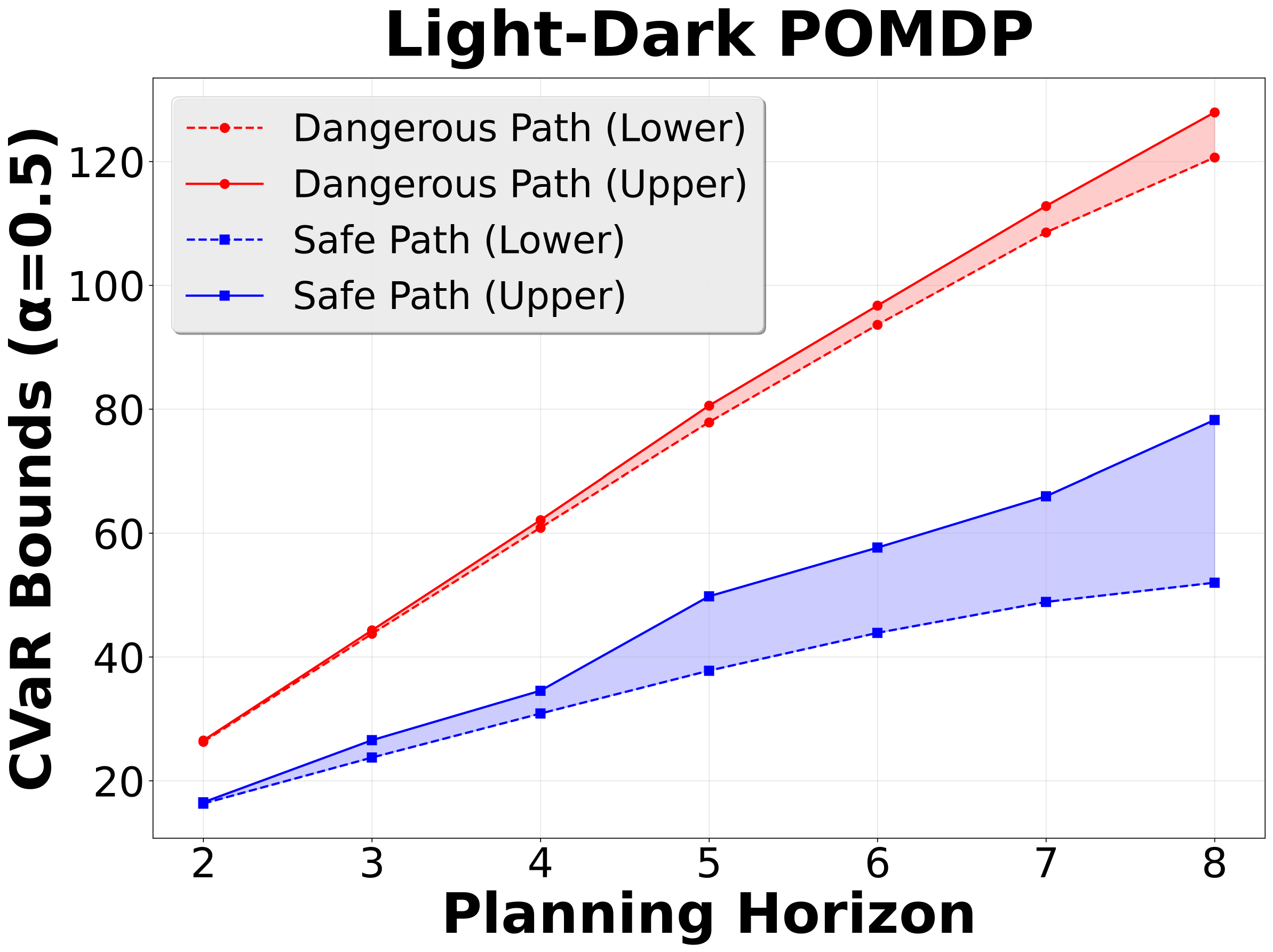}\label{fig:light_dark_horizon_bounds}}
	\subfloat[]{\includegraphics[width=0.30\textwidth]{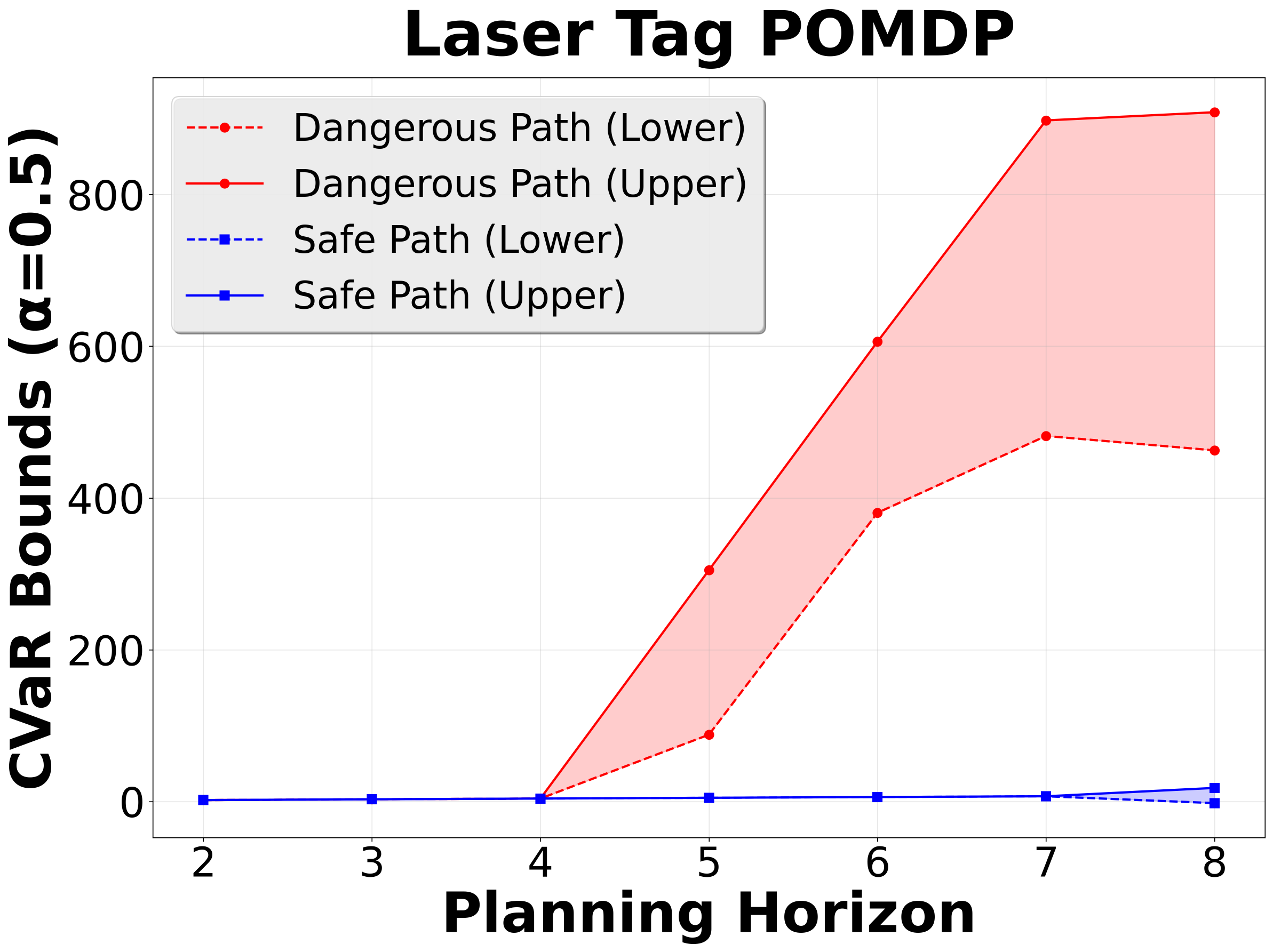}\label{fig:laser_tag_horizon_bounds}}
	\subfloat[]{\includegraphics[width=0.30\textwidth]{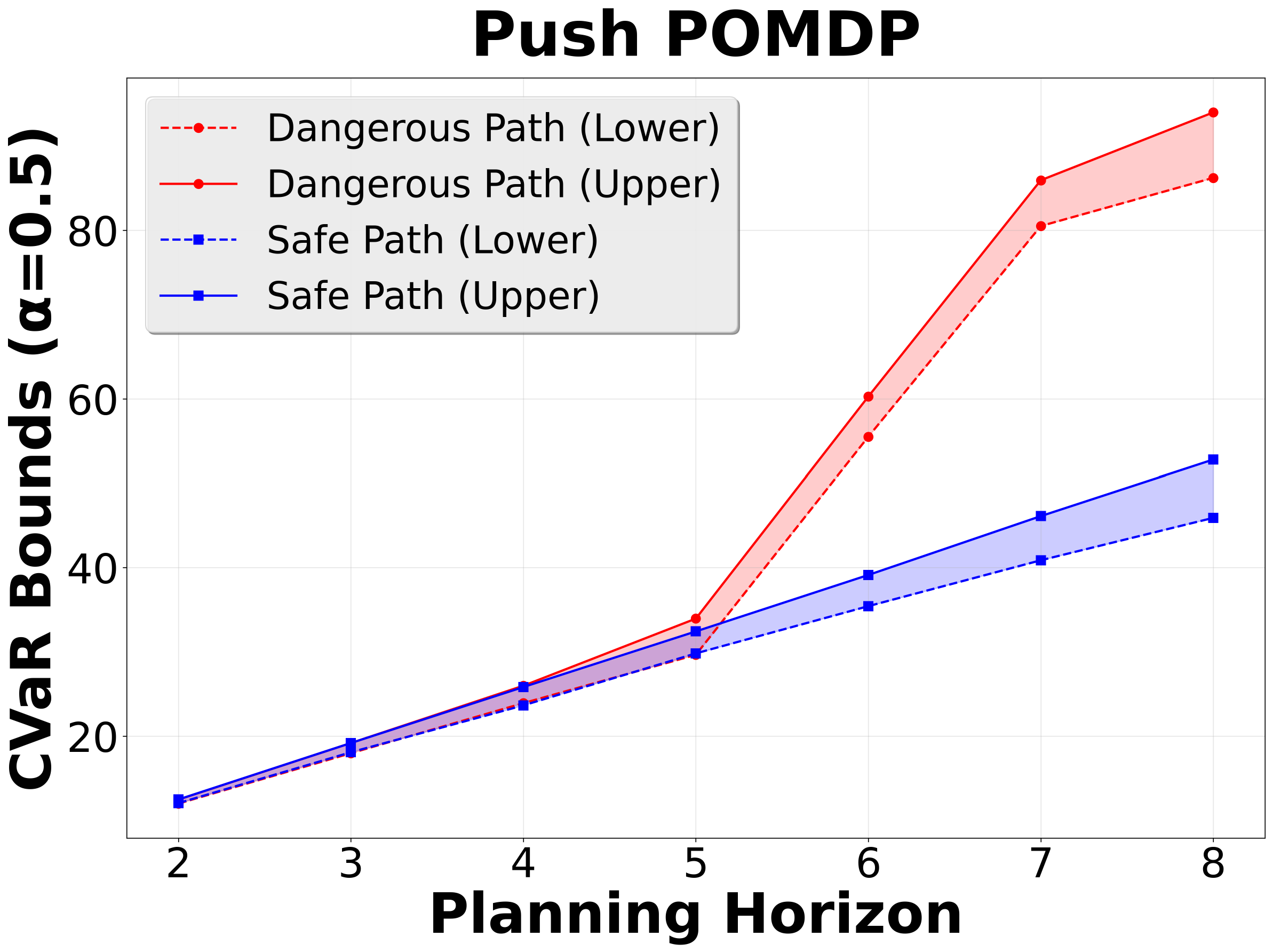}\label{fig:push_horizon_bounds}} \\[1em]
	\subfloat[]{\includegraphics[width=0.30\textwidth]{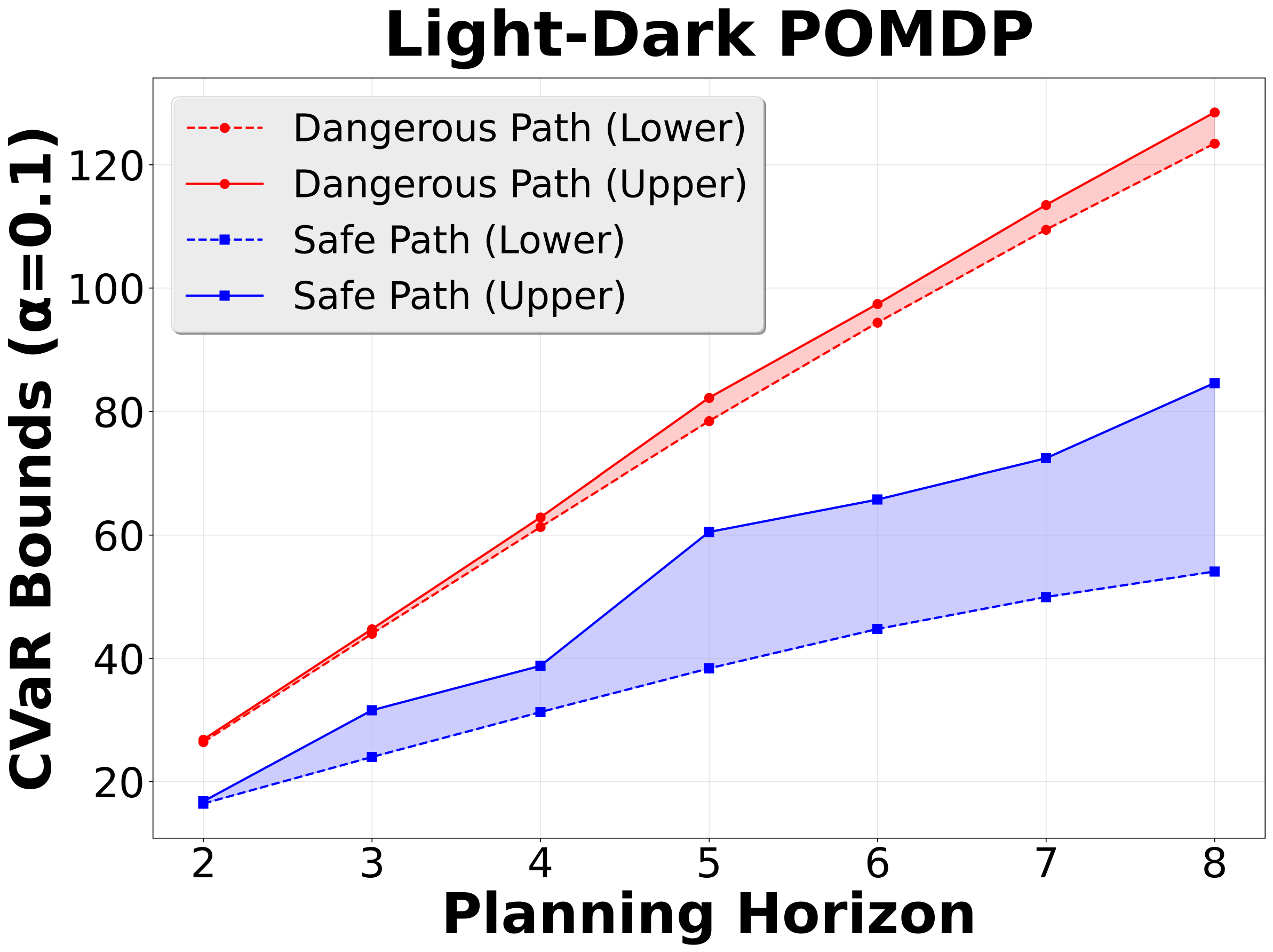}\label{fig:light_dark_horizon_bounds_alpha01}}
	\subfloat[]{\includegraphics[width=0.30\textwidth]{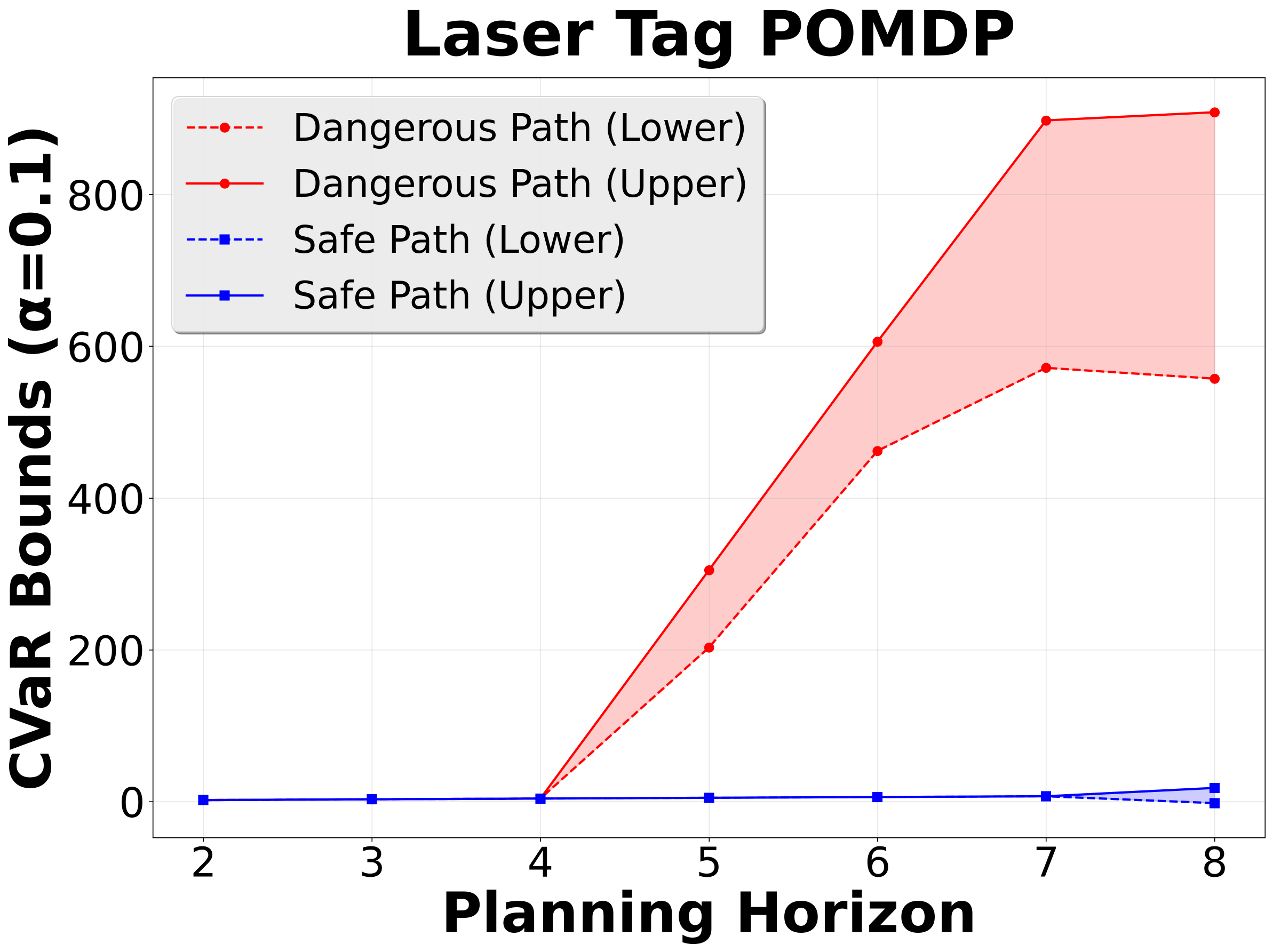}\label{fig:laser_tag_horizon_bounds_alpha01}}
	\subfloat[]{\includegraphics[width=0.30\textwidth]{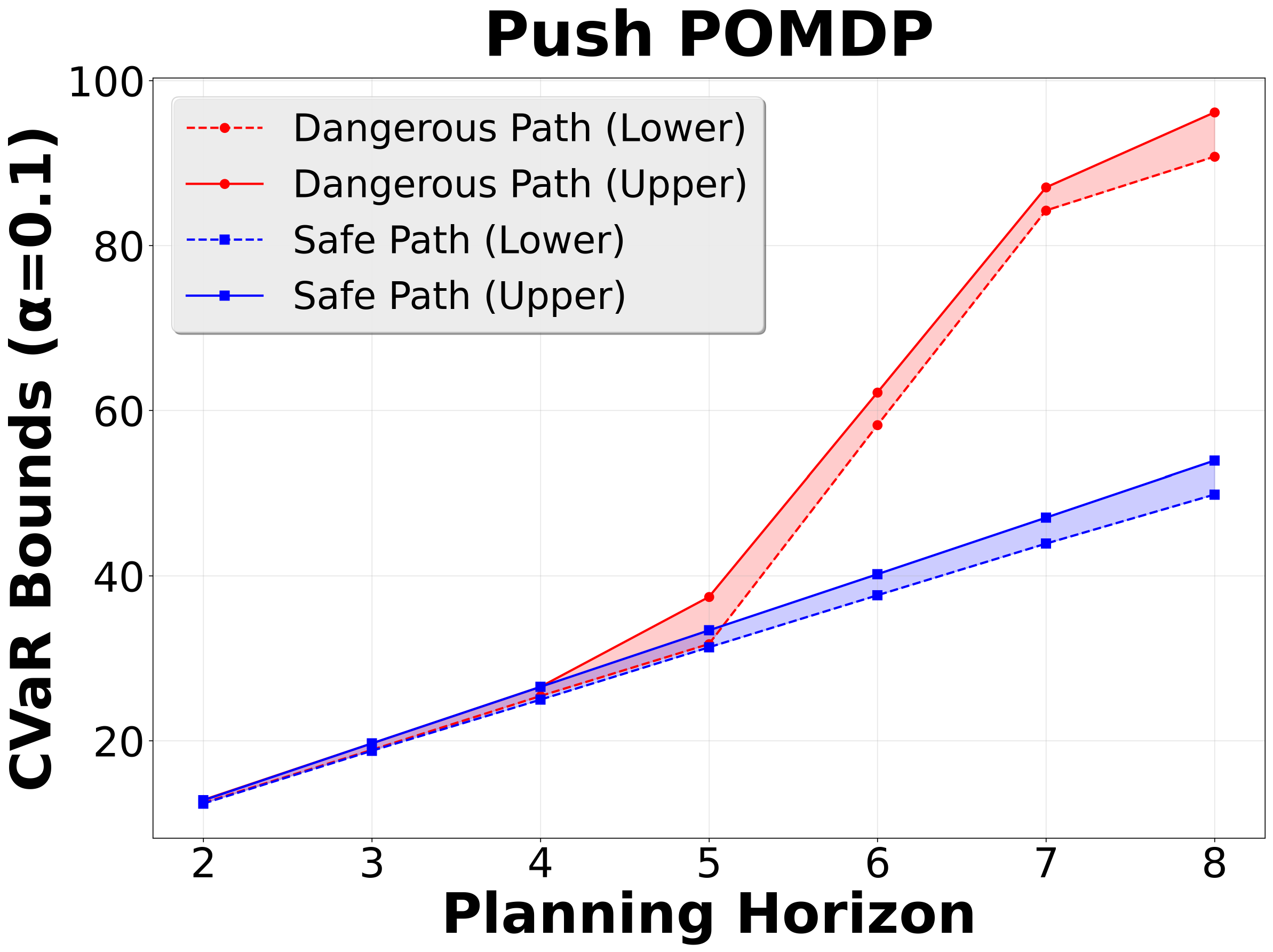}\label{fig:push_horizon_bounds_alpha01}} \\[1em]
	\subfloat[]{\includegraphics[width=0.30\textwidth]{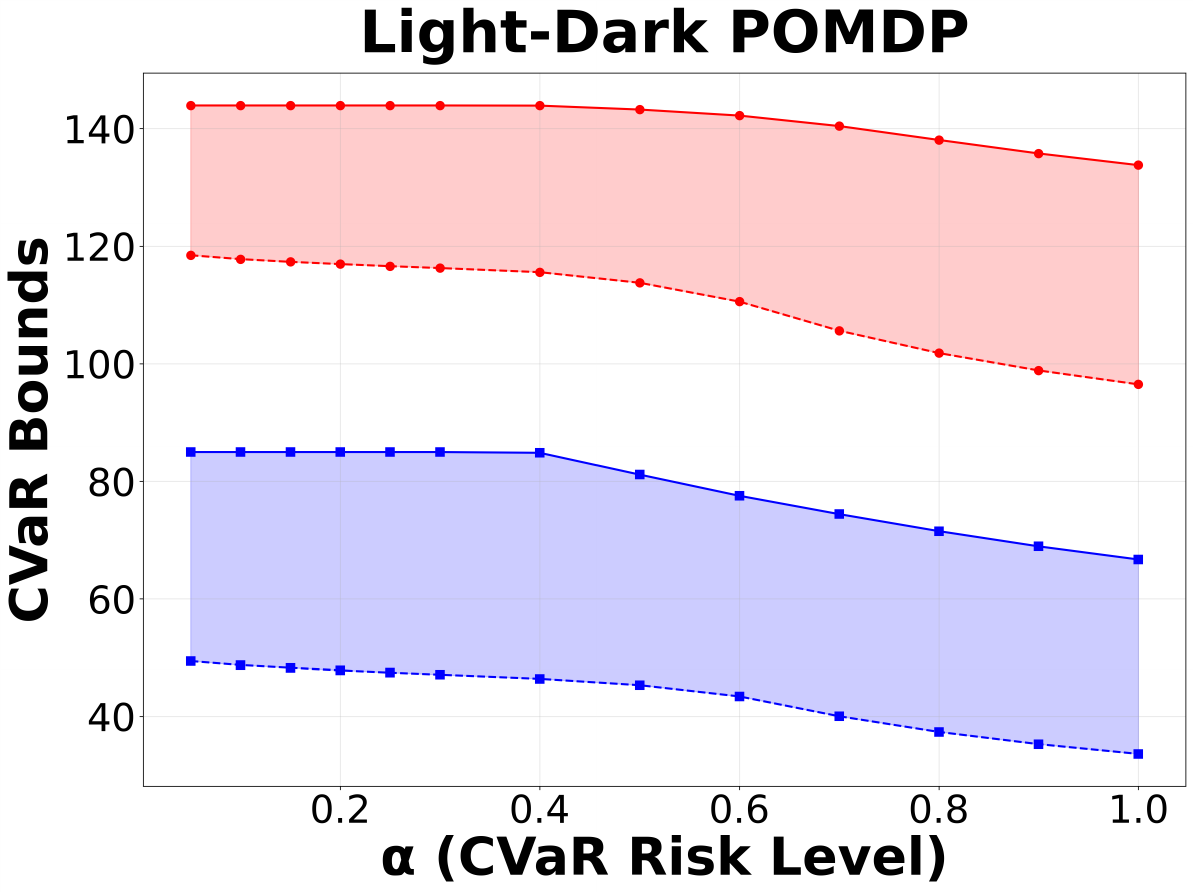}\label{fig:light_dark_alpha_bounds}}
	\subfloat[]{\includegraphics[width=0.30\textwidth]{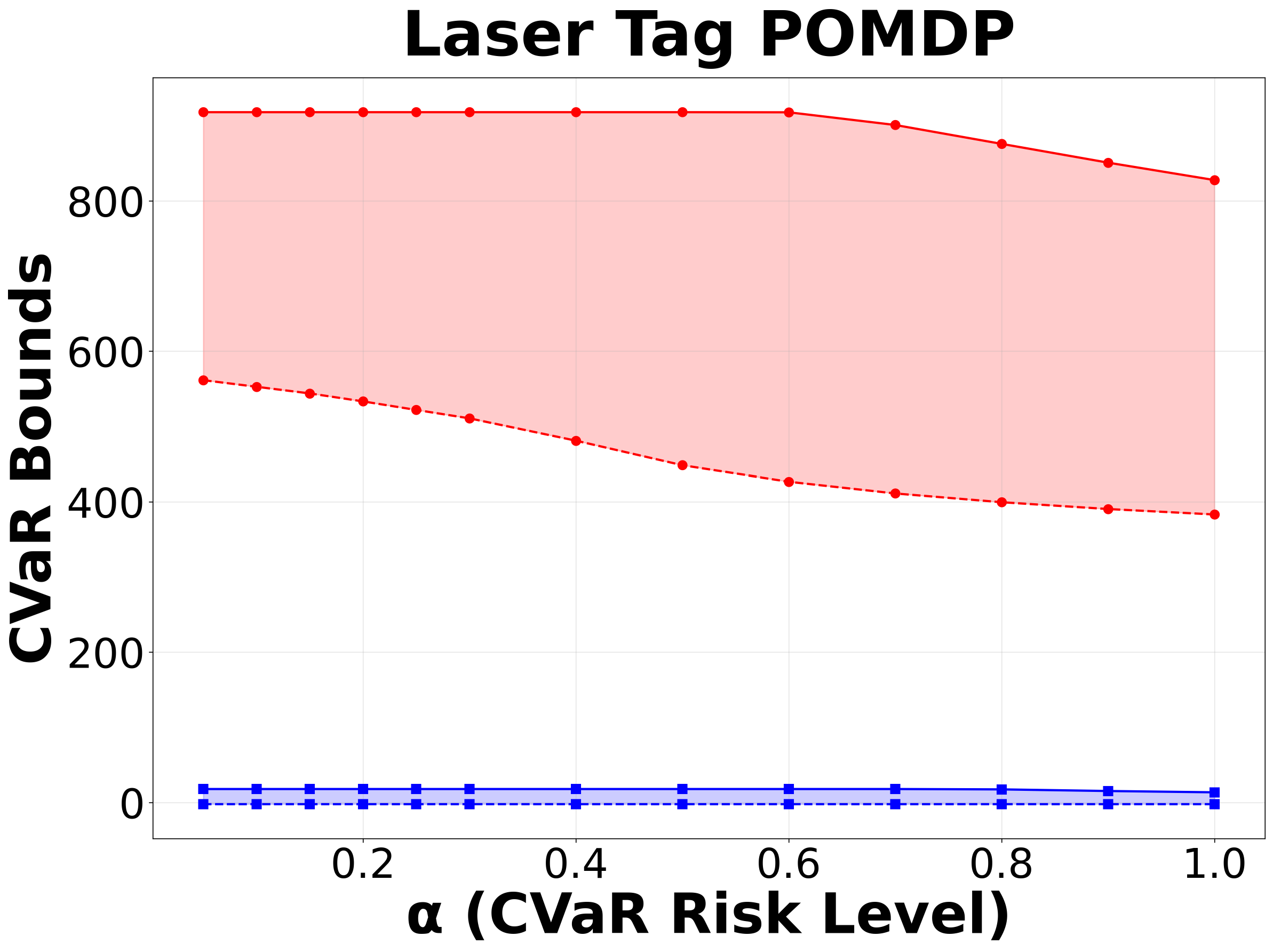}\label{fig:laser_tag_alpha_bounds}}
	\subfloat[]{\includegraphics[width=0.30\textwidth]{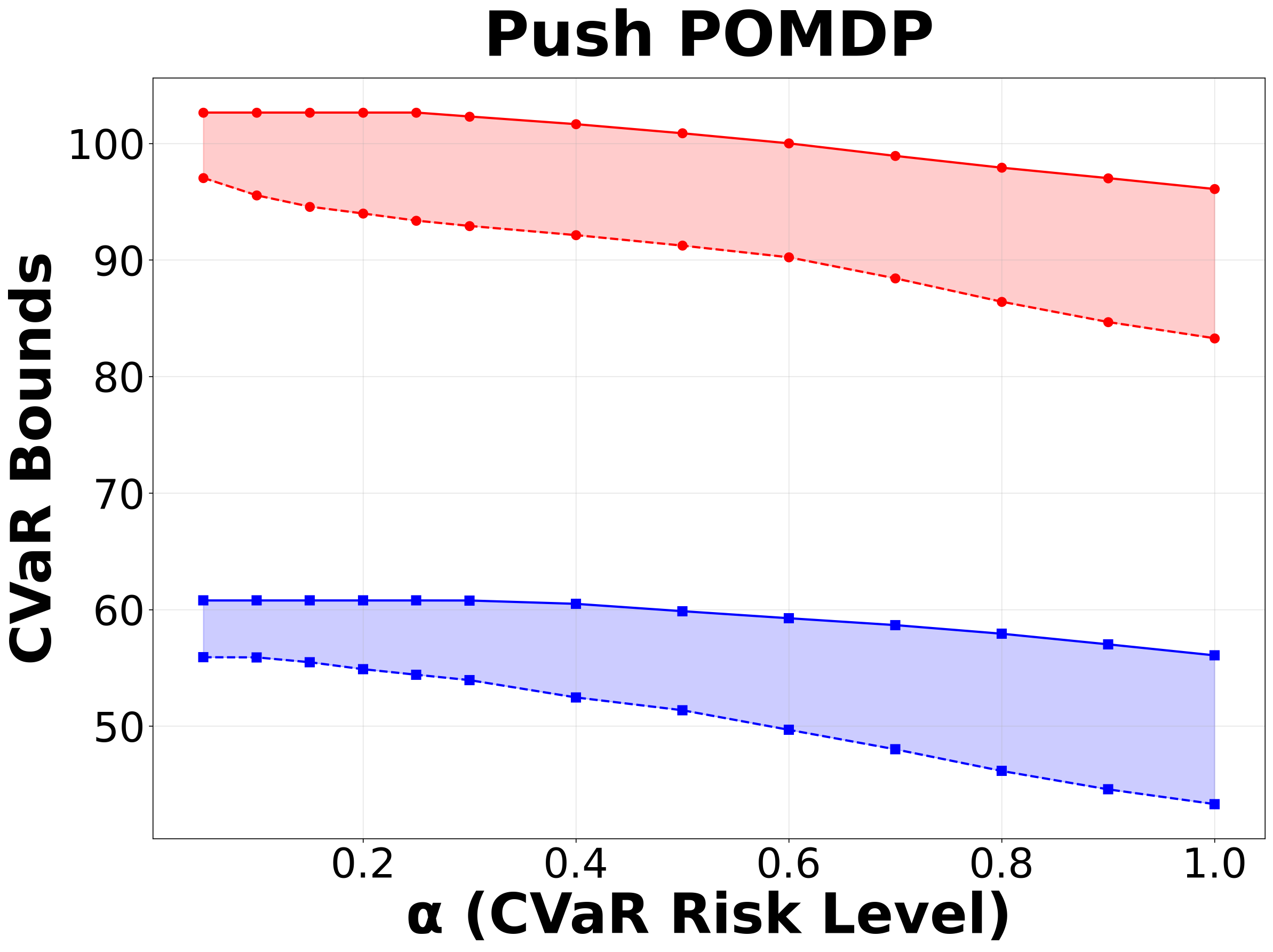}\label{fig:push_alpha_bounds}}
	\caption{Sensitivity of the CVaR bounds for the Light-Dark (left), Laser Tag (center), and Push (right) environments, using the complete safe (blue) and dangerous (red) action sequences depicted in Figure~\ref{fig:agent_path}; solid and dashed lines indicate the upper and lower bounds, respectively. Rows~1--2: bounds as a function of the planning horizon with $\alpha=0.5$ and $\alpha=0.1$, respectively. Row~3: bounds as a function of the risk level~$\alpha$ at a fixed horizon. As the horizon increases, the accumulated distributional discrepancy grows, causing the bounds to widen. As $\alpha$ decreases (more risk-averse), the bounds increase and the bound intervals widen, yet they remain well separated between the safe and dangerous paths across the full range of~$\alpha$. Bounds are computed with probability of error $\delta = 0.05$.}
	\label{fig:horizon_bounds}
\end{figure}

\subsection{Empirical Static CVaR Bound Evaluation: Closed-Loop Policy}

The preceding evaluation assessed bounds on fixed open-loop action sequences. We now demonstrate that the framework extends to closed-loop planners, using the Light-Dark POMDP as a test case. Specifically, we use the neural network trained by BetaZero~\cite{moss2024betazero} as a deterministic policy: BetaZero learns offline approximations of the optimal policy and value function for POMDPs and combines them with online Monte Carlo tree search at test time. Here, we extract a deterministic closed-loop policy from the trained network by selecting at each belief state the action with minimum predicted cost. The BetaZero configuration used in our experiments is detailed in Appendix~\ref{sec:betazero_config}.

We evaluate two objectives. First, we test whether the bounds computed under the simplified observation model successfully distinguish between policies of differing risk levels. We compare the BetaZero policy against the same safe and dangerous action sequences from the preceding subsection. As shown in Figures~\ref{fig:closed_loop_policy_and_trajectories} and~\ref{fig:closed_loop_sensitivity}, the bounds clearly separate the dangerous sequence from both the safe sequence and the BetaZero policy, with non-overlapping bound intervals. The trajectory plot confirms this ordering: the dangerous sequence traverses the high-cost region, while the BetaZero policy navigates through the illuminated safe zones. This separation persists across the full range of risk levels $\alpha$ and planning horizons.

Second, we show that evaluating the bounds under the simplified observation model is substantially faster than under the original model. Figure~\ref{fig:closed_loop_speedup} shows a consistent ${\approx}5\times$ speedup, stable across planning horizons from $4$ to $12$ and return sample counts from $100$ to $1000$.

\begin{figure}[H]
	\centering
	\subfloat[]{\includegraphics[width=0.30\textwidth]{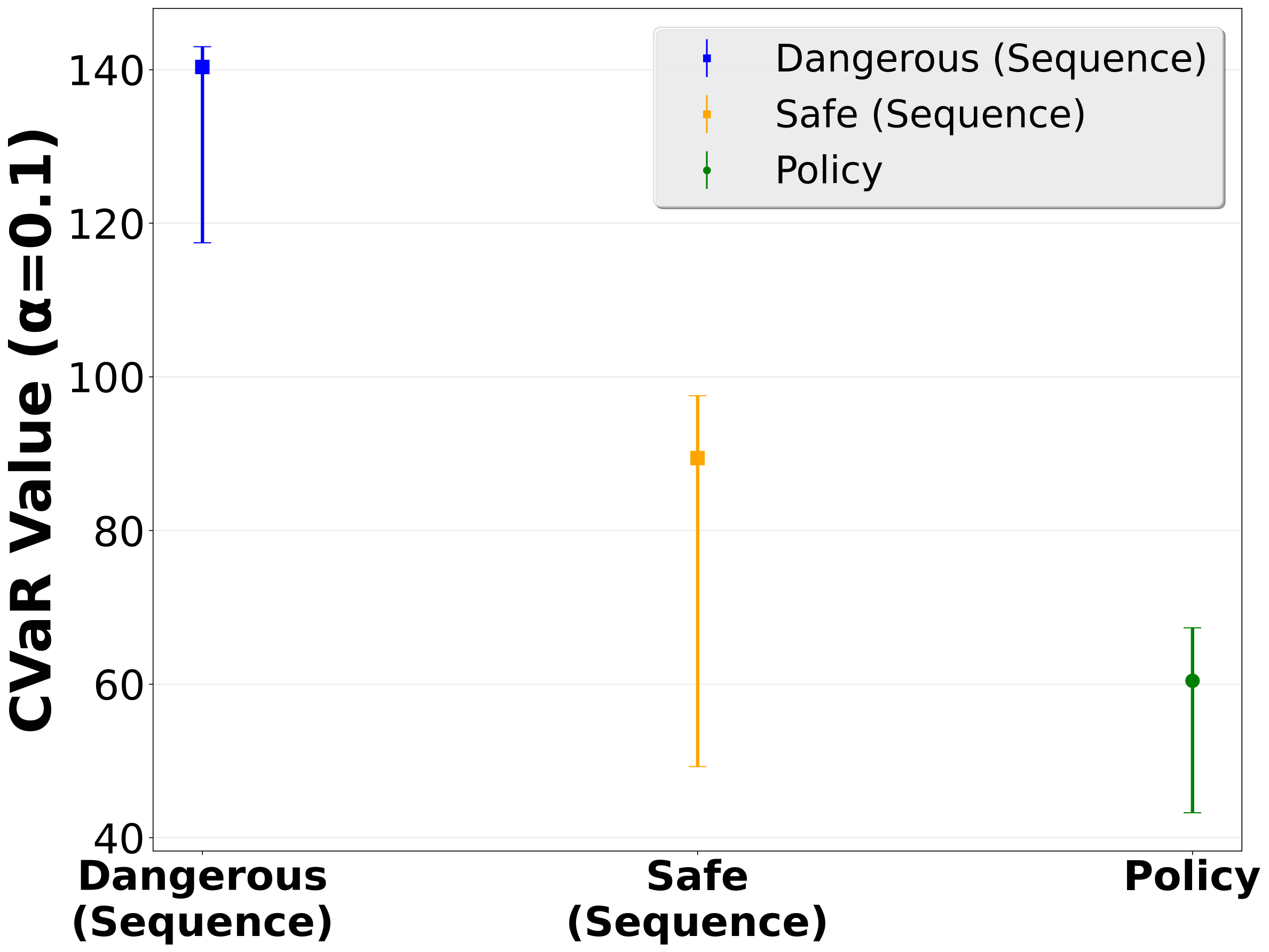}\label{fig:closed_loop_policy_eval}}
	\subfloat[]{\includegraphics[height=0.24\textwidth]{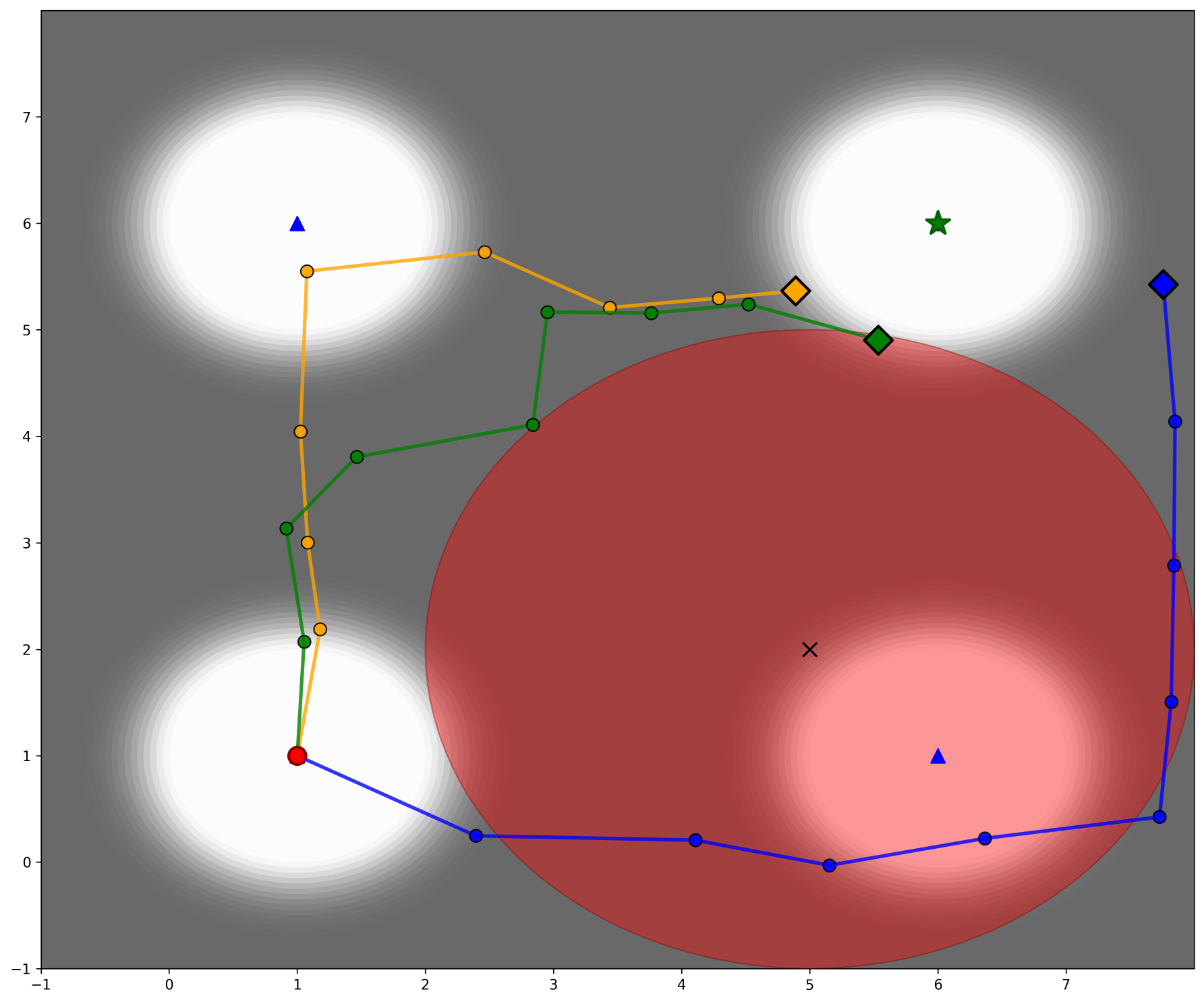}\label{fig:closed_loop_trajectories}}
	\caption{Evaluation of the BetaZero neural network policy against the safe and dangerous action sequences in the Light-Dark POMDP ($\alpha = 0.1$, $H = 9$), with bounds computed under the simplified observation model. Figure~\ref{fig:closed_loop_policy_eval}: CVaR bounds for the dangerous sequence (blue), safe sequence (orange), and BetaZero policy (green). The bound intervals of the policy and dangerous sequence do not overlap, demonstrating that the bounds correctly identify the policy as less risky. Bounds are computed with probability of error $\delta = 0.05$. Figure~\ref{fig:closed_loop_trajectories}: agent trajectories; the BetaZero policy (green) and safe sequence (orange) navigate through the illuminated safe zones, while the dangerous sequence (blue) traverses the high-cost region.}
	\label{fig:closed_loop_policy_and_trajectories}
\end{figure}

\begin{figure}[t]\begin{flushright}
		
	\end{flushright}
	\centering
	\subfloat[]{\includegraphics[width=0.38\textwidth]{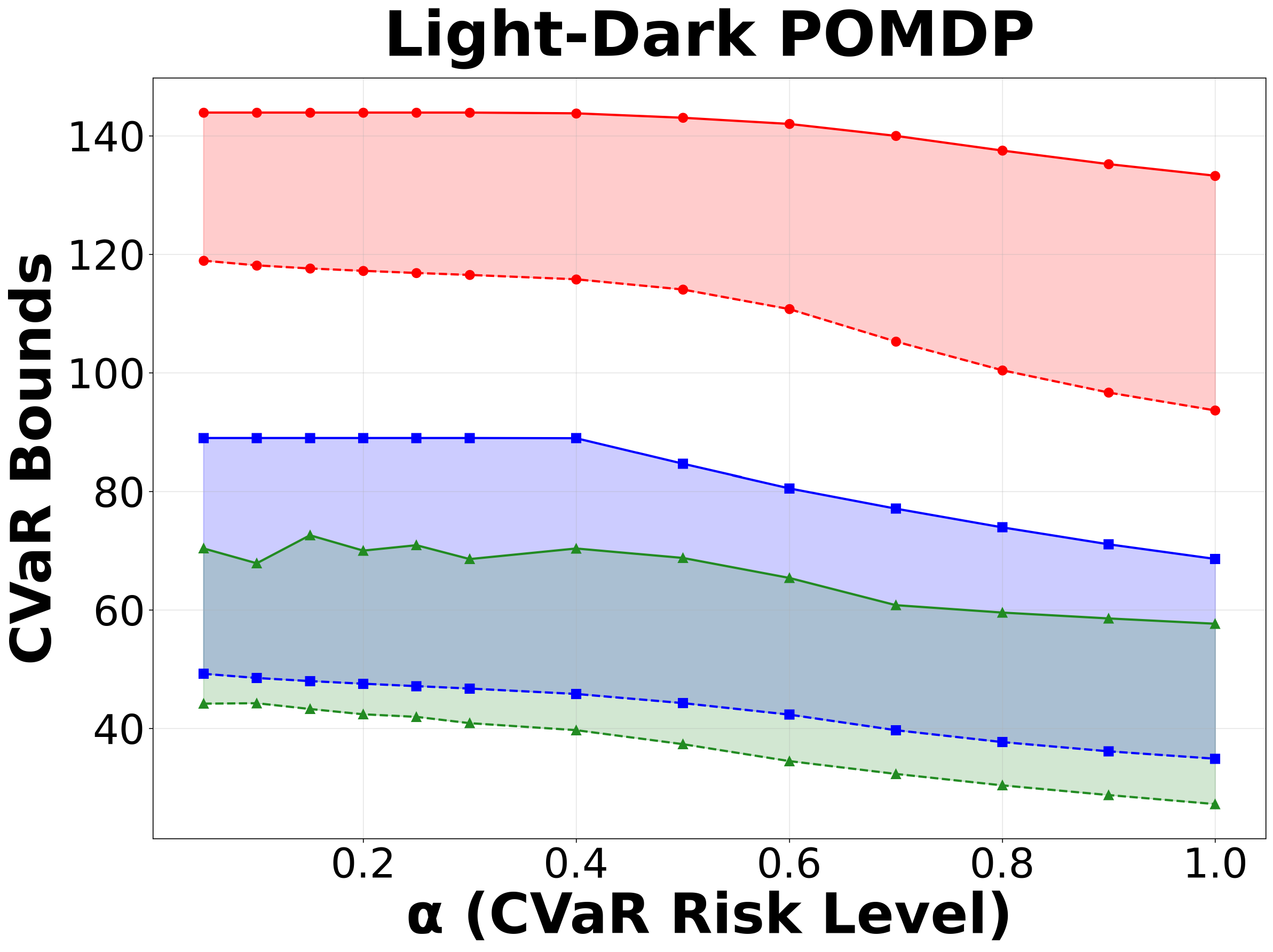}\label{fig:closed_loop_alpha_sensitivity}}
	\subfloat[]{\includegraphics[width=0.38\textwidth]{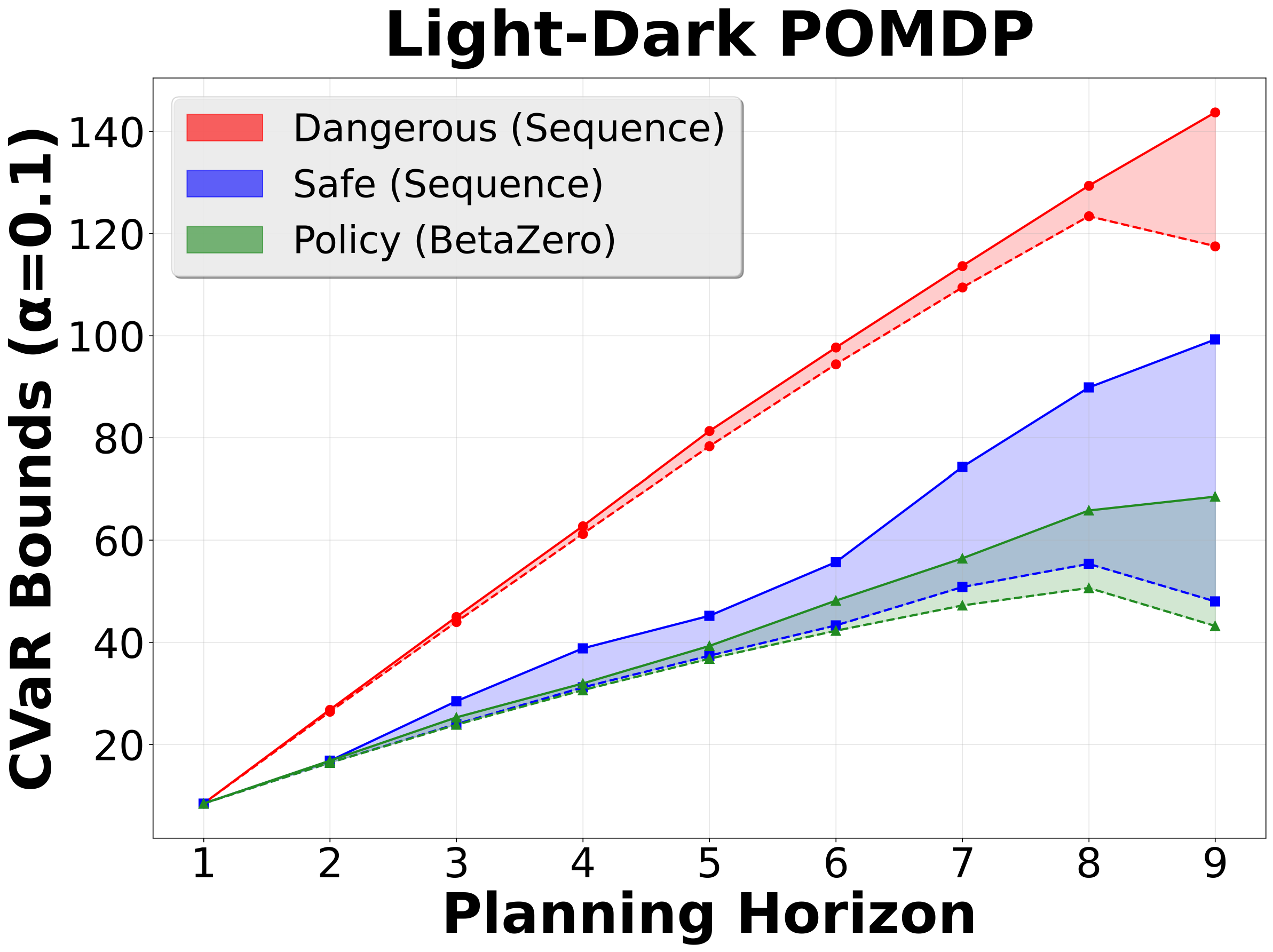}\label{fig:closed_loop_horizon_sensitivity}}
	\caption{Sensitivity of the CVaR bounds (computed under the simplified observation model) for the BetaZero policy, safe sequence, and dangerous sequence in the Light-Dark POMDP; solid and dashed lines denote upper and lower bounds, respectively. Figure~\ref{fig:closed_loop_alpha_sensitivity}: bounds as a function of the risk level $\alpha$ at horizon $H = 9$. Figure~\ref{fig:closed_loop_horizon_sensitivity}: bounds as a function of the planning horizon at $\alpha = 0.1$. The dangerous sequence remains clearly separated from the policy and safe sequence across all conditions. Bounds are computed with probability of error $\delta = 0.05$.}
	\label{fig:closed_loop_sensitivity}
\end{figure}

\begin{figure}[t]
	\centering
	\subfloat[]{\includegraphics[width=0.38\textwidth]{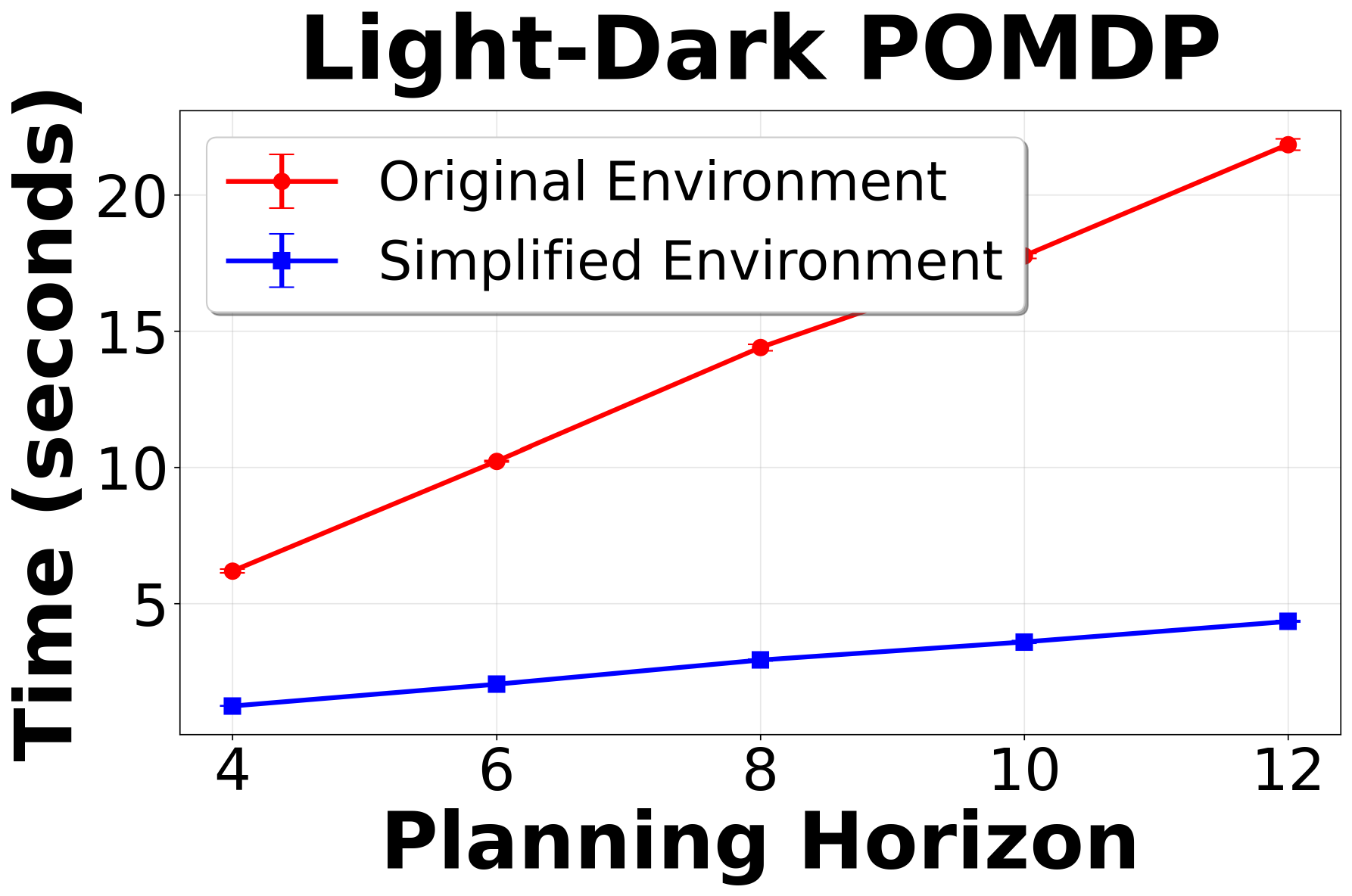}\label{fig:closed_loop_horizon_time}}
	\subfloat[]{\includegraphics[width=0.38\textwidth]{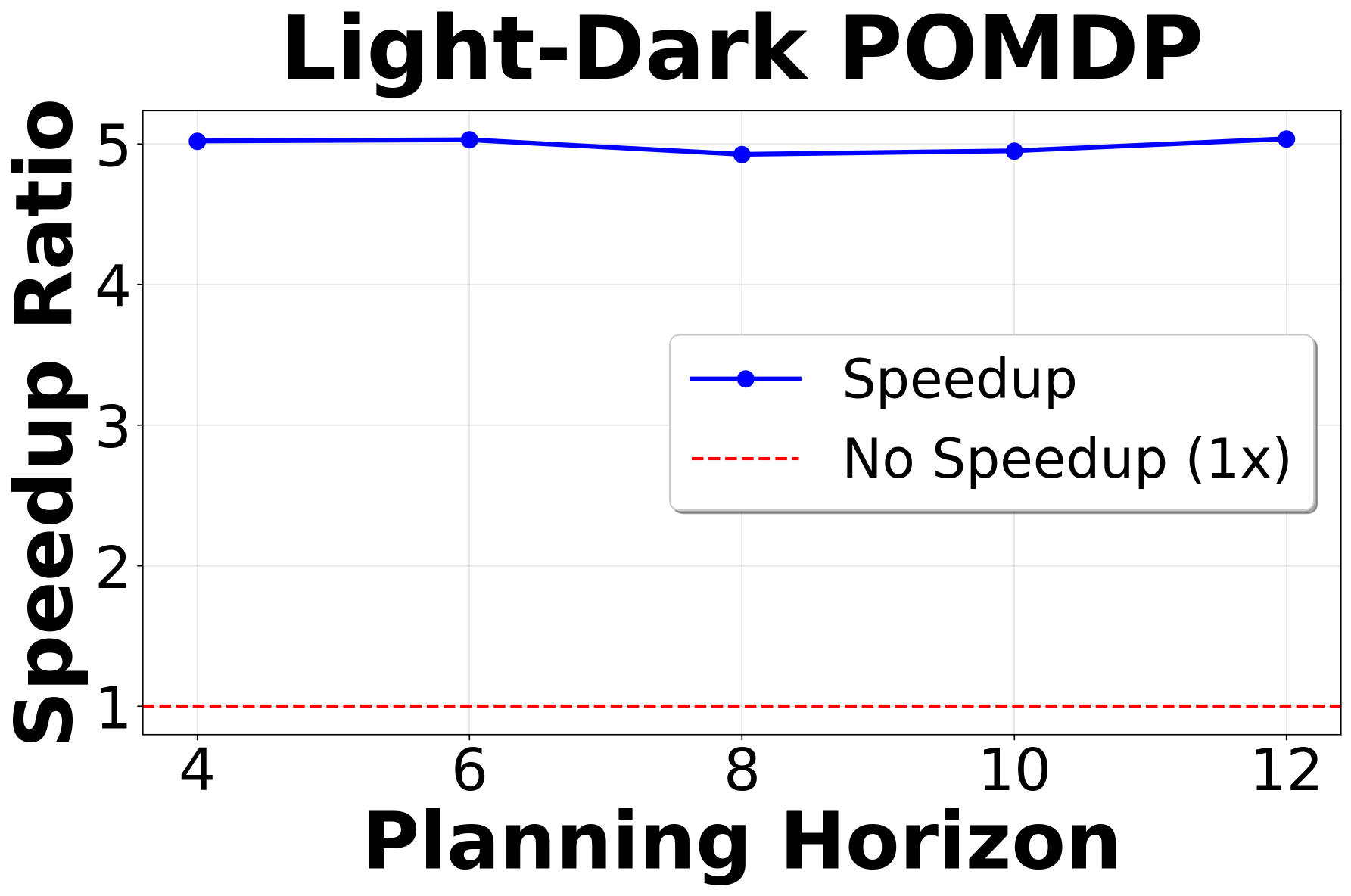}\label{fig:closed_loop_horizon_speedup}} \\[1em]
	\subfloat[]{\includegraphics[width=0.38\textwidth]{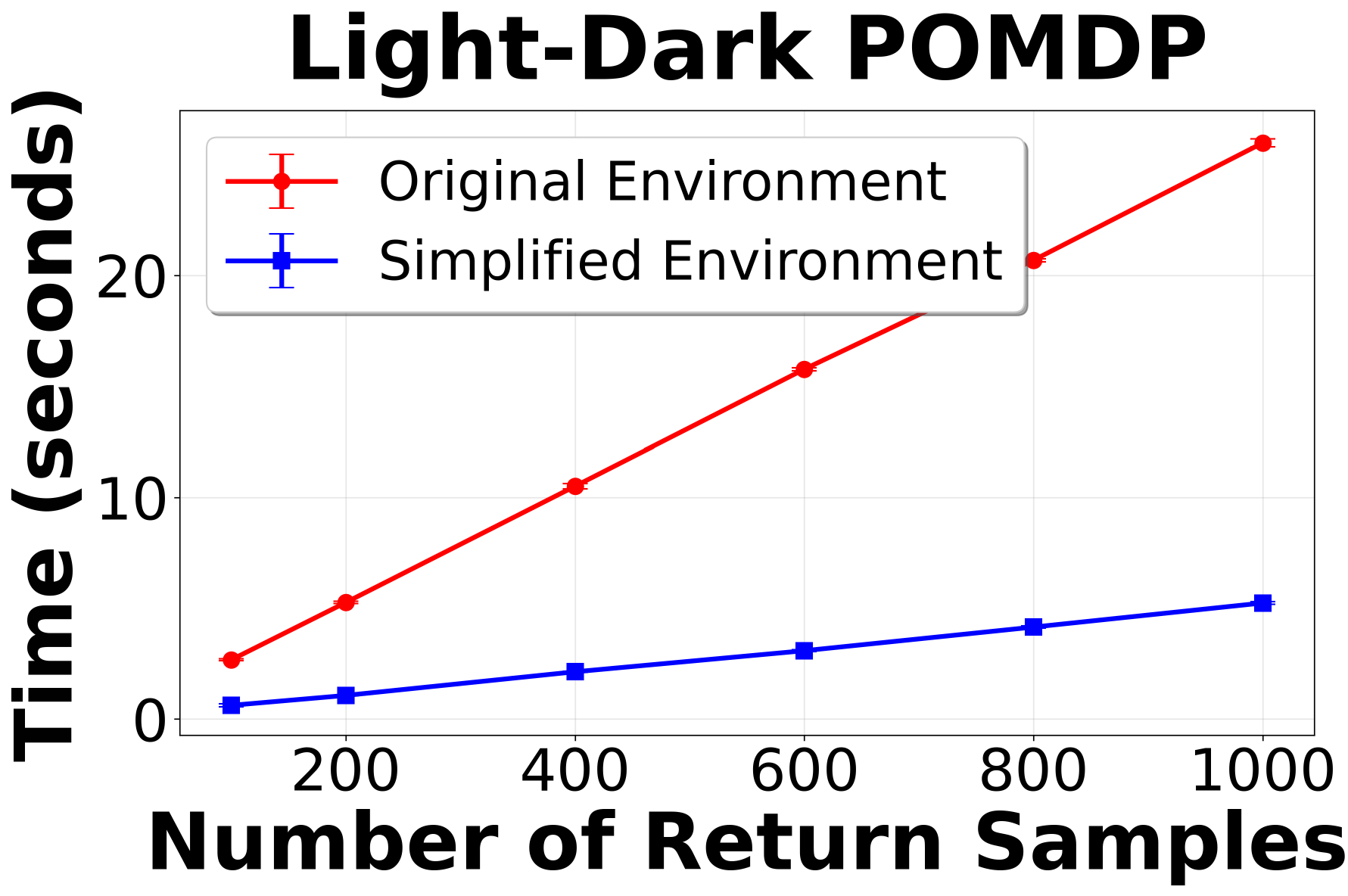}\label{fig:closed_loop_nsamples_time}}
	\subfloat[]{\includegraphics[width=0.38\textwidth]{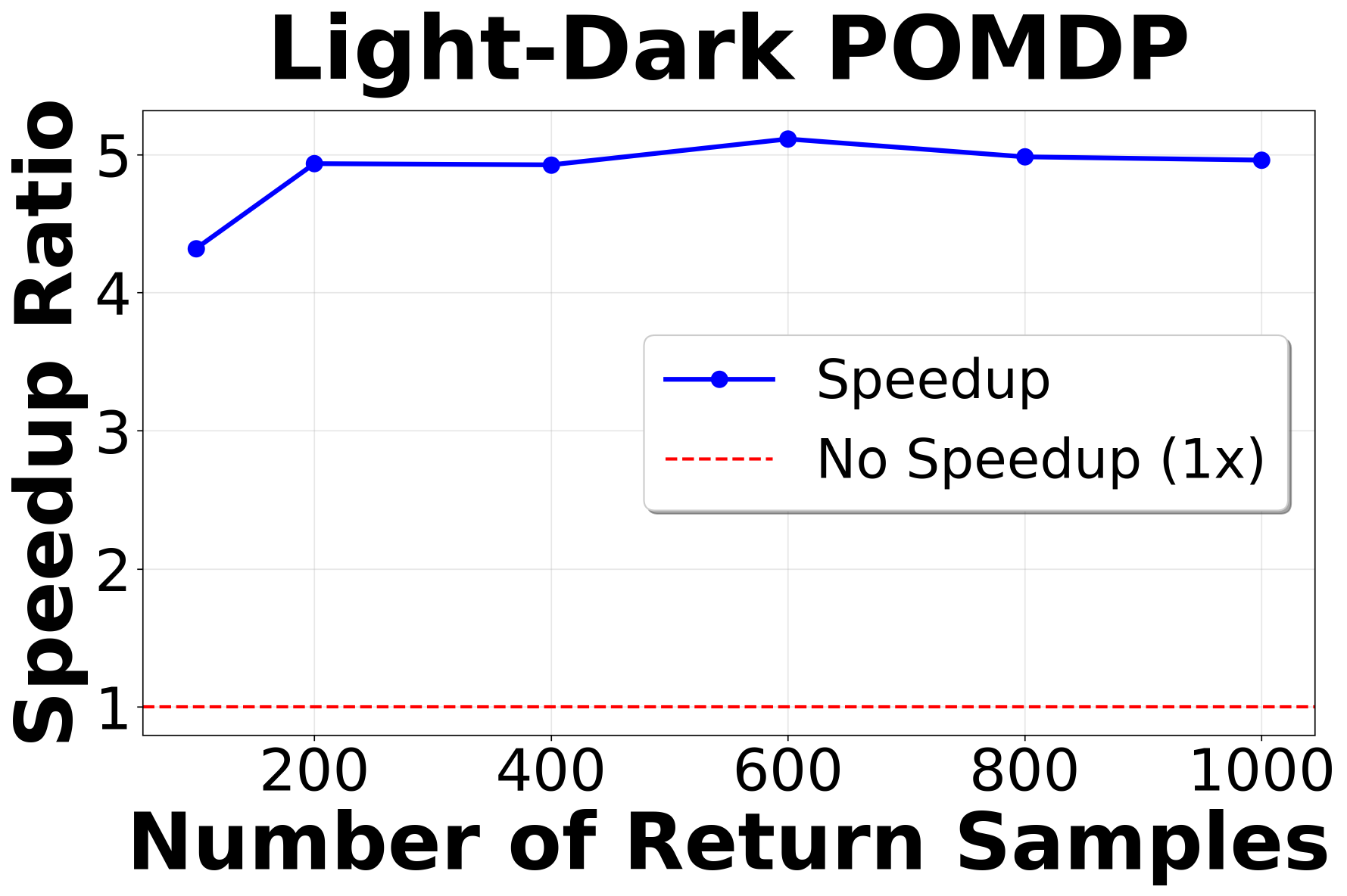}\label{fig:closed_loop_nsamples_speedup}}
	\caption{Computational speedup from evaluating CVaR bounds under the simplified observation model relative to the original model for the BetaZero policy in the Light-Dark POMDP ($\alpha = 0.1$). Row~1: computation time and speedup ratio as a function of the planning horizon. Row~2: computation time and speedup ratio as a function of the number of return samples. The simplified model achieves a consistent ${\approx}5\times$ speedup across all conditions. Error bars indicate $95\%$ confidence intervals.}
	\label{fig:closed_loop_speedup}
\end{figure}

\section{Conclusions}
In this work we introduced a belief-simplification framework for risk-averse policy evaluation in POMDPs under a static CVaR objective, with guaranteed bounds on the resulting action-value function estimators, enabling accelerated online policy evaluation. Our approach establishes bounds on the true action-value function by leveraging an approximate action-value function computed under a simplified belief-transition model. We further derived specialized bounds for the case of a simplified observation model and showed how these bounds can be employed to accelerate online planning. To support these results, we developed the requisite mathematical foundations for bounding the CVaR of a random variable $X$ through an auxiliary random variable $Y$, under assumptions relating their respective cumulative and density functions. These theoretical results are independent of the POMDP-planning setting and provide a general analytical framework that can facilitate future research on risk-sensitive decision making.

Our empirical evaluation, focused on observation model simplification, demonstrates the practical utility of the framework in both open-loop and closed-loop settings. In the open-loop case, the bounds correctly distinguish action sequences of differing risk levels across multiple environments and risk parameters. In the closed-loop case, using a BetaZero neural-network policy, the bounds successfully rank policies by risk level with non-overlapping bound intervals, while achieving a consistent ${\approx}5\times$ computational speedup over evaluation under the original model. The theoretical framework accommodates general belief-transition model simplifications, including state-transition models, whose empirical investigation we leave for future work.

\printbibliography

\newpage
\appendix

\section{CVaR Bounds Proofs}
\begin{theorem}\label{prf:cvar_bound_v2}
	Let $X$ and $Y$ be random variables and $\epsilon \in [0, 1]$. 
	\begin{enumerate}
		\item \textbf{Upper Bound:} assume that $P(X\leq b_X)=1, P(Y\leq b_Y)=1$ and $\forall z\in \mathbb{R},F_Y(z)-F_X(z)\leq \epsilon$, then 
		\begin{enumerate}
			\item If $\alpha > \epsilon$, \begin{equation}
				\CVaR_\alpha(X) \leq \frac{\epsilon}{\alpha}\max(b_X,b_Y) + (1 - \frac{\epsilon}{\alpha})\CVaR_{\alpha-\epsilon}(Y).
			\end{equation}
			
			\item If $\alpha \leq \epsilon$, then $\CVaR_\alpha(X)\leq \max(b_X,b_Y)$.
		\end{enumerate}
		
		\item \textbf{Lower Bound:} If $P(X\geq a_X)=1, P(Y\geq a_Y)=1$ and $\forall z\in \mathbb{R},F_X(z)-F_Y(z)\leq \epsilon$, then
		\begin{enumerate}
			\item If $\alpha+\epsilon \leq 1$, \begin{equation}
				\CVaR_{\alpha}(X)\geq (1+\frac{\epsilon}{\alpha})\CVaR_{\alpha+\epsilon}(Y)-\frac{\epsilon}{\alpha} \CVaR_{\epsilon}(Y)
			\end{equation}
			
			\item If $\alpha+\epsilon > 1$ and $a_{min} = \min(a_X, a_Y)$, \begin{equation}
				\CVaR_\alpha(X) \geq \frac{1}{\alpha}\big(\mathbb{E}[Y] - \epsilon \CVaR_\epsilon(Y) + (\alpha+\epsilon-1)a_{min}\big)
			\end{equation}
		\end{enumerate}
	\end{enumerate}
\end{theorem}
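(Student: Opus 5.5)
We work throughout with the quantile representation $\CVaR_\alpha(X)=\frac{1}{\alpha}\int_{1-\alpha}^1 F_X^{-1}(v)\,dv$, where $F^{-1}(v)=\inf\{z:F(z)\geq v\}$ is the generalized inverse. This identity holds for every integrable $X$, not only smooth ones; under the boundedness assumptions integrability is automatic for the relevant tails. The strategy is to build an auxiliary CDF that stochastically dominates (or is dominated by) $F_X$ and is obtained from $F_Y$ by shifting it by $\epsilon$. We then compute its tail integral explicitly and apply monotonicity of CVaR under first-order stochastic dominance: if $G\leq F$ pointwise, then $G^{-1}\geq F^{-1}$, and hence the tail integrals compare.

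\textbf{Upper bound.} Define $G(z)=\max(F_Y(z)-\epsilon,0)$ for $z<b\triangleq\max(b_X,b_Y)$ and $G(z)=1$ for $z\geq b$. This $G$ is nondecreasing and right-continuous with the correct limits, so it is a CDF. The assumption $F_Y-F_X\leq\epsilon$ gives $G\leq F_X$ on $(-\infty,b)$, and $F_X(z)=1$ for $z\geq b$, so $G\leq F_X$ everywhere. Consequently $F_X^{-1}\leq G^{-1}$, and therefore $\CVaR_\alpha(X)\leq\frac1\alpha\int_{1-\alpha}^1 G^{-1}(v)dv$.

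Now compute $G^{-1}$. For $v\leq 1-\epsilon$ we have $G^{-1}(v)=F_Y^{-1}(v+\epsilon)$, which is at most $b_Y\leq b$. For $v>1-\epsilon$, $G$ only reaches level $v$ at the jump at $b$, so $G^{-1}(v)=b$. If $\alpha>\epsilon$, split the integral at $1-\epsilon$. The piece $\int_{1-\alpha}^{1-\epsilon}F_Y^{-1}(v+\epsilon)dv=\int_{1-(\alpha-\epsilon)}^{1}F_Y^{-1}(u)du$ equals $(\alpha-\epsilon)\CVaR_{\alpha-\epsilon}(Y)$, and the piece over $(1-\epsilon,1]$ contributes $\epsilon b$. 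Dividing by $\alpha$ yields \eqref{eq:general_cvar_upper_bound_1}. If $\alpha\leq\epsilon$, then $G^{-1}\equiv b$ on $(1-\alpha,1]$, so the bound is $b$; this case is also immediate from $X\leq b_X$ almost surely.

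\textbf{Lower bound.} Symmetrically, define $H(z)=\min(F_Y(z)+\epsilon,1)$ for $z\geq a\triangleq a_{\min}$ and $H(z)=0$ for $z<a$. Then $H\geq F_X$: on $[a,\infty)$ this follows from the assumption, and below $a$ we have $F_X=0$. Hence $F_X^{-1}\geq H^{-1}$, where $H^{-1}(v)=F_Y^{-1}(v-\epsilon)$ for $v>\epsilon$ and $H^{-1}(v)=a$ for $v\leq\epsilon$.

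If $\alpha+\epsilon\leq1$, then $1-\alpha\geq\epsilon$ and
$\int_{1-\alpha}^{1}F_Y^{-1}(v-\epsilon)dv=\int_{1-\alpha-\epsilon}^{1-\epsilon}F_Y^{-1}(u)du$. We write this as $\int_{1-(\alpha+\epsilon)}^1F_Y^{-1}-\int_{1-\epsilon}^1F_Y^{-1}=(\alpha+\epsilon)\CVaR_{\alpha+\epsilon}(Y)-\epsilon\CVaR_\epsilon(Y)$. Dividing by $\alpha$ gives \eqref{eq:general_cvar_lower_bound_1}.

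If $\alpha+\epsilon>1$, split at $\epsilon$. The part over $(1-\alpha,\epsilon]$ contributes $(\alpha+\epsilon-1)a$. The part over $(\epsilon,1]$ gives $\int_0^{1-\epsilon}F_Y^{-1}=\mathbb{E}[Y]-\epsilon\CVaR_\epsilon(Y)$, using $\int_0^1F_Y^{-1}=\mathbb{E}[Y]$. Adding the two parts and dividing by $\alpha$ gives the stated bound. The degenerate case $\epsilon=0$ in $\CVaR_\epsilon$ is read as $\epsilon\CVaR_\epsilon(Y)=0$.

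\textbf{Main obstacle.} The delicate step is the bookkeeping of the generalized inverses. We must verify that $(\max(F_Y-\epsilon,0))^{-1}(v)=F_Y^{-1}(v+\epsilon)$ exactly, including at atoms and flat regions of $F_Y$. We must also handle the endpoint conventions of the quantile definition: the paper's $\VaR$ uses strict inequality, while the integral formula uses $\geq$. These differ only on a null set of $v$, so the integrals are unaffected. Finally, we need to justify the quantile-integral form of CVaR for general, non-smooth distributions. We would take this from the standard Acerbi–Tasche representation of the Rockafellar–Uryasev definition, rather than from the smooth-case formula \eqref{def:cvar_as_conditional_expectation}.
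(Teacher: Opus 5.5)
Your proof is correct and follows the paper's route. Both arguments sandwich $F_X$ between CDFs built from $F_Y$ shifted by $\pm\epsilon$ and truncated at the support endpoints, then apply the Acerbi--Tasche quantile-integral form of CVaR with stochastic-dominance monotonicity, a change of variable, and a split of the tail integral at $1-\epsilon$ (upper bound) or $\epsilon$ (lower bound). Your auxiliary CDFs are simpler than the paper's $F_Y^U,F_Y^L$ (no dependence on $a_X$ or on quantiles of $Y$), and your exact computation $G^{-1}(v)=F_Y^{-1}(v+\epsilon)$ avoids two problems in the paper's upper-bound step: its $F_Y^U$ is undefined on $[b_X,b_Y)$ when $b_Y>b_X$, and it justifies the step via $F_Y^U\le F_Y-\epsilon$, which points the wrong way for upper-bounding quantiles.
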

\begin{proof}
	The strategy of the proof is to construct two distributions derived from $F_Y$, denoted $F_Y^L$ and $F_Y^U$, such that $F_X$ is stochastically bounded between them; that is, $F_Y^L \leq F_X \leq F_Y^U$. Consequently, since CVaR is a coherent risk measure, it follows that
	\begin{equation}
		\CVaR_\alpha^{F_Y^L} \leq \CVaR_\alpha^{F_X} \leq \CVaR_\alpha^{F_Y^U},
	\end{equation}
	where $\CVaR_\alpha^{F_Y^L}$, $\CVaR_\alpha^{F_X}$, and $\CVaR_\alpha^{F_Y^U}$ denote the CVaR at level $\alpha$ corresponding to the distributions $F_Y^L$, $F_X$, and $F_Y^U$, respectively.
	
	Let $a_{\min} = \min(a_X, a_Y)$ and $b_{\min} = \min(b_X, b_Y)$. Define the interval $[b_{\min}, b_X)$ to be $\emptyset$ if $b_{\min} = b_X$, and equal to $[b_Y, b_X)$ otherwise. Analogously, define $[a_{\min}, a_X)$ in the same manner. We then define upper and lower bounds for $F_Y$ as follows (see Figure \ref{fig:CDF_bounds}): \begin{equation}
		F_Y^U(y)=\begin{cases}
			0 & y< \max(a_X,q_{1-\epsilon}^Y) \\
			\min(F_Y(y)-\epsilon, 1-\epsilon) & y\in [\max(a_X,q_{1-\epsilon}^Y), b_{min}) \\
			1-\epsilon & y\in [b_{min}, b_X) \\
			1 & y\geq \max(b_X, b_Y),
		\end{cases}
	\end{equation}
	\begin{equation}
		F_Y^L(y)=\begin{cases}
			0 & y<a_{min}  \\
			\epsilon & y\in [a_{min}, a_Y) \\
			\min(F_Y(y)+\epsilon, 1) & y\in [a_Y, \min(q_{\epsilon}^Y, b_X)) \\
			1 & y\geq \min(q_\epsilon^Y, b_X).
		\end{cases}
	\end{equation}

	\begin{figure}[htbp]
		\centering
		\includegraphics[width=0.45\textwidth]{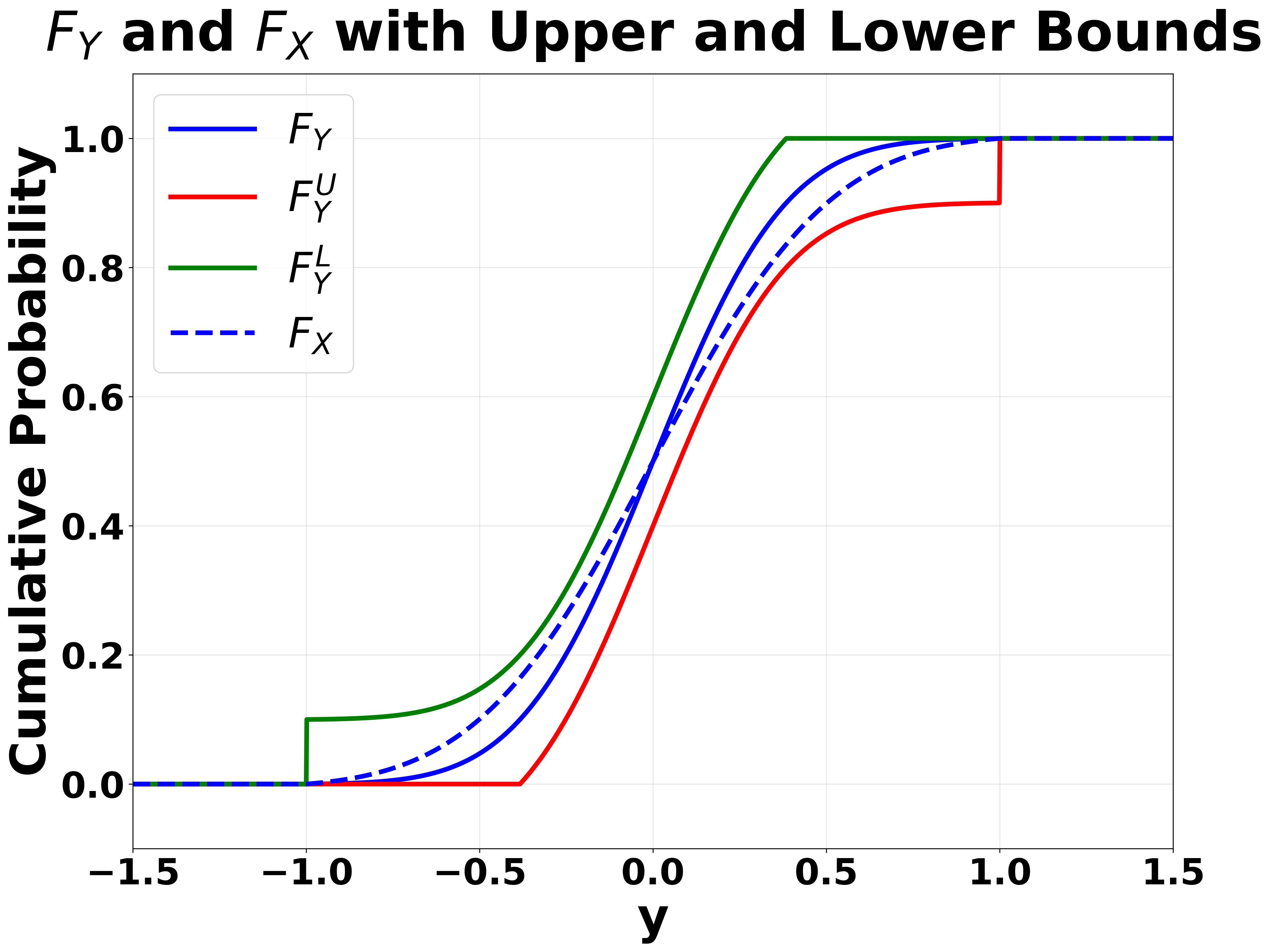}
		\caption{Illustration of the upper and lower bounds $F_Y^U$ and $F_Y^L$ on the cumulative distribution function $F_X$.}
		\label{fig:CDF_bounds}
	\end{figure}
	As a first step, we verify that $F_Y^L$ and $F_Y^U$ are valid CDFs and that they satisfy $F_Y^L \leq F_X \leq F_Y^U$. To establish that $F$ is a CDF, it suffices to verify the following properties:
	\begin{enumerate}
		\item $F$ is non-decreasing;
		\item $F : \mathbb{R} \to [0,1]$ with $\lim_{x \to \infty} F(x) = 1$ and $\lim_{x \to -\infty} F(x) = 0$;
		\item $F$ is right-continuous.
	\end{enumerate}
	\textbf{Proof that $F_Y^L$ is a CDF:} \begin{enumerate}
		\item On the interval $[a_Y, \min(q_{1-\epsilon}^Y, b_X))$, the function $F_Y^L$ is monotone increasing, since $F_Y$ is monotone increasing by virtue of being a CDF. Outside this interval, $F_Y^L$ is constant and consequently preserves monotonicity.
		
		\item By definition, $\lim_{y\rightarrow \infty}F_Y^L(y)=1$ and $\lim_{y\rightarrow -\infty}F_Y^L(y)=0$. We need to show that $F_Y^L$ is bounded between 0 and 1. By its definition, $F_Y^L$ is bounded between 0 and 1.
		
		\item Within the interval $[a_Y, \min(q_{\epsilon}^Y, b_X))$, $F_Y^L$ is right-continuous since $F_Y$ is right-continuous as a CDF. Outside this interval, $F_Y^L$ is constant and hence also right-continuous.
	\end{enumerate}
	\textbf{Proof that $F_Y^U$ is a CDF:} \begin{enumerate}
		\item On the interval $[\max(a_X,q_{1-\epsilon}^Y), b_{min})$, the function $F_Y^U$ is monotone increasing, since $F_Y$ is monotonically increasing by virtue of being a CDF. Outside this interval, $F_Y^U$ is constant and consequently preserves monotonicity.
		\item By definition, $\lim_{y\rightarrow \infty}F_Y^U(y)=1$ and $\lim_{y\rightarrow -\infty}F_Y^U(y)=0$. By its definition, $F_Y^U$ is bounded between 0 and 1.
		\item Within the interval $[\max(a_X,q_{1-\epsilon}^Y), b_{min})$, $F_Y^U$ is right-continuous since $F_Y$ is right-continuous as a CDF. Outside this interval, $F_Y^U$ is constant and hence also right-continuous.
	\end{enumerate}
	\textbf{Proof that $F_Y^L \leq F_X \leq F_Y^U$:} To establish that $F_Y^L \leq F_X \leq F_Y^U$, we need to show that for all $z \in \mathbb{R}$, $F_Y^L(z) \geq F_X(z) \geq F_Y^U(z)$. Let $z\in \mathbb{R}$. \begin{itemize}
		\item If $z<a_{min}$ then $F_Y^L(z)=0 = F_X(z)$.
		\item If $z\in [a_{min}, a_Y)$ then $F_X(y)=F_X(y)-F_Y(y)+F_Y(y)\leq |F_X(y)-F_Y(y)| + F_Y(y)\leq \epsilon + F_Y(y)=\epsilon=F_Y^L(y)$.
		\item If $z\in [a_Y, \min(q_\epsilon^Y, b_X))$ we assume that $F_Y(z)+\epsilon\leq 1$ because otherwise $F_Y^L(z)=1$ and the inequality holds. $F_Y^L(z)=F_Y(z) + \epsilon = F_Y(z)-F_X(z)+F_X(z)+\epsilon \geq F_X(z) - |F_Y(z)-F_X(z)| + \epsilon\geq F_X(z)$.
		\item If $z\geq \min(q_\epsilon^Y, b_X)$ then $F_Y^L(z)=1\geq F_X(z)$.
	\end{itemize} 
	and therefore $F_Y^L\leq F_X$. Note that the last proof holds when $b_X=\infty$, making $F_Y^L$ a valid CDF when the support of $X$ is not bounded from above.
	\begin{itemize}
		\item If $z<\max(a_X,q_{1-\epsilon}^Y)$ then $F_Y^U(z)=0\leq F_X(z)$.
		\item If $z\in [\max(a_X,q_{1-\epsilon}^Y), b_{min})$ then $F_Y^U(z)\leq F_Y(z)-\epsilon=F_Y(z)-F_X(z)+F_X(z)-\epsilon \leq |F_Y(z)-F_X(z)|+F_X(z)-\epsilon \leq \epsilon+F_X(z)-\epsilon= F_X(z)$.
		\item If $z \in [b_{min}, b_X)$ then $F_Y^U(z)=1-\epsilon=F_Y(z)-\epsilon=F_Y(z)-\epsilon+F_X(z)-F_X(z)\leq F_X(z)-\epsilon+|F_Y(z)-F_X(z)|\leq F_X(z)$
	\end{itemize}
	and therefore $F_X\leq F_Y^U$. Note that the last proof holds when $a_X=-\infty$, making $F_Y^U$ a valid CDF when the support of $X$ is not bounded from below.
	
	Next, we derive bounds for the CVaR associated with $F_Y^U$ and $F_Y^L$.
	
	\textbf{Upper bound for $\CVaR_\alpha^{F_Y^U}$:} From \cite{acerbi2002coherence}, CVaR is equal to an integral of the VaR
	\begin{equation}
		\begin{aligned}
			&\CVaR_\alpha^{F_Y^U}=\frac{1}{\alpha}\int_{1-\alpha}^1 \inf \{y\in \mathbb{R}:F_Y^U(y)\geq \tau\} d\tau 
			=\frac{1}{\alpha}\int_{1-\alpha+\epsilon}^{1+\epsilon} \inf \{y\in \mathbb{R}:F_Y^U(y)\geq \tau -\epsilon\} d\tau
		\end{aligned}
	\end{equation}  
	If $\alpha \leq \epsilon$, then $1 - \alpha + \epsilon \geq 1$, rendering the bound in the preceding equation trivial, as it attains the maximum value of the support of both $X$ and $Y$.
	\begin{equation}\label{eq:helper1}
		\begin{aligned}
			&\frac{1}{\alpha}\int_{1-\alpha+\epsilon}^{1+\epsilon} \inf \{y\in \mathbb{R}:F_Y^U(y)\geq \tau -\epsilon\} 
			\leq \frac{1}{\alpha}\int_{1-\alpha+\epsilon}^{1+\epsilon} \max(b_X, b_Y)d\tau = \max(b_X, b_Y).
		\end{aligned}
	\end{equation}
	That is, $\CVaR_\alpha(X)\leq \max(b_X, b_Y)$.
	
	If $\alpha > \epsilon$, then $1 - \alpha + \epsilon < 1$, and the integral may be decomposed into one term that is trivially bounded by the maximum of the support and another term that can be computed explicitly
	\begin{equation}
		\begin{aligned}
			\CVaR_\alpha^{F_Y^U}&=\frac{1}{\alpha}\int_{1-\alpha+\epsilon}^{1+\epsilon} \inf \{y\in \mathbb{R}:F_Y^U(y)\geq \tau -\epsilon\} d\tau 
			=\frac{1}{\alpha}[\underbrace{\int_{1}^{1+\epsilon} \inf \{y\in \mathbb{R}:F_Y^U(y)\geq \tau -\epsilon\} d\tau}_{\triangleq A_1} \\
			&+\underbrace{\int_{1-\alpha+\epsilon}^{1} \inf \{y\in \mathbb{R}:F_Y^U(y)\geq \tau -\epsilon\} d\tau}_{\triangleq A_2}]
		\end{aligned}
	\end{equation}
	The term $A_1$ is bounded, in a manner analogous to \eqref{eq:helper1}, by $\epsilon \max(b_X, b_Y)$, which is essentially the tightest bound attainable given the definition of $F_Y^U$. The term $A_2$ can be expressed in terms of the CVaR of $Y$, evaluated at a shifted confidence level. Specifically, the variable $\tau - \epsilon$ in $A_2$ lies within the interval $[1 - \alpha, 1 - \epsilon]$. Over this range, for all $y \in [\max(a_X, q_{1-\epsilon}^Y), b_{\min}]$, the inequality $F_Y^U(y) \leq F_Y(y) - \epsilon$ holds. This follows because if $q_{1-\epsilon}^Y \geq a_X$, then by definition $F_Y^U(y) = \min(F_Y(y) - \epsilon, 1-\epsilon)$, and if $a_X > q_{1-\epsilon}^Y$, then $F_Y^U(y)=0 \leq F_Y(y) - \epsilon$ for $y\in [q_{1-\epsilon}^Y, a_X]$ and again $F_Y^U(y) = \min(F_Y(y) - \epsilon, 1-\epsilon)$ for $y\in [a_X, b_{min}]$.
	\begin{equation}
		\begin{aligned}
			&A_2 \leq \int_{1-\alpha+\epsilon}^1 \inf \{y\in \mathbb{R}:F_Y(y) - \epsilon \geq \tau -\epsilon\}d\tau 
			=\int_{1-(\alpha-\epsilon)}^1 \inf \{y\in \mathbb{R}:F_Y(y) \geq \tau\}d\tau 
			=(\alpha-\epsilon)\CVaR_{\alpha-\epsilon}(Y).
		\end{aligned}
	\end{equation}
	Note that the confidence level is valid, as $\alpha, \epsilon \in (0,1)$ and $\alpha > \epsilon$ imply $\alpha-\epsilon \in (0, 1)$. By combining the previous two expressions, we obtain a single bound:
	\begin{equation}
		\CVaR_\alpha^{F_Y^U} \leq \epsilon\frac{\max(b_X,b_Y)}{\alpha} + (1 - \frac{\epsilon}{\alpha})\CVaR_{\alpha-\epsilon}(Y).
	\end{equation}
	In the case where $a_X = -\infty$, we have $\max(a_X, q_{1-\epsilon}^Y) = q_{1-\epsilon}^Y$, and the definition of $F_Y^U$ no longer involves $a_X$. Therefore, the inequality remains valid in the case of $a_X = -\infty$.
	
	\textbf{Lower bound for $\CVaR_\alpha^{F_Y^L}$:} From \cite{acerbi2002coherence}, CVaR is equal to an integral of the VaR
	\begin{equation}
		\begin{aligned}
			&\CVaR_\alpha^{F_Y^L}=\frac{1}{\alpha}\int_{1-\alpha}^1 \inf \{y\in \mathbb{R}:F_Y^L(y)\geq \tau\} d\tau 
			=\frac{1}{\alpha}\int_{1-(\epsilon+\alpha)}^{1-\epsilon} \inf \{y\in \mathbb{R}:F_Y^L(y)\geq \tau +\epsilon\} d\tau.
		\end{aligned}
	\end{equation}
	It holds that $F_Y^L(y)\leq F_Y(y)+\epsilon$ and therefore
	\begin{equation}
		\begin{aligned}
			&\CVaR_\alpha^{F_Y^L} \geq \frac{1}{\alpha}\int_{1-(\epsilon+\alpha)}^{1-\epsilon} \inf \{y\in \mathbb{R}:F_Y(y)+\epsilon \geq \tau +\epsilon\} d\tau 
			=\frac{1}{\alpha}\int_{1-(\epsilon+\alpha)}^{1-\epsilon} \inf \{y\in \mathbb{R}:F_Y(y)\geq \tau\} d\tau \\
			&=\frac{1}{\alpha}[\int_{1-(\epsilon+\alpha)}^{1} \inf \{y\in \mathbb{R}:F_Y(y)\geq \tau\} d\tau 
			-\int_{1-\epsilon}^{1} \inf \{y\in \mathbb{R}:F_Y(y)\geq \tau\} d\tau] \\
			&=\frac{1}{\alpha}[(\alpha+\epsilon)\CVaR_{\alpha+\epsilon}(Y)-\epsilon \CVaR_{\epsilon}(Y)].
		\end{aligned}
	\end{equation}
	In the case where $\epsilon+\alpha > 1$,
	\begin{equation}
		\begin{aligned}
			\CVaR_\alpha^{F_Y^L}&=\frac{1}{\alpha}\int_{1-(\epsilon+\alpha)}^{1-\epsilon} \inf \{y\in \mathbb{R}:F_Y^L(y)\geq \tau +\epsilon\} d\tau \\
			&=\frac{1}{\alpha}[\underbrace{\int_{0}^{1-\epsilon} \inf \{y\in \mathbb{R}:F_Y^L(y)\geq \tau +\epsilon\} d\tau}_{\triangleq A_3} 
			+\underbrace{\int_{1-(\epsilon+\alpha)}^{0} \inf \{y\in \mathbb{R}:F_Y^L(y)\geq \tau +\epsilon\} d\tau}_{\triangleq A_4}].
		\end{aligned}
	\end{equation}
	
	\begin{equation}
		\begin{aligned}
			A_3 &\geq \int_{0}^{1-\epsilon} \inf \{y\in \mathbb{R}:F_Y(y)\geq \tau\} d\tau 
			=\int_{0}^{1} \inf \{y\in \mathbb{R}:F_Y(y)\geq \tau\} d\tau 
			-\int_{1-\epsilon}^{1} \inf \{y\in \mathbb{R}:F_Y(y)\geq \tau\} d\tau \\
			&=\mathbb{E}[Y] - \epsilon \CVaR_\epsilon(Y)
		\end{aligned}
	\end{equation}
	In the case of $A_4$, after the change of variable, $\tau$ ranges from $1-\alpha$ to $\epsilon$, and therefore 
	\begin{equation}
		\begin{aligned}
			&A_4=\int_{1-\alpha}^{\epsilon} \inf \{y\in \mathbb{R}:F_Y^L(y)\geq \tau\} d\tau \geq (\alpha+\epsilon-1)a_{min}
		\end{aligned}
	\end{equation}
	By combining the last equations to one bound we get \begin{equation}
		\begin{aligned}
			\CVaR_\alpha(X)&\geq \CVaR_\alpha^{F_Y^L}
			\geq \frac{1}{\alpha}\big(\mathbb{E}[Y] - \epsilon \CVaR_\epsilon(Y) + (\alpha+\epsilon-1)a_{min}\big).
		\end{aligned}
	\end{equation}
\end{proof}

\begin{theorem}\label{prf:cvar_bound_convergence}
	Given the definition of $X,Y,\epsilon$ and $\alpha$ as in Theorem \ref{thm:cvar_bound_v2}, the lower and upper bounds of Theorem \ref{thm:cvar_bound_v2} converge to $\CVaR_\alpha(X)$ as $\epsilon \rightarrow 0$.
\end{theorem}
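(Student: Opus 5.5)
Throughout, I regard $X$ as fixed and $Y=Y_\epsilon$ as any random variable satisfying the hypotheses of Theorem~\ref{thm:cvar_bound_v2} for the given $\epsilon$. I will assume, as in the POMDP application where returns lie in $[(T-k+1)R_{\min},(T-k+1)R_{\max}]$, that the support constants $a_X,a_Y,b_X,b_Y$ stay in a fixed compact interval as $\epsilon\to 0$. Some such assumption is needed. Without an upper bound on $Y$, one can put mass $\epsilon$ at a point $M_\epsilon$ with $\epsilon M_\epsilon\not\to 0$, and then the term $\frac{\epsilon}{\alpha}\CVaR_\epsilon(Y)$ does not vanish.

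Since $\alpha\in(0,1)$ is fixed, for all sufficiently small $\epsilon$ we have $\alpha>\epsilon$ and $\alpha+\epsilon\le 1$. So only cases 1(a) and 2(a) matter, and the bounds are
\[
U(\epsilon)=\tfrac{\epsilon}{\alpha}\max(b_X,b_Y)+(1-\tfrac{\epsilon}{\alpha})\CVaR_{\alpha-\epsilon}(Y),
\qquad
L(\epsilon)=(1+\tfrac{\epsilon}{\alpha})\CVaR_{\alpha+\epsilon}(Y)-\tfrac{\epsilon}{\alpha}\CVaR_\epsilon(Y).
\]
Theorem~\ref{thm:cvar_bound_v2} already gives $L(\epsilon)\le\CVaR_\alpha(X)\le U(\epsilon)$. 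It therefore suffices to show $\limsup_{\epsilon\to0}U(\epsilon)\le\CVaR_\alpha(X)$ and $\liminf_{\epsilon\to0}L(\epsilon)\ge\CVaR_\alpha(X)$.

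\textbf{Key step (quantile comparison).} Write $q_X(\tau)=\inf\{z:F_X(z)\ge\tau\}$ and similarly $q_Y$. The one-sided hypothesis $F_Y-F_X\le\epsilon$ implies $q_Y(\tau)\le q_X(\min(\tau+\epsilon,1))$, with the convention $q_X(1)=b_X$. Indeed, if $F_X(z)\ge\tau+\epsilon$ then $F_Y(z)\ge F_X(z)-\epsilon\ge\tau$. Symmetrically, $F_X-F_Y\le\epsilon$ implies $q_Y(\tau)\ge q_X(\max(\tau-\epsilon,0))$.

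Using the representation $\beta\,\CVaR_\beta(Y)=\int_{1-\beta}^1 q_Y(\tau)\,d\tau$ (as in the proof of Theorem~\ref{thm:cvar_bound_v2}), the upper bound becomes
\[
(\alpha-\epsilon)\CVaR_{\alpha-\epsilon}(Y)\le\int_{1-\alpha+\epsilon}^{1-\epsilon}q_X(\tau+\epsilon)\,d\tau+\epsilon\, b_X=\int_{1-\alpha+2\epsilon}^{1}q_X(s)\,ds+\epsilon\, b_X .
\]
Since $q_X$ is integrable on $[1-\alpha,1]$ (because $\mathbb{E}|X|<\infty$), the right-hand side tends to $\alpha\CVaR_\alpha(X)$ by absolute continuity of the Lebesgue integral. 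Dividing by $\alpha$ and noting $\frac{\epsilon}{\alpha}\max(b_X,b_Y)\to0$ under the boundedness assumption gives $\limsup U(\epsilon)\le\CVaR_\alpha(X)$.

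For the lower bound, first note $\epsilon\,\CVaR_\epsilon(Y)=\int_{1-\epsilon}^1q_Y\le\epsilon\, b_Y\to0$. Next,
\[
(\alpha+\epsilon)\CVaR_{\alpha+\epsilon}(Y)\ge\int_{1-\alpha-\epsilon}^1 q_X(\max(\tau-\epsilon,0))\,d\tau=\int_{1-\alpha-2\epsilon}^{1-\epsilon}q_X(s)\,ds ,
\]
which tends to $\alpha\CVaR_\alpha(X)$ by the same absolute-continuity argument. The lower support constant $a_{\min}$ only enters through case 2(b), which is irrelevant for small $\epsilon$. Combining these gives $\liminf L(\epsilon)\ge\CVaR_\alpha(X)$.

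The main obstacle I expect is the quantile comparison at the endpoints. It must be checked carefully with the paper's left-continuous inverse $\inf\{z:F(z)\ge\tau\}$, where $\tau+\epsilon$ may exceed $1$ or $\tau-\epsilon$ may fall below $0$. This is where the support bounds $b_X$ and $a$ are used, and where uniform boundedness in $\epsilon$ is genuinely needed. The remaining steps are routine limits of integrals.
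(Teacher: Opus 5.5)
\textbf{The key quantile comparison has its hypotheses swapped.} Your justification, ``if $F_X(z)\ge\tau+\epsilon$ then $F_Y(z)\ge F_X(z)-\epsilon$'', uses $F_X-F_Y\le\epsilon$. That is the hypothesis of the \emph{lower}-bound case. The upper-bound hypothesis $F_Y-F_X\le\epsilon$ yields the opposite comparison, $q_Y(\tau)\ge q_X(\tau-\epsilon)$, since $\{F_Y\ge\tau\}\subseteq\{F_X\ge\tau-\epsilon\}$. Symmetrically, the comparison you use for $L(\epsilon)$ needs $F_Y-F_X\le\epsilon$, not the lower-bound hypothesis.

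This is not a relabeling slip: under the one-sided hypotheses alone, the convergence you claim is false. Take $X\equiv 0$ and $Y\equiv 1$. Then $F_Y-F_X\le 0\le\epsilon$ for every $\epsilon$, yet $U(\epsilon)=1$ for all $\epsilon<\alpha$, while $\CVaR_\alpha(X)=0$. Swapping roles ($X\equiv 1$, $Y\equiv 0$) gives $L(\epsilon)=0\not\to 1$. What either bound actually needs is the two-sided condition $\sup_z|F_X(z)-F_Y(z)|\le\epsilon$. One side gives the inequality of Theorem~\ref{thm:cvar_bound_v2}. The \emph{other} side gives the quantile comparison that drives the limit. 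The paper uses this two-sided reading implicitly (``when $\epsilon=0$ the CDFs of $X$ and $Y$ are identical''), so you should state it as an explicit assumption.

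With that assumption added, your sandwich argument works after two small fixes.
\begin{itemize}
\item For $\tau>1-\epsilon$, the bound $q_Y(\tau)\le q_X(1)=b_X$ fails, because a mass $\epsilon$ of $Y$ may sit above $b_X$. Bound that top slice by $\epsilon b_Y$ instead; it still vanishes under your compactness assumption.
\item For the lower bound you have exactly $L(\epsilon)=\frac{1}{\alpha}\int_{1-\alpha-\epsilon}^{1-\epsilon}q_Y(\tau)\,d\tau$. The $\epsilon\CVaR_\epsilon(Y)$ tail therefore cancels, and no upper support bound on $Y$ is needed there. Your $\epsilon M_\epsilon$ example only shows that this one term need not vanish on its own.
\end{itemize}

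\textbf{Comparison with the paper.} The paper keeps $Y$ fixed, invokes continuity of $\beta\mapsto\CVaR_\beta(Y)$, and obtains the limit $\CVaR_\alpha(Y)$. That limit equals $\CVaR_\alpha(X)$ only because a fixed $Y$ satisfying the two-sided condition for every $\epsilon$ must have $F_Y=F_X$. Your repaired route says more. It allows $Y=Y_\epsilon$ to vary with $\epsilon$, which is the only non-degenerate reading. It also avoids any continuity-in-level argument for a moving $Y$, by transferring everything to the fixed quantile function of $X$ and using absolute continuity of the integral.
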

\begin{proof}
	Let $f$ and $g$ be continuous functions such that $f(0)$ and $g(0)$ are finite. Then
	$\lim_{x \to 0} f(x) g(x) = \big( \lim_{x \to 0} f(x) \big) \big( \lim_{x \to 0} g(x) \big)$.
	This property will be used throughout the proof.
	
	Assume that $F_Y(z) - F_X(z) \leq \epsilon$ for all $z \in \mathbb{R}$. Since we consider the limit as $\epsilon \to 0$, we further assume that $\alpha > \epsilon$ when computing the bound. Noting that CVaR is continuous with respect to the confidence level, it follows that $\lim_{\epsilon \to 0} \CVaR_{\alpha - \epsilon}(Y) = \CVaR_{\alpha}(Y)$.
	\begin{equation}
		\begin{aligned}
			&\lim_{\epsilon\rightarrow 0} \frac{\epsilon}{\alpha}\max(b_X,b_Y) + (1 - \frac{\epsilon}{\alpha})\CVaR_{\alpha-\epsilon}(Y) 
			=\lim_{\epsilon \rightarrow 0} (1 - \frac{\epsilon}{\alpha})\CVaR_{\alpha-\epsilon}(Y) 
			=\lim_{\epsilon \rightarrow 0} 1 - \frac{\epsilon}{\alpha} \lim_{\epsilon\rightarrow 0} \CVaR_{\alpha-\epsilon}(Y)=\CVaR_\alpha(Y)
		\end{aligned}
	\end{equation}
	
	If $F_X(z) - F_Y(z) \leq \epsilon$ for all $z \in \mathbb{R}$, then, since we consider the limit as $\epsilon \to 0$, we assume in the derivation of the bound that $\alpha + \epsilon \leq 1$.
	\begin{equation}
		\begin{aligned}
			&\lim_{\epsilon\rightarrow 0} (1+\frac{\epsilon}{\alpha})\CVaR_{\alpha+\epsilon}(Y)-\frac{\epsilon}{\alpha} \CVaR_{\epsilon}(Y)
			=\lim_{\epsilon\rightarrow 0} 1+\frac{\epsilon}{\alpha} \lim_{\epsilon\rightarrow 0} \CVaR_{\alpha+\epsilon}(Y)
			-\lim_{\epsilon\rightarrow 0} \frac{\epsilon}{\alpha} \lim_{\epsilon \rightarrow 0} \CVaR_{\epsilon}(Y)
			=\CVaR_\alpha(Y).
		\end{aligned}
	\end{equation}
\end{proof}

\begin{theorem}
	\label{prf:ecdf_cvar_bound}
	Let $X$ be a random variable, $\alpha\in (0, 1],\delta \in (0, 0.5), \epsilon=\sqrt{\ln(1/\delta)/(2n)}$. Let $X_1, \dots, X_n \overset{iid}{\sim} F_X$ be random variables that define the ECDF $\hat{F}_X$.
	\begin{enumerate}
		\item \textbf{Upper Bound:} If $P(X\leq b)=1$, then
		\begin{enumerate}
			\item If $\alpha > \epsilon$ then $P(\CVaR_\alpha(X) \leq (1-\frac{\epsilon}{\alpha})C_{\alpha-\epsilon}^{\hat{F}_X} + \frac{\epsilon}{\alpha}b)> 1-\delta$.
			
			\item If $\alpha\leq \epsilon$ then $\CVaR_\alpha(X)\leq b$
		\end{enumerate}
		
		\item \textbf{Lower Bound:} If $P(X\geq a)=1$, then 
		\begin{enumerate}
			\item If $\alpha+\epsilon < 1$, then $P(\CVaR_\alpha(X)\geq (1+\frac{\epsilon}{\alpha})C_{\alpha+\epsilon}^{\hat{F}_X}-\frac{\epsilon}{\alpha}C_{\epsilon}^{\hat{F}_X})> 1-\delta$.
			
			\item If $\alpha+\epsilon \geq 1$, then $P(\CVaR_\alpha(X) \geq \frac{1}{\alpha}[(\alpha+\epsilon - 1)a + E_{\hat{F}_X}[X] - \epsilon C_\epsilon^{\hat{F}_X}]) > 1-\delta$.
		\end{enumerate}
	\end{enumerate}
\end{theorem}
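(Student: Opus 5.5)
The plan is to obtain this as a corollary of Theorem~\ref{thm:cvar_bound_v2}, taking $Y$ to be a random variable with CDF $\hat{F}_X$ (the empirical distribution, conditional on the sample). Then $\CVaR_\beta(Y)=C_\beta^{\hat{F}_X}$ and $\mathbb{E}[Y]=E_{\hat{F}_X}[X]$ for every level $\beta$. The distributional discrepancy is controlled by the one-sided Dvoretzky--Kiefer--Wolfowitz inequality (Massart's tight-constant version). With $\epsilon=\sqrt{\ln(1/\delta)/(2n)}$, for $\epsilon$ in the relevant range:
\begin{equation*}
P\Bigl(\sup_z(\hat{F}_X(z)-F_X(z))>\epsilon\Bigr)\leq e^{-2n\epsilon^2}=\delta,
\end{equation*}
and symmetrically for $\sup_z(F_X(z)-\hat{F}_X(z))$. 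I would also need this one-sided bound to hold with strict inequality in the complementary event, giving probability $>1-\delta$. I expect to handle this by noting that the restriction $\delta<0.5$ is exactly Massart's condition $e^{-2n\epsilon^2}\le 1/2$ for the one-sided bound. Alternatively, I would read the ``$>$'' as ``$\ge$'' if strictness cannot be secured.

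\textbf{Upper bound.} Work on the event $E_U=\{\forall z:\ \hat{F}_X(z)-F_X(z)\le\epsilon\}$, which has probability at least $1-\delta$. On $E_U$ the hypothesis $F_Y-F_X\le\epsilon$ of Theorem~\ref{thm:cvar_bound_v2}(1) holds. Both supports are bounded by $b$: almost surely all $X_i\le b$, so $b_Y\le b$ and $\max(b_X,b_Y)=b$. Part (1a) then gives the claimed inequality whenever $\alpha>\epsilon$. Case (1b) is deterministic, since $\CVaR_\alpha(X)\le b$ always holds when $X\le b$ a.s.

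\textbf{Lower bound.} Symmetrically, work on $E_L=\{\forall z:\ F_X(z)-\hat{F}_X(z)\le\epsilon\}$, which also has probability at least $1-\delta$. On $E_L$, apply Theorem~\ref{thm:cvar_bound_v2}(2) with $a_X=a$ and $a_Y=\min_i X_i\ge a$ a.s., so that $a_{\min}=a$. If $\alpha+\epsilon<1$ (in particular $\le 1$), part (2a) yields the stated bound with $C^{\hat{F}_X}$ in place of $\CVaR(Y)$. If $\alpha+\epsilon\ge1$, part (2b) yields it. The boundary case $\alpha+\epsilon=1$ needs a brief check that both formulas agree, which they do, since $\CVaR_1(Y)=\mathbb{E}[Y]$.

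\textbf{Main obstacle.} The delicate point is the probabilistic step rather than the algebra. Theorem~\ref{thm:cvar_bound_v2} is deterministic, so I must make sure of three things:
\begin{enumerate}
\item Conditioning on the sample is legitimate, which it is because $Y$ is defined pathwise from the sample.
\item The one-sided DKW constant is exactly $e^{-2n\epsilon^2}$, with no factor $2$; this is what lets each bound use $\delta$ rather than $\delta/2$.
\item The hypotheses $\epsilon\in[0,1]$ and the validity of the CVaR levels $\alpha\pm\epsilon$ hold. Here $\epsilon>1$ is possible only for tiny $n$, and in that case the trivial bounds apply.
\end{enumerate}
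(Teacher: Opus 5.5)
Your proposal follows the paper's proof: the paper likewise takes $Y$ to have CDF $\hat{F}_X$ in Theorem~\ref{thm:cvar_bound_v2} and uses the one-sided DKW inequalities to obtain the events $\sup_z(\hat{F}_X-F_X)\le\epsilon$ and $\sup_z(F_X-\hat{F}_X)\le\epsilon$ with probability at least $1-\delta$. It then applies the law of total probability, with the deterministic bound holding on those events. Your extra checks ($\max(b_X,b_Y)=b$, $a_{\min}=a$, agreement of the two lower-bound formulas at $\alpha+\epsilon=1$, and the $\epsilon>1$ and strict-versus-weak caveats) fill in details the paper glosses over. Note, however, that $\delta\le 1/2$ only makes Massart's constant valid; it does not give the strict ``$>1-\delta$''.
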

\begin{proof}
	From DKW inequality the following inequalities can be derived \cite{pmlr-v97-thomas19a}
	\begin{equation}
		\Pr \left( \sup_{x \in \mathbb{R}} \big( \hat{F}(x) - F(x) \big) 
		\leq \sqrt{\frac{\ln(1/\delta)}{2n}} \right) \geq 1 - \delta, \Pr \left( \sup_{x \in \mathbb{R}} \big( \hat{F}(x) - F(x) \big) 
		\geq \sqrt{\frac{\ln(1/\delta)}{2n}} \right) \geq 1 - \delta.
	\end{equation}
	Let $\epsilon=\sqrt{\frac{\ln(1/\delta)}{2n}}$. By using Theorem \ref{thm:cvar_bound_v2} we get the following. 
	
	If $\alpha > \epsilon$ then \begin{equation}
		\begin{aligned}
			P(\CVaR_\alpha(X) \leq \frac{\alpha-\epsilon}{\alpha}C_{\alpha-\epsilon}^{\hat{F}_X} + \frac{\epsilon}{\alpha}b) 
			&= \underbrace{P(C_\alpha(X) \leq \frac{\alpha-\epsilon}{\alpha}C_{\alpha-\epsilon}^{\hat{F}_X} + \frac{\epsilon}{\alpha}b\Bigr{|} \sup_{z\in\mathbb{R}}(\hat{F}_X-F_X)\leq \epsilon)}_{=1} 
			\underbrace{P(\sup_{z\in\mathbb{R}}(\hat{F}_X-F_X)\leq \epsilon)}_{>1-\delta} \\
			&+ \underbrace{P(C_\alpha(X) \leq \frac{\alpha-\epsilon}{\alpha}C_{\alpha-\epsilon}^{\hat{F}_X} + \frac{\epsilon}{\alpha}b\Bigr{|} \sup_{z\in\mathbb{R}}(\hat{F}_X-F_X)> \epsilon)}_{\geq 0} 
			\underbrace{P(\sup_{z\in\mathbb{R}}(\hat{F}_X-F_X)> \epsilon)}_{\geq 0} \\
			&> 1-\delta.
		\end{aligned}
	\end{equation}
	Observe that, conditional on the event 
	\[
	\sup_{z \in \mathbb{R}} \big( \hat{F}_X(z) - F_X(z) \big) \leq \epsilon,
	\]
	the bound in Theorem~\ref{thm:cvar_bound_v2} holds deterministically. Consequently, the probability that
	\[
	C_\alpha(X) \leq \frac{\alpha - \epsilon}{\alpha} C_{\alpha - \epsilon}^{\hat{F}_X} + \frac{\epsilon}{\alpha} b
	\]
	holds, given 
	\[
	\sup_{z \in \mathbb{R}} \big( \hat{F}_X(z) - F_X(z) \big) \leq \epsilon,
	\]
	is equal to one. The same observation is necessary through the rest of the proof in a similar manner.
	If $\alpha+\epsilon < 1$, then \begin{equation}
		\begin{aligned}
			P(C_\alpha(X)\geq (1+\frac{\epsilon}{\alpha})C_{\alpha+\epsilon}^{\hat{F}_X}-\frac{\epsilon}{\alpha}C_{\epsilon}^{\hat{F}_X}) 
			&= \underbrace{P(C_\alpha(X)\geq (1+\frac{\epsilon}{\alpha})C_{\alpha+\epsilon}^{\hat{F}_X}-\frac{\epsilon}{\alpha}C_{\epsilon}^{\hat{F}_X}\Bigr{|} \sup_{z\in\mathbb{R}}(F_X-\hat{F}_X)\leq \epsilon)}_{=1} \underbrace{P(\sup_{z\in\mathbb{R}}(F_X-\hat{F}_X)\leq \epsilon)}_{>1-\delta} \\
			&+\underbrace{P(C_\alpha(X)\geq (1+\frac{\epsilon}{\alpha})C_{\alpha+\epsilon}^{\hat{F}_X}-\frac{\epsilon}{\alpha}C_{\epsilon}^{\hat{F}_X}\Bigr{|} \sup_{z\in\mathbb{R}}(F_X-\hat{F}_X) > \epsilon)}_{\geq 0} 
			\underbrace{P(\sup_{z\in\mathbb{R}}(F_X-\hat{F}_X) > \epsilon)}_{\geq 0} \\
			&> 1-\delta
		\end{aligned}
	\end{equation}
	If $\alpha+\epsilon \geq 1$, then \begin{equation}
		\begin{aligned}
			&P(C_\alpha(X) \geq \frac{1}{\alpha}[(\alpha+\epsilon - 1)a + E_{\hat{F}_X}[X] - \epsilon C_\epsilon^{\hat{F}_X}]) \\
			&=\underbrace{P(C_\alpha(X) \geq \frac{1}{\alpha}[(\alpha+\epsilon - 1)a + E_{\hat{F}_X}[X] - \epsilon C_\epsilon^{\hat{F}_X}]\Bigr{|} \sup_{z\in\mathbb{R}}(F_X-\hat{F}_X)\leq \epsilon)}_{=1} 
			\underbrace{P(\sup_{z\in\mathbb{R}}(F_X-\hat{F}_X) \leq \epsilon)}_{>1-\delta} \\
			&+\underbrace{P(C_\alpha(X) \geq \frac{1}{\alpha}[(\alpha+\epsilon - 1)a + E_{\hat{F}_X}[X] - \epsilon C_\epsilon^{\hat{F}_X}]\Bigr{|} \sup_{z\in\mathbb{R}}(F_X-\hat{F}_X) > \epsilon)}_{\geq 0} 
			\underbrace{P(\sup_{z\in\mathbb{R}}(F_X-\hat{F}_X) > \epsilon)}_{\geq 0} \\
			&> 1-\delta
		\end{aligned}
	\end{equation}
\end{proof}

\begin{theorem}\label{prf:tight_cvar_lower_bound}
	(Tighter CVaR Lower Bound) Let $\alpha \in (0,1)$, $X$ and $Y$ be random variables. Define a random variable $Y^L$ such that $F_{Y^L}(y) \triangleq \min(1, F_Y(y) + g(y))$ for $g:\mathbb{R}\rightarrow [0, \infty)$. Assume 
	$\lim_{x \rightarrow -\infty}g(x)=0$, $g$ is continuous from the right and monotonic increasing.
	If $\forall x\in \mathbb{R}, F_X(x)\leq F_Y(x)+g(x)$, then $F_{Y^L}$ is a CDF and $\CVaR_\alpha(Y^L)\leq \CVaR_\alpha(X).$
\end{theorem}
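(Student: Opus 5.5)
The proof splits into two parts: first we show that $F_{Y^L}(y)=\min(1,F_Y(y)+g(y))$ is a valid CDF, and then we deduce the CVaR inequality from the quantile representation of CVaR used in the proof of Theorem~\ref{thm:cvar_bound_v2}.

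\textbf{$F_{Y^L}$ is a CDF.} We check the three properties listed in the proof of Theorem~\ref{thm:cvar_bound_v2}.
\begin{enumerate}
\item \emph{Monotonicity.} $F_Y$ and $g$ are both non-decreasing, so $F_Y+g$ is non-decreasing. Taking the minimum with the constant $1$ preserves this.
\item \emph{Range and limits.} Since $g\ge 0$ and $F_Y\ge 0$, we get $0\le F_{Y^L}\le 1$. As $y\to-\infty$, both $F_Y(y)\to 0$ and $g(y)\to 0$ (the latter by assumption), so $F_{Y^L}(y)\to 0$. As $y\to\infty$, $F_Y(y)\to 1$ and $g\ge 0$, so $F_Y(y)+g(y)$ eventually exceeds $1-\eta$ for any $\eta>0$. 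Combined with the cap at $1$, this gives $F_{Y^L}(y)\to 1$.
\item \emph{Right-continuity.} $F_Y$ is right-continuous because it is a CDF, and $g$ is right-continuous by hypothesis. Their sum is right-continuous, and $t\mapsto\min(1,t)$ is continuous, so the composition is right-continuous.
\end{enumerate}

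\textbf{Dominance.} For every $x$, the hypothesis $F_X(x)\le F_Y(x)+g(x)$ together with $F_X(x)\le 1$ gives
\[
F_X(x)\le F_{Y^L}(x).
\]
This says $Y^L$ is first-order stochastically dominated by $X$.

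\textbf{CVaR inequality.} We use the representation from \cite{acerbi2002coherence}, as in the proof of Theorem~\ref{thm:cvar_bound_v2}:
\[
\CVaR_\alpha(W)=\frac1\alpha\int_{1-\alpha}^1 \inf\{z:F_W(z)\ge\tau\}\,d\tau .
\]
Fix $\tau\in(1-\alpha,1)$. Any $z$ with $F_X(z)\ge\tau$ also satisfies $F_{Y^L}(z)\ge\tau$, by the dominance step. Hence
\[
\{z:F_X(z)\ge\tau\}\subseteq\{z:F_{Y^L}(z)\ge\tau\},
\]
and taking infima gives $F_{Y^L}^{-1}(\tau)\le F_X^{-1}(\tau)$. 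Integrating over $\tau\in(1-\alpha,1)$ and dividing by $\alpha$ yields $\CVaR_\alpha(Y^L)\le\CVaR_\alpha(X)$.

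\textbf{Main obstacle.} The only delicate point is making sure this quantile-integral representation is legitimate. It requires the integrals to be well defined, that is, $\mathbb{E}|X|<\infty$ and the corresponding integrability for $Y^L$. For $X$ this is the paper's standing assumption. For $Y^L$, the lower tail of $Y^L$ is controlled by that of $Y$. If this becomes problematic, we would allow the value $-\infty$ on the left-hand side, in which case the inequality holds trivially.
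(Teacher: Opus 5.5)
Your proposal is correct and follows the paper's proof: the same three-property check that $F_{Y^L}$ is a CDF, the same pointwise dominance $F_X\le F_{Y^L}$, and then monotonicity of CVaR, which you verify directly through the quantile-integral representation where the paper simply invokes coherence (your version has the mild advantage of working with the distributional ordering rather than an almost-sure one). Your integrability worry is moot: since $\alpha<1$, on $(1-\alpha,1)$ the quantile $F_{Y^L}^{-1}(\tau)$ is bounded below by the finite value $F_{Y^L}^{-1}(1-\alpha)$ and above by $F_X^{-1}(\tau)$, so $\CVaR_\alpha(Y^L)$ is automatically finite and no $-\infty$ convention is needed.
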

\begin{proof}
	In order to prove that $F_{Y^L}$ is a CDF we need to prove that $F_{Y^L}$ is:
	\begin{enumerate}
		\item Monotonic increasing
		\item $F:\mathbb{R}\rightarrow [0,1]$, $\lim_{x\rightarrow \infty}F_{Y^L}(x)=1$, $\lim_{x\rightarrow -\infty}F_{Y^L}(x)=0$
		\item Continuous from the right
	\end{enumerate}
	
	\textbf{Monotonic increasing:}
	Note that for every $f_i:\mathbb{R}\rightarrow \mathbb{R}, i=1,2$ that are monotonic increasing, $f_1(f_2(x)))$ is also monotonic increasing in x. Denote $f(x):=\min(x,1)$ and $f_2(x):=F_Y(x)+g(x)$. $F_Y(x)$ is a CDF and therefore monotonic increasing, so $f_2(x)$ is monotonic increasing as a sum of monotonic increasing functions. $f_1$ is also monotonic increasing, and $F_{Y^L}(x)=f_1(f_2(x))$. Therefore $F_{Y^L}(x)$ is monotonic increasing. \\
	
	\textbf{Limits:} \begin{equation}
		\begin{aligned}
			1&\geq\lim_{x\rightarrow \infty} F_{Y^L}(x)=\lim_{x\rightarrow \infty} \min(1,F_Y(x)+g(x)) \geq \lim_{x\rightarrow \infty}\min(1,F_Y(x))=\lim_{x\rightarrow \infty} F_Y(x)=1
		\end{aligned}
	\end{equation}
	and therefore $\lim_{x\rightarrow \infty} F_{Y^L}(x)=1$.
	$$\begin{aligned}
		0&\leq \lim_{x\rightarrow -\infty} F_{Y^L}(x)=\lim_{x\rightarrow -\infty} \min(1,F_Y(x)+g(x)) 
		\leq \lim_{x\rightarrow -\infty} F_Y(x)+g(x) 
		=\lim_{x\rightarrow -\infty} F_Y(x)+\lim_{x\rightarrow -\infty} g(x) 
		=0	
	\end{aligned}$$
	and therefore $\lim_{x\rightarrow -\infty} F_{Y^L}(x)=0$. By definition $\forall x\in \mathbb{R},F_{Y^L}(x)\leq 1$, and $\forall x\in \mathbb{R},F_{Y^L}(x)\geq 0$ because both g and $F_{Y^L}$ are non negative functions. \\
	
	\textbf{Continuity from the right:}
	$F_Y$ is continuous from the right because it is a CDF, and $g$ is continuous from the right by assumption. Their sum $F_Y + g$ is therefore continuous from the right, and since the constant function $1$ is continuous, $F_{Y^L} = \min(1, F_Y + g)$ is continuous from the right as the minimum of two right-continuous functions. \\
	Thus, $F_{Y^L}$ is a CDF. \\
	
	\textbf{Bound proof:}
	If $Y^L\leq X$, then $\CVaR_\alpha(Y^L)\leq \CVaR_\alpha(X)$ because CVaR is a coherent risk measure. Note that 
	if $F_Y(x) + g(x)<1$ then
	$$F_X(x)\leq F_Y(x) + g(x)=F_{Y^L}(x),$$
	and if $F_Y(x) + g(x)\geq 1$, $1=F_{Y^L}(x)\geq F_X(x)$. Therefore $Y^L\leq X$.
\end{proof}

\begin{theorem}\label{prf:lower_cvar_bound_using_density_bound}
	Let $\alpha \in (0,1)$, $X$ and $Y$ be random variables. Define $h:\mathbb{R}\rightarrow [0,\infty)$ to be a continuous function, $g(z):=\int_{-\infty}^z h(x)dx$ and $Y^L$ to be a random variable such that $F_{Y^L}(y):=\min(1, F_{Y}(y) + g(y))$. If $\lim_{z\rightarrow -\infty} g(z)=0$ and $\forall x\in \mathbb{R},f_x(x)\leq f_y(x) + h(x)$, then $F_{Y^L}$ is a CDF and $\CVaR_\alpha(Y^L)\leq \CVaR_\alpha(X)$.
\end{theorem}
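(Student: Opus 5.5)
The plan is to reduce the statement to Theorem~\ref{prf:tight_cvar_lower_bound} (equivalently Theorem~\ref{thm:tight_cvar_lower_bound}) by checking that the function $g(z)=\int_{-\infty}^z h(x)\,dx$ satisfies all of that theorem's hypotheses. Once they hold, both conclusions follow at once: $F_{Y^L}$ is a CDF and $\CVaR_\alpha(Y^L)\leq \CVaR_\alpha(X)$.

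\textbf{Regularity of $g$.} Three properties are needed.
\begin{enumerate}
\item Nonnegativity: $g\geq 0$ because $h\geq 0$.
\item Monotonicity: for $z_1\leq z_2$ we have $g(z_2)-g(z_1)=\int_{z_1}^{z_2}h\geq 0$, so $g$ is non-decreasing.
\item Right-continuity: $h$ is continuous, so it is locally bounded and hence locally integrable. Wherever $g$ is finite, $g$ is therefore continuous, since $|g(z+\Delta)-g(z)|\leq \Delta\sup_{[z,z+\Delta]}h\to 0$. Continuity in particular gives right-continuity.
\end{enumerate}
The limit condition $\lim_{z\to-\infty}g(z)=0$ is assumed directly. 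This condition also forces the tail integral to be finite for $z$ near $-\infty$, and local integrability then keeps $g$ finite everywhere. One could worry that $g(z)=+\infty$ for all $z$; the hypothesis rules this out.

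\textbf{Pointwise CDF domination.} I need $F_X(x)\leq F_Y(x)+g(x)$ for every $x$. This comes from integrating the density inequality $f_x\leq f_y+h$ over $(-\infty,x]$:
\[
F_X(x)=\int_{-\infty}^x f_x(t)\,dt\leq \int_{-\infty}^x f_y(t)\,dt+\int_{-\infty}^x h(t)\,dt=F_Y(x)+g(x).
\]
Integration preserves the inequality because the integrands are nonnegative and the integrals are monotone. I will state explicitly that $X$ and $Y$ are taken to be absolutely continuous, since the hypothesis refers to their densities $f_x,f_y$. With domination and the properties of $g$ established, Theorem~\ref{prf:tight_cvar_lower_bound} applies and yields the claim.

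\textbf{Main obstacle.} The delicate step is justifying the integration of the density inequality, that is, making sure $F_X$ and $F_Y$ are exactly the integrals of the stated densities. This is where absolute continuity is used. The other point needing care is the finiteness of $g$, which is only implicit in the hypothesis $\lim g=0$; I would handle it with the local-integrability remark above.
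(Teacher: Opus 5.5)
Your proposal is correct and follows essentially the same route as the paper: you reduce to Theorem~\ref{thm:tight_cvar_lower_bound} by checking the limit condition, monotonicity and right-continuity of $g$, and by integrating $f_x\le f_y+h$ to get $F_X\le F_Y+g$. The only differences are minor. You get continuity of $g$ from local boundedness of $h$, where the paper uses the fundamental theorem of calculus ($g'=h$). You also state explicitly two points the paper leaves implicit: that $g$ is finite, and that $X$ and $Y$ are absolutely continuous.
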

\begin{proof}
	We will show the g satisfies the properties of Theorem \ref{thm:tight_cvar_lower_bound}, and therefore this theorem holds. We need to prove that \begin{enumerate}
		\item $\lim_{z\rightarrow -\infty} g(z)=0$
		\item g is continuous from the right.
		\item g is monotonic increasing.
		\item $F_X(y)\leq F_Y(y) + g(y)$
	\end{enumerate}
	It is given in the theorem's assumptions that $\lim_{z\rightarrow -\infty} g(z)=0$, so (1) holds. h is non negative and therefore $g$ is monotonic increasing, so (3) holds. $g$ is continuous if its derivative exists for all $z\in \mathbb{R}$. Let $z\in \mathbb{R}$ and $a<z$.
	\begin{equation}
		\begin{aligned}
			\frac{d}{dz}g(z) &= \frac{d}{dz}\int_{-\infty}^z h(x)dx = \frac{d}{dz}[\int_{-\infty}^a h(x)dx + \int_{a}^z h(x)dx] 
			=\frac{d}{dz}\int_{a}^z h(x)dx = h(z)
		\end{aligned}
	\end{equation}
	where the third equality holds because $\int_{-\infty}^a h(x)dx = g(a)$ is a constant that does not depend on z. The last equality holds from the fundamental theorem of calculus because $h$ is continuous. Finally, (4) holds because \begin{equation}
		\begin{aligned}
			F_X(y) &\triangleq \int_{-\infty}^y f_x(x)dx\leq \int_{-\infty}^y f_y(x) + h(x)dx 
			\triangleq F_Y(y) + g(y).
		\end{aligned}
	\end{equation}
\end{proof}

\begin{corollary}\label{prf:x_concentration_bound_asymptotic_convergence}
	Let $X$ be a random variable, $\alpha\in (0, 1],\delta \in (0, 1), \epsilon=\sqrt{\ln(1/\delta)/(2n)}, a\in \mathbb{R}, b\in \mathbb{R}$. Let $X_1, \dots, X_n \overset{iid}{\sim} F_X$ be random variables that define the ECDF $\hat{F}_X$. Denote by $U(n)$ and $L(n)$ the upper and lower bounds respectively from Theorem \ref{thm:ecdf_cvar_bound}, where $n$ is the number of samples, then
	\begin{enumerate}
		\item If $P(X\leq b)=1$, then $\lim_{n\rightarrow \infty} U(n)\overset{a.s.}{=}\CVaR_\alpha(X)$.
		\item If $P(X\geq a)=1$, then $\lim_{n\rightarrow \infty} L(n)\overset{a.s.}{=}\CVaR_\alpha(X)$.
	\end{enumerate}
	where a.s.~denotes almost sure convergence.
\end{corollary}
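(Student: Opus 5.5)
Since $\epsilon_n=\sqrt{\ln(1/\delta)/(2n)}\to 0$ deterministically, the plan is to show that the empirical CVaR terms converge almost surely to their population counterparts, and then let $n\to\infty$ in the formulas of Theorem~\ref{thm:ecdf_cvar_bound}. For $n$ large enough we have $\epsilon_n<\alpha$ and, when $\alpha<1$, also $\alpha+\epsilon_n<1$. Eventually, therefore, only cases 1(a) and 2(a) of that theorem are active:
\[
U(n)=\Bigl(1-\tfrac{\epsilon_n}{\alpha}\Bigr)C^{\hat F_X}_{\alpha-\epsilon_n}+\tfrac{\epsilon_n}{\alpha}b,\qquad
L(n)=\Bigl(1+\tfrac{\epsilon_n}{\alpha}\Bigr)C^{\hat F_X}_{\alpha+\epsilon_n}-\tfrac{\epsilon_n}{\alpha}C^{\hat F_X}_{\epsilon_n}.
\]
The edge case $\alpha=1$ for the lower bound falls under case 2(b). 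There $L(n)=E_{\hat F_X}[X]+\epsilon_n a-\epsilon_n C^{\hat F_X}_{\epsilon_n}$, and I would handle it by the same argument below together with the strong law of large numbers.

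The key step is uniform almost-sure convergence of empirical quantiles in the level. Write $C^{\hat F}_\beta=\frac1\beta\int_{1-\beta}^1\hat F^{-1}(v)\,dv$. By Glivenko--Cantelli, $\sup_z|\hat F_X-F_X|\to0$ a.s., and on this event $\hat F^{-1}(v)\to F^{-1}(v)$ at every continuity point of $F^{-1}$, which means for almost every $v$. Three facts then give convergence:
\begin{itemize}
\item Under $P(X\le b)=1$, the empirical quantiles are bounded above by $b$.
\item When $\epsilon_n\to0$, the perturbed level $\alpha-\epsilon_n$ is bounded away from $0$, so the lower part of the integration range $[1-\alpha+\epsilon_n,1]$ stays at quantile levels bounded away from $0$.
\item Since $\hat F^{-1}$ is monotone, those quantiles are bounded below by $\hat F^{-1}(1-\alpha)$, which converges a.s. when $F^{-1}$ is continuous at $1-\alpha$. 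In general it is at least bounded eventually, via the a.s. liminf of empirical quantiles at a fixed interior level.
\end{itemize}
Dominated convergence then yields
\[
\int_{1-\alpha+\epsilon_n}^1\hat F^{-1}\to\int_{1-\alpha}^1F^{-1},
\]
because the integrands converge a.e. and the interval lengths converge. Hence $C^{\hat F_X}_{\alpha-\epsilon_n}\to\CVaR_\alpha(X)$ a.s., and also $\tfrac{\epsilon_n}{\alpha}b\to0$. This gives $U(n)\to\CVaR_\alpha(X)$ a.s. The term $C^{\hat F_X}_{\alpha+\epsilon_n}$ is handled by the same argument using the lower bound $a$.

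The main obstacle is the term $\tfrac{\epsilon_n}{\alpha}C^{\hat F_X}_{\epsilon_n}$, since the level $\epsilon_n$ itself shrinks and the empirical CVaR at a vanishing level need not converge to anything finite. Here I would use only boundedness. Under the lower-bound hypothesis we have $C^{\hat F_X}_{\epsilon_n}\ge a$. From above, $\epsilon_n C^{\hat F_X}_{\epsilon_n}=\int_{1-\epsilon_n}^1\hat F^{-1}\le\epsilon_n\max_i X_i$, which only requires control of the sample maximum. If $X$ is also bounded above, the term is $O(\epsilon_n)\to0$ directly. Otherwise I would avoid the sample maximum and instead use the identity $(1+\tfrac{\epsilon}{\alpha})C_{\alpha+\epsilon}-\tfrac{\epsilon}{\alpha}C_\epsilon=\frac1\alpha\int_{1-\alpha-\epsilon}^{1-\epsilon}\hat F^{-1}(v)\,dv$. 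The integration range then stays away from level $1$, except that its upper endpoint tends to $1$, and I would control that endpoint with the integrability $\mathbb{E}|X|<\infty$ together with the a.s. convergence of the empirical tail integral $\int_{1-\eta}^1\hat F^{-1}\to\int_{1-\eta}^1F^{-1}$ (SLLN applied to $(X-w)^+$), then let $\eta\to0$. Combining the pieces gives $L(n)\to\CVaR_\alpha(X)$ a.s.
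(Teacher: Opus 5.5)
Your argument is correct in substance. The skeleton matches the paper's: let $\epsilon_n\to0$ deterministically, then appeal to consistency of the empirical CVaR. The key step, however, is different.

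\emph{The paper's proof.} It is three lines. It drops $\frac{\epsilon}{\alpha}b$ and $\frac{\epsilon}{\alpha}C^{\hat F_X}_{\epsilon}$ simply because $\epsilon\to0$. It replaces $C^{\hat F_X}_{\alpha\mp\epsilon}$ by $C^{\hat F_X}_{\alpha}$ by ``continuity of CVaR in the level''. It then cites a.s.\ consistency of the empirical CVaR at the fixed level $\alpha$.

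\emph{Your proof.} You obtain the joint limit $C^{\hat F_n}_{\alpha\mp\epsilon_n}\to\CVaR_\alpha(X)$ directly from the quantile representation, via Glivenko--Cantelli, a.e.\ convergence of $\hat F_n^{-1}$, and bounded convergence. You treat the vanishing-level term through the identity $\alpha L(n)=\int_{1-\alpha-\epsilon_n}^{1-\epsilon_n}\hat F_n^{-1}$, plus tail control from $\mathbb{E}|X|<\infty$.

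\emph{What your route buys.} It supplies exactly what the paper leaves implicit:
\begin{itemize}
\item Continuity in the level is a statement about a fixed distribution, but here $\hat F_n$ moves with $n$, so some uniformity is needed.
\item $\epsilon_n C^{\hat F_n}_{\epsilon_n}\to0$ is not automatic when $X$ is unbounded above, which part 2 allows. You correctly single this out as the main obstacle.
\item You also handle case 2(b) at $\alpha=1$, which the paper's proof ignores.
\end{itemize}
The paper's route is shorter and modular; yours is self-contained.

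\emph{A small patch is needed.} Your second bullet says the levels in $[1-\alpha+\epsilon_n,1]$ stay away from $0$ because $\alpha-\epsilon_n$ is bounded away from $0$. That only keeps the endpoint away from $1$; it stays away from $0$ because $\alpha<1$.

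For $\alpha=1$, which the statement allows, $U(n)=\int_{\epsilon_n}^1\hat F_n^{-1}+\epsilon_n b$. Under only $P(X\le b)=1$, the integrand near level $0$ is governed by the sample minimum, so your domination by $\hat F_n^{-1}(1-\alpha)$ and bounded convergence both fail. The fix is the mirror image of your upper-tail argument. Write $\int_{\epsilon_n}^1\hat F_n^{-1}=\bar X_n-\int_0^{\epsilon_n}\hat F_n^{-1}$, with $\bar X_n\to\mathbb{E}[X]$ by the SLLN and $\int_0^{\epsilon_n}\hat F_n^{-1}\le\epsilon_n b$. Bound the other side through $\int_0^{\eta}\hat F_n^{-1}\to\int_0^{\eta}F^{-1}$ and $\limsup_n\hat F_n^{-1}(\eta)<\infty$, then let $\eta\to0$ using integrability.

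Likewise, your remark that $C^{\hat F_X}_{\alpha+\epsilon_n}$ is handled ``by the same argument using $a$'' fails in isolation when $X$ is unbounded above, since there is no domination near level $1$. Your identity-based treatment of $L(n)$ as a whole already covers this, so nothing is lost.
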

\begin{proof}
	\begin{equation}
		\begin{aligned}
			&\lim_{n \to \infty} U(n)\overset{a.s}{=} \lim_{n \to \infty} (1-\frac{\epsilon}{\alpha})C_{\alpha-\epsilon}^{\hat{F}_X} + \frac{\epsilon}{\alpha}b 
			\overset{a.s}{=}\lim_{n \to \infty} C_{\alpha-\epsilon}^{\hat{F}_X}=\lim_{n \to \infty} C_{\alpha}^{\hat{F}_X} = C_{\alpha}^{F_X}
		\end{aligned}
	\end{equation}
	where the first and second equalities follow from the fact that $\epsilon \to 0$ as $n \to \infty$; the third equality holds by the continuity of CVaR with respect to $\alpha$; and the fourth equality holds by the almost sure convergence of the empirical CVaR of i.i.d. samples to the true CVaR. For the same reason,
	\begin{equation}
		\begin{aligned}
			&\lim_{n\rightarrow \infty} L(n)\overset{a.s}{=} \lim_{n\rightarrow \infty} (1+\frac{\epsilon}{\alpha})C_{\alpha+\epsilon}^{\hat{F}_X}-\frac{\epsilon}{\alpha}C_{\epsilon}^{\hat{F}_X} 
			\overset{a.s}{=} \lim_{n\rightarrow \infty} C_{\alpha+\epsilon}^{\hat{F}_X}
			\overset{a.s}{=} \lim_{n\rightarrow \infty} C_{\alpha}^{\hat{F}_X}
			\overset{a.s}{=} \lim_{n\rightarrow \infty} C_{\alpha}^{F_X}.
		\end{aligned}
	\end{equation}
\end{proof}

\begin{theorem}
	\label{prf:ecdf_y_bound_cvar_x}
	Let $X$ and $Y$ be random variables, $\epsilon \in [0, 1]$, and $\eta = \sqrt{\ln(1/\delta)/(2n)}, \epsilon'=\min(\epsilon+\eta, 1)$. Let $Y_1, \dots, Y_n$ be independent and identically distributed samples from $F_Y$, and denote by $\hat{F}_Y$ the associated empirical cumulative distribution function.
	\begin{enumerate}
		\item \textbf{Upper Bound:} If $\forall z\in \mathbb{R},F_Y(z)-F_X(z)\leq \epsilon$ and $P(X\leq b_X)=1, P(Y\leq b_Y)=1$, then
		
		\begin{enumerate}
			\item If $\alpha > \epsilon'$ then \begin{equation}
				\begin{aligned}
					P\Big(\CVaR_\alpha(X) &\leq \frac{\epsilon'}{\alpha}\max(b_X,b_Y) 
					+ (1 - \frac{\epsilon'}{\alpha})\CVaR_{\alpha-\epsilon'}^{\hat{F}_Y}\Big)>1-\delta.
				\end{aligned}
			\end{equation}
			
			\item If $\alpha \leq \epsilon'$, then $\CVaR_\alpha(X)\leq \max(b_X,b_Y)$
		\end{enumerate}
		
		\item \textbf{Lower Bound:} If $\forall z\in \mathbb{R},F_X(z)-F_Y(z)\leq \epsilon$ and $P(X\geq a_X)=1,P(Y\geq a_Y)=1$, then \begin{enumerate}
			\item If $\alpha+\epsilon' \leq 1$, then \begin{equation}
				\begin{aligned}
					P\Big(\CVaR_{\alpha}(X)&\geq (1+\frac{\epsilon'}{\alpha})\CVaR_{\alpha+\epsilon'}^{\hat{F}_Y} 
					-\frac{\epsilon'}{\alpha} \CVaR_{\epsilon'}^{\hat{F}_Y}\Big)>1-\delta.
				\end{aligned}
			\end{equation}
			
			\item If $\alpha+\epsilon' > 1$, then \begin{equation}
				\begin{aligned}
					P\Big(\CVaR_\alpha(X) &\geq \frac{1}{\alpha}\big(\mathbb{E}_{\hat{F}_Y}[Y] - \epsilon' \CVaR_{\epsilon'}^{\hat{F}_Y}
					+ (\alpha+\epsilon'-1)a_{min}\big) \Big) > 1-\delta.
				\end{aligned}
			\end{equation}
		\end{enumerate}
	\end{enumerate}
\end{theorem}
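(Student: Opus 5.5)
The plan is to combine the deterministic bound of Theorem~\ref{thm:cvar_bound_v2} with the Dvoretzky--Kiefer--Wolfowitz inequality \cite{DKW_inequality}, exactly as in the proof of Theorem~\ref{thm:ecdf_cvar_bound}. The auxiliary random variable is now the empirical distribution $\hat{F}_Y$ rather than $F_Y$. The discrepancy between $F_X$ and $\hat{F}_Y$ will be controlled by a triangle-type decomposition.

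\textbf{Upper bound.} Write
\[
\hat{F}_Y(z)-F_X(z)=\big(\hat{F}_Y(z)-F_Y(z)\big)+\big(F_Y(z)-F_X(z)\big).
\]
The second term is at most $\epsilon$ by assumption. By the one-sided DKW inequality (in the form used in the proof of Theorem~\ref{thm:ecdf_cvar_bound}), the event
\[
E_U=\Big\{\sup_z(\hat{F}_Y(z)-F_Y(z))\leq \eta\Big\}
\]
has probability greater than $1-\delta$. On $E_U$ we have $\hat{F}_Y(z)-F_X(z)\leq \epsilon+\eta$ for all $z$. Since a CDF difference never exceeds $1$, also $\hat{F}_Y(z)-F_X(z)\leq \epsilon'=\min(\epsilon+\eta,1)$.

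The empirical distribution is supported on the sample points, so it satisfies $\hat{P}(\hat{Y}\leq b_Y)=1$ almost surely. We can therefore apply Theorem~\ref{thm:cvar_bound_v2}(1) with $Y$ replaced by a random variable with CDF $\hat{F}_Y$ and $\epsilon$ replaced by $\epsilon'$. On $E_U$ this yields case (a) when $\alpha>\epsilon'$ and case (b) when $\alpha\leq\epsilon'$. Case (b) is actually deterministic, since $\CVaR_\alpha(X)\leq b_X$ always holds. We then condition on $E_U$ as in the proof of Theorem~\ref{thm:ecdf_cvar_bound}:
\[
P(\text{bound})\geq P(\text{bound}\mid E_U)P(E_U)=P(E_U)>1-\delta .
\]

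\textbf{Lower bound.} This is symmetric. Decompose
\[
F_X-\hat{F}_Y=(F_X-F_Y)+(F_Y-\hat{F}_Y).
\]
Use the other one-sided DKW event $E_L=\{\sup_z(F_Y(z)-\hat{F}_Y(z))\leq\eta\}$, which has probability greater than $1-\delta$. Note that $\hat{F}_Y$ is supported in $[a_Y,\infty)$ almost surely. Then invoke Theorem~\ref{thm:cvar_bound_v2}(2)(a) or (2)(b) with $\epsilon'$ in place of $\epsilon$; here $a_{\min}=\min(a_X,a_Y)$ and $\mathbb{E}[Y]$ is replaced by $\mathbb{E}_{\hat{F}_Y}[Y]$. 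The case split $\alpha+\epsilon'\leq 1$ versus $\alpha+\epsilon'>1$ matches the statement directly.

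\textbf{Main obstacle.} The main point requiring care is applying Theorem~\ref{thm:cvar_bound_v2} to a data-dependent auxiliary distribution. That theorem is purely deterministic in $(F_X,F_Y,\epsilon)$, so we apply it pointwise on each sample realization in the good event, with $\hat{F}_Y$ treated as a fixed CDF. A secondary check is the truncation $\epsilon'=\min(\epsilon+\eta,1)$: Theorem~\ref{thm:cvar_bound_v2} requires its discrepancy parameter to lie in $[0,1]$, and the truncation guarantees this without weakening the inequality. Finally, we should confirm that $\delta<0.5$ is enough for the DKW one-sided form; the statement needs no union bound because each part uses only one side.
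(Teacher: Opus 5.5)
Your proposal is correct and follows essentially the same route as the paper: on a one-sided DKW event you bound $\hat{F}_Y - F_X$ (respectively $F_X - \hat{F}_Y$) by $\epsilon'$, apply Theorem~\ref{thm:cvar_bound_v2} pointwise to the fixed CDF $\hat{F}_Y$ on that event, and lower-bound the probability by conditioning on the event (law of total probability). Your version is slightly cleaner than the paper's in two respects: you use the matching one-sided DKW event for each direction, whereas the paper's triangle-inequality step uses absolute values that the one-sided hypotheses do not supply; and you observe that case (1)(b) holds deterministically.
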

\begin{proof}
	We begin by establishing that $\epsilon'$ bounds the distributional discrepancy between $\hat{F}_Y$ and $F_X$, under the assumption that $\sup_x \big( \hat{F}_Y(x) - F_Y(x) \big) \leq \eta$. Let $x\in \mathbb{R}$,
	\begin{equation}
		\begin{aligned}
			&\hat{F}_Y(x)-F_X(x)=\hat{F}_Y(x)-F_Y(x) + F_Y(x)-F_X(x) \\
			&\leq |\hat{F}_Y(x)-F_Y(x)| + |F_Y(x)-F_X(x)| \leq \eta + \epsilon=\epsilon'.
		\end{aligned}
	\end{equation}
	Assume that $\alpha > \epsilon'$. Note that, conditional on the event $\sup_{z \in \mathbb{R}} \big( \hat{F}_Y(z) - F_X(z) \big) \leq \epsilon'$,
	the upper bound in Theorem~\ref{thm:cvar_bound_v2} holds deterministically. Consequently, the probability that
	\begin{equation}
		\CVaR_\alpha(X) \leq \frac{\epsilon'}{\alpha}\max(b_X,b_Y) 
		+ (1 - \frac{\epsilon'}{\alpha})\CVaR_{\alpha-\epsilon'}^{\hat{F}_Y}
	\end{equation}
	holds is equal to one. From the law of total probability, 
	\begin{equation}
		\begin{aligned}
			&P\Big(\CVaR_\alpha(X) \leq \frac{\epsilon'}{\alpha}\max(b_X,b_Y) + (1 - \frac{\epsilon'}{\alpha})\CVaR_{\alpha-\epsilon'}^{\hat{F}_Y}\Big) \\
			&=P\Big(\CVaR_\alpha(X) \leq \frac{\epsilon'}{\alpha}\max(b_X,b_Y) + (1 - \frac{\epsilon'}{\alpha})\CVaR_{\alpha-\epsilon'}^{\hat{F}_Y}
			\Big| \sup_{z \in \mathbb{R}} \big( \hat{F}_Y(z) - F_X(z) \big) \leq \epsilon'\Big) 
			P(\sup_{z \in \mathbb{R}} \big( \hat{F}_Y(z) - F_X(z) \big) \leq \epsilon') \\
			&+P\Big(\CVaR_\alpha(X) \leq \frac{\epsilon'}{\alpha}\max(b_X,b_Y) + (1 - \frac{\epsilon'}{\alpha})\CVaR_{\alpha-\epsilon'}^{\hat{F}_Y}
			\Big| \sup_{z \in \mathbb{R}} \big( \hat{F}_Y(z) - F_X(z) \big) > \epsilon'\Big) 
			P(\sup_{z \in \mathbb{R}} \big( \hat{F}_Y(z) - F_X(z) \big) > \epsilon')\\
			&=P(\sup_{z \in \mathbb{R}} \big( \hat{F}_Y(z) - F_X(z) \big) \leq \epsilon') \\
			&+P\Big(\CVaR_\alpha(X) \leq \frac{\epsilon'}{\alpha}\max(b_X,b_Y) + (1 - \frac{\epsilon'}{\alpha})\CVaR_{\alpha-\epsilon'}^{\hat{F}_Y}
			\Big| \sup_{z \in \mathbb{R}} \big( \hat{F}_Y(z) - F_X(z) \big) > \epsilon'\Big) 
			P(\sup_{z \in \mathbb{R}} \big( \hat{F}_Y(z) - F_X(z) \big) > \epsilon') \\
			&\geq P(\sup_{z \in \mathbb{R}} \big( \hat{F}_Y(z) - F_X(z) \big) \leq \epsilon')
		\end{aligned}
	\end{equation}
	From DKW \cite{DKW_inequality} inequality the following inequalities can be derived \cite{pmlr-v97-thomas19a}
	\begin{equation}\label{eq:helper_2}
		\Pr \left( \sup_{x \in \mathbb{R}} \big( \hat{F}(x) - F(x) \big) 
		\leq \sqrt{\frac{\ln(1/\delta)}{2n}} \right) \geq 1 - \delta, \qquad \Pr \left( \sup_{x \in \mathbb{R}} \big( \hat{F}(x) - F(x) \big) 
		\geq \sqrt{\frac{\ln(1/\delta)}{2n}} \right) \geq 1 - \delta.
	\end{equation}
	
	\begin{equation}
		\begin{aligned}
			P(\sup_{z \in \mathbb{R}} \big( \hat{F}_Y(z) - F_X(z) \big) \leq \epsilon') 
			&\geq P(\sup_{z \in \mathbb{R}} \big( \hat{F}_Y(z) - F_Y(z) \big) + \sup_{z \in \mathbb{R}} \big( F_Y(z) - F_X(z) \big)\leq \epsilon') \\
			&\geq P(\sup_{z \in \mathbb{R}} \big( \hat{F}_Y(z) - F_Y(z) \big) + \epsilon \leq \epsilon + \eta) \\
			&=P(\sup_{z \in \mathbb{R}} \big( \hat{F}_Y(z) - F_Y(z) \big)\leq \eta) > 1-\delta.
		\end{aligned}
	\end{equation}
	The first inequality follows from the triangle inequality; the second holds since $\epsilon$ bounds the distributional discrepancy between $X$ and $Y$; and the third follows from \eqref{eq:helper_2}.
	
	As with the preceding equations, all bounds in Theorem~\ref{thm:cvar_bound_v2} hold deterministically for a given distributional discrepancy, where the discrepancy between $\hat{F}_Y$ and $F_X$ is $\epsilon'$. Consequently, the remaining probabilistic guarantees hold with probability at least $1 - \delta$.
\end{proof}

\section{Comparison with the Concentration Bounds of Thomas et al.~\texorpdfstring{\cite{pmlr-v97-thomas19a}}{}}\label{sec:thomas_comparison}
\begin{figure}[htbp]
	\centering
	\includegraphics[width=\textwidth]{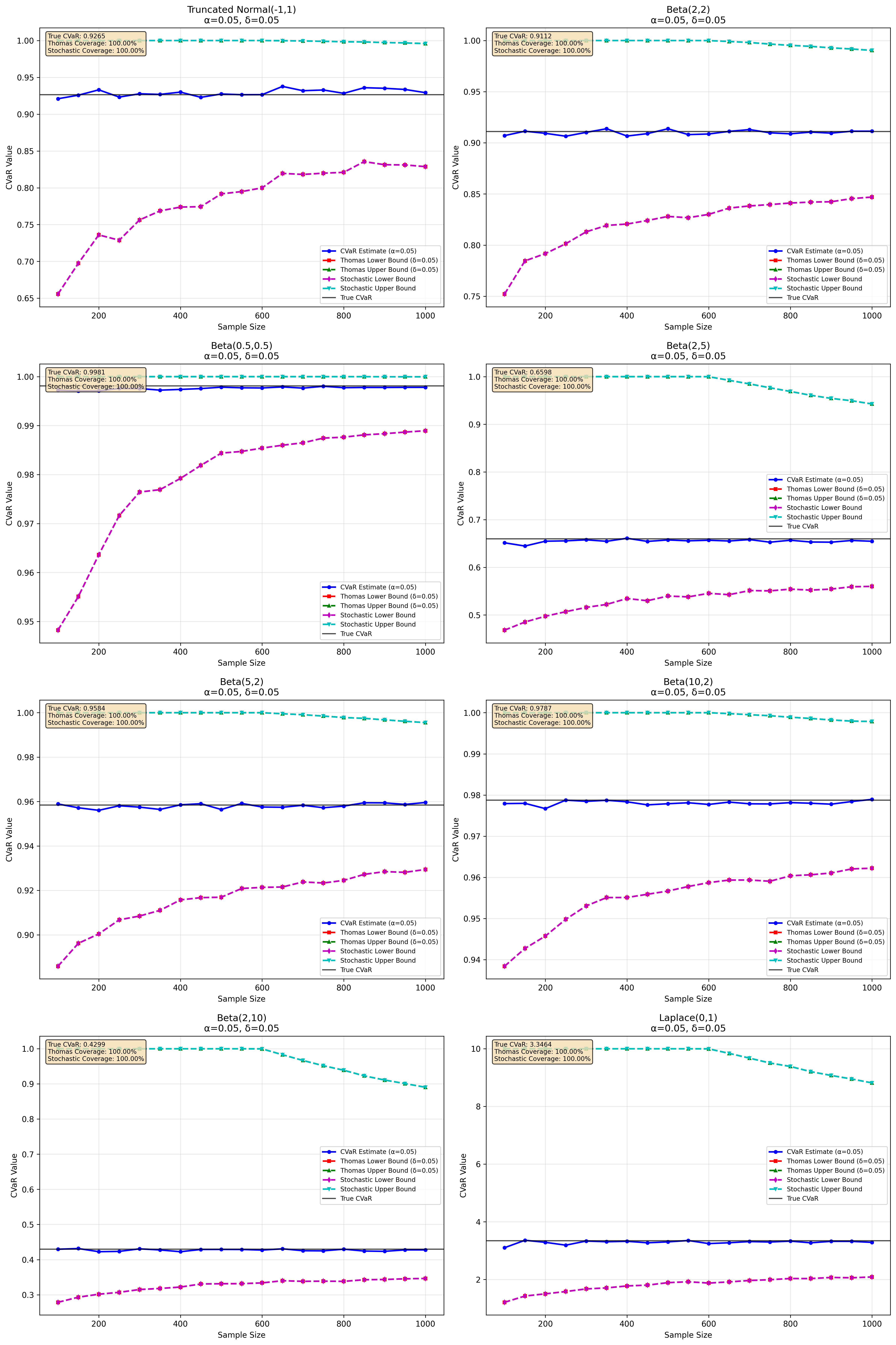}
	\caption{Comparison of concentration inequalities for $\CVaR_\alpha(X)$ established by Theorem~\ref{thm:ecdf_cvar_bound} (this work) and by \cite{pmlr-v97-thomas19a}, evaluated on $\operatorname{Beta}(2,2)$, $\operatorname{Beta}(0.5,0.5)$, $\operatorname{Beta}(2,5)$, $\operatorname{Beta}(5,2)$, $\operatorname{Beta}(10,2)$, $\operatorname{Beta}(2,10)$, and $\operatorname{Laplace}(0,1)$ distributions. The two sets of bounds coincide for all tested distributions.}
	\label{fig:cvar_bounds_comparison}
\end{figure}
See Figure \ref{fig:cvar_bounds_comparison}.

\section{Static CVaR Simplification Proofs}

\begin{theorem}\label{prf:simplification_bound_over_belief_distribution}
	\begin{equation}
		\begin{aligned}
			&\sup_{l\in \mathbb{R}}|P(R_{k:T} \leq l|b_k, a_k,\pi) \! -\! P_s(R_{k:T} \leq l|b_k, a_k,\pi)| \!
			\leq \!\! \! \sum_{t=k}^{T-1} \mathbb{E}^s[\Delta^s(b_t, a_t)|b_k, a_k],
		\end{aligned}
	\end{equation}
	where $\Delta^s$ is the TV distance that is defined by
	\begin{equation}
		\begin{aligned}
			&\Delta^s(b_{t-1},a_{t-1}) \triangleq 
			\int_{b_t\in B} |P(b_t|b_{t-1},a_{t-1})-P_s(b_t|b_{t-1},a_{t-1})|db_t.
		\end{aligned}
	\end{equation}
	
\end{theorem}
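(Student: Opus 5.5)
The plan is a standard telescoping (hybrid) argument over the horizon, applied to the joint law of the belief trajectory $b_{k+1:T}$. The return $R_{k:T}$ is a (possibly randomized) functional of the trajectory, because the kernel $P(R_{k:T}\leq l\mid b_{k:T},\pi)$ is common to \eqref{eq:OriginalCDFReturn} and \eqref{eq:SimplifiedCDFReturn}. For every $l$ we therefore have
\[
\bigl|P(R_{k:T}\leq l\mid b_k,a_k,\pi)-P_s(R_{k:T}\leq l\mid b_k,a_k,\pi)\bigr|
\le \int \Bigl|\prod_{i=k+1}^T P(b_i\mid b_{i-1})-\prod_{i=k+1}^T P_s(b_i\mid b_{i-1})\Bigr|\,db_{k+1:T}.
\]
Here the integrand weight $P(R_{k:T}\leq l\mid b_{k:T},\pi)\in[0,1]$ is bounded by $1$. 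The right-hand side does not depend on $l$, so taking the supremum over $l$ is free. The action at step $k$ is $a_k$ and afterwards $\pi(b_i)$, so I write $P(b_i\mid b_{i-1})$ for $P(b_i\mid b_{i-1},a_{i-1})$.

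Next I will introduce hybrid measures $\mu_j$, $j=k,\dots,T$. In $\mu_j$ the first $j-k$ transitions (producing $b_{k+1},\dots,b_j$) use $P_s$ and the remaining ones use $P$. Then $\mu_k$ is the original joint law and $\mu_T$ is the simplified one, and the difference of products telescopes as $\sum_{j=k}^{T-1}(\mu_j-\mu_{j+1})$. The measures $\mu_j$ and $\mu_{j+1}$ share the prefix $\prod_{i=k+1}^{j}P_s(b_i\mid b_{i-1})$ and the suffix $\prod_{i=j+2}^{T}P(b_i\mid b_{i-1})$, and differ only in the factor $P(b_{j+1}\mid b_j)-P_s(b_{j+1}\mid b_j)$. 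Pulling the absolute value inside and integrating out the suffix in the order $b_T,b_{T-1},\dots,b_{j+2}$ gives mass $1$. The remaining integral is $\int |P(b_{j+1}\mid b_j)-P_s(b_{j+1}\mid b_j)|\,db_{j+1}=\Delta^s(b_j,a_j)$ by \eqref{eq:TVdistBel}. Integrating this against the prefix, which is exactly the simplified law of $b_j$ given $b_k,a_k$, yields $\mathbb{E}^s[\Delta^s(b_j,a_j)\mid b_k,a_k]$. Summing over $j=k,\dots,T-1$ gives the claim. The term $j=k$ has a deterministic prefix and corresponds to $\Delta^s(b_k,a_k)$.

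The main obstacle is bookkeeping rather than any deep estimate. Two points need care. First, the suffix must be integrated in the correct order, from the last transition backwards, so that each Markov kernel integrates to one; this requires Fubini/Tonelli for the nonnegative integrands. Second, the prefix must be produced by the \emph{simplified} kernels, so that the expectation is $\mathbb{E}^s$ and not $\mathbb{E}$. This choice of the hybrid ordering, with $P_s$ first and $P$ later, is the one commitment that matters.

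The bound is half of the usual $\|\cdot\|_1$ total-variation estimate up to a factor of $2$, since $\Delta^s$ is defined without the $\tfrac12$. The stated inequality nonetheless holds as written, because bounding the weight by $1$ loses at most that factor.
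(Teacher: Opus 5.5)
Your proof is correct and is essentially the paper's argument. The paper bounds the CDF gap by $g(T)=\int\bigl|\prod_i P(b_i\mid b_{i-1})-\prod_i P_s(b_i\mid b_{i-1})\bigr|\,db_{k+1:T}$ and proves $g(t+1)\le g(t)+\mathbb{E}^s[\Delta^s(b_t,a_t)\mid b_k,a_k]$ by induction, using $\prod_{i\le t+1}x_i-\prod_{i\le t+1}y_i=x_{t+1}\bigl(\prod_{i\le t}x_i-\prod_{i\le t}y_i\bigr)+(x_{t+1}-y_{t+1})\prod_{i\le t}y_i$. Unrolled, this is exactly your hybrid telescoping with simplified kernels in the prefix and original kernels in the suffix; the difference is only step-by-step induction versus one telescoping sum, and your factor-$\tfrac12$ remark is right but not needed.
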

\begin{proof}
	By expanding the belief path from time $k+1$ to time $T$ we get
	\begin{equation}
		\begin{aligned}
			P(\sum_{t=k}^T c(b_t,a_t)\leq l|b_{k},\pi) 
			& = \int_{b_{k+1:T}\in B^{T-k}} P(R_{k:T}\leq l|b_{k:T},\pi) 
			 \prod_{i=k+1}^T P(b_i|b_{i-1}, \pi, a_i)db_{k+1:T}\\
			& = \int_{b_{k+1:T}\in B^{T-k}} 1_{R_{k:T}\leq l}\prod_{i=k+1}^T P(b_i|b_{i-1}, \pi, a_i)db_{k+1:T},
		\end{aligned}
	\end{equation}
	where the second equality holds because $R_{k:T}$ is constant given $\pi$ and $b_{k:T}$. Hence, the subtraction between the simplified and original return CDFs can be exhibited using the difference between the multiplication of the belief transition models.
	\begin{equation}
		\begin{aligned}
			&P(R_{k:T}\leq l|b_k,\pi)-P_s(R_{k:T}\leq l|b_k,\pi)
			=\int_{b_{k+1:T}\in B^{T-k}} 1_{R_{k:T}\leq l} [\prod_{i=k+1}^T P(b_i|b_{i-1}, \pi, a_i) 
			- \prod_{i=k+1}^T P_s(b_i|b_{i-1}, \pi, a_i)]db_{k+1:T}
		\end{aligned}
	\end{equation} 
	By applying the triangle inequality we get the following bound
	\begin{equation}\label{eq:prf_first_belief_cdf_bound}
		\begin{aligned}
			&|P(R_{k:T}\leq l|b_k,\pi)-P_s(R_{k:T}\leq l|b_k,\pi)|
			\leq \int_{b_{k+1:T}\in B^{T-k}} 1_{R_{k:T}\leq l} |\prod_{i=k+1}^T P(b_i|b_{i-1}, \pi, a_i) 
			- \prod_{i=k+1}^T P_s(b_i|b_{i-1}, \pi, a_i)|db_{k+1:T} \\
			&\leq \int_{b_{k+1:T}\in B^{T-k}} |\prod_{i=k+1}^T P(b_i|b_{i-1}, \pi, a_i) 
			- \prod_{i=k+1}^T P_s(b_i|b_{i-1}, \pi, a_i)|db_{k+1:T} = g(T),
		\end{aligned}
	\end{equation}
	where 
	\begin{equation}
		\begin{aligned}
			g(t) &\triangleq \int_{b_{k+1:t}\in B^{t-k+1}} |\prod_{i=k+1}^t P(b_i|b_{i-1}, \pi, a_i) 
			- \prod_{i=k+1}^t P_s(b_i|b_{i-1}, \pi, a_i)|db_{k+1:t}.
		\end{aligned}
	\end{equation}
	We will prove that
	\begin{equation}
		g(t+1) \leq g(t) + \mathbb{E}^s[\Delta^s (b_{t}, a_t)|b_k, a_k]=\epsilon_t
	\end{equation}
	for $t \in \{k+1,\dots, T\}$, where $\epsilon_k=0$, and therefore 
	\begin{equation}
		|P(R_{k:t}\leq l|b_k,\pi)-P_s(R_{k:t}\leq l|b_k,\pi)|\leq \epsilon_t.
	\end{equation}
	For the case where $t=k$, $b_t$ is constant, and therefore the $c(b_t, a_t)$ does not depend on the belief transition model. Hence, $P_s(c(b_t, a_t)\leq l|b_t, a_t)=P(c(b_t, a_t)\leq l|b_t, a_t)$. Therefore we get
	\begin{equation}
		P(c(b_k, a_k)\leq l|b_k, a_k) - P_s(c(b_k, a_k)\leq l|b_k, a_k)|=0=\epsilon_k.
	\end{equation}
	The rest of the proof will use induction over $t\in \{k+1, \dots, T\}$.
	
	\textbf{Base case, $t=k+1$:}
	\begin{equation}
		\begin{aligned}
			&g(k+1)=\int_{b_{k+1}\in B} |\prod_{i=k+1}^{k+1} P(b_i|b_{i-1}, \pi, a_i) 
			- \prod_{i=k+1}^{k+1} P_s(b_i|b_{i-1}, \pi, a_i)|db_{k+1:k+1} \\
			&= \int_{b_{k+1}\in B} |P(b_{k+1}|b_{k}, \pi, a_{k}) - P_s(b_{k+1}|b_{k}, \pi, a_k)|db_{k+1} 
			= \Delta^s(b_{k}, a_k)=\mathbb{E}^s[\Delta^s(b_{k}, a_k)|b_k,a_k] \\
			&= \epsilon_{k+1}
		\end{aligned}
	\end{equation} 
	
	\textbf{Induction step:} Assume that the claim is true for $t\in \{k+1, \dots, T-1\}$ and prove for $t+1$.
	
	Our proof's strategy is to express $g(t+1)$ recursively, using $g(t)$, and separate the bound at time $t+1$ into 2 components - one at time $t$ and one at time $t+1$. We then bound the term at time $t+1$, and use the induction step to bound the recursive part of $g$ at time $t$. 
	
	For all $x_i,y_i\in \mathbb{R}$,
	\begin{equation}
		\begin{aligned}
			&\prod_{i=k+1}^{t+1} x_i-\prod_{i=k+1}^{t+1}y_i
			=\prod_{i=k+1}^{t+1} x_i - x_{t+1} \prod_{i=k+1}^t y_i + x_{t+1} \prod_{i=k+1}^t y_i - \prod_{i=k+1}^{t+1}y_i 
			=x_{t+1}(\prod_{i=k+1}^t x_i - \prod_{i=k+1}^t y_i) + (x_{t+1} - y_{t+1}) \prod_{i=k+1}^{t} y_i.
		\end{aligned}
	\end{equation}
	By denoting $x_i=P(b_i|b_{i-1},a_{i-1})$ and $y_i=P_s(b_i|b_{i-1},a_{i-1})$ we get
	\begin{equation}
		\begin{aligned}
			&\prod_{i=k+1}^{t+1} P(b_i|b_{i-1}, \pi, a_i) - \prod_{i=k+1}^{t+1} P_s(b_i|b_{i-1}, \pi, a_i)\\
			&= [P(b_{t+1}|b_{t},a_{t}) - P_s(b_{t+1}|b_{t},a_{t})] \prod_{i=k+1}^{t}P_s(b_i|b_{i-1},a_{i-1}) 
			+ P(b_{t+1}|b_{t},a_{t})[\prod_{i=k+1}^{t} P(b_i|b_{i-1}, \pi, a_i) 
			- \prod_{i=k+1}^{t} P_s(b_i|b_{i-1}, \pi, a_i)].
		\end{aligned}
	\end{equation}
	By combining the equation above with the triangle inequality, we get 
	\begin{equation}
		g(t+1)\leq A_1 + A_2
	\end{equation}
	for 
	\begin{equation}
		\begin{aligned}
			A_1 &\triangleq \int\limits_{b_{k+1:t+1}\in B^{t-k}} |P(b_{t+1}|b_{t},a_{t}) - P_s(b_{t+1}|b_{t},a_{t})| 
			\prod_{i=k+1}^{t}P_s(b_i|b_{i-1},a_{i-1})b_{k+1:t},
		\end{aligned}
	\end{equation}
	\begin{equation}
		\begin{aligned}
			A_2 &\triangleq \int\limits_{b_{k+1:t+1}\in B^{t-k}} P(b_{t+1}|b_{t},a_{t}) |\prod_{i=k+1}^t P(b_i|b_{i-1}, \pi, a_i) 
			- \prod_{i=k+1}^t P_s(b_i|b_{i-1}, \pi, a_i)|db_{k+1:t} 
		\end{aligned}
	\end{equation}
	$A_1$ is the expectation over the TV-distance of the belief at time t.
	\begin{equation}
		\begin{aligned}
			&A_1 = \mathbb{E}^s[\Delta^s (b_{t}, a_t)|b_k, a_k].
		\end{aligned}
	\end{equation}
	As we see below, $A_2$ does not depend on the belief integration at time $t+1$, and therefore equals to $g(t)$. From the induction assumption, we get that $g(t)\leq \epsilon_t$.
	\begin{equation}
		\begin{aligned}
			&A_2 = \int\limits_{b_{k+1:t}\in B^{t-k}} P(b_{t+1}|b_{t},a_{t})  |\prod_{i=k+1}^t P(b_i|b_{i-1}, \pi, a_i) 
			- \prod_{i=k+1}^t P_s(b_i|b_{i-1}, \pi, a_i)|db_{k:t+1} \\
			&= \int\limits_{b_{k+1:t}\in B^{t-k}} |\prod_{i=k+1}^t P(b_i|b_{i-1}, \pi, a_i) - \prod_{i=k+1}^t P_s(b_i|b_{i-1}, \pi, a_i)| 
			\underbrace{\int_{b_{t+1}}P(b_{t+1}|b_{t},a_{t})db_{t+1}}_{=1} db_{k+1:t} \\
			&=g(t)\leq \epsilon_{t}
		\end{aligned}
	\end{equation}
	By combining the computations of $A_1$ and $A_2$,
	\begin{equation}
		\begin{aligned}
			g(t+1)&\leq A_1 + A_2 = g(t) + \mathbb{E}^s[\Delta^s (b_{t}, a_t)|b_k, a_k].
		\end{aligned}
	\end{equation}
	The last equation completes the induction proof. Utilizing the recursive relation of $\epsilon_t$, we get the bound we want to prove.
	\begin{equation}
		\begin{aligned}
			&|P(R_{k:T} \leq l|b_k,\pi)-P_s(R_{k:T} \leq l|b_k,\pi)|\leq g(T)
			\leq g(T-1) + \mathbb{E}^s[\Delta^s (b_{T-1}, a_{T-1})|b_k, a_k] 
			\leq \sum_{t=k}^{T-1} \mathbb{E}^s[\Delta^s(b_t, a_t)|b_k, a_k].
		\end{aligned}
	\end{equation}
\end{proof}

\begin{theorem}\label{prf:observation_model_simplification_bounds}
	In the case where the simplified and original observation models are denoted by $q_z$ and $p_z$ respectively, it holds that
	\begin{equation}
		\begin{aligned}
			&\sup_{l\in \mathbb{R}}|P(R_{k:T} \leq l|b_k,\pi) \! -\! P_s(R_{k:T} \leq l|b_k,\pi)| \! 
			\leq \!\! \! \sum_{t=k}^{T-1} \mathbb{E}^s[\Delta^s(x_{t+1})|b_k, a_k],
		\end{aligned}
	\end{equation}
	where $\Delta^s$ is the TV distance that is defined by
	\begin{equation}\label{eq:TVdistState_appendix}
		\Delta^s(x_{t}) \triangleq \int_{z_{t}\in Z} |p(z_{t}|x_t)-q(z_t|x_t)|dz_t.
	\end{equation}
\end{theorem}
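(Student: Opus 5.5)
The plan is to mimic the proof of Theorem~\ref{thm:simplification_bound_over_belief_distribution}, but to pass from belief-level quantities down to state/observation-level quantities. Only the observation model differs between $M$ and $M_s$; the state transitions and the policy are shared. So I will lift the return CDF to the joint space of latent states and observations, $(x_{k:T}, z_{k+1:T})$.

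\textbf{Step 1: lift the CDF to a deterministic functional.} Given $b_k$, $a_k$, $\pi$ and the observation history, the beliefs $b_{k+1:T}$ are obtained by a deterministic Bayes update. Hence $R_{k:T}$ is a deterministic function of $z_{k+1:T}$. Note that the update itself may use $q_z$ in $M_s$; since $c$ is belief-dependent, I treat the belief-update rule as part of the fixed functional, and only the generative law of the trajectory changes. Then
\[
P(R_{k:T}\le l\mid b_k,a_k,\pi)=\int 1_{\{R_{k:T}\le l\}}\, b_k(x_k)\prod_{i=k}^{T-1}T(x_{i+1}\mid x_i,a_i)\,p_z(z_{i+1}\mid x_{i+1})\,dx\,dz ,
\]
and the analogous expression with $q_z$ gives $P_s$. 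Bounding the indicator by $1$ and applying the triangle inequality, as in \eqref{eq:prf_first_belief_cdf_bound}, yields
\[
\bigl|P(R_{k:T}\le l\mid\cdot)-P_s(R_{k:T}\le l\mid\cdot)\bigr|\le G(T),
\]
where $G(t)$ is the $L^1$ distance between the two joint densities of the trajectory up to time $t$.

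\textbf{Step 2: telescoping induction on $G$.} I will use the same product-difference identity as before, now applied to the factors $x_i = T\,p_z$ and $y_i = T\,q_z$. Peeling off the last step gives two terms.
\begin{itemize}
\item The first term is $\int |p_z(z_{t+1}\mid x_{t+1})-q_z(z_{t+1}\mid x_{t+1})|$ integrated against the simplified-model joint density of $(x_{k:t+1}, z_{k+1:t})$. This equals $\mathbb{E}^s[\Delta^s(x_{t+1})\mid b_k,a_k]$, with $\Delta^s$ as in \eqref{eq:TVdistState_appendix}.
\item The second term contains the factor $T(x_{t+1}\mid x_t,a_t)\,p_z(z_{t+1}\mid x_{t+1})$, which integrates to one over $(x_{t+1}, z_{t+1})$. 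It therefore reduces to $G(t)$.
\end{itemize}
This gives $G(t+1)\le G(t)+\mathbb{E}^s[\Delta^s(x_{t+1})\mid b_k,a_k]$ with $G(k)=0$, and summing from $t=k$ to $T-1$ gives the claim. Because the bound is uniform in $l$, taking the supremum over $l$ is immediate.

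\textbf{Main obstacle: the dependence of actions on the simplified beliefs.} Actions $a_i=\pi(b_i)$ depend on beliefs, which depend on past observations. So $T(x_{i+1}\mid x_i,a_i)$ is really $T(x_{i+1}\mid x_i,\pi(b_i(z_{k+1:i})))$, and the factorization is not a plain product of per-step kernels. I will handle this by writing each factor as a conditional density given the full past $(x_{k:i}, z_{k+1:i})$. The telescoping identity holds for arbitrary history-dependent kernels, and each kernel still integrates to one over its new variables. The one point to check carefully is that the expectation in the first term is taken under the simplified law, including the actions induced by it. To guarantee this, I will take $y_i$ (the simplified factors) for the prefix product, i.e.\ use the ordering $(x_{t+1}-y_{t+1})\prod_{i\le t} y_i$, exactly as in the earlier proof.
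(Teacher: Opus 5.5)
Your argument is correct, and it rests on the same convention as the paper's proof: the belief-update map is shared by $M$ and $M_s$. The paper asserts this in one line as $P(b_{t+1}\mid b_t,a_t,z_{t+1},x_t,x_{t+1})=P_s(b_{t+1}\mid b_t,a_t,z_{t+1},x_t,x_{t+1})$. You make it explicit, which is worthwhile, because with model-specific Bayes updates the inequality can fail. For example, take two equiprobable states, identity transitions, cost $b(1)$, $p_z(z{=}1\mid x)=0.9$ or $0.1$ for $x=1,0$, and $q_z(z{=}1\mid x{=}0)=0.1+\eta$. This gives a one-step CDF gap of $1/2$ against a bound of $\eta$.

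The route, however, differs. The paper does not redo the induction. It invokes Theorem~\ref{thm:simplification_bound_over_belief_distribution} and proves only the one-step comparison $\Delta^s(b_t,a_t)\le\mathbb{E}[\Delta^s(x_{t+1})\mid b_t,a_t]$. To do so, it writes both belief kernels as mixtures over $(x_t,x_{t+1},z_{t+1})$ with the common update kernel, applies the triangle inequality, integrates out $b_{t+1}$, and finishes with the tower property under $\mathbb{E}^s$. You instead run the product-difference telescoping once, directly on the joint density of $(x_{k:T},z_{k+1:T})$ with history-dependent kernels. The observation-level TV is then the peeled-off term, and neither the belief-level theorem nor the tower step is needed.

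The paper's route is modular: it shows the state-level discrepancy dominates the belief-level one term by term, which is exactly what Corollary~\ref{thm:cvar_tv_dist_bound_simp_obs} relies on. Because it works with the Markov belief kernels of \eqref{eq:OriginalCDFReturn}--\eqref{eq:SimplifiedCDFReturn}, it needs no special care for belief-dependent actions or randomized (particle-filter) updates. Yours is more elementary and self-contained, and it handles the history dependence of actions explicitly; a randomized filter can be absorbed as one more common kernel.

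Be aware that you prove the inequality for the push-forwards of the state--observation trajectory laws through the shared update, while the paper proves it for the belief-MDP laws. These agree for a model when the shared update is that model's own Bayes update, and a single rule can achieve this for at most one of $M$, $M_s$. This ambiguity is inherited from the statement rather than being a defect of your induction.
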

\begin{proof}
	We show that for each $t\in \{k, \dots, T-1\}$,
	\begin{equation}
		\mathbb{E}^s[\Delta^s(b_t, a_t)|b_k, a_k]\leq \mathbb{E}^s[\Delta^s(x_{t+1})|b_k, a_k],
	\end{equation}
	and therefore 
	\begin{equation}
		\begin{aligned}
			&\sup_{l\in \mathbb{R}}|P(R_{k:T} \leq l|b_k,\pi) \! -\! P_s(R_{k:T} \leq l|b_k,\pi)| \! 
			\leq \!\! \! \sum_{t=k}^{T-1} \mathbb{E}^s[\Delta^s(b_t, a_t)|b_k, a_k] 
			\leq \sum_{t=k}^{T-1} \mathbb{E}^s[\Delta^s(x_{t+1})|b_k, a_k].
		\end{aligned}
	\end{equation}
	
	We start by expressing the belief transition model using the observation and state-transition models. Here we assume that $b_t$ is known. That is, $b_t=\{(x_i, w_i)\}_{i=1}^n$, where $x_i$ and $w_i$ are constants.
	\begin{equation}
		\begin{aligned}
			&P(b_{t+1}|b_{t},a_{t})=\int_{x_t\in X, x_{t+1}\in X, z_{t+1}\in Z} 
			P(b_{t+1}|b_{t}, a_{t}, z_{t+1}, x_t, x_{t+1})P(z_{t+1}, x_t, x_{t+1}|b_t,a_t) 
			dz_{t+1}dx_{t+1}dx_t \\
			&=\int_{x_t\in X, x_{t+1}\in X, z_{t+1}\in Z} 
			P(b_{t+1}|b_{t}, a_{t}, z_{t+1}, x_t, x_{t+1})p(z_{t+1}|x_{t+1})
			\times P(x_{t+1}|x_t, a_t)b(x_t)dz_{t+1}dx_{t+1}dx_t. 
		\end{aligned}
	\end{equation}
	Similarly,
	\begin{equation}
		\begin{aligned}
			&P_s(b_{t+1}|b_{t},a_{t}) 
			=\int_{x_t\in X, x_{t+1}\in X, z_{t+1}\in Z} P_s(b_{t+1}|b_{t}, a_{t}, z_{t+1}, x_t, x_{t+1}) 
			 q(z_{t+1}|x_{t+1})P(x_{t+1}|x_t, a_t)b(x_t)dz_{t+1}dx_{t+1}dx_t. 
		\end{aligned}
	\end{equation}
	Note that $P(b_{t+1}|b_{t}, a_{t}, z_{t+1}, x_t, x_{t+1})=P_s(b_{t+1}|b_{t}, a_{t}, z_{t+1}, x_t, x_{t+1})$, as both terms do not depend on the observation model. By subtracting the simplified and original belief transition models, we can separate the simplified and original observation models.
	\begin{equation}\label{eq:belief_transition_bound_with_simp_obs}
		\begin{aligned}
			&|P(b_{t+1}|b_{t},a_{t})-P_s(b_{t+1}|b_{t},a_{t})| \\
			&=|\int_{x_t\in X, x_{t+1}\in X, z_{t+1}\in Z} P_s(b_{t+1}|b_{t}, a_{t}, z_{t+1}, x_t, x_{t+1}) 
			P(x_{t+1}|x_t, a_t)b(x_t) 
			(p(z_{t+1}|x_{t+1}) - q(z_{t+1}|x_{t+1}))dz_{t+1}dx_{t+1}dx_t| \\
			&\leq \int_{x_t\in X, x_{t+1}\in X, z_{t+1}\in Z} P_s(b_{t+1}|b_{t}, a_{t}, z_{t+1}, x_t, x_{t+1}) 
			P(x_{t+1}|x_t, a_t)b(x_t) 
			|p(z_{t+1}|x_{t+1}) - q(z_{t+1}|x_{t+1})|dz_{t+1}dx_{t+1}dx_t.
		\end{aligned}
	\end{equation}
	where the inequality holds from the triangle inequality. It holds that \begin{equation}
		\begin{aligned}
			&\Delta^s(b_{t}, a_{t})=\int_{b_{t+1}\in B} |P(b_{t+1}|b_{t},a_{t})-P_s(b_{t+1}|b_{t},a_{t})|db_{t+1} \\
			&\leq \int_{b_{t+1}\in B} \int_{x_t\in X, x_{t+1}\in X, z_{t+1}\in Z} P_s(b_{t+1}|b_{t}, a_{t}, z_{t+1}, x_t, x_{t+1}) 
			P(x_{t+1}|x_t, a_t)b(x_t) 
			|p(z_{t+1}|x_{t+1}) - q(z_{t+1}|x_{t+1})|dz_{t+1}dx_{t+1}dx_tdb_{t+1} \\
			&=\int_{x_t\in X, x_{t+1}\in X} P(x_{t+1}|x_t, a_t)b(x_t) 
			\int_{z_{t+1}\in Z} |p(z_{t+1}|x_{t+1}) - q(z_{t+1}|x_{t+1})| 
			\underbrace{\int_{b_{t+1}\in B}P_s(b_{t+1}|b_{t}, a_{t}, z_{t+1}, x_t, x_{t+1}) db_{t+1}}_{= 1} dz_{t+1}dx_{t+1}dx_t \\
			&= \int_{b_{t+1}\in B} \int_{x_t\in X, x_{t+1}\in X} P(x_{t+1}|x_t, a_t)b(x_t) \Delta^s(x_{t+1}) dx_{t+1}dx_t \\
			&=\mathbb{E}^s[\Delta^s(x_{t+1})|b_t, a_t]
		\end{aligned}
	\end{equation}
	where the first inequality holds from \eqref{eq:belief_transition_bound_with_simp_obs}. From the last equation and expectation monotonicity, we get that $\mathbb{E}^s[\Delta(b_{t}, a_{t})]\leq \mathbb{E}^s[\Delta^s(x_{t+1})|b_t, a_t]$, and therefore the claim holds for each $t$. Summing over $t$ and applying the tower property yields the result.
	\begin{equation}
		\begin{aligned}
			\mathbb{E}^s[\Delta^s(b_{t}, a_{t})|b_k, a_k]&=\mathbb{E}^s[\mathbb{E}^s[\Delta^s(b_{t}, a_{t})|b_t, a_t]|b_k, a_k] 
			\leq \mathbb{E}^s[\mathbb{E}^s[\Delta^s(x_{t+1})|b_t, a_t]|b_k, a_k]
			=\mathbb{E}^s[\Delta^s(x_{t+1})|b_k, a_k].
		\end{aligned}
	\end{equation}
\end{proof}

\begin{theorem}\label{prf:uniform_lower_and_upper_bounds_for_v_and_q}
	Let $\delta_{d_{min}}\in [0, 1], \delta_{d_{max}}\in [0, 1]$ and denote 
	\begin{equation}
		\epsilon \triangleq \epsilon(b_k, a_k) \triangleq \min(\sum_{i=k}^{T-1} \mathbb{E}^s[\Delta^s(b_i, a_i)|b_k, a_k,\pi], 1),
	\end{equation}
	\begin{enumerate}
		\item \textbf{Upper Bound:} assume that $P(R_{k:T}\leq d_{max})>1-\delta_{d_{max}}$ and $P_s(R_{k:T}\leq d_{max})>1-\delta_{d_{max}}$, then \begin{enumerate}
			\item If $\epsilon<\alpha$, then
			$U_s\triangleq (1-\frac{\epsilon}{\alpha}) Q^\pi_{M_s}(b_k,a_k,\alpha-\epsilon) + \frac{\epsilon}{\alpha}d_{max}$
			\item If $\epsilon \geq \alpha$ then $U_s\triangleq d_{max}$
		\end{enumerate}
		\item \textbf{Lower Bound:} assume that $P(R_{k:T}\geq d_{min})>1-\delta_{d_{min}}$ and $P_s(R_{k:T}\geq d_{min})>1-\delta_{d_{min}}$, then \begin{enumerate}
			\item If $\epsilon + \alpha < 1$ then 
			$
			L_s\triangleq (1+\frac{\epsilon}{\alpha}) Q^\pi_{M_s}(b_k,a_k,\alpha+\epsilon) - \frac{\epsilon}{\alpha} Q^\pi_{M_s}(b_k,a_k, \epsilon)
			$
			\item If $\epsilon + \alpha \geq 1$ then 
			$
			L_s\triangleq\frac{1}{\alpha}[-(\alpha + \epsilon - 1)
			d_{min}+Q^\pi_{M_s}(b_k,a_k)-\epsilon Q^\pi_{M_s}(b_k,a_k,\epsilon)]
			$
		\end{enumerate}
	\end{enumerate}
	Then $P(L_s \leq Q_M^\pi(b_k, a_k,\alpha)) > 1-\delta_{d_{min}}$ and $P(Q_M^\pi(b_k, a_k,\alpha)\leq U_s)> 1-\delta_{d_{max}}$.
\end{theorem}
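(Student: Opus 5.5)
The plan is to reduce this statement to Theorem~\ref{thm:cvar_bound_v2} by identifying $X$ with the return $R_{k:T}$ under the original belief-transition model $P$ and $Y$ with the same return under the simplified model $P_s$, both conditioned on $b_k,a_k,\pi$. With this identification, $\CVaR_\alpha(X)=Q^\pi_M(b_k,a_k,\alpha)$ and $\CVaR_\beta(Y)=Q^\pi_{M_s}(b_k,a_k,\beta)$ for every level $\beta$, where $\beta=1$ gives $\mathbb{E}[Y]=Q^\pi_{M_s}(b_k,a_k)$. The distributional discrepancy hypothesis of Theorem~\ref{thm:cvar_bound_v2} comes from Theorem~\ref{thm:simplification_bound_over_belief_distribution}. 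That result gives $\sup_l|F_X(l)-F_Y(l)|\le\sum_{i=k}^{T-1}\mathbb{E}^s[\Delta^s(b_i,a_i)\mid b_k,a_k]$. Since a difference of CDFs is trivially at most $1$, the truncated quantity $\epsilon=\min(\cdot,1)$ bounds both one-sided differences $F_Y-F_X$ and $F_X-F_Y$ uniformly.

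Next I handle the support assumptions. Theorem~\ref{thm:cvar_bound_v2} needs almost-sure bounds $b_X,b_Y$ (resp.\ $a_X,a_Y$), but here we only have $P(R\le d_{\max})>1-\delta_{d_{\max}}$ under both $P$ and $P_s$. I would define the event $E_{\max}=\{R_{k:T}\le d_{\max}\}$ and argue in the same way as the proofs of Theorems~\ref{thm:ecdf_cvar_bound} and~\ref{thm:ecdf_y_bound_cvar_x}. On $E_{\max}$, take $b_X=b_Y=d_{\max}$, so that $\max(b_X,b_Y)=d_{\max}$, and Theorem~\ref{thm:cvar_bound_v2} gives $Q_M^\pi\le U_s$ deterministically in both cases $\epsilon<\alpha$ and $\epsilon\ge\alpha$. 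Then the law of total probability, splitting on $E_{\max}$ and its complement, gives a probability greater than $1-\delta_{d_{\max}}$. The lower bound follows symmetrically: use $E_{\min}=\{R_{k:T}\ge d_{\min}\}$, set $a_X=a_Y=a_{\min}=d_{\min}$, and apply part~2(a) or~2(b) of Theorem~\ref{thm:cvar_bound_v2} according to whether $\alpha+\epsilon<1$. The strict versus non-strict boundary at $\alpha+\epsilon=1$ is harmless, because the two expressions agree there.

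The main obstacle is the sign of the $d_{\min}$ term in case~2(b). Theorem~\ref{thm:cvar_bound_v2} yields $+(\alpha+\epsilon-1)a_{\min}$, while $L_s$ as stated carries $-(\alpha+\epsilon-1)d_{\min}$. I would re-derive the $A_4$ term to see which sign is correct. If $d_{\min}\ge0$ (nonnegative costs), the stated $L_s$ is no larger than the Theorem~\ref{thm:cvar_bound_v2} bound, so it remains a valid, weaker lower bound. Otherwise I would state the bound with the sign from Theorem~\ref{thm:cvar_bound_v2}.

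A second point needs care: making rigorous the idea that the bound holds ``conditionally on the event''. The quantities involved, the CVaR values, are deterministic, and the probability is really over whether the truncation assumption is valid. I would therefore phrase the argument through the truncated return $\min(R,d_{\max})$, or simply mirror the paper's own conditioning argument, noting that on the good event the support hypotheses hold and the CVaR inequality follows.
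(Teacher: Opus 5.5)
Your route is the paper's route: identify $X$ and $Y$ with the return under $P$ and $P_s$, take the uniform CDF discrepancy from Theorem~\ref{thm:simplification_bound_over_belief_distribution}, split on $E_{\max}=\{R_{k:T}\le d_{\max}\}$ by total probability, and apply Theorem~\ref{thm:cvar_bound_v2} ``on the good event''.

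Your sign concern is correct, and the paper's proof misses it. Write $q_X(\tau)=\inf\{z:F_X(z)\ge\tau\}$. On $[1-\alpha,\epsilon]$, an interval of length $\alpha+\epsilon-1$, we have $q_X\ge a_{\min}$. So the correct term is $+(\alpha+\epsilon-1)d_{\min}$, exactly as in Theorem~\ref{thm:cvar_bound_v2}. The stated $L_s$ is therefore safe only when $d_{\min}\ge 0$. For a counterexample, take $R_{k:T}\equiv c<0$ under both models, mutually singular belief kernels (so $\epsilon=1$) and $\alpha=1/2$; then the stated $L_s$ equals $-c>c=Q^\pi_M$. Your remark that the boundary $\alpha+\epsilon=1$ is harmless is also right.

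The genuine gap is the step ``on $E_{\max}$ take $b_X=b_Y=d_{\max}$, so Theorem~\ref{thm:cvar_bound_v2} gives $Q^\pi_M\le U_s$ deterministically.'' The paper's proof makes the same move. The hypothesis $P(X\le b_X)=1$ in Theorem~\ref{thm:cvar_bound_v2} is a property of the law of $X$, not of a realization. Moreover, $Q^\pi_M(b_k,a_k,\alpha)$ and $U_s$ are deterministic functionals of the laws. Hence $\{Q^\pi_M\le U_s\}$ is either $\emptyset$ or $\Omega$, conditioning on $E_{\max}$ changes nothing, and the total-probability split proves nothing.

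This differs from Theorems~\ref{thm:ecdf_cvar_bound} and~\ref{thm:ecdf_y_bound_cvar_x}. There the bound depends on the random sample, and the DKW event is a property of that sample which makes the deterministic hypotheses of Theorem~\ref{thm:cvar_bound_v2} hold for $(F_X,\hat F)$. Your truncation fix points the wrong way: $\CVaR_\alpha(\min(R_{k:T},d_{\max}))\le Q^\pi_M$, so an upper bound on the left side says nothing about $Q^\pi_M$. The dual problem arises for $\max(R_{k:T},d_{\min})$.

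No repair is possible, because the claim itself is false. Take $T=k+1$ and $c(b_k,a_k)=0$. Let the next belief have cost $M>0$ with probability $p=0.02$ under $P$, and cost $0$ almost surely under $P_s$. Then $\epsilon=2p=0.04$. With $\alpha=0.1$, $d_{\max}=0$ and $\delta_{d_{\max}}=0.05$, both hypotheses hold and $U_s=0$. Yet $Q^\pi_M(b_k,a_k,0.1)=0.2M$, so $P(Q^\pi_M\le U_s)=0$. Any $\delta_{d_{\max}}>0$ can be broken the same way by shrinking $p$.

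What your reduction does prove is the deterministic version. There $d_{\max}$ and $d_{\min}$ are almost-sure bounds under both models (e.g.\ $(T-k+1)R_{\max}$ and $(T-k+1)R_{\min}$), and case 2(b) carries the corrected sign. Case 2(a) needs no support bound at all: $F_X\le F_Y+\epsilon$ alone gives $q_X(\tau)\ge q_Y(\tau-\epsilon)$ for $\tau\ge 1-\alpha\ge\epsilon$.
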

\begin{proof}
	From the law of total probability,
	\begin{equation}
		\begin{aligned}
			P(Q_M^\pi(b_k, a_k,\alpha)\leq U_s) 
			&= P(Q_M^\pi(b_k, a_k,\alpha) \leq U_s| R_{k:T}\leq d_{max})P(R_{k:T}\leq d_{max}) \\
			&+P(Q_M^\pi(b_k, a_k,\alpha) \leq U_s| R_{k:T} > d_{max})P(R_{k:T} > d_{max})\\
			&\geq P(Q_M^\pi(b_k, a_k,\alpha) \leq U_s| R_{k:T}\leq d_{max})P(R_{k:T}\leq d_{max}) \\
			&>P(Q_M^\pi(b_k, a_k,\alpha) \leq U_s| R_{k:T}\leq d_{max})(1-\delta_{d_{max}}).
		\end{aligned}
	\end{equation}
	Similarly, 
	\begin{equation}
		P(Q_M^\pi(b_k, a_k,\alpha) \geq L_s) > P(Q_M^\pi(b_k, a_k,\alpha) \geq L_s | R_{k:T}\geq d_{min})(1-\delta_{d_{min}}).
	\end{equation}
	From Theorem \ref{prf:simplification_bound_over_belief_distribution}, for all $l\in \mathbb{R}$
	\begin{equation}
		\begin{aligned}
			&|P(R_{k:T} \leq l|b_k,\pi)-P_s(R_{k:T} \leq l|b_k,\pi)| 
			\leq \sum_{i=k+1}^{T-1} \mathbb{E}_{b_{{k+1}:i}}
			[\Delta^s(b_i, a_i)|b_k,\pi]=\epsilon,
		\end{aligned}
	\end{equation}
	The action-value function is defined as the CVaR of the return. Let $X$ and $Y$ be random variables distributed according to the original and simplified models $P$ and $P_s$, respectively. Theorem~\ref{thm:cvar_bound_v2} establishes bounds on $\CVaR_\alpha(X)$ in terms of $\CVaR$ of $Y$, under a known discrepancy between the cumulative distribution functions of $X$ and $Y$. In our setting, this distributional discrepancy is bounded by $\epsilon$, and thus the result applies directly.
\end{proof}

\begin{corollary}\label{prf:cvar_tv_dist_bound_simp_obs}
	The bound from Theorem \ref{thm:uniform_lower_and_upper_bounds_for_v_and_q} holds for $\epsilon(b_k, a_k)=\sum_{t=k}^{T-1} \mathbb{E}^s[\Delta^s(x_{t+1})|b_k, a_k]$.
\end{corollary}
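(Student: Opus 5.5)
The corollary follows by chaining the two earlier results. The only requirement of Theorem~\ref{thm:uniform_lower_and_upper_bounds_for_v_and_q} is a uniform bound on the CDF discrepancy of the return under $P$ and $P_s$. Theorem~\ref{thm:observation_model_simplification_bounds} supplies exactly such a bound when only the observation model is simplified.

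\textbf{Step 1: identify where $\epsilon$ enters.} In the proof of Theorem~\ref{thm:uniform_lower_and_upper_bounds_for_v_and_q}, the quantity $\epsilon$ is used in one place only, as a number satisfying
\[
\sup_{l}\bigl|P(R_{k:T}\le l\mid b_k,a_k,\pi)-P_s(R_{k:T}\le l\mid b_k,a_k,\pi)\bigr|\le \epsilon .
\]
This inequality is fed into Theorem~\ref{thm:cvar_bound_v2}, with $X$ the return under $P$ and $Y$ the return under $P_s$. Both one-sided hypotheses $F_Y-F_X\le\epsilon$ and $F_X-F_Y\le\epsilon$ follow from the two-sided sup bound. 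The remaining arguments do not depend on how $\epsilon$ was obtained:
\begin{itemize}
\item the law-of-total-probability step conditioning on $R_{k:T}\le d_{max}$ (respectively $R_{k:T}\ge d_{min}$);
\item the case split on $\epsilon<\alpha$ and on $\epsilon+\alpha<1$.
\end{itemize}
So Theorem~\ref{thm:uniform_lower_and_upper_bounds_for_v_and_q} holds verbatim for any $\epsilon\in[0,1]$ that upper-bounds the sup-distance between the return CDFs.

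\textbf{Step 2: verify the new $\epsilon$ is such a bound.} Set $\tilde\epsilon=\sum_{t=k}^{T-1}\mathbb{E}^s[\Delta^s(x_{t+1})\mid b_k,a_k]$. Theorem~\ref{thm:observation_model_simplification_bounds} gives $\sup_l|P(\cdot)-P_s(\cdot)|\le\tilde\epsilon$ directly. Its proof obtains this by showing
\[
\sum_t \mathbb{E}^s[\Delta^s(b_t,a_t)\mid b_k,a_k]\le \tilde\epsilon ,
\]
so $\tilde\epsilon$ also dominates the belief-level $\epsilon$ of \eqref{eq:epsilon}.

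\textbf{Step 3: handle the clipping at 1.} If $\tilde\epsilon>1$, replace it by $\min(\tilde\epsilon,1)$. This is still a valid bound, since a difference of two CDFs never exceeds $1$. It also keeps $\epsilon\in[0,1]$, as Theorem~\ref{thm:cvar_bound_v2} requires.

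\textbf{Step 4: substitute and conclude.} Substituting $\min(\tilde\epsilon,1)$ into the case analysis of Theorem~\ref{thm:uniform_lower_and_upper_bounds_for_v_and_q} yields the same $U_s,L_s$ formulas and the same probabilistic guarantees $1-\delta_{d_{max}}$ and $1-\delta_{d_{min}}$.

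\textbf{Main obstacle.} The only delicate point is that the bounds are not monotone in $\epsilon$ by inspection. One therefore cannot simply argue that ``a larger $\epsilon$ gives a looser, hence still valid, bound.''

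The fix is to avoid a monotonicity argument altogether. Theorem~\ref{thm:cvar_bound_v2} is stated for any $\epsilon$ satisfying its CDF hypothesis, not only the tightest one. So I would re-invoke it directly with $\tilde\epsilon$ in place of $\epsilon$, rather than comparing the two bound expressions.
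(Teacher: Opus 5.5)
Your proposal is correct and takes the same route as the paper. Theorem~\ref{thm:uniform_lower_and_upper_bounds_for_v_and_q} uses $\epsilon$ only as a uniform bound on the return-CDF discrepancy, and Theorem~\ref{thm:observation_model_simplification_bounds} supplies exactly such a bound for the state-based $\epsilon$. Your explicit clipping to $\min(\tilde\epsilon,1)$, and your choice to re-invoke Theorem~\ref{thm:cvar_bound_v2} rather than argue monotonicity in $\epsilon$, are sound refinements that the paper leaves implicit.
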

\begin{proof}
	Theorem~\ref{thm:uniform_lower_and_upper_bounds_for_v_and_q} establishes bounds on the value and action-value functions under a quantified distributional discrepancy $\epsilon$ between the original and simplified return distributions. As long as this discrepancy between their CDFs remains bounded by $\epsilon$, the result remains valid. Consequently, by bounding the belief-based discrepancy $\epsilon$ in Theorem~\ref{thm:uniform_lower_and_upper_bounds_for_v_and_q} via the state-based discrepancy bound established in Theorem~\ref{prf:observation_model_simplification_bounds}, we recover the same form of the bound for the newly defined $\epsilon$.
\end{proof}

\begin{theorem}\label{prf:m_i_sum_bound_guarantees}
	Let $\eta>0,\delta\in (0,1)$. If $\hat{\Delta}$ is unbiased, then
	\begin{equation}
		\begin{aligned}
			&P(|\hat{\epsilon}_t(\bar{b}_t, a)-\epsilon_t(\bar{b}_t, a)|\geq \eta)
			\leq 2\exp\Big( -\frac{2\eta^2 N_b}{D^2(T-k)^2(R_{\max}-R_{\min})^2} \Big)
		\end{aligned}
	\end{equation}
	for $D=\max_{i\in \{k+1,\dots,T-1\}} D_i$ where $D_i=\sup_{x_{i+1}}P(x_{i+1}|x_i, a_i)/Q_0(x_{i+1})$.
\end{theorem}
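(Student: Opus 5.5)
The plan is to write the estimator error as an average of $N_b$ i.i.d. bounded terms and apply Hoeffding's inequality. By \eqref{eq:eps_estimator},
\begin{equation*}
\hat{\epsilon}_t(\bar{b}_t,a)=\frac{1}{N_b}\sum_{i=1}^{N_b} S^i, \qquad S^i \triangleq \sum_{\tau=t}^{T-1}\hat{m}\big(\bar{b}^i_\tau,\pi(\bar{b}^i_\tau)\big).
\end{equation*}
Each $S^i$ is built from the $i$-th simulated particle-belief trajectory $\{\bar{b}^i_\tau\}_\tau$ of Section~\ref{sec:q_func_estimator}. These trajectories are generated independently from $\bar{b}_t$, so the $S^i$ are i.i.d.

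The argument then has three steps.

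\textbf{Step 1: unbiasedness.} I will show $\mathbb{E}[S^i]=\epsilon_t(\bar{b}_t,a)$. First condition on $\bar{b}^i_\tau$. The importance-sampling estimator \eqref{eq:m_est_state} is unbiased for $m(x,a)$ by \eqref{eq:m_state_act}, under the hypothesis that $\hat{\Delta}$ is unbiased and the offline samples $x^\Delta_n\sim Q_0$ are independent of the online trajectories. By linearity over the normalized particle weights, $\hat{m}(\bar{b}_\tau,a)$ is unbiased for $m(\bar{b}_\tau,a)$. The tower property over $\bar{b}^i_\tau\mid\bar{b}_t$, followed by summing over $\tau$, gives $\mathbb{E}[S^i]=\sum_{\tau}\mathbb{E}[m(b_\tau,\pi_\tau)\mid \bar{b}_t,a]=\epsilon_t$.

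\textbf{Step 2: almost-sure range of each summand.} Each importance weight satisfies $P(x^\Delta_n\mid x,a)/Q_0(x^\Delta_n)\le D_\tau\le D$. The TV term $\Delta^s$ (or its estimate) lies in a bounded interval whose width I take to be $(R_{\max}-R_{\min})$, which is the scale appearing in the statement. I would justify this by rescaling, or simply adopt it as the range assumed for $\hat{\Delta}$. Hence each $\hat{m}(\bar{b}_\tau,\cdot)$, being a convex combination of such weighted averages, lies in an interval of length at most $D(R_{\max}-R_{\min})$. Summing over at most $T-k$ time steps, $S^i$ lies in an interval of length $D(T-k)(R_{\max}-R_{\min})$.

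\textbf{Step 3: concentration.} Hoeffding's inequality for the mean of $N_b$ i.i.d. variables with range $c=D(T-k)(R_{\max}-R_{\min})$ gives
\begin{equation*}
P\big(|\hat{\epsilon}_t-\epsilon_t|\ge\eta\big)\le 2\exp\!\big(-2N_b\eta^2/c^2\big),
\end{equation*}
which is exactly the claimed bound.

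The main obstacle is Step 2, justifying the range constant. There are two issues. First, the offline samples $\{x^\Delta_n\}$ are shared across all trajectories, which breaks independence of the $S^i$. I would handle this by conditioning on the offline sample, so that $\hat{m}$ becomes a fixed bounded function; unbiasedness is then applied only after averaging over that sample. Second, the stated constant uses $R_{\max}-R_{\min}$ rather than the natural TV range $[0,2]$, so I would need to make that range convention explicit.
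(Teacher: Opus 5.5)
Your skeleton (unbiasedness of $\hat{m}$ by linearity and the tower property, an almost-sure range for each per-trajectory sum $S^i$, then Hoeffding over the $N_b$ trajectories) is the paper's proof. The paper sets $X_i=S^i/N_b$ and simply asserts that the $X_i$ are i.i.d.\ with a bounded range. You correctly spotted the two points it glosses over, but your repair for the first one does not work.

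\textbf{Shared offline sample.} Conditioning on $\mathcal{G}=\sigma\bigl(\{x^\Delta_n,\hat{\Delta}(x^\Delta_n)\}_{n}\bigr)$ does make the $S^i$ conditionally i.i.d. Their conditional mean, however, is $\tilde{\epsilon}_t\triangleq\mathbb{E}[S^1\mid\mathcal{G}]$, so Hoeffding only gives concentration of $\hat{\epsilon}_t$ around $\tilde{\epsilon}_t$. The identity $\mathbb{E}[\tilde{\epsilon}_t]=\epsilon_t$ cannot be applied afterwards to move the center of a tail bound: $\tilde{\epsilon}_t-\epsilon_t$ is an average of $N^\Delta$ i.i.d.\ centered terms and does not shrink as $N_b$ grows. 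With one shared offline sample the statement is in fact false. When $T-t=1$, every $S^i$ equals the same quantity $\hat{m}(\bar{b}_t,a)$, since all trajectories start at $\bar{b}_t$. Then $\hat{\epsilon}_t$ does not depend on $N_b$, and for fixed $N^\Delta$ the probability $P(|\hat{\epsilon}_t-\epsilon_t|\ge\eta)$ is a fixed number, positive in general, while the claimed right-hand side tends to $0$. You therefore need one of two fixes:
\begin{itemize}
\item Assume each trajectory uses its own independent offline sample, which is what the paper tacitly does when it declares the $X_i$ i.i.d. Then the $S^i$ are genuinely i.i.d.\ with mean $\epsilon_t$ by your Step~1.
\item Keep the shared sample and add a term: a union bound with a second Hoeffding inequality, in $N^\Delta$, for $|\tilde{\epsilon}_t-\epsilon_t|$.
\end{itemize}

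\textbf{Range constant.} Your Step~2 does not produce the stated constant either, and rescaling cannot. Multiplying $\Delta^s$ and $\hat{\Delta}$ by $c$ turns the event $\{|\hat{\epsilon}_t-\epsilon_t|\ge\eta\}$ into one with threshold $c\eta$ and multiplies the range by $c$, so the Hoeffding exponent is unchanged. Also, because the importance weights range over $[0,D]$, the range of $\hat{m}$ is governed by $\sup\hat{\Delta}$, not by the width of an interval containing $\hat{\Delta}$.

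If $\hat{\Delta}\in[0,2]$ (the range of $\Delta^s$), then $S^i\in[0,2D(T-k)]$ and Hoeffding gives $2\exp\bigl(-\eta^2N_b/(2D^2(T-k)^2)\bigr)$. This implies the stated bound exactly when $R_{\max}-R_{\min}\ge 2$. For smaller cost ranges, the $R_{\max}-R_{\min}$ constant does not follow from the hypotheses.

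The paper's proof has the same weakness. It writes the range of $X_i$ as $[2D(T-t)R_{\min},\,2D(T-t)R_{\max}]/N_b$, mixing the TV bound with cost bounds. Taken at face value, that range puts a factor $4$ in the exponent's denominator, which the final display drops.

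So state the needed assumption explicitly: $0\le\hat{\Delta}\le R_{\max}-R_{\min}$, which holds automatically when $R_{\max}-R_{\min}\ge 2$. Also note that a deterministic weight bound needs the supremum over the conditioning state $x_\tau$ as well, since $D_i$ as defined depends on $x_i$.
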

\begin{proof}
	We start by proving that $\hat{m}(\bar{b}_t, a)$ is an unbiased estimator for $m(\bar{b}_t, a)$.
	\begin{equation}
		\begin{aligned}
			&\mathbb{E}[\hat{m}(\bar{b}_t, a)]=\sum_{i=1}^{N_p} \mathbb{E}[\hat{m}(x_t^i, a)]\tilde{w}^i_t
			=\sum_{i=1}^{N_p} \mathbb{E}[\frac{P(x_n^\Delta|x_t^i, a)}{Q_0(x_n^\Delta)}\Delta(x_n^\Delta)]\tilde{w}^i_t=\sum_{i=1}^{N_p} m(x_t^i, a)\tilde{w}^i_t  \\
			&=m(\bar{b}_t, a).
		\end{aligned}
	\end{equation}
	It follows that the estimator $\hat{\epsilon}(\bar{b}_t, a)$ of the total distributional discrepancy is an unbiased estimator of $\epsilon(\bar{b}_t, a)$.
	\begin{equation}
		\begin{aligned}
			\mathbb{E}[\hat{\epsilon}(\bar{b}_t, a)]&=\sum_{\tau=t}^{T-1} \mathbb{E}[\hat{\mathbb{E}}[m(\bar{b}_\tau, \pi_\tau)|\bar{b}_t, a]|\bar{b}_t, a]
			=\sum_{\tau=t}^{T-1} \mathbb{E}[\hat{m}(\bar{b}_\tau^1, \pi(\bar{b}_\tau^1))|\bar{b}_t, a]
			=\sum_{\tau=t}^{T-1} \mathbb{E}[\mathbb{E}[\hat{m}(\bar{b}_\tau^1, \pi(\bar{b}_\tau^1))|\bar{b}_\tau^1]|\bar{b}_t, a] \\
			&=\sum_{\tau=t}^{T-1} \mathbb{E}[m(\bar{b}_\tau^1, \pi(\bar{b}_\tau^1))|\bar{b}_t, a]=\epsilon(\bar{b}_t, a)
		\end{aligned}
	\end{equation}
	
	Define
	\begin{equation}
		X_i \triangleq \frac{1}{N_b} \sum_{\tau=t}^{T-1} \hat{m}(\bar{b}_\tau^i, \pi(\bar{b}_\tau^i)),
	\end{equation}
	and note that
	\begin{equation}
		\sum_{i=1}^{N_b} X_i = \hat{\epsilon}_t(\bar{b}_t, a).
	\end{equation}
	Since the total variation distance is bounded by 2, each $X_i$ satisfies the bound
	\begin{equation}
		\frac{2D(T - t) R_{\min}}{N_b} \leq X_i \leq \frac{2D(T - t) R_{\max}}{N_b}.
	\end{equation}
	Moreover, the $X_i$ are independent and identically distributed, and thus Hoeffding’s inequality applies.
	\begin{equation}
		\begin{aligned}
			P(|\hat{\epsilon}_t(\bar{b}_t, a)-\epsilon_t(\bar{b}_t, a)|\geq \eta)
			=P(|\sum_{i=1}^{N_b}X_i-\epsilon_t(\bar{b}_t, a)|\geq \eta)
			&\leq 2\exp(-\frac{2\eta^2}{D^2(T-k)^2(R_{\max}-R_{\min})^2 / N_b^2 \times N_b}) \\
			&\leq 2\exp(-\frac{2\eta^2 N_b}{D^2(T-k)^2(R_{\max}-R_{\min})^2}).
		\end{aligned}
	\end{equation}
\end{proof}

\begin{theorem}\label{prf:d_guarantees}
	For the estimators $\hat{d}_{\max}^\pi(b_k) = \max_{i \in \{1,\dots,N_b\}} R^i$ and $\hat{d}_{\min}^\pi(b_k) = \min_{i \in \{1,\dots,N_b\}} R^i$, where $R^1,\dots,R^{N_b} \sim P_s(\cdot | b_k, \pi)$, and for $\epsilon$ as defined in \eqref{eq:epsilon}, the following bounds hold. 
	\begin{equation}
		P_s(R_{k:T} \geq \hat{d}_{min}^\pi(b_k))\geq\frac{N_b}{N_b+1}, \qquad P_s(R_{k:T} \leq \hat{d}_{max}^\pi(b_k))\geq\frac{N_b}{N_b+1}
	\end{equation}
	\begin{equation}
		P\!\left(R_{k:T} \geq \hat{d}_{min}^\pi(b_k) \Big| b_k ,\pi \right)
		\geq \frac{N_b}{N_b+1} - \epsilon, \qquad
		P\!\left(R_{k:T} \leq \hat{d}_{max}^\pi(b_k) \Big| b_k ,\pi \right)
		\geq \frac{N_b}{N_b+1} - \epsilon.
	\end{equation}
\end{theorem}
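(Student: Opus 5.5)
The plan has two stages. First, the $P_s$ statements are a pure order-statistics (exchangeability) argument. Then the $P$ statements follow by transferring through the CDF-discrepancy bound of Theorem~\ref{thm:simplification_bound_over_belief_distribution}.

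\textbf{Simplified-model statements.} Let $R^{N_b+1}$ denote a fresh return drawn from $P_s(\cdot\mid b_k,\pi)$, independent of $R^1,\dots,R^{N_b}$. The event $\{R^{N_b+1}\le \max_{i\le N_b} R^i\}$ fails only if $R^{N_b+1}$ is the strict unique maximum among the $N_b+1$ exchangeable variables. The probability of this is at most $1/(N_b+1)$: with ties it is smaller, and a symmetry argument over which index attains a strict maximum gives at most one such index. Hence
\[
P_s\bigl(R_{k:T}\le \hat d^\pi_{\max}(b_k)\bigr)\ge \frac{N_b}{N_b+1},
\]
where the probability is joint over the sample and the fresh return. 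The statement for $\hat d^\pi_{\min}$ is symmetric.

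\textbf{Original-model statements.} Conditioned on the sample, $\hat d^\pi_{\max}$ is a fixed number $l$. Theorem~\ref{thm:simplification_bound_over_belief_distribution}, together with $\epsilon$ as in \eqref{eq:epsilon}, gives
\[
P(R_{k:T}\le l\mid b_k,\pi)\ge P_s(R_{k:T}\le l\mid b_k,\pi)-\epsilon
\]
uniformly in $l$. The truncation $\min(\cdot,1)$ in $\epsilon$ only makes the bound trivially true when it is active. Integrating this pointwise inequality over the sampling distribution of $l$, which is itself generated under $P_s$, yields
\[
P(R_{k:T}\le \hat d^\pi_{\max})\ge \frac{N_b}{N_b+1}-\epsilon .
\]

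For the minimum, we need $P(R_{k:T}\ge l)=1-P(R_{k:T}<l)$. The sup-bound on CDF differences also controls left limits, since $P(R<l)=\lim_{l'\uparrow l}P(R\le l')$. So $|P(R<l)-P_s(R<l)|\le\epsilon$, and the same integration argument applies.

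\textbf{Main obstacle.} The delicate step is making precise what the probability is taken over. The statement mixes a random threshold with a random return, so the order-statistic bound must be read as a joint probability over the sample and an independent draw. I will state this interpretation explicitly, apply the exchangeability argument in that joint form, and note that the transfer to $P$ is valid because the CDF bound holds for every fixed threshold and is therefore preserved under averaging over the sample.
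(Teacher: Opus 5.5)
Your proposal is correct and follows essentially the same route as the paper. First, an exchangeability argument over the $N_b+1$ i.i.d.\ draws under $P_s$, where the events ``index $j$ is the strict maximum'' are disjoint and equiprobable. Then, conditioning on the sample, you apply the uniform CDF bound of Theorem~\ref{thm:simplification_bound_over_belief_distribution} at the now-fixed threshold and integrate over the $P_s$-distribution of the sample. Your left-limit remark for the $\hat d^\pi_{\min}$ case, giving $|P(R_{k:T}<l)-P_s(R_{k:T}<l)|\le\epsilon$, is a worthwhile precision that the paper's ``similarly'' skips; the paper's own text even writes $\le$ in place of $\ge$ there.
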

\begin{proof}
	Note that $\{R^i\}_{i=1}^{N_b} \sim P_s(\cdot | b_k, \pi)$ are sampled from the simplified distribution. Observe that $R_{k:T}, R^1,\dots, R^{N_b}$ constitute an i.i.d.\ sample of size $N_b+1$ from the simplified distribution. For each $j \in \{0,1,\dots,N_b\}$, define the event $A_j = \{X_j \text{ is strictly the largest among all } N_b+1 \text{ samples}\}$, where $X_0 = R_{k:T}$ and $X_i = R^i$ for $i \geq 1$. These events are mutually exclusive, and by the i.i.d.\ assumption $P_s(A_j)$ is the same for all $j$. Since $\sum_{j=0}^{N_b} P_s(A_j) \leq 1$, it follows that
	\begin{equation}
		P_s\!\left(R_{k:T} > \max_{i \in \{1,\dots, N_b\}} R^i \Big| b_k ,\pi \right)
		= P_s(A_0)
		\leq \frac{1}{N_b+1}.
	\end{equation}
	Similarly, $P_s(R_{k:T}<\min_{i \in \{1,\dots, N_b\}} R^i | b_k, \pi)\leq\frac{1}{N_b+1}$. Therefore,
	\begin{equation}
		P_s(R_{k:T} \leq \max_{i \in \{1,\dots, N_b\}} R^i | b_k ,\pi )\geq\frac{N_b}{N_b+1}, \qquad
		P_s(R_{k:T} \geq \min_{i \in \{1,\dots, N_b\}} R^i | b_k ,\pi )\geq\frac{N_b}{N_b+1}.
	\end{equation}
	
	When bounding
	$P\left(R_{k:T} \le \max_{i \in {1,\dots, N_b}} R^i\right)$,
	observe that $R_{k:T}$ is drawn from the original distribution $P$, whereas the samples $\{R^i\}_{i=1}^{N_b}$ are drawn from the simplified distribution $P_s$. Consequently, the symmetry argument used in the previous bound does not apply. To address this mismatch, we employ the bound from Theorem \ref{thm:simplification_bound_over_belief_distribution} to bound the CDF of the return computed with the original model, using the CDF computed using the simplified belief-transition model.
	\begin{equation}
		\begin{aligned}
			&P\!\left(R_{k:T} \leq \max_{i \in \{1,\dots, N_b\}} R^i \Big| b_k ,\pi \right)
			=\int_{R^{1},\dots,R^{N_b}} P\!\left(R_{k:T} \leq \max_{i \in \{1,\dots, N_b\}} R^i \Big| R^1,\dots,R^{N_b}, b_k ,\pi \right) P_s(R^1,\dots,R^{N_b}|b_k ,\pi) dR^1\dots dR^{N_b} \\
			&\geq \int_{R^{1},\dots,R^{N_b}} \Big( P_s\!\left(R_{k:T} \leq \max_{i \in \{1,\dots, N_b\}} R^i \Big| R^1,\dots,R^{N_b}, b_k ,\pi \right)- \epsilon \Big) P_s(R^1,\dots,R^{N_b}|b_k ,\pi) dR^1,\dots,dR^{N_b} \\
			&=\mathbb{E}_s \Big[ P_s\!\left(R_{k:T} \leq \max_{i \in \{1,\dots, N_b\}} R^i \Big| R^1,\dots,R^{N_b} \right) - \epsilon  \Big| b_k ,\pi \Big] \\
			&= P_s\!\left(R_{k:T} \leq \max_{i \in \{1,\dots, N_b\}} R^i\right) - \epsilon \\
			&\geq\frac{N_b}{N_b+1} - \epsilon.
		\end{aligned}
	\end{equation}
	The first inequality follows directly from Theorem \ref{thm:simplification_bound_over_belief_distribution}, and the last inequality holds from the symmetry argument used above.
	
	Similarly, 
	\begin{equation}
		P\!\left(R_{k:T} \leq \min_{i \in \{1,\dots, N_b\}} R^i \Big| b_k ,\pi \right)
		\geq \frac{N_b}{N_b+1} - \epsilon.
	\end{equation}	
\end{proof}

\begin{theorem}\label{prf:trivial_d_guarantees}
	Let $\delta,\delta_{d_{\min}}, \delta_{d_{\max}} \in [0,1]$, and let $b_k$ denote the initial belief. Consider samples $R^1,\dots,R^{N_b} \sim P_s(\cdot \mid b_k,\pi)$ generated using the simplified belief-transition model. Define
	\begin{equation}
		\epsilon \triangleq \epsilon(b_k, a_k) \triangleq \min(\sum_{i=k}^{T-1} \mathbb{E}^s[\Delta^s(b_i, a_i)|b_k, a_k,\pi], 1),
	\end{equation}
	\begin{equation}
		\eta = \sqrt{\ln(1/\delta)/(2N_b)}, \qquad \epsilon' = \epsilon + \eta.
	\end{equation}
	\begin{enumerate}
		\item \textbf{Upper Bound:} assume that $P(R_{k:T}\leq d_{max})>1-\delta_{d_{max}}$ and $P_s(R_{k:T}\leq d_{max})>1-\delta_{d_{max}}$, then \begin{enumerate}
			\item If $\epsilon'<\alpha$, then
			$U_s\triangleq (1-\frac{\epsilon'}{\alpha}) \hat{Q}^\pi_{M_s}(b_k,a_k,\alpha-\epsilon') + \frac{\epsilon'}{\alpha}d_{max}$
			\item If $\epsilon' \geq \alpha$ then $U_s\triangleq d_{max}$
		\end{enumerate}
		\item \textbf{Lower Bound:} assume that $P(R_{k:T}\geq d_{min})>1-\delta_{d_{min}}$ and $P_s(R_{k:T}\geq d_{min})>1-\delta_{d_{min}}$, then \begin{enumerate}
			\item If $\epsilon' + \alpha < 1$ then
			$
			L_s\triangleq (1+\frac{\epsilon'}{\alpha}) \hat{Q}^\pi_{M_s}(b_k,a_k,\alpha+\epsilon') - \frac{\epsilon'}{\alpha} \hat{Q}^\pi_{M_s}(b_k,a_k, \epsilon')
			$
			\item If $\epsilon' + \alpha \geq 1$ then 
			$
			L_s\triangleq\frac{1}{\alpha}[-(\alpha + \epsilon' - 1)
			d_{min}+\hat{Q}^\pi_{M_s}(b_k,a_k)-\epsilon' \hat{Q}^\pi_{M_s}(b_k,a_k,\epsilon')]
			$
		\end{enumerate}
	\end{enumerate}
	Then $P(L_s \leq Q_M^\pi(b_k, a_k,\alpha)) > (1-\delta)(1-\delta_{d_{min}})$ and $P(Q_M^\pi(b_k, a_k,\alpha)\leq U_s)> (1-\delta)(1-\delta_{d_{max}})$.
\end{theorem}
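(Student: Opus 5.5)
The plan is to combine the deterministic bounds of Theorem~\ref{thm:uniform_lower_and_upper_bounds_for_v_and_q} with the DKW-based finite-sample argument already used for Theorem~\ref{thm:ecdf_y_bound_cvar_x}. We take $X$ to be the return $R_{k:T}$ under the original belief-transition model $P$. We take $Y$ to be the return under $P_s$, and let $\hat{F}_Y$ be the empirical CDF of the samples $R^1,\dots,R^{N_b}$. The estimator $\hat{Q}^\pi_{M_s}(b_k,a_k,\beta)$ is by construction $\CVaR_\beta^{\hat{F}_Y}$, via the sorted-sample representation \eqref{eq:cvar_estimator_sorted_sample}. With this identification, every case of $U_s$ and $L_s$ is exactly the corresponding expression of Theorem~\ref{thm:cvar_bound_v2} applied to the pair $(X,\hat{F}_Y)$ with discrepancy $\epsilon'$ in place of $\epsilon$. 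The constants $d_{\max}$ and $d_{\min}$ play the roles of $\max(b_X,b_Y)$ and $a_{\min}$.

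\textbf{Step 1 (discrepancy).} By Theorem~\ref{thm:simplification_bound_over_belief_distribution}, $\sup_l |F_X(l)-F_Y(l)|\le \epsilon$. We then define the events $E_+=\{\sup_z(\hat{F}_Y-F_Y)\le\eta\}$ and $E_-=\{\sup_z(F_Y-\hat{F}_Y)\le\eta\}$. By the one-sided DKW inequality as stated in \eqref{eq:helper_2}, each has probability at least $1-\delta$. On $E_+$ we have $\hat{F}_Y-F_X\le\epsilon'$ by the triangle inequality, exactly as in the proof of Theorem~\ref{thm:ecdf_y_bound_cvar_x}, and on $E_-$ we have $F_X-\hat{F}_Y\le\epsilon'$. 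The upper bound uses only $E_+$ and the lower bound only $E_-$, so each statement costs a single $\delta$.

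\textbf{Step 2 (support).} Theorem~\ref{thm:cvar_bound_v2} needs almost-sure support bounds, whereas here $d_{\max}$ only bounds the return with probability above $1-\delta_{d_{\max}}$. Following the proof of Theorem~\ref{thm:uniform_lower_and_upper_bounds_for_v_and_q}, we condition on the event $\{R_{k:T}\le d_{\max}\}$. On that event the return, and likewise the simplified samples (using the assumption on $P_s$), are supported below $d_{\max}$, so $\max(b_X,b_Y)$ may be replaced by $d_{\max}$. Since $\hat{F}_Y$ is the empirical law of samples bounded by $d_{\max}$ on that event, its support bound is also $d_{\max}$.

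\textbf{Step 3 (combine).} We apply Theorem~\ref{thm:cvar_bound_v2} deterministically on the intersection of the DKW event with the support event. We then use the law of total probability, as in the proof of Theorem~\ref{thm:ecdf_cvar_bound}, discarding the complementary term as nonnegative. This gives a lower bound of the form $P(\text{DKW event}\mid\text{support event})\cdot P(\text{support event})$. Treating the sample-based DKW event as independent of the support event, this yields $(1-\delta)(1-\delta_{d_{\max}})$, and symmetrically $(1-\delta)(1-\delta_{d_{\min}})$ for the lower bound. The case splits $\epsilon'<\alpha$ versus $\epsilon'\ge\alpha$, and $\epsilon'+\alpha<1$ versus $\epsilon'+\alpha\ge1$, transfer directly from parts (a) and (b) of Theorem~\ref{thm:cvar_bound_v2}. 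If $\epsilon'>1$, we cap it at $1$; the resulting trivial bound still holds.

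\textbf{Main obstacle.} The hardest step is making the probabilistic conditioning in Steps 2--3 rigorous. The support assumptions are probabilistic statements about the same return whose CVaR is a deterministic functional, so ``conditioning on $R_{k:T}\le d_{\max}$'' really means working with the truncated law. Truncation perturbs $F_X$ and $F_Y$ by at most $\delta_{d_{\max}}$, which is exactly what the paper's product form absorbs. The factorization $(1-\delta)(1-\delta_{d_{\max}})$ also requires independence between the DKW event on the samples $R^i$ and the support event. I would handle this by mirroring the argument of Theorem~\ref{thm:uniform_lower_and_upper_bounds_for_v_and_q} verbatim, so that the bound is conditional on the support event. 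The remaining DKW step is then conditionally valid, because the $R^i$ are i.i.d.\ from $P_s$ regardless of that event.
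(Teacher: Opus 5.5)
Your route is the paper's route: Theorem~\ref{thm:simplification_bound_over_belief_distribution} for $\epsilon$, then DKW plus Theorem~\ref{thm:cvar_bound_v2} (i.e.\ Theorem~\ref{thm:ecdf_y_bound_cvar_x}) on the event $\{R_{k:T}\le d_{\max}\}$, then total probability. It breaks at your Step~2, at the same point where the paper's proof breaks.

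$Q^\pi_M(b_k,a_k,\alpha)$ is the CVaR of the entire law of $R_{k:T}$ under $P$, so it is a fixed number; the only randomness is in $R^1,\dots,R^{N_b}$. If $R_{k:T}$ is a fresh draw independent of the samples, as your Step~3 assumes, then conditioning on $\{R_{k:T}\le d_{\max}\}$ leaves $P(Q^\pi_M\le U_s)$ unchanged. It also does not make $d_{\max}$ an almost-sure bound for $F_X$, nor for $\hat{F}_Y$, since the $R^i$ can still exceed $d_{\max}$. Yet the upper bound of Theorem~\ref{thm:cvar_bound_v2} needs exactly that almost-sure bound: it places the displaced $\epsilon'$ mass at $\max(b_X,b_Y)$, and that mass is what covers tail outcomes the sample has not seen.

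Your proposed fix does not help. A truncation perturbs the CDFs by up to $\delta_{d_{\max}}$ in sup norm, which would have to enter the level shift rather than a multiplicative factor, and truncating changes the CVaR being bounded.

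In fact the statement is false. Take:
\begin{itemize}
\item $P_s=P$, so $\epsilon=0$ and $\epsilon'=\eta$;
\item a return equal to $M>0$ with probability $p=0.001$ and $0$ otherwise;
\item $d_{\max}=0$, $\delta_{d_{\max}}=0.002$, $\alpha=0.5$, $\delta=0.05$, $N_b=100$, giving $\eta\approx0.12$.
\end{itemize}
Both hypotheses hold and $Q^\pi_M=pM/\alpha>0$. Whenever no sample equals $M$, $\hat{Q}^\pi_{M_s}(b_k,a_k,\alpha-\epsilon')=0$ and $U_s=0$; otherwise $U_s\ge Q^\pi_M$. Hence $P(Q^\pi_M\le U_s)=1-(1-p)^{N_b}\approx0.095$, against the claimed $(1-\delta)(1-\delta_{d_{\max}})\approx0.948$. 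On the failure event $U_s$ does not depend on $M$ while $Q^\pi_M$ grows with it, so no repair of the conditioning can work.

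What your Steps~1 and~3 do prove is the following. If Step~2 is replaced by the assumption that $d_{\max}$ and $d_{\min}$ are almost-sure bounds on both returns (e.g.\ $(T-k+1)R_{\max}$ and $(T-k+1)R_{\min}$), you get the guarantee $1-\delta$ with no $\delta_d$ factor, for $\delta\le 1/2$ as the one-sided DKW bound requires. Note that the hypothesis as written, $P(\cdot)>1-\delta_d$, cannot be satisfied with $\delta_d=0$, so even that case needs $\ge$. Your split into $E_+$ for the upper bound and $E_-$ for the lower bound is correct and more careful than the paper's.

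One further inaccuracy: case~2(b) is not ``exactly'' the expression of Theorem~\ref{thm:cvar_bound_v2}. That theorem has $+(\alpha+\epsilon-1)a_{\min}$, whereas $L_s$ has $-(\alpha+\epsilon'-1)d_{\min}$. The stated $L_s$ follows from the correct one only when $d_{\min}\ge0$. For a return identically $0$ with $d_{\min}=-1$ (any $\delta_{d_{\min}}>0$) and $\alpha+\epsilon'>1$, it gives $L_s=(\alpha+\epsilon'-1)/\alpha>0=Q^\pi_M$ with certainty.
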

\begin{proof}
	From the law of total probability,
	\begin{equation}
		\begin{aligned}
			P(Q_M^\pi(b_k, a_k,\alpha)\leq U_s) 
			&= P(Q_M^\pi(b_k, a_k,\alpha) \leq U_s| R_{k:T}\leq d_{max})P(R_{k:T}\leq d_{max}) \\
			&+P(Q_M^\pi(b_k, a_k,\alpha) \leq U_s| R_{k:T} > d_{max})P(R_{k:T} > d_{max})\\
			&\geq P(Q_M^\pi(b_k, a_k,\alpha) \leq U_s| R_{k:T}\leq d_{max})P(R_{k:T}\leq d_{max}) \\
			&>P(Q_M^\pi(b_k, a_k,\alpha) \leq U_s| R_{k:T}\leq d_{max})(1-\delta_{d_{max}}).
		\end{aligned}
	\end{equation}
	Similarly, 
	\begin{equation}
		P(Q_M^\pi(b_k, a_k,\alpha) \geq L_s) > P(Q_M^\pi(b_k, a_k,\alpha) \geq L_s | R_{k:T}\geq d_{min})(1-\delta_{d_{min}}).
	\end{equation}
	From Theorem \ref{prf:simplification_bound_over_belief_distribution}, for all $l\in \mathbb{R}$
	\begin{equation}
		\begin{aligned}
			&|P(R_{k:T} \leq l|b_k,\pi)-P_s(R_{k:T} \leq l|b_k,\pi)| 
			\leq \sum_{i=k+1}^{T-1} \mathbb{E}_{b_{{k+1}:i}}
			[\Delta^s(b_i, a_i)|b_k,\pi]=\epsilon,
		\end{aligned}
	\end{equation}
	The action-value function is defined as the CVaR of the return. Let $X$ and $Y$ be random variables distributed according to the original and simplified models $P$ and $P_s$, respectively. Theorem~\ref{thm:ecdf_y_bound_cvar_x} establishes bounds on $\CVaR_\alpha(X)$ in terms of $\CVaR$ of a sample of $Y$, under a known discrepancy between the cumulative distribution functions of $X$ and $Y$. In our setting, this distributional discrepancy is bounded by $\epsilon$, and thus the result applies directly.
\end{proof}

\begin{theorem}(Bound Guarantees)\label{prf:guarantees}
	Let $\delta \in (0,0.5)$, and let $b_k$ denote the initial belief. Consider samples $R^1,\dots,R^{N_b} \sim P_s(\cdot \mid b_k,\pi)$ generated using the simplified belief-transition model. Define
	\begin{equation}
		\epsilon \triangleq \epsilon(b_k, a_k) \triangleq \min(\sum_{i=k}^{T-1} \mathbb{E}^s[\Delta^s(b_i, a_i)|b_k, a_k,\pi], 1),
	\end{equation}
	\begin{equation}
		\eta = \sqrt{\ln(1/\delta)/(2N_b)}, \qquad \epsilon' = \epsilon + \eta,
	\end{equation}
	\begin{equation}
		\hat{d}_{\max}^\pi(b_k) = \max_{i \in \{1,\dots,N_b\}} R^i, \qquad \hat{d}_{\min}^\pi(b_k) = \min_{i \in \{1,\dots,N_b\}} R^i.
	\end{equation}
	\begin{enumerate}
		\item \textbf{Upper Bound:} \begin{enumerate}
			\item If $\epsilon'<\alpha$, then
			$U_s\triangleq (1-\frac{\epsilon'}{\alpha}) \hat{Q}^\pi_{M_s}(b_k,a_k,\alpha-\epsilon') + \frac{\epsilon'}{\alpha}\hat{d}_{\max}^\pi(b_k)$
			\item If $\epsilon' \geq \alpha$ then $U_s\triangleq \hat{d}_{\max}^\pi(b_k)$
		\end{enumerate}
		\item \textbf{Lower Bound:} \begin{enumerate}
			\item If $\epsilon' + \alpha < 1$ then
			$
			L_s\triangleq (1+\frac{\epsilon'}{\alpha}) \hat{Q}^\pi_{M_s}(b_k,a_k,\alpha+\epsilon') - \frac{\epsilon'}{\alpha} \hat{Q}^\pi_{M_s}(b_k,a_k, \epsilon')
			$
			\item If $\epsilon' + \alpha \geq 1$ then
			$
			L_s\triangleq\frac{1}{\alpha}[-(\alpha + \epsilon' - 1)
			\hat{d}_{\min}^\pi(b_k)+\hat{Q}^\pi_{M_s}(b_k,a_k)-\epsilon' \hat{Q}^\pi_{M_s}(b_k,a_k,\epsilon')]
			$
		\end{enumerate}
	\end{enumerate}
	Then $P(L_s \leq Q_M^\pi(b_k, a_k,\alpha)) > (1-\delta)\Big(\frac{N_b}{N_b+1} - \epsilon\Big)$ and $P(Q_M^\pi(b_k, a_k,\alpha)\leq U_s)> (1-\delta)\Big(\frac{N_b}{N_b+1} - \epsilon\Big)$.
\end{theorem}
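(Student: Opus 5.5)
The plan is to reduce the statement to Theorem~\ref{thm:trivial_d_guarantees} (equivalently Theorem~\ref{thm:ecdf_y_bound_cvar_x} applied with the $\epsilon$ of Theorem~\ref{thm:simplification_bound_over_belief_distribution}). The only new ingredient is that the deterministic support bounds $d_{\max}, d_{\min}$ are replaced by the data-dependent estimators $\hat{d}^\pi_{\max}(b_k)$ and $\hat{d}^\pi_{\min}(b_k)$. I treat the upper bound in detail; the lower bound is symmetric, with $\min$ in place of $\max$ and Theorem~\ref{thm:d_guarantees}'s lower-tail statement in place of the upper one.

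\textbf{Step 1: two good events.} Define
\[
E_1=\Big\{\sup_z\big(\hat F_Y(z)-F_Y(z)\big)\le\eta\Big\},
\qquad
E_2=\{R_{k:T}\le \hat d^\pi_{\max}(b_k)\}.
\]
Here $\hat F_Y$ is the ECDF of $R^1,\dots,R^{N_b}\sim P_s$, $Y$ is the simplified return, and $X$ is the original return. By the one-sided DKW inequality used in Theorem~\ref{thm:ecdf_y_bound_cvar_x}, $P(E_1)>1-\delta$. By Theorem~\ref{thm:d_guarantees}, $P(E_2)\ge \frac{N_b}{N_b+1}-\epsilon$.

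\textbf{Step 2: deterministic bound on $E_1\cap E_2$.} On $E_1$, the triangle-inequality argument from the proof of Theorem~\ref{thm:ecdf_y_bound_cvar_x} combined with Theorem~\ref{thm:simplification_bound_over_belief_distribution} gives $\hat F_Y(z)-F_X(z)\le\epsilon'$ for all $z$. The empirical law trivially satisfies $\hat F_Y\le \hat d^\pi_{\max}$ almost surely. On $E_2$, I then treat $\hat d^\pi_{\max}$ as the upper support bound $d_{\max}$ and condition on it, exactly as the proof of Theorem~\ref{thm:uniform_lower_and_upper_bounds_for_v_and_q} conditions on $\{R_{k:T}\le d_{\max}\}$. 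Applying Theorem~\ref{thm:cvar_bound_v2}(1) with $b_X=b_Y=\hat d^\pi_{\max}$ and discrepancy $\epsilon'$ yields $Q^\pi_M\le U_s$ in both cases: $\epsilon'<\alpha$ and $\epsilon'\ge\alpha$.

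\textbf{Step 3: combine the probabilities.} By total probability, $P(Q^\pi_M\le U_s)\ge P(E_1\cap E_2)$. I would write this as $P(E_1\cap E_2)=P(E_1)\,P(E_2\mid E_1)$ and bound $P(E_2\mid E_1)$ by $\frac{N_b}{N_b+1}-\epsilon$, which gives the product form $(1-\delta)\big(\frac{N_b}{N_b+1}-\epsilon\big)$.

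\textbf{Main obstacle: justifying the product form.} This is the delicate step. $E_1$ and $E_2$ both depend on the same samples $R^i$, so independence is not automatic. My first attempt would be to repeat the proof of Theorem~\ref{thm:d_guarantees} conditionally on the sample. For a fixed sample, $P(R_{k:T}\le m\mid R^{1:N_b})\ge P_s(R_{k:T}\le m)-\epsilon$, where $m=\max_i R^i$ and $R_{k:T}$ is an independent fresh return, and this conditional bound holds on $E_1$ as well. The difficulty is that averaging over samples restricted to $E_1$ requires an exchangeability argument conditioned on $E_1$. Since the ECDF deviation event is a symmetric function of the $R^i$, I expect the rank argument to survive only approximately. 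If it fails, I would fall back to a union bound, $P(E_1\cap E_2)\ge 1-\delta-\big(\frac{1}{N_b+1}+\epsilon\big)$, and compare that with the stated product. A second subtlety is interpreting ``$P(R\le d_{\max})$'' in Theorem~\ref{thm:cvar_bound_v2} as a conditioning event rather than an almost-sure support bound, which the paper already does in the proof of Theorem~\ref{thm:uniform_lower_and_upper_bounds_for_v_and_q}; I would follow that convention.
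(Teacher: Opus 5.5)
Your reduction is the same one the paper uses. Its proof sets $d_{\max}=\hat d^\pi_{\max}(b_k)$ and $\delta_{d_{\max}}=\frac{1}{N_b+1}+\epsilon$ in Theorem~\ref{thm:trivial_d_guarantees}, checks that theorem's hypotheses with Theorem~\ref{thm:d_guarantees}, and takes the product form from that theorem's conditioning on $\{R_{k:T}\le d_{\max}\}$. It never addresses the dependence you worry about.

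The decisive gap, however, is in Step~2, not Step~3. From $\hat F_Y-F_X\le\epsilon'$ you only get $F_X^{-1}(\tau)\le\hat F_Y^{-1}(\tau+\epsilon')$ for $\tau\le 1-\epsilon'$. The top slice $\int_{1-\epsilon'}^{1}F_X^{-1}(\tau)\,d\tau$ is bounded by $\epsilon'\max(b_X,b_Y)$ in Theorem~\ref{thm:cvar_bound_v2}(1) only because $P(X\le b_X)=1$. Your event $E_2$ concerns a single independent draw of the return. By contrast, $Q^\pi_M(b_k,a_k,\alpha)$ is a fixed functional of the whole law of $R_{k:T}$. Conditioning on $E_2$ leaves it unchanged and says nothing about its upper $\epsilon'$-tail. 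The convention you borrow from the proof of Theorem~\ref{thm:uniform_lower_and_upper_bounds_for_v_and_q} is exactly this invalid step.

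In fact the statement is false, so neither an exchangeability argument nor a union bound in Step~3 can rescue it. Here is a counterexample.
\begin{itemize}
\item Take $P_s=P$, so $\epsilon=0$ and $\epsilon'=\eta$. Choose a policy whose return is $B>0$ with probability $p=N_b^{-2}$ and $0$ otherwise (a rare high-cost event).
\item With probability $(1-p)^{N_b}\ge 1-1/N_b$, all $R^i$ equal $0$. Then every empirical CVaR is $0$ and $\hat d^\pi_{\max}(b_k)=0$, so $U_s=0$ in both cases (a) and (b).
\item Meanwhile $Q^\pi_M(b_k,a_k,\alpha)\ge\mathbb{E}[R_{k:T}]=pB>0$. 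Hence $P(Q^\pi_M\le U_s)\le 1/N_b<\frac{3}{8}<(1-\delta)\frac{N_b}{N_b+1}$ for every $N_b\ge 3$.
\item On those samples $E_1$ holds, since $\sup_z(\hat F_Y-F_Y)=p\le\eta$, and $E_2$ has conditional probability $1-p$. So your inclusion $E_1\cap E_2\subseteq\{Q^\pi_M\le U_s\}$ fails on an event of probability at least $(1-p)^{N_b+1}$.
\end{itemize}
The lower-bound case (b), which uses $\hat d^\pi_{\min}$, fails in the mirror way: take a rare very low return with $\alpha>1-p$.

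What survives is your argument run on $E_1$ alone, with a genuine almost-sure bound such as $(T-k+1)R_{\max}$ in place of $\hat d^\pi_{\max}$. That gives probability at least $1-\delta$ via Theorems~\ref{thm:ecdf_y_bound_cvar_x} and~\ref{thm:simplification_bound_over_belief_distribution}. If you keep the sample maximum, the same argument bounds only the CVaR of the truncated return $\min(R_{k:T},\hat d^\pi_{\max}(b_k))$, not $Q^\pi_M$.
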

\begin{proof}
	We prove the upper bound; the lower bound follows by an analogous argument applied to~$\hat{d}_{\min}^\pi(b_k)$.
	By Theorem~\ref{thm:d_guarantees},
	\begin{equation}
		P_s(R_{k:T} \leq \hat{d}_{\max}^\pi(b_k) \mid b_k, \pi) \geq \frac{N_b}{N_b+1}, \qquad
		P(R_{k:T} \leq \hat{d}_{\max}^\pi(b_k) \mid b_k, \pi) \geq \frac{N_b}{N_b+1} - \epsilon.
	\end{equation}
	Setting $d_{\max} = \hat{d}_{\max}^\pi(b_k)$ and $\delta_{d_{\max}} = \frac{1}{N_b+1} + \epsilon$ in Theorem~\ref{thm:trivial_d_guarantees}, both of its conditions are satisfied, and we obtain
	\begin{equation}
		P(Q^\pi_M(b_k, a_k, \alpha) \leq U_s) > (1-\delta)(1-\delta_{d_{\max}}) = (1-\delta)\Big(\frac{N_b}{N_b+1} - \epsilon\Big).
	\end{equation}
	An analogous argument with $d_{\min} = \hat{d}_{\min}^\pi(b_k)$ and $\delta_{d_{\min}} = \frac{1}{N_b+1} + \epsilon$ yields the lower bound guarantee.
\end{proof}

\section{Environment Configurations}\label{sec:env_configs}

The following subsections describe the three POMDP environments used throughout the experiments, including the action elimination experiments of Figure~\ref{fig:agent_path}. Each domain provides an \emph{original} environment (used for sampling rollouts) and a \emph{simplified} surrogate (used for belief updates and CVaR bound computation). The two models differ only in their observation and reward models, as summarised in Table~\ref{tab:arch_overview}.

\begin{table}[ht]
	\centering
	\small
	\setlength{\tabcolsep}{6pt}
	\begin{tabular}{|l|l|l|}
		\hline
		\textbf{Aspect} & \textbf{Original} & \textbf{Simplified} \\
		\hline
		Observation noise & Gaussian Mixture Model (GMM) & Isotropic Gaussian \\
		\hline
		Reward noise & Stochastic (Push, Laser Tag) & Deterministic \\
		\hline
	\end{tabular}
	\caption{Architectural differences between the original and simplified models across all domains.}
	\label{tab:arch_overview}
\end{table}

\subsection{Light-Dark}

\textbf{State space}: $\mathbf{s} = (x, y) \in \mathbb{R}^2$ (2D continuous position).
\textbf{Action space}: Discrete — $\{\text{up}, \text{down}, \text{left}, \text{right}\}$, mapped to unit vectors in $\mathbb{R}^2$.
\textbf{Observation space}: $\mathbf{o} \in \mathbb{R}^2$ — noisy 2D position.

\begin{table}[ht]
	\centering
	\small
	\setlength{\tabcolsep}{4pt}
	\begin{tabular}{|l|c|}
		\hline
		\textbf{Parameter} & \textbf{Value} \\
		\hline
		Grid size & $7 \times 7$ \\
		\hline
		Start state & $(1,\;1)$ \\
		\hline
		Goal state & $(6,\;6)$; radius $1.5$ \\
		\hline
		State transition covariance & $0.06 \cdot I_2$ \\
		\hline
		Obs.\ noise (far from beacon) & $\mathcal{N}(0,\;0.06\cdot I_2)$ / GMM($K{=}2500$) \\
		\hline
		Obs.\ noise (near beacon) & $\mathcal{N}(0,\;0.03\cdot I_2)$ / GMM($K{=}2500$, scaled) \\
		\hline
		Beacons & $(1,1),\;(1,6),\;(6,1),\;(6,6)$; radius $1.0$ \\
		\hline
		Obstacle & $(5,2)$; radius $3.0$; hit prob $1.0$ \\
		\hline
		Goal reward & $+10$ \\
		\hline
		Obstacle penalty & $-10$ \\
		\hline
		Step cost (fuel) & $-2$ \\
		\hline
		Discount factor $\gamma$ & $0.95$ \\
		\hline
		Planning horizon $H$ & $9$ steps \\
		\hline
	\end{tabular}
	\caption{Configuration for the 2D Light-Dark POMDP.}
	\label{tab:ld_full_config}
\end{table}

\textbf{Original}: Two independent 1D GMMs (one per axis), $K = 2500$ components, approximating the target Normal distribution.
\textbf{Simplified}: Standard Gaussian $\mathcal{N}(0,\;\sigma^2 I_2)$ with the same $\sigma$.

\subsection{Continuous Push}

\textbf{State space}: $\mathbf{s} = (r_x, r_y, o_x, o_y, t_x, t_y) \in \mathbb{R}^6$ — robot, object, and target positions.
\textbf{Action space}: Discrete — $\{\text{up}, \text{down}, \text{left}, \text{right}\}$, mapped to unit vectors.
\textbf{Observation space}: $\mathbf{o} \in \mathbb{R}^6$ — robot and target positions observed exactly; object position is noisy: $\hat{o} = o + \epsilon,\;\epsilon \sim \mathcal{N}(0, \sigma^2 I_2)$.

\begin{table}[ht]
	\centering
	\small
	\setlength{\tabcolsep}{4pt}
	\begin{tabular}{|l|c|}
		\hline
		\textbf{Parameter} & \textbf{Value} \\
		\hline
		Grid size & $6 \times 6$ \\
		\hline
		Initial state & $(0.5,\;0.5,\;1.0,\;0.5,\;5.0,\;5.0)$ \\
		\hline
		Robot transition covariance & $\mathrm{diag}(0.005,\;0.005)$ \\
		\hline
		Observation noise $\sigma$ & $0.1$ (on object position) \\
		\hline
		Push threshold & $1.0$ \\
		\hline
		Friction coefficient & $0.3$ \\
		\hline
		Max push force & $2.0$ \\
		\hline
		Robot radius & $0.3$ \\
		\hline
		Dangerous area & $(5.0,\;2.0)$; radius $1.0$; penalty $-20$ \\
		\hline
		Discount factor $\gamma$ & $0.95$ \\
		\hline
		Planning horizon $H$ & $9$ steps \\
		\hline
	\end{tabular}
	\caption{Configuration for the Continuous Push POMDP.}
	\label{tab:push_full_config}
\end{table}

\textbf{Original}: One 1D GMM ($K = 7000$ components) applied independently to each object-position axis; additionally adds stochastic reward noise $\mathcal{N}(0,\;25)$.
\textbf{Simplified}: Gaussian noise $\mathcal{N}(0,\;0.01\cdot I_2)$ on object position; deterministic reward.

\subsection{Continuous Laser Tag}

\textbf{State space}: $\mathbf{s} = (r_x, r_y, o_x, o_y, \text{term}) \in \mathbb{R}^4 \times \{0,1\}$ — robot and opponent positions plus terminal flag.
\textbf{Action space}: Discrete — $\{\text{up}, \text{down}, \text{left}, \text{right}, \text{tag}\}$.
\textbf{Observation space}: $\mathbf{o} \in \mathbb{R}^8$ — 8-directional laser range measurements with additive noise; terminal states return $\mathbf{o} = -\mathbf{1}$.

\begin{table}[ht]
	\centering
	\small
	\setlength{\tabcolsep}{4pt}
	\begin{tabular}{|l|c|}
		\hline
		\textbf{Parameter} & \textbf{Value} \\
		\hline
		Arena & $11 \times 7$ continuous grid \\
		\hline
		Walls & 8 axis-aligned wall cells \\
		\hline
		Initial state & $(2.0,\;1.0,\;8.0,\;5.0,\;0)$ \\
		\hline
		Robot transition covariance & $0.0125 \cdot I_2$ \\
		\hline
		Opponent transition covariance & $0.00625 \cdot I_2$ \\
		\hline
		Opponent pursuit speed & $0.6$ \\
		\hline
		Laser noise $\sigma$ & $1.0$ (per measurement) \\
		\hline
		Robot / opponent radius & $0.3$ \\
		\hline
		Tag radius & $0.5$ \\
		\hline
		Tag reward & $+10$ \\
		\hline
		Failed-tag penalty & $-10$ \\
		\hline
		Step cost & $-1$ \\
		\hline
		Dangerous areas & $(5.0,\;3.0),\;(7.0,\;1.0)$; radius $1.0$; penalty $-300$ \\
		\hline
		Discount factor $\gamma$ & $0.95$ \\
		\hline
		Planning horizon $H$ & $8$ steps (7 moves + tag) \\
		\hline
	\end{tabular}
	\caption{Configuration for the Continuous Laser Tag POMDP.}
	\label{tab:laser_tag_full_config}
\end{table}

\textbf{Original}: One 1D GMM with $K = 290$ components ($\sigma_{\text{comp}} = 0.097$), applied independently to all 8 laser measurements; additionally adds stochastic reward noise $\mathcal{N}(0,\;0.25)$.
\textbf{Simplified}: Gaussian noise $\mathcal{N}(0,\;1.0)$ per laser measurement; deterministic reward.

\subsection{Shared Planner Parameters}

\begin{table}[ht]
	\centering
	\small
	\setlength{\tabcolsep}{4pt}
	\begin{tabular}{|l|c|}
		\hline
		\textbf{Parameter} & \textbf{Value} \\
		\hline
		CVaR level $\alpha$ & $0.1$ \\
		\hline
		Belief particles $N_p$ & $10$ \\
		\hline
		Return samples $N_r$ & $600$ \\
		\hline
		$\Delta$-estimation states & $100$ \\
		\hline
		Samples per state (for $\Delta$) & $2000$ \\
		\hline
		$k$-NN neighbours & $10$ \\
		\hline
		Discount factor $\gamma$ & $0.95$ \\
		\hline
		Confidence parameter $\delta$ & $0.05$ \\
		\hline
		Random seed & $42$ \\
		\hline
	\end{tabular}
	\caption{Planner parameters shared across all three environments.}
	\label{tab:shared_planner_config}
\end{table}

These parameters are used in the open-loop bound evaluation experiments (Figures~\ref{fig:agent_path}, \ref{fig:acceleration_sensitivity}, and \ref{fig:horizon_bounds}) and in the closed-loop policy evaluation experiments (Figures~\ref{fig:closed_loop_policy_and_trajectories}, \ref{fig:closed_loop_sensitivity}, and \ref{fig:closed_loop_speedup}).

\section{Simulations}\label{sec:simulations_appendix}

The planning configurations below correspond to the open-loop experiments in Figures~\ref{fig:agent_path}, \ref{fig:acceleration_sensitivity}, and \ref{fig:horizon_bounds}.

Figure \ref{fig:agent_path} presents the estimated bounds and CVaR values for pairs of action sequences across all three environments, computed using 600 sampled belief trajectories and a confidence parameter of $\delta = 0.05$.

The simplified planner samples $N^\Delta = 100$ states offline from the square region defined by the corners $(0, 0)$ and $(7, 7)$. For each sampled state, the distributional discrepancy $\Delta$ between the simplified and original observation models is estimated using $N_z=2000$ samples. CVaR is computed at confidence level $\alpha = 0.5$, and the probabilistic bounds are constructed with confidence level $\delta = 0.05$. We set the number of sampled belief trajectories to $N_b=600$.

\subsubsection{Continuous Laser Tag -- Planning Configuration}

\begin{table}[ht]
	\centering
	\small
	\setlength{\tabcolsep}{4pt}
	\begin{tabular}{|l|c|}
		\hline
		\textbf{Parameter} & \textbf{Value} \\
		\hline
		Number of Episodes & 300 \\
		\hline
		Steps per Episode & 30 (max) \\
		\hline
		Planning Depth & 2 \\
		\hline
		Return Samples ($N_b$) & 600 \\
		\hline
		CVaR $\alpha$ & 0.5 \\
		\hline
		Number of Particles ($N_p$) & 10 \\
		\hline
		Discount Factor ($\gamma$) & 0.95 \\
		\hline
		GMM Components & 290 \\
		\hline
	\end{tabular}
	\caption{Planning configuration for the Continuous Laser Tag experiments.}
	\label{tab:laser_tag_config}
\end{table}

\subsubsection{Light-Dark -- Planning Configuration}

\begin{table}[ht]
	\centering
	\small
	\setlength{\tabcolsep}{4pt}
	\begin{tabular}{|l|c|}
		\hline
		\textbf{Parameter} & \textbf{Value} \\
		\hline
		Number of Episodes & 300 \\
		\hline
		Steps per Episode & 30 (max) \\
		\hline
		Planning Depth & 2 \\
		\hline
		Return Samples ($N_b$) & 600 \\
		\hline
		CVaR $\alpha$ & 0.5 \\
		\hline
		Number of Particles ($N_p$) & 10 \\
		\hline
		Discount Factor ($\gamma$) & 0.95 \\
		\hline
		GMM Components & 7000 \\
		\hline
	\end{tabular}
	\caption{Planning configuration for the Light-Dark experiments.}
	\label{tab:light_dark_config}
\end{table}

\subsubsection{Continuous Push -- Planning Configuration}

The planning parameters used for Continuous Push experiments are summarized in Table~\ref{tab:push_config}; see Figure~\ref{fig:push_env} for representative agent trajectories.

\begin{table}[ht]
	\centering
	\small
	\setlength{\tabcolsep}{4pt}
	\begin{tabular}{|l|c|}
		\hline
		\textbf{Parameter} & \textbf{Value} \\
		\hline
		Number of Episodes & 300 \\
		\hline
		Steps per Episode & 30 (max) \\
		\hline
		Planning Depth & 2 \\
		\hline
		Return Samples ($N_b$) & 600 \\
		\hline
		CVaR $\alpha$ & 0.5 \\
		\hline
		Number of Particles ($N_p$) & 10 \\
		\hline
		Discount Factor ($\gamma$) & 0.95 \\
		\hline
		GMM Components & 7000 \\
		\hline
	\end{tabular}
	\caption{Planning configuration for the Continuous Push experiments.}
	\label{tab:push_config}
\end{table}

\section{BetaZero Policy Configuration (Light-Dark)}\label{sec:betazero_config}

The closed-loop policy evaluation uses BetaZero~\cite{moss2024betazero}, an AlphaZero-style algorithm for POMDPs. A neural network maps a belief summary vector $\phi(b)$ to a scalar value estimate $V(b)$ and a policy prior $\pi(a \mid b)$, which guide MCTS over particle beliefs.

\subsection{Belief Representation}

Particles are summarised into a fixed-size feature vector using the mean-and-standard-deviation representation:
\begin{equation}
    \phi(b) = \bigl[\bar{\mu}_x,\ \bar{\mu}_y,\ \bar{\sigma}_x,\ \bar{\sigma}_y\bigr] \in \mathbb{R}^4,
\end{equation}
where $\bar{\mu}$ and $\bar{\sigma}$ are the weighted mean and standard deviation of the particle set.

\begin{table}[ht]
    \centering
    \small
    \setlength{\tabcolsep}{4pt}
    \begin{tabular}{|l|c|}
        \hline
        \textbf{Parameter} & \textbf{Value} \\
        \hline
        State dimension & 2 \\
        \hline
        Belief feature dimension & 4 (= $2 \times$ state dim) \\
        \hline
        Particles per belief $N_p$ & 500 \\
        \hline
    \end{tabular}
    \caption{Belief representation parameters for BetaZero on Light-Dark.}
    \label{tab:bz_belief}
\end{table}

Input normalisation is applied using online statistics collected during training:

\begin{table}[ht]
    \centering
    \small
    \setlength{\tabcolsep}{4pt}
    \begin{tabular}{|l|c|c|}
        \hline
        \textbf{Feature} & \textbf{Mean} & \textbf{Std} \\
        \hline
        $\bar{\mu}_x$ & 3.56 & 1.91 \\
        \hline
        $\bar{\mu}_y$ & 4.12 & 1.91 \\
        \hline
        $\bar{\sigma}_x$ & 0.302 & 0.407 \\
        \hline
        $\bar{\sigma}_y$ & 0.303 & 0.408 \\
        \hline
        Value output & $-24.34$ & 36.64 \\
        \hline
    \end{tabular}
    \caption{Input normalisation statistics for the BetaZero network.}
    \label{tab:bz_normalization}
\end{table}

\subsection{Network Architecture}

Two-headed MLP with a shared trunk and separate value and policy heads:

\begin{table}[ht]
    \centering
    \small
    \setlength{\tabcolsep}{4pt}
    \begin{tabular}{|l|c|}
        \hline
        \textbf{Layer} & \textbf{Size} \\
        \hline
        Input & 4 \\
        \hline
        Hidden 1 & 128 \\
        \hline
        Hidden 2 & 128 \\
        \hline
        Value head output & 1 (scalar) \\
        \hline
        Policy head output & 4 (one logit per action) \\
        \hline
    \end{tabular}
    \caption{BetaZero network architecture.}
    \label{tab:bz_network}
\end{table}

\subsection{MCTS Parameters}

\begin{table}[ht]
    \centering
    \small
    \setlength{\tabcolsep}{4pt}
    \begin{tabular}{|l|c|}
        \hline
        \textbf{Parameter} & \textbf{Value} \\
        \hline
        Search depth & 10 \\
        \hline
        Simulations per step (training) & 100 \\
        \hline
        Simulations per step (inference) & 1000 \\
        \hline
        Exploration constant $c$ & 219.13 (tuned) \\
        \hline
        Temperature $\tau$ & 0.319 (tuned) \\
        \hline
        Value weight $z_q$ & 1.0 \\
        \hline
        Visit count weight $z_n$ & 1.0 \\
        \hline
        DPW action widening $k_a$, $\alpha_a$ & 1.0,\ 0.5 \\
        \hline
        DPW observation widening $k_o$, $\alpha_o$ & 1.0,\ 0.5 \\
        \hline
        Discount factor $\gamma$ & 0.95 \\
        \hline
    \end{tabular}
    \caption{MCTS parameters for BetaZero on Light-Dark.}
    \label{tab:bz_mcts}
\end{table}

\subsection{Training Loop}

\begin{table}[ht]
    \centering
    \small
    \setlength{\tabcolsep}{4pt}
    \begin{tabular}{|l|c|}
        \hline
        \textbf{Parameter} & \textbf{Value} \\
        \hline
        Outer iterations & 50 \\
        \hline
        Episodes per iteration & 50 \\
        \hline
        Total training episodes & 2500 \\
        \hline
        Episode length & 20 steps \\
        \hline
        Data collection & Full MCTS rollouts \\
        \hline
        Replay buffer & 5 iterations \\
        \hline
    \end{tabular}
    \caption{BetaZero training loop parameters.}
    \label{tab:bz_training}
\end{table}

At each episode, a fresh initial belief is sampled: mean $\sim \mathcal{U}([-1,8]^2)$, std $\sim \mathcal{U}(0.01,\;2.0)$, then $N_p = 500$ particles are drawn from $\mathcal{N}(\text{mean},\;\text{std}^2 I)$. This diversity prevents the training distribution from collapsing to a uniform loop.

\subsection{Optimizer and Loss}

\begin{table}[ht]
    \centering
    \small
    \setlength{\tabcolsep}{4pt}
    \begin{tabular}{|l|c|}
        \hline
        \textbf{Parameter} & \textbf{Value} \\
        \hline
        Optimizer & Adam \\
        \hline
        Learning rate & $10^{-4}$ \\
        \hline
        Batch size & 1024 \\
        \hline
        Training epochs per iteration & 50 \\
        \hline
        Weight decay & $10^{-5}$ \\
        \hline
        Loss & Cross-entropy (policy) + MSE (value) \\
        \hline
    \end{tabular}
    \caption{Optimizer and loss configuration for BetaZero training.}
    \label{tab:bz_optimizer}
\end{table}

\subsection{Hyperparameter Search}

The exploration constant $c$ and temperature $\tau$ were selected via Optuna (TPE sampler, 20 trials) with $n_\text{sim}=100$ during search:

\begin{table}[ht]
    \centering
    \small
    \setlength{\tabcolsep}{4pt}
    \begin{tabular}{|l|c|c|}
        \hline
        \textbf{Hyperparameter} & \textbf{Search range} & \textbf{Best value} \\
        \hline
        $c$ (exploration constant) & $[1.0,\;10\times\Delta r] \approx [1,\;220]$ & 219.13 \\
        \hline
        $\tau$ (temperature) & $[0.05,\;0.5]$ & 0.319 \\
        \hline
        $z_q$,\ $z_n$ & fixed & 1.0,\ 1.0 \\
        \hline
    \end{tabular}
    \caption{Hyperparameter search results for BetaZero on Light-Dark.}
    \label{tab:bz_hparam}
\end{table}

Search used a MedianPruner (3 startup trials, 5 warmup steps) and a TPE sampler with seed 42. The objective minimised the final policy loss (cross-entropy vs.\ MCTS visit counts); the baseline uniform policy corresponds to $\ln 4 \approx 1.386$. The best trial was retrained at $n_\text{sim}=1000$ for the final model used in all experiments.

\section{Hardware Specifications}\label{sec:hardware}
All simulations were conducted on a machine equipped with an 11th Gen Intel\textsuperscript{\textregistered} Core\textsuperscript{TM} i9-11900K CPU running at 3.50\,GHz and 64\,GB of RAM.

\end{document}